\newtheorem{theorem}{Theorem}
\newtheorem{Lemma}[theorem]{Lemma}
\newtheorem{corollary}[theorem]{Corollary}
\newtheorem{definition}[theorem]{Definition}
\newtheorem{remark}[theorem]{Remark}
\newtheorem{proposition}[theorem]{Proposition}
\newcommand{\T}{\mathbb{T}}
\newcommand{\fcolim}{\text{``}\colim\text{"}}
\newcommand{\Ind}{\operatorname{Ind}}
\newcommand{\Span}{\operatorname{Span}}
\newcommand{\Lex}{\operatorname{Lex}}
\newcommand{\At}{\mathbf{At}}
\newcommand{\Psh}{\mathbf{Psh}}
\newcommand{\Sh}{\mathbf{Sh}}
\newcommand{\Sub}{\mathbf{Sub}}
\newcommand{\yo}{\text{\usefont{U}{min}{m}{n}\symbol{'110}}}
\DeclareFontFamily{U}{min}{}
\DeclareFontShape{U}{min}{m}{n}{<-> dmjhira}{}
\mathchardef\hyphen="2D 
\renewcommand{\epsilon}{\varepsilon}
\renewcommand{\phi}{\varphi}
\newcommand{\Hom}{\operatorname{Hom}}
\newcommand{\colim}{\operatorname{colim}}
\newcommand{\dom}{{\operatorname{dom}}}
\newcommand{\id}{\operatorname{id}}
\newcommand{\bigAND}{\bigwedge}
\newcommand{\bigOR}{\bigvee}
\newcommand{\Cat}{\mathbf{Cat}}
\newcommand{\Set}{\mathbf{Set}}
\newcommand{\Mod}{{\mathbf{Mod}}}
\newcommand{\dcat}{\mathbb{D}}
\newcommand{\pcat}{\mathbb{P}}
\newbox\xrat@below
\newbox\xrat@above
\newcommand{\xrightarrowtail}[2][]{%
	\setbox\xrat@below=\hbox{\ensuremath{\scriptstyle #1}}%
	\setbox\xrat@above=\hbox{\ensuremath{\scriptstyle #2}}%
	\pgfmathsetlengthmacro{\xrat@len}{max(\wd\xrat@below,\wd\xrat@above)+.6em}%
	\mathrel{\tikz [>->,baseline=-.75ex]
		\draw (0,0) -- node[below=-2pt] {\box\xrat@below}
		node[above=-2pt] {\box\xrat@above}
		(\xrat@len,0) ;}}
\title{De Morgan's law in toposes I}
\author{Olivia Caramello \and Yorgo Chamoun}
\begin{document}

\maketitle
\begin{abstract}
    We study toposes satisfying De Morgan's law, in particular we give characterizations of geometric theories whose classifying topos is De Morgan, clarifying the link with the amalgamation property of the category of models of such theory. We then give several ways of turning a topos into a De Morgan topos.
\end{abstract}

\tableofcontents

\section{Introduction}

It is now well-known that every geometric theory has a classifying topos (see for instance \cite{caramello2018theories}). In parallel, every topos has an internal language which is (at least) intuitionistic, and may or may not satisfy some axioms like the excluded-middle or De Morgan's law. Thus, there is natural notion of De Morgan theory, namely a geometric theory whose classifying topos satisfies De Morgan's law. In this paper, we undertake a systematic study of these theories, from a model-theoretic and topos-theoretic point of view.\\
\\
The first question to ask is whether there exist syntactic and semantic criteria for a theory to be De Morgan. Then we can think of a (more or less canonical) way to turn a theory into a De Morgan theory, or equivalently a topos into a De Morgan topos. The question of a syntactic criterion was answered by Bagchi in \cite{bagchi1992morgan} for coherent theories, and in full generality by Caramello in \cite{caramello2009morgan}. A semantic criterion for a coherent version of De Morgan theories is given in \cite{harun1996applications} without proof and attributed to Joyal and Reyes. Bagchi also gives a semantic characterization for coherent theories in \cite{bagchi1992morgan}, involving finitely presented structures. Both involve the amalgamation property, which also appears in the characterization of De Morgan presheaf toposes. The amalgamation property thus appears to be very closely linked to De Morgan's law. Finally the question of enforcing De Morgan's law has been answered in two orthogonal ways by Johnstone in \cite{johnstone1980gleason} and Caramello in \cite{caramello2009morgan}. If these questions seem too `abstract', one can take a look at the results in \cite{caramello2009morgan2}, where for instance it is proved that the `DeMorganization' of the (coherent) theory of fields is the
(geometric) theory of fields of finite characteristic which are algebraic over their prime fields. This shows that topos-theoretic features of logic, even the most abstract ones, can indeed have concrete applications for the working model theorist. Other interesting applications can be found for instance in \cite{harun1996applications}. However, the answers to the precedent questions were not completely satisfactory, and other more interesting questions arose, which gives this work its \emph{raison d'être}. \\
\\
In section \ref{sec:dml}, after recalling some basic facts about De Morgan's law, we define De Morgan toposes and theories and give basic results about them. In particular, we generalize the argument of Bagchi to give a syntactic criterion for a theory to be De Morgan. Then in section \ref{sec:presheaf_type}, we revisit the well-known characterization of De Morgan's law for theories of presheaf type, proving that amalgamation for finitely presentable models is equivalent to amalgamation for all set-based models. In section \ref{sec:model_amalg}, we investigate the semantic criterion of Joyal and Reyes, give a proof, interpret it topos-theoretically, and propose a conceptual point of view on the matter. We also provide a categorical interpretation of the semantic characterization of Bagchi in the particular case of presheaf-type theories. In section \ref{sec:enforce}, we use the Gleason cover of a topos, introduced by Johnstone in \cite{johnstone1980gleason}, to enforce De Morgan's law. More specifically, we use new tools to explicitly describe the Gleason cover of a topos, and reprove the main theorems of Johnstone in the language for relative topos theory introduced in \cite{caramello2021relative}. We also investigate some further applications, once again related to amalgamation. Lastly, we introduce another way of enforcing De Morgan's law, which we plan to further investigate in future work.

\section{De Morgan's law in toposes}\label{sec:dml}
\subsection{De Morgan's law}
The two well-known De Morgan laws can be written as:
$$\neg (x\lor y) = \neg x\land \neg y$$
$$\neg (x\land y) = \neg x\lor \neg y$$
These are true in any Boolean algebra, i.e. in classical logic, but not in any Heyting algebra, i.e. in intuitionistic logic, which is the logic of elementary toposes. Let us focus on these algebras for now. The results of this section are well-known, we give some proofs for the reader's convenience. First, note that De Morgan laws are \emph{almost} true in any Heyting algebra.

\begin{proposition}\label{true_dml}
    In any Heyting algebra $H$, for any two elements $x,\,y\in H$, we have
    \begin{align}
        \neg (x\lor y) = \neg x\land \neg y\\
        \neg (x\land y) \geq \neg x\lor \neg y
    \end{align}
    
\end{proposition}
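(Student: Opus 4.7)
The plan is to work directly from the defining adjunction of Heyting implication, namely $z \land a \leq b$ if and only if $z \leq (a \Rightarrow b)$, which upon taking $b = \bot$ specializes to the universal property of negation: $z \leq \neg a$ iff $z \land a = \bot$.

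For the first equality, I would prove both inequalities simultaneously by testing against an arbitrary $z \in H$. Using the distributivity of finite meets over finite joins that holds in every Heyting algebra, we have $z \land (x \lor y) = (z \land x) \lor (z \land y)$. Now observe that for any $a, b \in H$, $a \lor b = \bot$ is equivalent to the conjunction $a = \bot$ and $b = \bot$ (one direction is because $a, b \leq a \lor b$; the other is trivial). Combining these, $z \leq \neg(x \lor y)$ iff $z \land x = \bot$ and $z \land y = \bot$, iff $z \leq \neg x$ and $z \leq \neg y$, iff $z \leq \neg x \land \neg y$. Taking $z = \neg(x \lor y)$ and $z = \neg x \land \neg y$ respectively gives the two inequalities, hence the equality.

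For the second inequality, the key observation is that negation is order-reversing: if $a \leq b$, then from $\neg b \land b = \bot$ and $\neg b \land a \leq \neg b \land b$ we get $\neg b \land a = \bot$, so $\neg b \leq \neg a$. Applying this to the standard inequalities $x \land y \leq x$ and $x \land y \leq y$, we obtain $\neg x \leq \neg(x \land y)$ and $\neg y \leq \neg(x \land y)$, and taking the join yields $\neg x \lor \neg y \leq \neg(x \land y)$.

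No genuine obstacle is anticipated; the proof is a routine exercise in the Heyting algebra axioms, and the asymmetry between the two statements reflects the fact that the first identity only uses that $\bot$ is join-irreducible together with distributivity, whereas the reverse of the second would require a genuine structural property (which is precisely what De Morgan's law, the subject of the paper, demands).
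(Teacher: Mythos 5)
Your proof is correct and follows essentially the same route as the paper's: the nontrivial inequality $\neg x\land\neg y\leq\neg(x\lor y)$ rests on distributivity of $\land$ over $\lor$ together with the universal property of $\neg$, and the second inequality on antitonicity of $\neg$, exactly as in the paper. The only difference is presentational — you package the first equality as a single chain of equivalences against a test element $z$, whereas the paper separates the trivial directions (via antitonicity) from the one computation $\neg x\land\neg y\land(x\lor y)=0$.
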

\begin{proof}
    Using that $z\leq t$ implies $\neg t\leq \neg z$ (because $z\leq t$ implies $z\leq z\land t$ implies $z\land \neg t\leq 0$ implies $\neg t\leq \neg z)$ and the universal properties of $\lor$ and $\land$, the only non trivial inequality is $\neg x\land \neg y\leq \neg (x\lor y)$. But this is equivalent to $\neg x\land \neg y \land (x\lor y)=0$, and we conclude using the distributivity law:
    $\neg x\land \neg y \land (x\lor y)\leq \neg x\land (x\lor y)\leq \neg x\land y$ and $\neg x\land \neg y \land (x\lor y)\leq \neg y$ so $\neg x\land \neg y \land (x\lor y)\leq \neg x\land y \land \neg y=0$. 
\end{proof}

\begin{definition}
    A Heyting algebra is a De Morgan algebra or Stone algebra if it satisfies $\neg (x\land y) \leq \neg x\lor \neg y$. 
\end{definition}
In general this inequality is what will be called the De Morgan's law. The following law will also be called De Morgan's law.

\begin{proposition}\label{comp_join}
    A Heyting algebra $H$ is De Morgan if and only if it satisfies $\neg x\lor \neg\neg x=1$ for all $x\in H$.
\end{proposition}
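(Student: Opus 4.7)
The plan is to prove both directions by exploiting the fact that $\neg x$ is characterized by the universal property $z \leq \neg x \iff z \land x = 0$, together with distributivity in the Heyting algebra.

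For the forward direction ($\Rightarrow$), I would simply instantiate the De Morgan inequality at the pair $(x, \neg x)$. Since $x \land \neg x = 0$, one gets $\neg 0 = 1 \leq \neg x \lor \neg \neg x$, hence equality since the other inequality is trivial. This step is immediate once the law is in hand.

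For the converse ($\Leftarrow$), assume $\neg x \lor \neg\neg x = 1$ for every $x \in H$. The idea is to multiply $\neg(x \land y)$ by $1 = \neg x \lor \neg\neg x$ and then use distributivity to write
\[
\neg(x \land y) = \bigl(\neg(x \land y) \land \neg x\bigr) \;\lor\; \bigl(\neg(x \land y) \land \neg\neg x\bigr).
\]
The first term is bounded above by $\neg x$, hence by $\neg x \lor \neg y$. The work is in the second term, where I want to show $\neg(x \land y) \land \neg\neg x \leq \neg y$. For this, it suffices (by the universal property of $\neg y$) to verify that $\neg(x \land y) \land \neg\neg x \land y = 0$. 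But $\neg(x \land y) \land y \land x \leq \neg(x \land y) \land (x \land y) = 0$, so by the universal property of $\neg x$ we get $\neg(x \land y) \land y \leq \neg x$; meeting with $\neg\neg x$ then yields $\neg\neg x \land \neg x = 0$, as desired.

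The only potentially tricky step is this last chain of adjoint manipulations, but it is really just repeated use of $z \land x = 0 \iff z \leq \neg x$ together with distributivity, both of which are available in any Heyting algebra. Combining the two cases gives $\neg(x \land y) \leq \neg x \lor \neg y$, which is De Morgan's law.
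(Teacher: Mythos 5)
Your proof is correct, and it takes a genuinely different route from the one in the paper. For the converse, the paper first establishes that $\neg\neg$ preserves binary meets in any Heyting algebra and, under the hypothesis $\neg x\lor\neg\neg x=1$, also binary joins (the key point being that $\neg\neg$-closed elements then coincide with complemented elements and that a join of complemented elements is complemented); it then concludes by the chain $\neg(x\land y)=\neg\neg\neg(x\land y)=\neg(\neg\neg x\land\neg\neg y)=\neg\neg(\neg x\lor\neg y)=\neg x\lor\neg y$. You instead argue directly: distribute $\neg(x\land y)$ over $1=\neg x\lor\neg\neg x$ and bound each disjunct, the only nontrivial one being $\neg(x\land y)\land\neg\neg x\leq\neg y$, which you obtain by two applications of the adjunction $z\land x=0\iff z\leq\neg x$. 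Each step checks out (in particular $\neg(x\land y)\land y\leq\neg x$, hence $\neg(x\land y)\land y\land\neg\neg x\leq\neg x\land\neg\neg x=0$, is exactly right), and binary distributivity is indeed available in any Heyting algebra. Your argument is shorter and more elementary; what the paper's longer route buys is the structural byproduct that under De Morgan's law $\neg\neg$ is a lattice homomorphism and complemented elements form a Boolean subalgebra closed under joins, facts the paper reuses later (e.g.\ in the analysis of $\Omega_{\neg\neg}$ and in Lemma \ref{1u1}).
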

\begin{proof}
    In one direction, just apply De Morgan's law for $x$ and $\neg x$. In the other, first we have to see that $\neg\neg$ preserves meets and joins. \\
    \\
    We claim that $\neg\neg(x\land y)=\neg\neg x\land \neg\neg y$ in any Heyting algebra. Indeed, one direction is trivial, for the other apply (1) of Proposition~\ref{true_dml} to $\neg x$ and $\neg y$: $\neg (\neg x\lor \neg y)=\neg\neg x\land \neg \neg y$, then use (2).\\
    \\
    We claim that $\neg\neg(x\lor y)=\neg\neg x\lor \neg\neg y$ in a Heyting algebra satisfying $\neg x\lor \neg\neg x=1$ for all $x$. Indeed, noticing that by applying (1) of Proposition~\ref{true_dml} to $\neg\neg x$ and $\neg\neg y$ we have that $\neg\neg(x\lor y)=\neg\neg(\neg\neg x\lor \neg\neg y)$, we just have to prove that the join of complemented elements is complemented, because in such an algebra complemented elements coincide with $\neg\neg$-closed elements. This follows from:
\begin{align*}
    ((x\lor\neg x)\land y)\lor ((x\lor\neg x)\land \neg y) & = (x\land y)\lor(\neg x\land y)\lor(x\land \neg y)\lor(\neg x\land \neg y)\\
    & \leq x\lor y\lor  (\neg x\land \neg y)\\
    & =(x\lor y)\lor\neg (x\lor y)
\end{align*}
This shows that if $x$ and $y$ are complemented, i.e. $x\lor\neg x=1$ and $y\lor\neg y=1$, then their join also is.\\
\\
Now putting everything together, we have 
\begin{align*}
    \neg(x\land y) & =\neg\neg\neg(x\land y)\\
    & = \neg(\neg\neg x \land \neg\neg y)\\
    & = \neg\neg(\neg x\lor \neg y)\\
    & = \neg\neg\neg x\lor \neg\neg\neg y\\
    & = \neg x\lor \neg y
\end{align*}
\end{proof}

We also give a Lemma~that will be useful later.

\begin{Lemma}\label{Hleqx}
    Let $H$ be a Heyting algebra. For an element $x\in H$, we write $H_{\leq x}$ for the Heyting algebra of elements of $H$ below $x$. Then $H$ is a Stone algebra if and only if $H_{\leq x}$ is a Stone algebra for all $x\in H$.
\end{Lemma}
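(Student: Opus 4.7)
The plan is to use Proposition \ref{comp_join} to reformulate the Stone condition in terms of the identity $\neg z \vee \neg\neg z = \top$, since the algebras $H_{\leq x}$ are naturally Heyting algebras (meets, joins coincide with those of $H$; top is $x$; implication is $a \to_x b = x \wedge (a \to b)$; in particular $\neg_x a = x \wedge \neg a$).

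The easy direction is the ``if''. I would simply take $x$ to be the top element of $H$, so that $H_{\leq x} = H$ itself is a Stone algebra by hypothesis.

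For the ``only if'' direction, I would assume $H$ is Stone and fix $x \in H$. By Proposition \ref{comp_join} (applied to $H_{\leq x}$, whose top is $x$), it suffices to show $\neg_x a \vee \neg_x \neg_x a = x$ for every $a \leq x$. Substituting the formula $\neg_x a = x \wedge \neg a$ and applying distributivity, this reduces to showing
\[x \wedge \bigl(\neg a \vee \neg(\neg a \wedge x)\bigr) = x,\]
\ie that $\neg a \vee \neg(\neg a \wedge x) \geq x$. But inequality (2) of Proposition \ref{true_dml} gives $\neg(\neg a \wedge x) \geq \neg\neg a$, so the left-hand side is bounded below by $\neg a \vee \neg\neg a$, which equals $\top$ in $H$ by Proposition \ref{comp_join} applied to $H$ (since $H$ is Stone). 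Hence the inequality holds, and even trivially.

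I do not anticipate any genuine obstacle: once one writes down how negation is computed in $H_{\leq x}$, the verification is a short direct computation that just uses the already-established Proposition \ref{comp_join} together with inequality (2) of Proposition \ref{true_dml}. The only place to be careful is the routine check that the Heyting algebra structure of $H_{\leq x}$ behaves as claimed (in particular that its negation is $x \wedge \neg(-)$), which I would state briefly without belaboring.
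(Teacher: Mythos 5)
Your proof is correct and follows essentially the same route as the paper: identify the pseudo-complement in $H_{\leq x}$ as $x \wedge \neg(-)$ and reduce the Stone condition there to $\neg a \vee \neg\neg a = \top$ in $H$. The only (harmless) difference is that you bound $\neg(\neg a \wedge x)$ below by $\neg\neg a$ via Proposition \ref{true_dml}(2) instead of computing the double pseudo-complement in $H_{\leq x}$ exactly as $\neg\neg a \wedge x$, as the paper does.
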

\begin{proof}
    One direction is trivial. For the other, suppose that $H$ is a Stone algebra, and consider $H_{\leq x}$ for some $x$. Note that the Heyting implication of $a,\,b$ in $H_{\leq x}$ is given by $(a\rightarrow b)\land x$, so the double pseudo-complement of $a$ is given by $x\land \neg(x\land\neg a)$. Now for any $b\leq x$, we have 
\begin{align*}
    b\leq x\land \neg(x\land\neg a) & \iff b\leq \neg(x\land\neg a)\\
    & \iff b\land x \land \neg a=0\\
    & \iff b\leq\neg\neg a\\
    & \iff b\leq\neg\neg a\land x
\end{align*}
so by Yoneda, the double negation of $a\in H_{\leq x}$ is given by $\neg\neg a\land x$. Now De Morgan's law for an element $a$ is given by $(\neg a\land x)\lor(\neg\neg a \land x)=x$. But $(\neg a\land x)\lor(\neg\neg a \land x)=x\land(\neg a\lor\neg\neg a)=x\land 1=x$.
\end{proof}

\subsection{De Morgan toposes}

Now we want to define what it means for an elementary topos to satisfy De Morgan's law. It is well-known that the internal logic of an elementary topos is (at least) intuitionistic, so satisfying De Morgan's law should be equivalent to the fact that De Morgan's law holds in the internal logic of the topos. However we will avoid going into the internal logic, and we will define a De Morgan topos in an `external way'. 

\begin{definition}
    A topos $\mathcal{E}$ is said to be De Morgan if its subobject classifier is an internal Stone algebra.
\end{definition}

There are a number of equivalent formulations of this property, some of which are listed in \cite{johnstone2006conditions}. We review below those which are most relevant for us. First, recall that there is a `canonical' Boolean algebra in every topos, given by the equalizer 
\begin{center}
    \begin{tikzcd}
\Omega_{\neg\neg} \arrow[r, tail]
& \Omega \arrow[r, shift left, "\neg\neg"]
\arrow[r, shift right, "\id"']
& \Omega 
\end{tikzcd}
\end{center}
Since equalizers are computed pointwise, this is just given at an object $c$ by the subset of $\Omega(c)$ of the $\neg\neg$-closed elements. The (pointwise) join operation (at $c$) is given by $(x,y)\mapsto \neg\neg (x\lor y)$ where $\neg$ and $\lor$ refer to the operations in $\Omega(c)$, and all the other operations are inherited from $\Omega(c)$, so it is easy to check that it is indeed a Boolean algebra pointwise. Now, $\neg\top=\bot$ and $\neg\bot=\top$, $1\xrightarrow{\top}\Omega$ and $1\xrightarrow{\bot}\Omega$ factor through $\Omega_{\neg\neg}$, which gives a morphism $(\top,\bot):1\sqcup 1\xrightarrow{}\Omega_{\neg\neg}$. In fact, it is a monomorphism, because clearly $\top$ and $\bot$ are disjoint subobjects of $\Omega$ and so of $\Omega_{\neg\neg}$, and in a topos the coproduct of disjoint subobjects is their join (Proposition~IV.7.6 \cite{maclane2012sheaves}).

\begin{proposition}[\cite{johnstone2006conditions}]
    For a topos $\mathcal{E}=\mathbf{Sh}(\mathcal{C},J)$, the following are equivalent:
    \begin{enumerate}[(i)]
        \item $\mathcal{E}$ is De Morgan;
        \item For every object $E$ of $\mathcal{E}$, $\textup{Sub}_\mathcal{E}(E)$ is a Stone algebra, i.e. for any subobject $A$, $\neg A\lor\neg\neg A=E$ the maximal suboject;
        \item The subobject $\bot:1\xrightarrowtail{\enspace}\Omega$ has a complement;
        \item A subobject is $\neg\neg$-closed if and only if it has a complement;
        \item The monomorphism $(\top,\bot):1\sqcup 1\xrightarrow{}\Omega_{\neg\neg}$ is an isomorphism.
    \end{enumerate}
\end{proposition}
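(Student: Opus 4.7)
The strategy is to establish $(i) \Leftrightarrow (ii)$ via the standard Yoneda-style translation between internal algebraic structure on $\Omega$ and the pointwise structure on the subobject lattices: the subobject classifier is internally a Stone algebra exactly when the De Morgan identity holds among morphisms $\Omega \times \Omega \to \Omega$, which via the natural isomorphism $\text{Sub}(E) \cong \Hom(E, \Omega)$ of Heyting algebras is equivalent to the identity holding in each $\text{Sub}(E)$.

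The equivalence $(ii) \Leftrightarrow (iv)$ is essentially the argument of Proposition \ref{comp_join} applied to each $\text{Sub}(E)$. Complemented subobjects are always $\neg\neg$-closed; conversely, the Stone identity $\neg A \vee \neg\neg A = E$ produces an explicit complement $\neg A$ for any $\neg\neg$-closed $A$. If instead every $\neg\neg$-closed subobject is complemented, then specializing to $\neg A$ (which is always $\neg\neg$-closed) and invoking uniqueness of complements forces $\neg\neg A$ to be that complement, recovering the Stone identity.

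The crux is $(ii) \Leftrightarrow (iii)$. The plan is to identify, in the internal logic, the subobject $\bot : 1 \hookrightarrow \Omega$ with $\{p \in \Omega : \neg p\}$, i.e.\ with $\neg\{\top\}$, where $\{\top\} \subseteq \Omega$ is the generic subobject classified by $\id_\Omega$ (using that $p = \bot \iff \neg p$ and $p = \top \iff p$ in the internal logic). Under this identification, ``$\bot$ has a complement in $\Omega$'' unfolds to $\neg\{\top\} \vee \neg\neg\{\top\} = \Omega$ in $\text{Sub}(\Omega)$, which is precisely the Stone identity for the single element $\{\top\}$. Pulling back along the classifier $\chi_A : E \to \Omega$ of an arbitrary subobject $A \hookrightarrow E$, and using that $\chi_A^*$ is a morphism of Heyting algebras on subobject lattices (preserving $\wedge$, $\vee$, $\top$, and crucially $\neg$), this single instance propagates to the Stone identity for $A$ in $\text{Sub}(E)$. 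The reverse direction is immediate by specialization of $(ii)$ to $\text{Sub}(\Omega)$.

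Finally, for $(ii) \Leftrightarrow (v)$: since $(\top, \bot) : 1 \sqcup 1 \hookrightarrow \Omega_{\neg\neg}$ is already monic, only surjectivity needs checking. Working internally, a point $p$ of $\Omega_{\neg\neg}$ satisfies $p = \neg\neg p$, and Stone gives $p \vee \neg p = \top$; combined with the tautologies $p \Rightarrow (p = \top)$ and $\neg p \Rightarrow (p = \bot)$, this yields $(p = \top) \vee (p = \bot)$, placing $p$ in the image of $(\top, \bot)$. Conversely, if $\Omega_{\neg\neg} \cong 1 \sqcup 1$, then for any $p \in \Omega$ the element $\neg\neg p$ is $\neg\neg$-closed, so equals $\top$ or $\bot$; a case split yields $\neg\neg p \vee \neg p = \top$ in $\Omega$, and the same pullback argument globalizes this to all of $\text{Sub}(E)$. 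The main technical point to handle carefully throughout is that pullback of subobjects is a morphism of Heyting algebras (in particular preserves $\neg$), which is what allows a Stone identity at the universal subobject $\{\top\} \subseteq \Omega$ to propagate to all $\text{Sub}(E)$.
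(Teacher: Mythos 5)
Your proof is correct. The paper itself does not prove this proposition --- it is quoted from Johnstone's \emph{Conditions related to De Morgan's law} with only the monicity of $(\top,\bot):1\sqcup 1\to\Omega_{\neg\neg}$ established in the surrounding text --- and your argument is essentially the standard one from that source: translate the internal identity on $\Omega$ to the lattices $\textup{Sub}_\mathcal{E}(E)$ via $\textup{Sub}(E)\cong\Hom(E,\Omega)$, identify $\bot:1\rightarrowtail\Omega$ with $\neg\{\top\}$, and propagate the single instance at the generic subobject using that pullback is a Heyting algebra homomorphism. The only phrasing worth tightening is the appeal to ``uniqueness of complements'' in $(iv)\Rightarrow(ii)$: the precise fact needed is that in a Heyting algebra any complement of $x$ necessarily equals $\neg x$, so a complement of $\neg A$ must be $\neg\neg A$, which immediately yields $\neg A\lor\neg\neg A=E$.
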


Note that we can also define a \emph{Boolean topos} to be a topos such that $\Omega$ is an internal Boolean algebra, equivalently every $\textup{Sub}_\mathcal{E}(E)$ is a Boolean algebra. Obviously, any Boolean topos is in particular a De Morgan topos. In fact, for any topos $\mathcal{E}$, there exists a Boolean topos and a geometric embedding $\mathbf{sh}_{\neg\neg}(\mathcal{E})\hookrightarrow{}\mathcal{E}$, called the Booleanization of $\mathcal{E}$, which can be characterized by the property of being the unique dense Boolean subtopos of $\mathcal{E}$, where a subtopos is \emph{dense} if it contains the initial object of $\mathcal{E}$. The subobject classifier of $\mathbf{sh}_{\neg\neg}(\mathcal{E})$ will be $\Omega_{\neg\neg}$ as defined above. We also mention that we have a similar result for De Morgan's law, i.e. a subtopos and a geometric embedding $\mathbf{sh}_m(\mathcal{E})\hookrightarrow{}\mathcal{E}$ such that $\mathbf{sh}_m(\mathcal{E})$ is De Morgan, called the DeMorganization, and with subobject classifier $\Omega_m$ an internal Stone algebra of $\mathcal{E}$. The interested reader can look at \cite{caramello2009morgan}.\\
\\
Finally, let us also give an easy yet important definition.

\begin{definition}
\begin{enumerate}[(i)]
    \item A geometric theory $\T$ is De Morgan if its classifying topos is De Morgan.
    \item A geometric theory $\T$ is Boolean if its classifying topos is Boolean.
\end{enumerate}
     
\end{definition}
One can wonder if this is equivalent to some syntactic property of the theory. This is the subject of the next section. Remark~that since every topos is the classifying topos of some geometric theory, this will give another equivalent condition for a topos to be De Morgan. 

\subsection{A syntactic criterion}
In this section we generalize a syntactic criterion for a theory to be De Morgan, from the coherent to the geometric case. Bagchi characterized in \cite{bagchi1992morgan} being De Morgan for a coherent theory by means of its definable subobjects. More precisely, given a coherent theory $\mathbb{T}$, define $D_n(\mathbb{T})$ to be the lattice of infinite disjunctions of coherent formulas in a context of length $n$, quotiented by the relation $\phi\sim\psi$ if and only if $\T\text{-}\Mod(\Set)\models (\phi\Leftrightarrow\psi)$, with the corresponding order relation $\phi\leq\psi$ if and only if $\T\text{-}\Mod(\Set)\models (\phi\Rightarrow\psi)$, and operations inherited from the logical ones. Then we gets the following result.
\begin{theorem}[\cite{bagchi1992morgan}]\label{bagchi}
    A coherent theory $\mathbb{T}$ is De Morgan if and only if $D_n(\mathbb{T})$ is a De Morgan algebra for every $n\in\mathbb{N}$.
\end{theorem}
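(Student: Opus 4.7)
The plan is to identify $D_n(\T)$ with certain subobject lattices in the classifying topos $\mathcal{E}_\T$ of $\T$, and then to propagate the Stone property from these particular lattices to all subobject lattices of $\mathcal{E}_\T$. A standard description of the classifying topos of a coherent theory yields that, for each context $\vec{x}$ of length $n$, the subobjects of the universal object $\{\vec{x}.\top\}$ in $\mathcal{E}_\T$ correspond bijectively to geometric formulas in context $\vec{x}$ taken modulo $\T$-provable equivalence in geometric logic. Combined with the fact that $\mathcal{E}_\T$ has enough $\Set$-points when $\T$ is coherent (Deligne's theorem), this gives an identification $D_n(\T)\cong\textup{Sub}_{\mathcal{E}_\T}(\{\vec{x}.\top\})$ for each context $\vec{x}$ of length $n$.

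One direction is then immediate: if $\mathcal{E}_\T$ is De Morgan, condition (ii) of the earlier proposition tells us that every subobject lattice in $\mathcal{E}_\T$ is Stone, in particular each $D_n(\T)$ is. Conversely, assume each $D_n(\T)$ is a Stone algebra. Then $\textup{Sub}_{\mathcal{E}_\T}(\{\vec{x}.\top\})$ is Stone for every context $\vec{x}$, and Lemma \ref{Hleqx} upgrades this to $\textup{Sub}_{\mathcal{E}_\T}(\{\vec{x}.\phi\})$ being Stone for every coherent formula $\phi$, since this lattice is precisely the down-set of $\phi$ inside $\textup{Sub}_{\mathcal{E}_\T}(\{\vec{x}.\top\})$.

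To conclude, I would use that the objects $\{\vec{x}.\phi\}$ form a generating family of $\mathcal{E}_\T$, so any object $E$ admits an epimorphism $f\colon F=\coprod_i\{\vec{x}_i.\phi_i\}\twoheadrightarrow E$. Then $\textup{Sub}_{\mathcal{E}_\T}(F)\cong\prod_i\textup{Sub}_{\mathcal{E}_\T}(\{\vec{x}_i.\phi_i\})$ is a product of Stone algebras, and hence itself Stone by a componentwise check of the De Morgan inequality. Moreover, the pullback map $f\inv\colon\textup{Sub}_{\mathcal{E}_\T}(E)\to\textup{Sub}_{\mathcal{E}_\T}(F)$ is a Heyting algebra homomorphism which reflects order, since its left adjoint $\exists_f$ satisfies $\exists_f\circ f\inv=\id$ whenever $f$ is an epimorphism. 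Hence the Stone inequality $\neg(A\land B)\leq\neg A\lor\neg B$ in $\textup{Sub}_{\mathcal{E}_\T}(F)$ pulls back to the same inequality in $\textup{Sub}_{\mathcal{E}_\T}(E)$, so every $\textup{Sub}_{\mathcal{E}_\T}(E)$ is Stone and $\mathcal{E}_\T$ is De Morgan.

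The main delicate point is the identification $D_n(\T)\cong\textup{Sub}_{\mathcal{E}_\T}(\{\vec{x}.\top\})$: the definition of $D_n(\T)$ quotients by semantic equivalence in $\Set$-models, while the equivalence natively visible inside $\mathcal{E}_\T$ is provability in geometric logic over $\T$, and reconciling the two requires the completeness result coming from Deligne's theorem (which is where coherence of $\T$ is essentially used). Once this identification is in hand, the remainder is a routine propagation argument along a generating family, using Lemma \ref{Hleqx} together with the standard facts that subobject lattices of coproducts are products of subobject lattices, and that pullback along an epimorphism is an order-reflecting Heyting algebra homomorphism.
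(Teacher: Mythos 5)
Your proof is correct, and its skeleton is the one the paper follows: identify $D_n(\T)$ with a lattice of subobjects in the classifying topos via a completeness theorem, restrict to down-sets using Lemma \ref{Hleqx}, and then relate Stone-ness of these lattices to De Morgan-ness of the whole topos. The two technical legs are, however, implemented differently. For the identification, the paper (Remark \ref{completeness}) passes through the \emph{coherent} syntactic category, proving $\Omega^{\mathbf{Sh}(\mathcal{C}^{coh}_\T,J^{coh})}(d)\cong\textup{Idl}(\textup{Sub}_{\mathcal{C}^{coh}_\T}(d))$ and $D_n(\T)=\textup{Idl}(E_n(\T))$, so that only the classical \emph{finitary} completeness theorem for coherent logic is needed; you instead work directly in the geometric syntactic category and invoke Deligne's theorem to match provable and semantic equivalence of arbitrary geometric formulas --- a slightly heavier input, but a shorter path. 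For the propagation step, the paper computes $\Omega(d)\cong\textup{Sub}_{\mathcal{C}_\T}(d)$ on the geometric syntactic site and implicitly uses that the subobject classifier is an internal Stone algebra iff it is one pointwise on a site of definition; you replace this by the explicit argument via a generating family, the isomorphism $\textup{Sub}(\coprod_i F_i)\cong\prod_i\textup{Sub}(F_i)$, and order-reflection of $f^*$ along an epimorphism (using that $f^*$ is a Heyting algebra homomorphism, which holds in any topos by Frobenius reciprocity). Your version of this last step actually makes explicit a point the paper leaves to the reader (``it is easy to conclude''), at the cost of needing the standard fact that pullback preserves negation; both routes are sound. One cosmetic remark: in the down-set step you should allow $\phi$ to range over geometric (not just coherent) formulas if you want all objects of the syntactic site, though, as every geometric formula is covered by coherent ones, the coherent $\{\vec{x}.\phi\}$ already generate and your argument goes through as written.
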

\begin{proof}[Idea of proof]
    There is an isomorphism between $(D_n(\mathbb{T}))_{\leq\phi}$ and $\Omega(\{x.\phi\})$, where $\Omega$ is the subobject classifier of the (coherent) classifying topos of $\mathbb{T}$. Then just use (a simple variation of) Lemma~\ref{Hleqx}. 
\end{proof}
We will give a full proof as variation of our generalization. What is interesting in this Theorem~is that it generalizes directly to all the `higher De Morgan laws', i.e. the Lie properties.
\begin{definition}
    A Heyting algebra is said to satisfy the Lee property $I_r$ for $r\geq 1$ if it satisfies
    $$\bigwedge_{0\leq i<j\leq r}\big( x_i\land x_j=0\big) \xrightarrow{} \big(\bigvee_i \neg x_i = 1\big)$$
\end{definition}
Notice that De Morgan's law is equivalent to $I_1$. We can also define what it means for a theory to satisfy $I_r$ like we did for the De Morgan property. \\
\\
Now Bagchi's Theorem~\ref{bagchi} is still true if we replace ``De Morgan" by $I_r$ for any $r\geq 1$. We want to generalize this Theorem~for geometric theories, and to do so it will be more convenient to aim for a purely syntactic criterion. Indeed, Bagchi considers finitary theories, but he still needs to consider infinite disjunctions of formulas because they naturally arise in the classifying topos, so a syntactic criterion is less natural in his case, though possible (Remark~\ref{completeness}). He also needs to use the completeness Theorem~to do the link between semantics and syntax, but this is not true for general geometric theories so his proof does not generalize directly. On the other hand, the `syntactic version' of Theorem~\ref{bagchi} is true for geometric theories. It basically says that a theory $\mathbb{T}$ is De Morgan if and only if the $\textup{Sub}_{\mathcal{C}_\mathbb{T}}(\{x.\top\})$ are De Morgan, where $\mathcal{C}_\T$ is the syntactic category of $\T$ (see for example Definition 1.4.1 of \cite{caramello2018theories}), which is a direct consequence of the results in \cite{caramello2009morgan}. We give a more direct proof, inspired by Bagchi's methodology, which has the advantage of generalizing directly to all the $I_r$.
\begin{theorem}[\cite{caramello2009morgan} Theorem~2.11]\label{dmltheory}
    A geometric theory $\mathbb{T}$ is De Morgan if and only if for every context $x$, the Heyting algebra $\textup{Sub}_{\mathcal{C}_\mathbb{T}}(\{x.\top\})$ is a De Morgan algebra.
\end{theorem}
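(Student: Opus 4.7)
The plan is to connect the syntactic subobject lattices $\textup{Sub}_{\mathcal{C}_\mathbb{T}}(\{x.\top\})$ to the subobject lattices of the classifying topos $\mathcal{E} = \mathbf{Sh}(\mathcal{C}_\mathbb{T}, J_\mathbb{T})$ via the sheaf Yoneda embedding $\ell : \mathcal{C}_\mathbb{T} \to \mathcal{E}$, and then to apply Lemma \ref{Hleqx} to pass from general formulas $\{x.\phi\}$ down to $\{x.\top\}$.

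First I would establish, for every object $\{x.\phi\}$ of the syntactic category, a Heyting algebra isomorphism
\[ \textup{Sub}_{\mathcal{C}_\mathbb{T}}(\{x.\phi\}) \;\cong\; \textup{Sub}_\mathcal{E}(\ell(\{x.\phi\})). \]
The syntactic category is a geometric category and $J_\mathbb{T}$ is the geometric topology, so the standard argument identifies subobjects of $\{x.\phi\}$ in $\mathcal{C}_\mathbb{T}$ with $J_\mathbb{T}$-closed sieves on $\{x.\phi\}$: the principal sieve of a subobject is automatically $J_\mathbb{T}$-closed, and conversely every $J_\mathbb{T}$-closed sieve is the principal sieve of the join of the images of its members. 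Since $\ell$ is a conservative geometric functor it preserves finite meets and arbitrary joins of subobjects, and since implication and pseudocomplement in a frame are uniquely determined by meets and all joins, the identification is an isomorphism of Heyting algebras.

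Next I would show that $\mathcal{E}$ is De Morgan iff $\textup{Sub}_\mathcal{E}(\ell(X))$ is a De Morgan algebra for every $X \in \mathcal{C}_\mathbb{T}$. By Proposition \ref{comp_join}, $\mathcal{E}$ is De Morgan iff $(\neg u) \vee (\neg\neg u) = \top$ as arrows $\Omega \to \Omega$ in $\mathcal{E}$. Using the Yoneda lemma applied to the generating family $\{\ell(X) : X \in \mathcal{C}_\mathbb{T}\}$, together with the fact that the internal Heyting operations on $\Omega$ restrict at level $X$ to the external Heyting operations on $\Omega(X) = \textup{Sub}_\mathcal{E}(\ell(X))$, this equality of arrows holds globally iff it holds in every $\textup{Sub}_\mathcal{E}(\ell(X))$, which by Proposition \ref{comp_join} again is precisely the De Morgan property for each $\textup{Sub}_\mathcal{E}(\ell(X))$.

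Finally, since every object of $\mathcal{C}_\mathbb{T}$ is of the form $\{x.\phi\}$ and since $\{x.\phi\}$ is a subobject of $\{x.\top\}$, one has $\textup{Sub}_{\mathcal{C}_\mathbb{T}}(\{x.\phi\}) = (\textup{Sub}_{\mathcal{C}_\mathbb{T}}(\{x.\top\}))_{\leq \phi}$, so Lemma \ref{Hleqx} yields that all $\textup{Sub}_{\mathcal{C}_\mathbb{T}}(\{x.\top\})$ are De Morgan iff all $\textup{Sub}_{\mathcal{C}_\mathbb{T}}(\{x.\phi\})$ are De Morgan; chaining the three reductions yields the theorem. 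The hard part will be the middle step: a naive descent argument via a representable cover $\bigsqcup_i \ell(X_i) \twoheadrightarrow E$ fails because pullback does not preserve negation, so one must instead formulate De Morgan's law as an internal equation in $\Omega$ and exploit Yoneda to test such an equation on representables.
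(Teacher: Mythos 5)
Your proposal is correct and follows essentially the same route as the paper: identify $\textup{Sub}_{\mathcal{C}_\mathbb{T}}(\{x.\phi\})$ with $\Omega(\{x.\phi\})=\textup{Sub}_{\mathcal{E}}(\ell(\{x.\phi\}))$ via $J^{geom}$-closed sieves (the paper phrases this as $\Omega(d)\cong\textup{Idl}_c(\textup{Sub}_{\mathcal{C}_\mathbb{T}}(d))$ with such ideals being principal), and then reduce to $\{x.\top\}$ by Lemma \ref{Hleqx}. The only difference is that you spell out the intermediate step — testing the internal equation $\neg u\lor\neg\neg u=\top$ on $\Omega$ at representables via Yoneda — which the paper leaves implicit when passing from ``$\mathcal{E}$ is De Morgan'' to ``each $\Omega(d)$ is a Stone algebra''.
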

\begin{proof}
    First, we show that for any object $d$ of the syntactic category $\mathcal{C}_\T$, $\Omega(d)$ is isomorphic to $\textup{Idl}_c(\textup{Sub}_{\mathcal{C}_\mathbb{T}}(d))$, where $\textup{Idl}_c$ are the ideals which are closed under arbitrary disjunction. Indeed, Consider $S\in\Omega(d)$, it is a $J^{geom}$-closed sieve, where $J^{geom}$ is the geometric topology on $\mathcal{C}_\T$. Using the cover-mono factorization, we easily see that $S$ is generated by its monomorphisms: given $f\in S$, its monic part $i$ is in $S$, because $i^*(S)$ is a $J^{geom}$-covering sieve, because it contains the cover part of $f$. By a similar argument, we see that the monics of $S$ are closed under arbitrary disjunction, and the fact that $S$ is a sieve gives that they form an ideal. Conversely, given an ideal $I\in \textup{Idl}_c(\textup{Sub}_{\mathcal{C}_\mathbb{T}}(d))$, we take the sieve it generates, noted $(I)$, then we take its $J^{geom}$-closure, i.e. the sieve of morphisms it covers. To check that we get an isomorphism, we just need to check that this operation does not add monics to $I$. It is clear for the ``generated sieve" part, because $I$ is an ideal. Now if $f:c\xrightarrow{}d$ is a monic such that $f^*((I))$ is covering, we get a family $(f_i:c_i\xrightarrow{}c)_i$ such that $\bigvee_{i}Im(f_i)=c$ by the definition of the geometric topology, and such that each $f\circ f_i$ factors through some $g_i$ a monomorphism in $I$. But by uniqueness of the cover-mono factorization, we have that $Im(f\circ f_i)=f\circ Im(f_i)$, and $Im(f\circ f_i)\leq g_i$ by the universal property of the image. Since $I$ is an ideal, this means that $f\circ Im(f_i)\in I$ for all $i$, so $f$ is in $I$ because it is closed under arbitrary unions.\\
    \\
    Now it is easy to conclude like in the proof of Theorem~\ref{bagchi}, because ideals that are closed under arbitrary disjunction are just principal ideals, so we get that $\Omega(d)\cong \textup{Sub}_{\mathcal{C}_\mathbb{T}}(d)=\textup{Sub}_{\mathcal{C}_\mathbb{T}}(\{x.\top\})_{\leq d}$ for $d$ a formula in a context $x$.
\end{proof}

    We clearly see that the proof is still valid if we replace $I_1$ by an arbitrary $I_r$: 
\begin{theorem}
    A geometric theory $\mathbb{T}$ satisfies Lie property $I_r$ for $r\geq 1$ if and only if for every context $x$, $\textup{Sub}_{\mathcal{C}_\mathbb{T}}(\{x.\top\})$ satisfies $I_r$. In addition, $\T$ is Boolean if and only if the $\textup{Sub}_{\mathcal{C}_\mathbb{T}}(\{x.\top\})$ are Boolean.
\end{theorem}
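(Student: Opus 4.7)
The plan is to follow the proof of Theorem \ref{dmltheory} essentially verbatim, observing that the only property-specific step is the invocation of Lemma \ref{Hleqx}, which admits straightforward analogues for the Lee properties $I_r$ and for the Boolean case.

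First, I would reuse the isomorphism
$$\Omega(\{x.\phi\}) \cong \textup{Sub}_{\mathcal{C}_\T}(\{x.\phi\}) = \textup{Sub}_{\mathcal{C}_\T}(\{x.\top\})_{\leq \{x.\phi\}}$$
established in the proof of Theorem \ref{dmltheory} for every object $\{x.\phi\}$ of the syntactic category. By definition, $\T$ satisfies $I_r$ (resp. is Boolean) iff $\Omega$ internally satisfies $I_r$ (resp. is a Boolean algebra) in the classifying topos; since both properties are expressible by universally quantified first-order statements in the language of Heyting algebras and since the representables generate, this is in turn equivalent to asking that $\Omega(c)$ satisfy the corresponding external property for every object $c$ of $\mathcal{C}_\T$.

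Next, I would state and prove the required variants of Lemma \ref{Hleqx}: a Heyting algebra $H$ satisfies $I_r$ (resp. is Boolean) if and only if $H_{\leq x}$ does for every $x \in H$. One direction is trivial since $H = H_{\leq 1}$. For the converse, recalling from Lemma \ref{Hleqx} that the pseudo-complement in $H_{\leq x}$ of an element $a \leq x$ is $x \wedge \neg a$, pairwise disjoint elements of $H_{\leq x}$ remain pairwise disjoint in $H$, so $I_r$ in $H$ yields
$$\bigvee_i (x \wedge \neg a_i) = x \wedge \bigvee_i \neg a_i = x \wedge 1 = x,$$
which is $I_r$ in $H_{\leq x}$. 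Similarly, if $H$ is Boolean and $a \leq x$, then $a \vee (x \wedge \neg a) = x \wedge (a \vee \neg a) = x$, exhibiting a complement of $a$ in $H_{\leq x}$.

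Combining the two ingredients gives both equivalences by the same chain of reasoning as in the proof of Theorem \ref{dmltheory}: $\T$ satisfies $I_r$ (resp. is Boolean) iff $\textup{Sub}_{\mathcal{C}_\T}(\{x.\top\})_{\leq \phi}$ does for every $x$ and every $\phi$, iff $\textup{Sub}_{\mathcal{C}_\T}(\{x.\top\})$ does for every context $x$. I do not anticipate any real obstacle; the only point worth checking carefully is the reduction from internal validity of an (implication of) equational property of $\Omega$ in the classifying topos to its external validity on the generating family $\{\Omega(c) \mid c \in \mathcal{C}_\T\}$, which is a standard feature of Grothendieck toposes.
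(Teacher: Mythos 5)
Your proof is correct and takes essentially the same route as the paper, which itself only remarks that the argument for Theorem \ref{dmltheory} remains valid with $I_r$ in place of De Morgan's law; you have usefully made explicit the $I_r$ and Boolean analogues of Lemma \ref{Hleqx} (pairwise disjointness passes to $H$, and $\bigvee_i(x\wedge\neg a_i)=x\wedge\bigvee_i\neg a_i=x$ by finite distributivity), which are indeed the only property-specific ingredients. The remaining reduction from internal validity of these Horn conditions for $\Omega$ to their external validity on the $\Omega(c)$ for $c$ in the syntactic site is standard, as you note, and matches what the paper leaves implicit.
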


\begin{remark}\label{completeness}
    If $\mathbb{T}$ is coherent, we get an isomorphism between $\textup{Sub}_{\mathcal{C}_\mathbb{T}}(\{x.\top\})$ and $D_n(\mathbb{T})$. This is just a consequence of the classical completeness Theorem~for coherent logic. Indeed, if we write $E_n(\mathbb{T})$ for the restriction of $D_n(\mathbb{T})$ to coherent formulas, we have that $D_n(\T)=\textup{\textup{Idl}}(E_n(\T))$ as noticed in \cite{bagchi1992morgan}. On the other hand, considering the coherent syntactic category $\mathcal{C}^{coh}_{\mathbb{T}}$ of $\mathbb{T}$, mimicking the last proof we get that $\Omega^{\mathbf{Sh}(\mathcal{C}^{coh}_\T,J^{coh})}(d)=\textup{Idl}(\textup{Sub}_{\mathcal{C}^{coh}_{\mathbb{T}}}(d))$. We conclude by $\textup{Sub}_{\mathcal{C}^{coh}_{\mathbb{T}}}(d)\cong (E_n)_{\leq d}$ (by the completeness Theorem~for coherent logic and Lemma~D1.4.4 of \cite{johnstone2002sketches}), and the fact that $$\Omega^{\mathbf{Sh}(\mathcal{C}^{coh}_\T,J^{coh})}(\{x.\phi\})\cong\Omega^{\mathbf{Sh}(\mathcal{C}_\T,J^{geom})}(\{x.\phi\})$$ for every coherent $\phi$ (because the equivalence $\mathbf{Sh}(\mathcal{C}_\T,J^{geom})\cong \mathbf{Sh}(\mathcal{C}_\T^{coh},J^{coh})$ is induced by the obvious inclusion functor $\mathcal{C}^{coh}_\T\xrightarrow{}\mathcal{C}_\T$):
    \begin{align*}
        \textup{Sub}_{\mathcal{C}_\mathbb{T}}(\{x.\top\}) & =\Omega^{\mathbf{Sh}(\mathcal{C}_\T,J^{geom})}(\{x.\top\})\\
        & =\Omega^{\mathbf{Sh}(\mathcal{C}^{coh}_\T,J^{coh})}(\{x.\top\})\\
        & =\textup{Idl}(\textup{Sub}_{\mathcal{C}^{coh}_{\mathbb{T}}}(\{x.\top\}))\\
        & =\textup{Idl}(E_n(\T))\\
        & =D_n(\T)
    \end{align*}
    Note that this gives a proof of the isomorphism in the proof of Theorem~\ref{bagchi}.
\end{remark}

\subsection{De Morgan's law on presheaf toposes}\label{sec:presheaf_type}
Recall that a theory of presheaf type is a theory whose classifying topos is equivalent to a presheaf topos. More precisely:

\begin{definition}
     A finitely presented model of a geometric theory $\T$ is a (set-based) model $M$ such that there exists a geometric formula $\phi$ and a set of parameters, called set of generators $\xi$ such that $M\models\phi(\xi)$ and for each tuple of parameters of the appropriate length $\xi'$ in a model $N$ such that $N\models\phi(\xi')$, there exists a unique model homomorphism $h:M\xrightarrow{}N$ such that $h(\xi)=\xi'$.
 \end{definition}

We note $f.p.\T\text{-}\Mod(\Set)$ the full subcategory of $\T\text{-}\Mod(\Set)$ of the finitely presented models of $\T$. For a theory of presheaf type $\T$, we have an equivalence $\mathbf{Sh}(\mathcal{C}_\T,J^{geom})\cong \mathbf{Psh}((f.p.\T\text{-}\Mod(\Set))^{op})$. Thus, theories of presheaf type are typically theories where syntax and semantics are well-behaved with respect to each other. In particular, it is well-known that the property of being a De Morgan topos can be characterized semantically: 

 \begin{proposition}\label{Ore}
     A topos $\mathbf{Psh}(\mathcal{C})$ is De Morgan if and only if $\mathcal{C}$ satisfies the right Ore condition, i.e. if $\mathcal{C}^{op}$ satisfies amalgamation.
 \end{proposition}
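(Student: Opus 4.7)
The plan is to reduce De Morgan's law in the presheaf topos to a density condition on sieves, which then matches the Ore condition almost tautologically.

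By the preceding proposition, $\mathbf{Psh}(\mathcal{C})$ is De Morgan iff $\textup{Sub}(E)$ is a Stone algebra for every presheaf $E$. Applying this to the representables $\yon(c)$, where $\textup{Sub}(\yon(c))$ is the Heyting algebra of sieves on $c$ with $\neg S = \{h : h^*S = \emptyset\}$, the condition reads: for every $c$, every sieve $S$ on $c$, and every $h : d \to c$, either $h \in \neg S$ or $h \in \neg\neg S$. Since $h^*S$ is itself a sieve on $d$ and this dichotomy depends only on $h^*S$, I would first observe that quantifying over all triples $(c, S, h)$ is equivalent to fixing $h = \id_c$ for all pairs $(c, S)$. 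So De Morgan is equivalent to the statement: \emph{every nonempty sieve $S$ on any $c$ is dense}, in the sense that $h^*S \neq \emptyset$ for every $h : d \to c$.

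Both directions then follow directly. For (Ore) $\Rightarrow$ (De Morgan): given a nonempty sieve $S$ on $c$ containing some $f: a \to c$, and any $h: d \to c$, I apply the Ore condition to the cospan $(f, h)$ to obtain $p: e \to a$ and $q: e \to d$ with $fp = hq$; since $f \in S$ and $S$ is a sieve, $hq = fp \in S$, so $q \in h^*S$, which is therefore nonempty. For (De Morgan) $\Rightarrow$ (Ore): given a cospan $f: a \to c$, $g: b \to c$, I apply the density condition to the nonempty principal sieve $(f)$ on $c$ and the morphism $g$, obtaining $q: e \to b$ with $gq \in (f)$; this means $gq = fp$ for some $p: e \to a$, which is exactly the amalgamation in $\mathcal{C}^{op}$.

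The only slightly delicate point is the initial reformulation: one must unwind $\neg$ and $\neg\neg$ in a presheaf topos carefully to see that ``$h \in \neg S$ or $h \in \neg\neg S$'' at an arbitrary $h : d \to c$ coincides with ``$\id_d \in \neg(h^*S)$ or $\id_d \in \neg\neg(h^*S)$'', so that it suffices to verify the condition at identity morphisms for all sieves on all objects. Once this reduction is in hand, the rest is a direct matching of definitions, with no further obstacle.
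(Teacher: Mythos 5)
Your proof is correct. The paper states this proposition without proof, citing it as well-known, and your argument is precisely the standard one: reduce De Morgan's law to the condition that every nonempty sieve is stably nonempty (using that the internal Stone-algebra condition on $\Omega$ is checked pointwise on the lattices $\textup{Sub}(\yon(c))$ of sieves, where $\neg S = \{h \mid h^*S = \emptyset\}$), and then match this directly against the right Ore condition via principal sieves. The one step you rightly flag as delicate --- that the dichotomy at $h$ depends only on the sieve $h^*S$, so that quantifying over all $(c,S,h)$ reduces to identities --- is handled correctly, and no gap remains.
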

 
In this section, we prove that a category has amalgamation if and only if its ind-completion has amalgamation. As a direct corollary, we get that a theory of presheaf type is De Morgan if and only if its set-based models have amalgamation.\\
\\
We will have to do a transfinite recursion, so we need some lemmas which will help us deal with size issues. We fix a category $\mathcal{C}$. Note that by Theorem~1.5 of \cite{rosicky1994locally}, we can assume that the formal colimits in $\Ind(\mathcal{C})$ are indexed by directed posets, as long as the operations we apply on them respect their directedness. So we will assume it in what follows. For a formal colimit $\fcolim_{i\in I}A_i$, the canonical functor $A_i\xrightarrow{}\fcolim_{i\in I}A_i$ will be noted $\iota_i$ or $\iota_{A_i}$. For $I'\subseteq I$, the canonical functor $\fcolim_{i\in I'}A_i\xrightarrow{}\fcolim_{i\in I}A_i$ will be noted $\iota_{I'}$ or $\iota_{I'\subseteq I}$.

\begin{Lemma}
    Let $I$ be a directed poset, $J\subseteq I$ any subset. There is a directed subposet $J\subseteq J^*\subseteq I$ such that $|J^*|=|J|$ if $|J|\geq\aleph_0$, and $|J^*|<\aleph_0$ if $|J|<\aleph_0$.
\end{Lemma}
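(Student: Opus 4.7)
The plan is to build $J^*$ by iteratively closing $J$ under the operation ``pick an upper bound of each finite subset''. The finite case is immediate: if $|J|<\aleph_0$, directedness of $I$ applied inductively gives a single element $u\in I$ with $j\le u$ for all $j\in J$, and then $J^*:=J\cup\{u\}$ is finite and directed (any finite subset is bounded by $u$).

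For the infinite case $|J|\ge\aleph_0$, I would define an $\omega$-chain $J=J_0\subseteq J_1\subseteq\cdots$ of subsets of $I$ recursively. Given $J_n$, for each finite $F\subseteq J_n$ use directedness of $I$ to choose an upper bound $u_F\in I$, and set
\[
J_{n+1} \;:=\; J_n\cup\{u_F : F\subseteq J_n\text{ finite}\}.
\]
Finally take $J^*:=\bigcup_{n<\omega}J_n$, viewed as a subposet of $I$ with the induced order. Directedness of $J^*$ is then immediate: any finite subset $F\subseteq J^*$ lies in some $J_n$ (because each of its finitely many elements appears at some stage, and the chain is increasing), whence $u_F\in J_{n+1}\subseteq J^*$ is an upper bound for $F$ inside $J^*$.

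For the cardinality bookkeeping, the standard fact that an infinite set has the same cardinality as its set of finite subsets gives $|J_{n+1}|\le|J_n|+|J_n|^{<\omega}=|J_n|$ when $|J_n|\ge\aleph_0$, so by induction $|J_n|=|J|$ for every $n$. Then $|J^*|\le\aleph_0\cdot|J|=|J|$, and the reverse inequality holds because $J\subseteq J^*$.

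There is no real obstacle here; the only minor subtleties are (i) the use of the axiom of choice to pick the bounds $u_F$ simultaneously, which is standard, and (ii) the need to iterate $\omega$ many times rather than once, so as to absorb the new finite subsets created at each stage. A transfinite recursion is not needed because the directedness condition only involves finite subsets, which are automatically captured at some finite stage of the chain.
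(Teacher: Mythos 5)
Your proof is correct and is essentially the argument the paper relies on: the paper simply cites the proof of Lemma 1.6 of Adámek--Rosický, which performs exactly this $\omega$-fold closure under chosen upper bounds of finite subsets, with the same cardinality bookkeeping ($|J_n^{<\omega}|=|J_n|$ for infinite $J_n$, and a single upper bound sufficing in the finite case). No gaps; the treatment of choice and of why $\omega$ iterations suffice is handled properly.
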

\begin{proof}
    By the proof of Lemma~1.6 of \cite{rosicky1994locally}.
\end{proof}

This operation will be called closure and will always be noted $(-)^*$.

\begin{Lemma}\label{eq_size}
    Let $(B_j)_{j\in J}$ be a directed digram, $J'\subseteq J$, $f,g:\fcolim_{i\in I}A_i\xrightarrow{}\fcolim_{j\in J'}B_j$ two morphisms in $\Ind(\mathcal{C})$. If $\iota_{J'}\circ f=\iota_{J'}\circ g$, then there exists $J'\subseteq J''\subseteq J$ with $|J''|\leq max\{|I|,|J'|\}$ if one of them is infinite, and $|J''|$ is finite if both $I$ and $J'$ are, such that $\iota_{J'\subseteq J''}\circ f=\iota_{J'\subseteq J''}\circ g$.
\end{Lemma}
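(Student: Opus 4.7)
The plan is to exploit the standard description of morphisms between formal colimits in $\Ind(\mathcal{C})$, namely
$$\Hom_{\Ind(\mathcal{C})}\bigl(\fcolim_{i\in I} A_i,\,\fcolim_{j\in K} B_j\bigr)\;\cong\;\lim_{i}\,\colim_{j}\,\Hom_{\mathcal{C}}(A_i, B_j),$$
in order to analyse $f$ and $g$ one index $i\in I$ at a time, collect a witnessing index $j_i\in J$ for each such $i$, and then take the directed closure of the resulting subset of $J$.

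First, I would choose representatives. For each $i\in I$, the restriction $f\circ\iota_i:A_i\to\fcolim_{j\in J'} B_j$ factors, using finite presentability of $A_i\in\mathcal{C}$ inside $\Ind(\mathcal{C})$, as $\iota_{\alpha(i)}\circ\tilde f_i$ for some $\alpha(i)\in J'$ and some $\tilde f_i:A_i\to B_{\alpha(i)}$; likewise, for $g$ one gets $\beta(i)\in J'$ and $\tilde g_i:A_i\to B_{\beta(i)}$. The hypothesis $\iota_{J'}\circ f=\iota_{J'}\circ g$ then says that, for each $i$, the classes of $\tilde f_i$ and $\tilde g_i$ agree in $\colim_{j\in J}\Hom_{\mathcal{C}}(A_i,B_j)$; by the definition of a filtered colimit of sets, this furnishes an index $j_i\in J$ with $j_i\geq\alpha(i),\beta(i)$ such that the transition maps $B_{\alpha(i)}\to B_{j_i}$ and $B_{\beta(i)}\to B_{j_i}$ equalise $\tilde f_i$ and $\tilde g_i$ inside $B_{j_i}$.

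Next, I would set $J_0=J'\cup\{j_i:i\in I\}\subseteq J$ and take $J''=J_0^*$, the directed closure provided by the preceding lemma. Then $J'\subseteq J''\subseteq J$, and the cardinality bounds in the statement follow from $|J_0|\leq|J'|+|I|$ together with the size-preserving property of $(-)^*$. It remains to check the equality $\iota_{J'\subseteq J''}\circ f=\iota_{J'\subseteq J''}\circ g$. By the universal property of $\fcolim_{i\in I}A_i$, it is enough to verify it after pre-composition with each $\iota_i$; but $\alpha(i),\beta(i),j_i$ all lie in the directed poset $J''$, so the coequalisation already arranged in $B_{j_i}$ directly witnesses the desired equality of $A_i\to\fcolim_{j\in J''}B_j$.

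The main obstacle is bookkeeping across the four indexing posets $I,J',J'',J$, together with the fact that representatives of morphisms in $\Ind(\mathcal{C})$ are only well-defined up to passing to a common upper bound. The conceptual engine is the finite presentability of each $A_i$: this is what converts the global hypothesis $\iota_{J'}\circ f=\iota_{J'}\circ g$ into the pointwise, local witnesses $j_i\in J$, and once these are collected, controlling the size of $J''$ via the preceding lemma is immediate.
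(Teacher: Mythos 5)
Your proof is correct and follows essentially the same route as the paper's: use finite presentability of each $A_i$ to factor $f\circ\iota_i$ and $g\circ\iota_i$ through objects indexed in $J'$, use $\Hom(A_i,\fcolim_{j\in J}B_j)\cong\colim_{j\in J}\Hom(A_i,B_j)$ to extract a coequalising index $j_i\in J$ (the paper's $u(i)$), and take $J''=(J'\cup\{j_i\mid i\in I\})^*$. The bookkeeping and the cardinality bound via the closure operation match the paper's argument exactly.
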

\begin{proof}
    The idea is that we only need to check equality on each $A_i$. Since $A_i$ is finitely presentable in $\Ind(\mathcal{C})$, there are $B_{j_f}$ and $B_{j_g}$, $j_f,j_g\in J'$ such that $f\circ\iota_i$ and $g\circ\iota_i$ factor through them respectively. The situation is the following:
\[\begin{tikzcd}
	& {B_{j_f}} \\
	{A_i} & {\fcolim_{i\in I}A_i} && {\fcolim_{j\in J'}B_j} & {\fcolim_{j\in J}B_j} \\
	& {B_{j_g}}
	\arrow[from=1-2, to=2-4]
	\arrow["{f'}", from=2-1, to=1-2]
	\arrow[from=2-1, to=2-2]
	\arrow["{g'}"', from=2-1, to=3-2]
	\arrow["g"', shift right, from=2-2, to=2-4]
	\arrow["f", shift left, from=2-2, to=2-4]
	\arrow[from=2-4, to=2-5]
	\arrow[from=3-2, to=2-4]
\end{tikzcd}\]
where both composites are equal because $\iota_{J'}\circ f\circ \iota_i=\iota_{J'}\circ g\circ \iota_i$. Since $A_i$ is finitely presentable, we have that $\phi:\Hom(A_i,\fcolim_{j\in J}B_j)\cong\colim_{j\in J}(A_i,B_j)$. Since $\phi^{-1}(f')=\phi^{-1}(g')$, we have that $[f']=[g']$ in $\colim_{j\in J}(A_i,B_j)$, which means that there are $u(i)\in J$, $h_f:B_{j_f}\xrightarrow{}B_{u(i)}$ and $h_g:B_{j_g}\xrightarrow{}B_{u(i)}$ (belonging to the diagram) such that $h_f\circ f'=h_g\circ g'$. So if we had that $u(i)\in J'$, we would have $f\circ\iota_i=g\circ\iota_i$. Now we just conclude by defining $J'':=(J'\cup\{u(i)\,|\,i\in I\})^*$.
\end{proof}

\begin{Lemma}\label{facto_size}
    Let $f:\fcolim_{i\in I}A_i\xrightarrow{}\fcolim_{j\in J}B_j$. Then there exists $J'\subseteq J$ with $|J'|\leq|I|$, such that $f$ factors through $\iota_{J'}$.
\end{Lemma}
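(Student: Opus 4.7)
The strategy is to build a cocone $(A_i \to B_{j(i)})_{i\in I}$ indexed over a cofinal portion of $J$ of size at most $|I|$, assemble it into a map $f' : \fcolim_{i\in I} A_i \to \fcolim_{j\in J'} B_j$, and check that $\iota_{J'} \circ f' = f$.

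First, for each $i \in I$, I use that $A_i$ is finitely presentable in $\Ind(\mathcal{C})$, so the composite $f \circ \iota_i : A_i \to \fcolim_{j\in J} B_j$ factors as $\iota_{j(i)} \circ f_i$ for some $j(i) \in J$ and some $f_i : A_i \to B_{j(i)}$. Next, for each comparable pair $i \leq i'$ in $I$, the two maps $A_i \to \fcolim_{j \in J} B_j$ obtained by going through $B_{j(i)}$ on one hand and through $A_{i'} \to B_{j(i')}$ on the other hand agree, again by finite presentability of $A_i$ (and directedness of $J$, which lets me choose an upper bound for $j(i), j(i')$), they already agree in some $B_{j(i,i')}$ with $j(i,i') \geq j(i), j(i')$.

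Now I set $J_0 := \{j(i) : i \in I\} \cup \{j(i,i') : i \leq i' \in I\}$ and $J' := J_0^{*}$. Since the set of comparable pairs in $I$ has cardinality at most $|I|^2 = |I|$ when $I$ is infinite (and is finite when $I$ is finite), the closure lemma guarantees $|J'| \leq |I|$ in the infinite case and $|J'|$ finite in the finite case. By construction, $J'$ is a directed subposet of $J$ containing every $j(i)$ and every $j(i,i')$, so the maps $\iota_{j(i)} \circ f_i : A_i \to \fcolim_{j \in J'} B_j$ form a cocone on the diagram $(A_i)_{i \in I}$: compatibility for $i \leq i'$ is witnessed by equalization inside $B_{j(i,i')}$, which now lies in the smaller diagram indexed by $J'$. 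The universal property of the colimit in $\Ind(\mathcal{C})$ then yields a unique $f' : \fcolim_{i \in I} A_i \to \fcolim_{j \in J'} B_j$ with $f' \circ \iota_i = \iota_{j(i)} \circ f_i$ for every $i$.

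Finally I verify $\iota_{J'} \circ f' = f$ componentwise: $(\iota_{J'} \circ f') \circ \iota_i = \iota_{J' \subseteq J} \circ \iota_{j(i)}^{J'} \circ f_i = \iota_{j(i)}^{J} \circ f_i = f \circ \iota_i$, so the two maps out of $\fcolim_{i\in I} A_i$ agree by uniqueness in the colimit. The main delicate point is the cardinality bookkeeping: the naive choice $\{j(i) : i \in I\}$ is not enough to get an honest cocone, and throwing in the $j(i,i')$ is what forces the use of directed closure; the key is that adding one element per pair costs only $|I|^2 = |I|$ elements in the infinite case, which is what makes the stated bound achievable.
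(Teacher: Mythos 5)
Your argument is correct, and it is essentially the ``direct'' proof that the paper alludes to but does not write out: immediately after its own proof the paper remarks that transfinite induction is not really needed and that one can generalize the argument of Lemma \ref{eq_size}, which is exactly what you do --- factor each $f\circ\iota_i$ through some $B_{j(i)}$ by finite presentability of $A_i$, adjoin for each comparable pair $i\le i'$ a witness $j(i,i')\ge j(i),j(i')$ where the two legs are equalized, and take the directed closure, with the count $|I|+|I|^2=|I|$ giving the bound in the infinite case. The paper instead proceeds by transfinite recursion on $|I|$, writing $I$ as a continuous increasing union $\bigcup_{\beta<\lambda}I_\beta$ of smaller pieces and building a matching continuous chain $(J_\beta,f_\beta)$, invoking Lemma \ref{eq_size} at successor stages. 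What the recursion buys is not the lemma itself but the auxiliary data: the proof of the main amalgamation theorem explicitly appeals to ``the proof of Lemma \ref{facto_size}'' to extract the continuous chain of spans $\Span(f_\beta,g_\beta)$ on which its own induction runs, and your one-shot construction does not produce that chain. One small point to tidy: in the finite case your $J_0^{*}$ is only guaranteed to be finite, not of cardinality $\le|I|$ as literally stated; but a finite directed $I$ has a maximum $i_{\max}$, so there one simply takes $J'=\{j(i_{\max})\}$, as the paper does.
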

\begin{proof}
    We prove this by transfinite recursion on the size of $I$. First, we handle the case where $I$ is finite. Notice that a finite directed poset has a maximum, so a finite formal directed colimit is isomorphic to an object of $\mathcal{C}$. So we just need to use that the objects of $\mathcal{C}$ are finitely presentable in $\Ind(\mathcal{C})$.\\
    \\
    Now suppose that $|I|=\lambda$ is an infinite cardinal. By Lemma~1.6 of \cite{rosicky1994locally}, we can find an increasing sequence $(I_\beta)_{\beta<\lambda}$ indexed by ordinals smaller than $\lambda$ which is continuous in the sense that $I_\beta=\bigcup_{\beta'<\beta}I_{\beta'}$ for every limit ordinal $\beta$, and such that $|I_\beta|<\lambda$ for all $\beta$ and $I=\bigcup_{\beta<\lambda}I_\beta$. Now we recursively build a continuous increasing sequence $(J_\beta)_{\beta<\lambda}$ such that $|J_\beta|\leq max\{|I_\beta|,\aleph_0\}$ for all $\beta<\lambda$ and a morphism of chains 
\[\begin{tikzcd}
	{\fcolim_{j\in J_0}B_j} & {\fcolim_{j\in J_1}B_j} & \dots \\
	{\fcolim_{i\in I_0}A_i} & {\fcolim_{i\in I_1}A_i} & \dots
	\arrow[from=1-1, to=1-2]
	\arrow[from=1-2, to=1-3]
	\arrow["{f_0}", from=2-1, to=1-1]
	\arrow[from=2-1, to=2-2]
	\arrow["{f_1}", from=2-2, to=1-2]
	\arrow[from=2-2, to=2-3]
\end{tikzcd}\]
    where the horizontal morphisms are the canonical ones, such that $f\circ\iota_{I_\beta}=\iota_{J_\beta}\circ f_\beta$ for all $\beta$. To build $J_0$ and $f_0$, just use the induction hypothesis on $f\circ\iota_{I_0}$. At successor ordinal $\beta=\beta'+1$, first we use the induction hypothesis on $f\circ\iota_{I_{\beta}}$ to get $J'_{\beta}$ and $f'$ such that $f\circ\iota_{I_{\beta'}}=\iota_{J'}\circ f'$. We can assume that $J_\beta\subseteq J'_{\beta}$, because we can always take the closure of their union if it is not the case. Now by the induction hypothesis of the construction, we have $f\circ\iota_{I_{\beta'}}=\iota_{J_{\beta'}}\circ f_{\beta'}$, so $$\iota_{J'_{\beta}}\circ\iota_{J_{\beta'}\subseteq J'_{\beta}}\circ f_{\beta'}=f\circ\iota_{I_\beta}\circ\iota_{I_{\beta'}\subseteq I_\beta}=\iota_{J'_{\beta}}\circ f'\circ\iota_{I_{\beta'}\subseteq I_\beta}$$
    Now by Lemma~\ref{eq_size} there is $J_\beta\supseteq J'_{\beta}$ having the good size such that $\iota_{J'_{\beta}\subseteq J_\beta}\circ\iota_{J_{\beta'}\subseteq J'_{\beta}}\circ f_{\beta'}=\iota_{J'_{\beta}\subseteq J_\beta}\circ f'\circ\iota_{I_{\beta'}\subseteq I_\beta}$, and we just define $f_\beta:=\iota_{J'_{\beta}\subseteq J_\beta}\circ f'$. At limit ordinal $\beta$, we just take the union of the $J_{\beta'}$ for $\beta'<\beta$. Notice that then $\fcolim_{j\in J_\beta}B_j$ is the colimit of the chain which is already constructed, and this is also true for $\fcolim_{i\in I_\beta}A_i$, so $f_\beta$ is given by functoriality of the colimit, and we get that $f\circ\iota_{I_\beta}=\iota_{J_\beta}\circ f_\beta$ by the universal property of the colimit. This finishes the construction. But now it suffices to take $J':=\bigcup_{\beta<\lambda}J_\beta$. This is again the colimit of the diagram, which gives the map through which $f$ factors.
\end{proof}

Note that we do not really need transfinite induction in the last proof, since we can essentially generalize the argument of the proof of Lemma~\ref{eq_size}, but we will need this construction in the proof of the main theorem, which we are now ready to do. A diagram of the form
\begin{center}
    \begin{tikzcd}
        A & & C\\
        & B\arrow[ul,"g"]\arrow[ur,"f"'] &
    \end{tikzcd}
\end{center}
will be noted $\Span(f,g)$.
\begin{theorem}
    For a category $\mathcal{C}$, $\mathcal{C}$ has the amalgamation property if and only if $\Ind(\mathcal{C})$ has the amalgamation property.
\end{theorem}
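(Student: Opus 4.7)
The two directions are handled separately. The implication ``$\Ind(\mathcal{C})$ has AP $\Rightarrow$ $\mathcal{C}$ has AP'' is essentially formal: given a span $A \xleftarrow{f} B \xrightarrow{g} C$ in $\mathcal{C}$, regard it in $\Ind(\mathcal{C})$ and let $D = \fcolim_{\ell \in L} D_\ell$ be an amalgam. Since $A$ and $C$ are finitely presentable in $\Ind(\mathcal{C})$, their amalgamating maps factor through some common $D_\ell$ by directedness of $L$; the two composites $B \to D_\ell$ agree only after pushing into $D$, so Lemma \ref{eq_size} yields $\ell' \geq \ell$ at which equality already holds, making $D_{\ell'} \in \mathcal{C}$ the required amalgam.

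For the converse, assume AP in $\mathcal{C}$ and take a span $A \xleftarrow{f} B \xrightarrow{g} C$ in $\Ind(\mathcal{C})$ with $B = \fcolim_{k \in K} B_k$. Using Lemmas \ref{facto_size} and \ref{eq_size} together with the closure operation $(-)^*$, first rewrite the data as a coherent filtered colimit over the directed poset $K$ of spans $(A_{u(k)} \xleftarrow{f_k} B_k \xrightarrow{g_k} C_{v(k)})_{k \in K}$ in $\mathcal{C}$. Fix a cofinal chain $(k_\alpha)_{\alpha < \kappa}$ in $K$ with $k_\alpha \leq k_{\alpha+1}$ for all $\alpha$; the amalgam will be constructed as $D = \fcolim_{\alpha < \kappa} D_\alpha$ in $\Ind(\mathcal{C})$ by transfinite recursion along this chain, maintaining at each stage $\alpha$ amalgamating maps $A_{u(k_\alpha)} \to D_\alpha$ and $C_{v(k_\alpha)} \to D_\alpha$ over $B_{k_\alpha}$ and transitions $D_\beta \to D_\alpha$ for $\beta < \alpha$ commuting with the transitions of $A$ and $C$.

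At the base, $D_0 \in \mathcal{C}$ is the amalgam of the span at $k_0$ provided by AP. At a successor $\alpha \to \alpha + 1$, the key idea is to apply AP in $\mathcal{C}$ twice: first amalgamate the cospan $F_\alpha \leftarrow A_{u(k_\alpha)} \to A_{u(k_{\alpha+1})}$, where $F_\alpha \in \mathcal{C}$ is a finitely presentable subobject of $D_\alpha$ through which $A_{u(k_\alpha)} \to D_\alpha$ factors (by finite presentability in $\Ind(\mathcal{C})$), and then amalgamate the resulting object with $C_{v(k_{\alpha+1})}$ over $B_{k_{\alpha+1}}$; this yields some $G_{\alpha+1} \in \mathcal{C}$ carrying the required maps from $A_{u(k_{\alpha+1})}$, $C_{v(k_{\alpha+1})}$ and from $F_\alpha$. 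The object $D_{\alpha+1}$ and the transition $D_\alpha \to D_{\alpha+1}$ are then assembled by extending the formal-colimit presentation of $D_\alpha$ to incorporate $G_{\alpha+1}$ glued along $F_\alpha$. At limit stages, take $D_\alpha := \fcolim_{\beta < \alpha} D_\beta$; for arbitrary $k \in K$, the amalgamating map $A_{u(k)} \to D$ is defined via cofinality by composing $A_{u(k)} \to A_{u(k_\alpha)} \to D_\alpha \to D$ for any $k_\alpha \geq k$, and similarly for $C$.

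The main obstacle will be making the gluing at successor stages rigorous: even granting $G_{\alpha+1}$, extending the formal-colimit diagram of $D_\alpha$ by $G_{\alpha+1}$ requires producing compatible transitions $D_\alpha^m \to G_{\alpha+1}$ for the portion of the diagram above $F_\alpha$, which in general requires further applications of AP in $\mathcal{C}$ along a suitably chosen cofinal chain within that portion. Verifying that the resulting $D_{\alpha+1}$ is well-defined, that the transition $D_\alpha \to D_{\alpha+1}$ is coherent with all previously constructed amalgamation data, and that the cofinality-based definition of the maps into $D$ is independent of the choices involved, amounts to a careful diagram chase exploiting the inductive amalgamating property at stage $\alpha$ and the formal-colimit bookkeeping provided by Lemmas \ref{eq_size} and \ref{facto_size}.
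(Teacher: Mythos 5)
Your first direction is correct and essentially identical to the paper's argument. The converse, however, has genuine gaps, and they sit exactly where the difficulty of the theorem lies. First, your reduction of the span to a $K$-indexed family of spans $(A_{u(k)} \leftarrow B_k \to C_{v(k)})$ in $\mathcal{C}$ only records the part of the diagrams of $A$ and $C$ through which the legs factor; the objects $A_{u(k)}$ need not form a cofinal subdiagram of the presentation of $A$ (for instance, if $B$ is a single object of $\mathcal{C}$, the image of $u$ is a single index while $A$ may be arbitrarily large), so the maps you define at the end, given only on the $A_{u(k)}$ via cofinality in $K$, do not assemble into a map out of all of $A$. The paper's proof guards against exactly this by enlarging the filtration $(J_\beta)$ so that it exhausts all of $J$, not merely the indices occurring in a factorization of $g$. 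Second, your recursion is organized along ``a cofinal chain $(k_\alpha)_{\alpha<\kappa}$ in $K$'', but a directed poset need not contain a cofinal chain: the finite subsets of $\omega_1$ under inclusion are a standard counterexample, since any chain there is countable and hence has countable union. This is precisely why the paper inducts on the cardinality of the index posets and uses continuous filtrations by directed \emph{subposets} (Lemma 1.6 of the cited reference), rather than chains of elements.

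Third, and most seriously, the step you flag as ``the main obstacle'' is not bookkeeping but the actual content of the theorem. Gluing $G_{\alpha+1}$ onto $D_\alpha$ along $F_\alpha$ is an amalgamation in $\Ind(\mathcal{C})$ of the span $D_\alpha \leftarrow F_\alpha \to G_{\alpha+1}$ in which $D_\alpha$ is an infinite formal colimit: producing a coherent cocone of maps out of all the $D_\alpha^m$ over the (infinite, generally chainless) diagram of $D_\alpha$ is an instance of the statement being proved. Proposing to handle it by ``further applications of AP in $\mathcal{C}$ along a suitably chosen cofinal chain'' therefore begs the question, in addition to reusing the unavailable cofinal chain. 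The paper resolves this by taking as induction hypothesis the amalgamation property for spans of ind-objects indexed by posets of cardinality $<\lambda$, so that at successor stages the gluing of $D'$ onto $D_{\beta'}$ over $\fcolim_{i\in I_{\beta'}}A_i$ is supplied by the induction on cardinality, with plain amalgamation in $\mathcal{C}$ appearing only at the finite base case. Without restructuring your recursion around that stronger, cardinality-indexed hypothesis, the successor step cannot be completed.
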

\begin{proof}
    Suppose that $\Ind(\mathcal{C})$ has the amalgamation property, and take a diagram $\Span(f,g)$ in $\mathcal{C}$. This diagram admits an amalgamation in $\Ind(\mathcal{C})$:
    \begin{center}
    \begin{tikzcd}
        & \fcolim_{i\in I}D_i & \\
        A\arrow[ur, "g'"] & & C\arrow[ul, "f'"']\\
        & B\arrow[ul,"g"]\arrow[ur,"f"'] &
    \end{tikzcd}
\end{center}
$g'$ factors through $D_t$ and $f'$ factors through $D_z$: $g'=\iota_t\circ g''$, $f'=\iota_z\circ f''$. Since $\Hom(B,\fcolim_{i\in I}D_i)\cong\colim_{i\in I}\Hom(B,D_i)$, and since $g''\circ g$ and $f''\circ f$ have the same image by this isomorphism, then $[g''\circ g]=[f''\circ f]$, which means that there is an amalgamation of $\Span(f,g)$ in $\mathcal{C}$.\\
\\
Conversely, suppose that $\mathcal{C}$ has the amalgamation property. We prove by transfinite induction on $\lambda$ that any diagram of the form
\begin{center}
    \begin{tikzcd}
        \fcolim_{j\in J}A_j & & \fcolim_{k\in K}\mathcal{C}_k\\
        & \fcolim_{i\in I}B_i\arrow[ul,"g"]\arrow[ur,"f"'] &
    \end{tikzcd}
\end{center}
with $|I|,|J|,|K|\leq\lambda$ can be amalgamated. If all three posets are finite, this reduces to amalgamation in $\mathcal{C}$. Now suppose $\lambda\geq\aleph_0$, and that the property has been shown for every smaller cardinal. Without loss of generality, suppose that $|I|=\lambda$. Using Lemma~1.6 of \cite{rosicky1994locally} again, we write $I$ as $\bigcup_{\beta<\lambda}I_\beta$. By the proof of Lemma~\ref{facto_size}, there are $J'\subseteq J$ and $K'\subseteq K$, $J'=\bigcup_{\beta<\lambda}J_\beta$, $K'=\bigcup_{\beta<\lambda}K_\beta$ such that $g$ factors through $\iota_{J'}$, $g=\iota_{J'}\circ g'$, and $f$ factors through $\iota_{K'}$, $f=\iota_{K'}\circ f'$, and a chain
\begin{center}
    \begin{tikzcd}
    \Span(f_0,g_0)\arrow[r,""]&\Span(f_1,g_1)\arrow[r,""]&\dots
    \end{tikzcd}
\end{center}
where the horizontal arrows are the canonical ones, which is continuous in the sense that at every limit ordinal, the corresponding $\Span$ is the colimit of the spans before it, and whose colimit is $\Span(f',g')$. The key fact is that the $J_\beta$ and $K_\beta$ are all of size strictly smaller than $\lambda$, which will enable us to use the induction hypothesis. Notice also that we can suppose that $K'=K$, $J'=J$, $f'=f$ and $g'=g$, because we can use Lemma~1.6 of \cite{rosicky1994locally} to get $J=\bigcup_{\beta<\lambda}J'_\beta$ and $K=\bigcup_{\beta<\lambda}K'_\beta$, and replace the $J_\beta$ (resp. $K_\beta$) by $(J_\beta\cup J'_\beta)^*$ (resp. $(K_\beta\cup K'_\beta)^*$), changing the $f_\beta$ and $g_\beta$ accordingly. We define $\lambda_\beta:=max\{|\beta|,|I_\beta|,|J_\beta|,|K_\beta|\}$. Now we inductively build a chain in $\Ind(\mathcal{C})$:
$$D_0\xrightarrow{}D_1\xrightarrow{}\dots$$

such that $D_\beta$ amalgamates $\Span(f_\beta,g_\beta)$ (in a coherent way, i.e. we get a chain of commuting diagrams), and such that $D_\beta$ can be written as a formal colimit of a diagram of size at most $\lambda_\beta$ for all $\beta$. To build $D_0$, just use the induction hypothesis on $\Span(f_0,g_0)$. Notice that as a consequence of Lemmas \ref{facto_size} and \ref{eq_size}, we can choose $D_0$ to be a formal colimit of a diagram of cardinality at most $\lambda_0$. We call this a small amalgamation. At a successor ordinal $\beta=\beta'+1$, first use the induction hypothesis on $\Span(f_\beta,g_\beta)$ to get a small amalgamation $D'$. Now use small amalgamation again for the following diagram:
\[\begin{tikzcd}
	& {D_\beta} \\
	{D_{\beta'}} & {D'} \\
	{\fcolim_{i\in I_{\beta'}}A_i} & {\fcolim_{i\in I_\beta}A_i}
	\arrow[dashed, from=2-1, to=1-2]
	\arrow[dashed, from=2-2, to=1-2]
	\arrow[from=3-1, to=2-1]
	\arrow[from=3-1, to=3-2]
	\arrow[from=3-2, to=2-2]
\end{tikzcd}\]
At a limit ordinal $\beta$, first take the colimit $D'$ of the constructed chain, then use Lemmas \ref{facto_size} and \ref{eq_size} to see that there is $S'\subseteq S$ of size $\leq\lambda_\beta$ such that all the $D_{\beta'}\xrightarrow{}D'$ factor through $\fcolim_{s\in S'}M_s=:D_\beta$ in a coherent way (i.e. in such a way that the diagram forms a cocone), so there is a morphism $D'\xrightarrow{}D_\beta$ which we use to extend the chain. Finally, the colimit of the chain $(D_\beta)_{\beta<\lambda}$ is an amalgamation of $\Span(f,g)$, which concludes the induction and the proof.
\end{proof}

More generally, one could define the following:
\begin{definition}
    Let $\mathcal{C}$ be a category. For $d\in\Ind(\mathcal{C})$, write $|d|$ (cardinality of $d$) for the smallest cardinal $\kappa$ such that there is a directed diagram in $\mathcal{C}$ of size $\kappa$ whose colimit in $\Ind(\mathcal{C})$ is $d$.
\end{definition}

Note that, by the proof of 1.5 of \cite{rosicky1994locally}, it would not make a big difference to consider the size of the smallest filtered diagram instead, since an infinite filtered colimit can be written as a directed colimit of the same size. However, some finite filtered colimits cannot be written as less than countable directed colimits, so the definition would not be completely equivalent. The definition with directed colimits seems to be the easiest to manipulate. This definition leads to several interesting questions which are still under investigation, notably regarding the cardinality of colimits in $\Ind(\mathcal{C})$.

\section{Model-theoretic characterizations via amalgamation}\label{sec:model_amalg}
In this section, we prove another condition for a theory to be De Morgan, using model-theoretic tools. We will have to give some model-theoretic definitions, but the reader who is used to first-order model theory should be careful to the differences. We know that the existential formulas are characterized by the property of being preserved by extension. However, in the context of geometric or coherent theories, we are dealing with geometric formulas, which are preserved by any homomorphism: if $M\models \phi(a)$ and $h:M\xrightarrow{} N$, then $N\models \phi(h(a))$. So basically we will replace ``embedding" by ``homomorphism" in all the definitions. Note that this kind of model theory is also studied outside the realm of topos theory, under the name of positive logic. See for example the paper \cite{yaacov2007fondements}, which turned out to be helpful. The models in this section are set-based. We allow ourselves to write $M\models \neg\phi$ for $M\not\models\phi$. $[[x^A.\phi]]_M$ means the interpretation of $\{x^A.\phi\}$ in $M$, i.e. in the set-based case the subset of $M(A)$ of elements satisfying $\phi$. Also remember that a sequent $\phi\vdash_x\psi$ can equivalently be represented by the first-order formula $\forall x.(\phi\Rightarrow\psi)$, and a geometric formula $\{x.\phi\}$ can be represented by the sequent $\top\vdash_x\phi$. 

\subsection{Existentially closed models}
First, we give a proof of a characterization of existentially closed models that appears unproven in \cite{harun1996applications}, and that will be useful later, but is interesting on its own. Let us start with some definitions.
\begin{definition}
    Let $\T$ be a geometric theory, $M\models \mathbb{T}$, we say that $M$ is existentially closed (in $\mathbb{T}$) if for every homomorphism $h:M\xrightarrow{}N$ to a model $N$ of $\T$, for every coherent formula $\phi(a)$ with parameters in $M$, $M\models\phi(a)\Leftrightarrow N\models\phi(h(a))$.
\end{definition}
Notice that since a geometric formula is equivalent to a disjunction of coherent formulas (easy induction) then the above definition can be equivalently formulated by replacing ``coherent formula" by ``geometric formula". However, we will work a lot on coherent theories, so it will be more convenient to express the definition in terms of coherent formulas.
\begin{definition}
    An atomic formula in a signature $\Sigma$ is a formula of the form $R(t_1,\dots,t_n)$ where $R$ is a relation symbol and the $t_i$ are terms.
\end{definition}
\begin{definition}
    For a signature $\Sigma$ and a $\Sigma$-structure $M$, the positive diagram of $M$, noted $Diag^+(M)$, is the set of atomic formulas satisfied by $M$ in the signature $\Sigma_M:=\Sigma\cup\{c_a\,|\,a\in M\}$ where we add a constant symbol $c_a$ for every element $a\in M$.  In other words, for an atomic formula $\phi$ and an element (in fact a tuple) $a\in M$, $\phi(c_a)\in Diag^+(M)$ if and only if $M\models \phi(a)$
\end{definition}

It is a standard result that for $\Sigma$-structures $M$ and $N$, there is a homomorphism $M\xrightarrow{}N$ if and only if $N\models Diag^+(M)$: in this case, we can define the homomorphism $a\in M\mapsto c_a^N$. Note that if $M$ is existentially closed, every $N$ with $h:M\xrightarrow{}N$ is in fact a superstructure of $M$: just apply the definition with $\phi(x,y):=(x=y)$. So being a superstructure of $M$ can be characterized as being a model of the positive diagram $Diag^+(M)$ of $M$.\\
\\
Now we can prove the infamous characterization:

\begin{proposition}\label{exist_closed}
    Let $\mathbb{T}$ be a coherent theory, $M$ a model. Then $M$ is existentially closed if and only if for every coherent formulas $\phi$, $\psi$ such that $\phi\vdash_x \psi$ is provable in $\mathbb{T}$, we have 
    $$[[x.\psi]]_M=[[x.\phi]]_M\cup\bigcup_{\phi'\in\Phi}[[x.\phi']]_M$$
    with $\Phi=\{\phi'\text{ coherent}\,|\,\phi'\vdash_x \psi,\,\phi\land\phi'=0\text{ in $\T$}\}$.
\end{proposition}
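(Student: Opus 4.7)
My plan is to prove the two implications separately. For the easier direction, suppose the displayed equality holds for every pair of coherent formulas with $\phi\vdash_x\psi$; I want to show that $M$ is existentially closed. Let $h:M\to N$ be a homomorphism of $\T$-models, and let $\phi(a)$ be a coherent formula with parameters $a\in M$. Since geometric formulas are preserved by homomorphisms, the implication $M\models\phi(a)\Rightarrow N\models\phi(h(a))$ is automatic. For the converse, I apply the hypothesis to the pair $(\phi,\top)$, noting that $\phi\vdash_x\top$ trivially. If $M\not\models\phi(a)$, then since $M\models\top(a)$ the displayed equality forces $M\models\phi'(a)$ for some coherent $\phi'$ with $\phi\wedge\phi'=0$ in $\T$. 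Preservation gives $N\models\phi'(h(a))$, so assuming $N\models\phi(h(a))$ would yield $N\models\phi(h(a))\wedge\phi'(h(a))$, contradicting $N\models\T$. Hence $N\not\models\phi(h(a))$, as required.

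For the harder forward direction, assume $M$ is existentially closed and fix coherent $\phi,\psi$ with $\phi\vdash_x\psi$ in $\T$. The inclusion $[[x.\phi]]_M\cup\bigcup_{\phi'\in\Phi}[[x.\phi']]_M\subseteq[[x.\psi]]_M$ is immediate from $\phi\vdash_x\psi$ and $\phi'\vdash_x\psi$ for $\phi'\in\Phi$. For the reverse inclusion, I take $a\in[[x.\psi]]_M$ with $a\notin[[x.\phi]]_M$ and consider the classical first-order theory $\T\cup Diag^+(M)\cup\{\phi(c_a)\}$. Any set-based model $N$ of this theory would, via the canonical homomorphism $M\to N$ induced by $Diag^+(M)$, satisfy $\phi(h(a))$, which together with $M\not\models\phi(a)$ would contradict existential closure. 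Hence this theory has no set-based model.

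By classical compactness, some finite conjunction $\theta(c_a,c_b)$ of atomic formulas from $Diag^+(M)$, where $b$ is a tuple of elements of $M$ disjoint from $a$, already forces $\T\cup\{\theta(c_a,c_b),\phi(c_a)\}$ to be inconsistent. Regarding $a,b$ as variables, this says that the coherent sequent $\theta(x,y)\wedge\phi(x)\vdash_{x,y}\bot$ holds in every set-based model of $\T$, and hence by the classical completeness theorem for coherent logic (Deligne's theorem) it is coherently provable in $\T$. Setting $\phi'(x):=\psi(x)\wedge\exists y.\theta(x,y)$, which is coherent, I then obtain $\phi'\vdash_x\psi$ trivially, $\phi\wedge\phi'\vdash_x\bot$ in $\T$ by the $\exists$-elimination rule, and $M\models\phi'(a)$ because $M\models\psi(a)$ and the tuple $b\in M$ witnesses $\exists y.\theta(a,y)$. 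The main obstacle is precisely this compactness/completeness step: turning the semantic inconsistency obtained from compactness into a genuine coherent provability is what makes ``$\phi\wedge\phi'=0$ in $\T$'' hold in the required syntactic sense, and it is the step that crucially uses that $\T$ is coherent rather than merely geometric.
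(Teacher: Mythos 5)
Your proof is correct and follows essentially the same route as the paper's: both directions use the positive diagram, preservation of coherent formulas, compactness to extract a finite fragment $\theta(c_a,c_b)$, and existential quantification over the auxiliary constants to produce the witness $\phi'\in\Phi$. The only cosmetic differences are that you conjoin $\psi$ into $\phi'$ rather than carrying $\psi(c_a)$ through the compactness step, and you make explicit the appeal to the completeness theorem for coherent logic that the paper only mentions in the remark following the proof.
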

\begin{proof}
    Suppose $M$ existentially closed, take $\phi$, $\psi$ as in the theorem, and suppose that $M\models \psi(a)\land\neg\phi(a)$. Then for any homomorphism $h:M\xrightarrow{} N$, we have $N\models \psi(h(a))\land\neg\phi(h(a))$ ($\psi$ is preserved because it is coherent, and $\neg\phi$ because $M$ existentially closed and $\phi$ coherent). So $Diag^+(M)\cup\T\vdash\psi(c_a)\land\neg\phi(c_a)$, so by compactness there is $\phi'(c_a,c_b)\in Diag^+(M)$ such that $T\cup\{\phi'(c_a,c_b)\}\vdash\psi(c_a)\land\neg\phi(c_a)$. As $c_b$ is a constant that does not appear in $\T$, we have $\T\cup\{\exists y\phi'(c_a,y)\}\vdash\psi(c_a)\land\neg\phi(c_a)$, so $\exists y\phi'(x,y)\in\Phi$ and $M\models \exists y\phi'(a,y)$, which concludes.\\
    \\
    In the other direction, consider $M$ with the above property, $\phi$ coherent, $a\in M$, and suppose $h:M\xrightarrow{} N$ with $N\models\phi(a)$ but $M\models \neg\phi(a)$. We apply the property to $\phi\vdash_x\top$, we get that there is some $\phi'\in\Phi$ such that $M\models \phi'(a)$. But $\phi'$ coherent so $N\models \phi'(a)$, contradiction with $\phi\land\phi'=0$.
\end{proof}

Note that we have to assume $\T$ to be coherent in the above theorem, because we use the completeness Theorem~in the proof. We will give a generalization to geometric theories in section \ref{tti}.

\subsection{A Theorem~of Joyal and Reyes}
Another interesting Theorem~is unproven in \cite{harun1996applications}, relating being De Morgan for a theory and the property of having a model companion and satisfying amalgamation. This Theorem~is attributed to Joyal and Reyes, but the proof cannot be found in the litterature. We give a model-theoretic proof, largely inspired from \cite{weispfenning1981model} which only deals with coherent theories whose axioms are of the form $\top\vdash_x\phi$. Then we discuss its topos-theoretic content.

\subsubsection{Model-theoretic proof}
First, as usual, we need some definitions. Here we adapt the usual model-theoretic definitions to the geometric case. 

\begin{definition}
Let $\T$ be a geometric theory: 
\begin{itemize}
    \item $\T^*$ is model consistent relative to $\T$ if every model of $\T$ maps homomorphically to a model of $\T^*$.
    \item $\T^*$ is model complete relative to $\T$ if for every $M_1,\,M_2\models\T^*$, $M\models\T$, $h_i:M\xrightarrow{}M_i$, and for every coherent formula $\phi(a)$ with parameters in $M$, we have $M_1\models\phi(h_1(a))\Leftrightarrow M_2\models \phi(h_2(a))$.
    \item $\T^*$ is model complete if it is model complete relative to itself.
\end{itemize}
\end{definition}

Still with $\T$ geometric we have:

\begin{definition}\label{standard}
\begin{itemize}
    \item $\T^*$ is a model companion of $\T$ if $\T$ and $\T^*$ are mutually model consistent and $\T^*$ is model complete.
    \item $\T^*$ is a model completion of $\T$ if $\T\subseteq\T^*$, $\T^*$ model consistent relative to $\T$ and model complete relative to $\T$.
\end{itemize}
\end{definition}

Clearly a model completion is a model companion. The following classical Lemma~is interesting because it makes the link between the different notions defined above and the amalgamation property. We do the proof for the reader's convenience:

\begin{Lemma}\label{completion_amalg}
    Let $\T$ be a coherent theory, $\T^*$ a model companion of $\T$. Then $\T^*$ is a model completion if and only if the category of models of $\T$ has the amalgamation property.
\end{Lemma}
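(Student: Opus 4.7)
The plan is to handle the two implications separately, both via the method of positive diagrams together with compactness, which applies since $\T$ (and we may assume $\T^*$ too) is coherent.

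For $(\Leftarrow)$, assume amalgamation in $\T\text{-}\Mod(\Set)$; the task is to promote model completeness of $\T^*$ to model completeness relative to $\T$. So fix $M\models\T$ together with homomorphisms $h_i\colon M\to M_i$, where $M_i\models\T^*$. Regarding the $M_i$ as $\T$-models (using $\T\subseteq\T^*$), amalgamation yields $N\models\T$ and $k_i\colon M_i\to N$ with $k_1 h_1=k_2 h_2$, and model consistency then furnishes $l\colon N\to N^*$ with $N^*\models\T^*$. Each $lk_i\colon M_i\to N^*$ is now a homomorphism between models of $\T^*$, so model completeness of $\T^*$ itself yields, for every coherent $\phi(a)$ with $a$ a tuple from $M$, equivalences $M_i\models\phi(h_i a)\iff N^*\models\phi(lk_i h_i a)$; since $lk_1 h_1=lk_2 h_2$, the right-hand sides agree for $i=1,2$, giving the desired equivalence.

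For $(\Rightarrow)$, assume $\T^*$ is a model completion and take $h_i\colon M\to M_i$ in $\T\text{-}\Mod(\Set)$. Use model consistency to get $g_i\colon M_i\to M_i^*$ with $M_i^*\models\T^*$, and form an expanded signature with shared constants $c_x$ for $x\in M$ (interpreted in $M_i^*$ as $g_i h_i(x)$) and fresh constants $d_y,e_z$ for the remaining elements of $M_1^*$ and $M_2^*$. A model of $T := \T^* \cup Diag^+(M_1^*) \cup Diag^+(M_2^*)$ would yield homomorphisms $M_i^*\to N$ agreeing on $M$, and composing with the $g_i$ would amalgamate the original span in $\T\text{-}\Mod(\Set)$ (since $N\models\T^*\supseteq\T$). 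If $T$ were inconsistent, compactness would produce finite conjunctions $\phi_1(c_x,d_y),\phi_2(c_x,e_z)$ of atomic formulas with $\T^*\vdash\phi_1\wedge\phi_2\to\bot$; freshness of $d_y,e_z$ promotes this to the $\T^*$-sequent $\exists y\,\phi_1(x,y)\wedge\exists z\,\phi_2(x,z)\vdash_x\bot$. Both existential formulas are coherent and, by construction, are satisfied in $M_i^*$ at $g_i h_i(a)$. Model completeness relative to $\T$ applied to the cospan $M\to M_i^*$ transfers each satisfaction to the other side, so both formulas hold in (say) $M_1^*$, contradicting $M_1^*\models\T^*$.

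The main obstacle will be the bookkeeping in the forward direction: identifying constants across the two diagrams exactly along $g_i\circ h_i$ while keeping the other constants truly fresh (so that they can be existentially quantified out), and then invoking model completeness relative to $\T$ at the correct images to transport a coherent formula from $M_2^*$ into $M_1^*$. The reliance on compactness is what ties the argument to the coherent fragment; any direct extension to a genuinely geometric $\T^*$ would require a substitute for compactness.
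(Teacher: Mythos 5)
Your proposal is correct and follows essentially the same route as the paper's proof: the backward direction is identical (amalgamate in $\T\text{-}\Mod(\Set)$, push into a model of $\T^*$, and invoke model completeness of $\T^*$), and the forward direction is the same positive-diagram-plus-compactness argument, merely packaged as a reductio over finite fragments of both diagrams rather than the paper's direct construction of a model after reducing to a single atomic formula. The only cosmetic difference is that the paper takes the union of the diagrams with $\T$ rather than $\T^*$, which sidesteps the need to assume $\T^*$ finitary for compactness (a point you correctly flag).
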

\begin{proof}
    Suppose $\T^*$ is a model completion. Take $M,\, M_1,\, M_2\models \T$ and a diagram
        \begin{center}
        \begin{tikzcd}
        M_1 &   & M_2\\
            & M \arrow[ul,"h_1"]\arrow[ur, "h_2"']&
        \end{tikzcd}
    \end{center}
    By model consistency there are $M'_i\models \T^*$, $M_i\xrightarrow{h_i'} M'_i$. Since $\T^*$ is model complete relative to $\T$, $M'_1$ and $M'_2$ satisfy the same coherent formulas with parameters in $M$. We just have to show that the theory $Diag^+(M'_1)\cup Diag^+(M'_2)\cup\T$, where constant symbols for elements in $M$ are the same for both diagrams, is consistent, because a model of this theory is exactly an amalgamation of the above diagram. By compactness, we just have to show that for an atomic formula $\phi(c_a)$ with $a\in M'_1$ and $M'_1\models\phi(a)$, the theory $\T\cup\{\phi(c_a)\}\cup Diag^+(M'_2)$ is consistent. But $M'_1\models\exists x.\phi(x)$ so also $M'_2\models\exists x.\phi(x)$. Let $b\in M'_2$ such that $M'_2\models\phi(b)$. A model of $\T\cup\{\phi(c_a)\}\cup Diag^+(M'_2)$ is given by $M'_2$ enriched with the interpretation of $c_a$ by $b$.\\
    \\
    Conversely, take $M\models \T$, $M_1,\, M_2\models \T^*$, $M\xrightarrow{h_i} M_i$. We can amalgamate them by $N\models \T$. Now $N$ maps homomorphically into $N'\models\T^*$, and the homomorphisms $M_i\xrightarrow{}N'$ are elementary on coherent formulas by model completeness. This clearly concludes.
\end{proof}

\begin{corollary}
    Let $\T$ be a coherent theory. Then $\T$ has a model completion if and only if $\T$ has a model companion and the category of models of $\T$ has the amalgamation property.
\end{corollary}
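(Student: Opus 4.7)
The stated corollary coincides in content with Lemma \ref{completion_amalg} just proved, so the plan is to cite the lemma directly. I suspect this appears as a corollary either as a natural restatement packaged for later citation, or because a slight variation of the formulation was intended.

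If the intended statement is the natural strengthening — namely, that a coherent theory $\T$ admits a model completion if and only if $\T$ admits a model companion \emph{and} the category $\T\text{-}\Mod(\Set)$ has the amalgamation property — this will follow immediately. By Definition \ref{standard}, any model completion is a model companion (mutual model consistency is automatic from $\T\subseteq\T^*$, and model completeness relative to $\T$ implies model completeness relative to $\T^*$, since $\T^*$-models are $\T$-models). So the forward direction is trivial. For the backward direction, given a model companion $\T^*$ of $\T$ together with amalgamation in $\T\text{-}\Mod(\Set)$, Lemma \ref{completion_amalg} already concludes that $\T^*$ is a model completion.

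In either reading the substantive content is Lemma \ref{completion_amalg} itself, which bridges the notion of model completion and the model-theoretic amalgamation property via the positive-diagram construction and compactness for coherent logic. The expected next step, toward the Joyal--Reyes characterization of De Morgan-ness announced at the start of the subsection, would be to combine this corollary with Proposition \ref{exist_closed} and the syntactic criterion of Theorem \ref{dmltheory}: De Morgan-ness of $\T$ should be reformulated via the behaviour of existentially closed models, which under amalgamation organize into a model companion that is in fact a model completion. The main obstacle there, rather than in the present corollary, will be extracting a genuine model companion from the purely topos-theoretic hypothesis of De Morgan-ness, which requires identifying the right class of coherent formulas playing the role of the quantifier-free formulas of the classical theory.
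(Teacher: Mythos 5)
You are right that the statement is verbatim the same as Lemma \ref{completion_amalg} (evidently an editorial slip in the paper), but ``cite the lemma'' is not what the paper does here, and it leaves a genuine gap. The definition of model completion (Definition \ref{standard}) requires $\T\subseteq\T^*$, whereas the definition of model companion does not; moreover, the converse direction of the Lemma's own proof silently uses this containment, since amalgamating $M_1,M_2\models\T^*$ inside $\T\text{-}\Mod(\Set)$ only makes sense once one knows that models of $\T^*$ are models of $\T$. The entire content of the paper's proof of this corollary is to supply precisely that missing ingredient: \emph{every model companion of a coherent theory $\T$ contains $\T$}. Your proposal never proves this; it only observes (correctly) that the containment makes the forward direction of your ``strengthened'' reading trivial, which is the opposite implication.

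The missing argument is short. Let $\phi\vdash_x\psi$ be a sequent of $\T$ and let $M\models\T^*$ with $M\models\phi(a)$. By mutual model consistency there is $h:M\to N$ with $N\models\T$; since $\phi$ is coherent it is preserved, so $N\models\phi(h(a))$ and hence $N\models\psi(h(a))$. Again by model consistency there is $g:N\to M'$ with $M'\models\T^*$, and $M'\models\psi(g(h(a)))$ by preservation. Applying model completeness of $\T^*$ to $\id_M$ and $g\circ h$ (both out of $M\models\T^*$) gives $M\models\psi(a)$, so $M\models\T$. With this in hand the Lemma's equivalence is legitimately applicable, and the corollary follows.
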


Now in preparation of the proof of the theorem, we should note that two coherent theories are mutually model consistent if and only if they have the same existentially closed models (see Lemma~7 of \cite{yaacov2007fondements} for a proof). This shows that there is a maximal theory, that we will note $\T_{ec}$, which is mutually model consistent with $\T$, namely the coherent theory of the existentially closed models of $\T$. Notice that $\T\subseteq\T_{ec}$. Now it is easy to see that if a theory is model complete (relative to $\T$) then any bigger theory is also model complete (relative to $\T$). So this means that $\T$ has a model companion (completion) if and only if $\T_{ec}$ is a model companion (completion). So we will focus on $\T_{ec}$ from now on.\\
\\
Another important ingredient will be a semantic characterization of the pseudo-complement of an element $\{x.\phi\}$ in $\textup{Sub}_{\mathcal{C}_\T^{coh}}(\{x.\top\})$. Recall that in a presheaf topos $\mathbf{Psh}(\mathcal{C})$, the pseudo-complement of a subobject $A\xrightarrowtail{\enspace}E$ in the Heyting algebra $\textup{Sub}_{\mathbf{Psh}(\mathcal{C})}(E)$ is given by the formula, for all object $c$ of $\mathcal{C}$ and element $x\in E(c)$ 
$$x\in\neg A(c)\Leftrightarrow \forall f:d\xrightarrow{}c,\;E(f)(x)\not\in A(d)$$
\begin{Lemma}\label{pc}
    Let $\T$ be a coherent theory, $\phi$ and $\psi$ two coherent formulas in context $x$. Then $\{x.\psi\}$ is the pseudo-complement of $\{x.\phi\}$ in $\textup{Sub}_{\mathcal{C}_\T^{coh}}(\{x.\top\})$ if and only if for all $M\models \T$ and parameters $a$ in $M$, we have $M\models\psi(a)$ if and only if for any $h:M\xrightarrow{} N$ in $\T\text{-}\Mod(\Set)$, $N\not\models\phi(h(a))$.
\end{Lemma}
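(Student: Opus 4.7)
The plan is to unpack the definition of pseudo-complement in $\textup{Sub}_{\mathcal{C}_\T^{coh}}(\{x.\top\})$ as two syntactic conditions on $\psi$: (a) $\phi\wedge\psi\vdash_x\bot$ is provable in $\T$, and (b) for every coherent $\psi'$ with $\phi\wedge\psi'\vdash_x\bot$ provable in $\T$, one has $\psi'\vdash_x\psi$ provable in $\T$. The completeness theorem for coherent logic then lets me shuttle between these syntactic conditions and their semantic counterparts over set-based models of $\T$, and the preservation of coherent formulas under homomorphisms will do the rest. The ``semantic $(\star)$'' in the statement is essentially the model-theoretic repackaging of (a) together with (b).

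For the forward direction, assume $\{x.\psi\}$ is the pseudo-complement. If $M\models\psi(a)$ and $h:M\xrightarrow{}N$ is any homomorphism of models of $\T$, then $N\models\psi(h(a))$ by preservation, and (a) applied at $h(a)$ forbids $N\models\phi(h(a))$. Conversely, suppose no $h:M\xrightarrow{}N$ yields $N\models\phi(h(a))$. Then the first-order theory $\T\cup Diag^+(M)\cup\{\phi(c_a)\}$ has no model, since such a model would be exactly an $N\models\T$ together with a homomorphism $M\xrightarrow{}N$ under which $a$ satisfies $\phi$. By the classical compactness theorem there is a finite conjunction of atoms $\phi'(c_a,c_b)\in Diag^+(M)$ such that $\T\cup\{\phi(c_a),\phi'(c_a,c_b)\}$ is inconsistent; since the constants $c_b$ do not occur in $\T$, this gives $\phi(x)\wedge\exists y.\phi'(x,y)\vdash_x\bot$ provable in $\T$. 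The coherent formula $\psi'(x):=\exists y.\phi'(x,y)$ satisfies $M\models\psi'(a)$, so (b) yields $\psi'\vdash_x\psi$ in $\T$, hence $M\models\psi(a)$.

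For the reverse direction, assume $(\star)$. Condition (a) follows from completeness: if some $M\models\T$ satisfied $\phi(a)\wedge\psi(a)$, then taking $h=\id_M$ in $(\star)$ would contradict $M\models\phi(a)$. For (b), given $\psi'$ coherent with $\phi\wedge\psi'\vdash_x\bot$ in $\T$, completeness again reduces $\psi'\vdash_x\psi$ to a semantic check: if $M\models\psi'(a)$ then any homomorphism $h:M\xrightarrow{}N$ preserves $\psi'$, so $N\models\psi'(h(a))$, and since $N\models\T$ satisfies $\phi\wedge\psi'\vdash_x\bot$ we get $N\not\models\phi(h(a))$; by $(\star)$, $M\models\psi(a)$.

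The crux of the argument is the compactness step in the forward direction: one must repackage the finite set of atoms returned by compactness into a single existentially quantified coherent formula, in order to feed the syntactic maximality clause (b) with a semantic input of the right form. Everywhere else the proof is routine bookkeeping using the completeness theorem for coherent logic and the preservation of coherent formulas under homomorphisms.
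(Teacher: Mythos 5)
Your proof is correct and follows essentially the same route as the paper's: the decisive step in both is the compactness argument applied to $\T\cup Diag^+(M)\cup\{\phi(c_a)\}$ to extract a coherent formula $\psi'=\exists y.\phi'(x,y)$ with $\phi\wedge\psi'\vdash_x\bot$ and $M\models\psi'(a)$, together with preservation of coherent formulas along homomorphisms. The only cosmetic difference is that the paper phrases the maximality clause via the explicit geometric pseudo-complement $\bigvee_{\phi'\in\Phi}\phi'$ and a uniqueness remark, whereas you unpack the universal property into your conditions (a) and (b) directly; the mathematical content is the same.
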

\begin{proof}
    Remark~that $\{x.\phi\}$ has always a \emph{geometric} pseudo-complement, that is a pseudo-complement in $\textup{Sub}_{\mathcal{C}_\T}(\{x.\top\})$, which is given by $\bigvee_{\phi'\in\Phi}\phi'$ with $\Phi=\{\phi'|\,\phi\land\phi'=0\}$. And clearly $\phi$ is the pseudo-complement of $\phi$ in $\textup{Sub}_{\mathcal{C}_\T^{coh}}(\{x.\top\})$ if and only if it is logically equivalent to $\bigvee_{\phi'\in\Phi}\phi'$.\\
    \\
    In one direction, if $\T\vdash\psi\Leftrightarrow\bigvee_{\phi'\in\Phi}\phi'$, if $M\models\psi(a)$ then $N\models\phi'(h(a))$ for some $\phi'$ by preservation, so $N\not\models\phi(h(a))$, and conversely if $N\not\models\phi(h(a))$ for all $N,\,h$, then $Diag^+(M)\bigcup \T\not\models\phi(a)$ so by compactness there is a $\phi'\in\Phi$ such that $M\models\phi'(a)$. In the other direction, we just notice that a $\psi$ with this property, if it exists, is unique (up to logical equivalence).
\end{proof}

In other words, the evaluation functor $ev:\mathcal{C}_{\T}^{coh}\xrightarrow[]{}\Set^{\T\text{-}\Mod(\Set)}$, $\{x.\phi\}\mapsto(M\mapsto [[x.\phi]]_M$) preserves pseudo-complements when they exist. Note that a morphism $\theta:\{x.\phi\}\xrightarrow{}\{x.\psi\}$ in $\mathcal{C}_\T^{coh}$ is a formula defining the graph of a function $[[x.\phi]]_M\xrightarrow{}[[x.\psi]]_M$ for every model $M$ of $\T$, which gives a natural action on morphisms for $ev$. Also notice that the previous characterization can be modified by only looking at the models $N$ of $\T_{ec}$, by model consistency and preservation. Now we introduce the following notion from \cite{weispfenning1981model}: 

\begin{definition}
    For $\T$, $\T'$ geometric theories, $\phi$ geometric formula, $\phi$ is strongly invariant in $(\T'\text{-}\Mod(\Set),\,\T\text{-}\Mod(\Set))$ if for every $M\models\T$, $M_1,\,M_2\models\T'$, $h_i:M\xrightarrow{}M_i$, and parameters $a$ in $M$, we have $M_1\models \phi(h_1(a))\Leftrightarrow M_2\models \phi(h_2(a))$. 
\end{definition}
So $\T_{ec}$ is model complete relative to $\T$ if and only if every coherent formula is strongly invariant in $(\T_{ec}\text{-}\Mod(\Set),\,\T\text{-}\Mod(\Set))$. The next Lemma~relates pseudo-complements and strong invariance, and the nice proof is inspired from the proof of Lemma~3.2 of \cite{weispfenning1981model}.

\begin{Lemma}
    Let $\T$ be a coherent theory, $\phi$ a coherent formula. If $\phi$ is strongly invariant in $(\T_{ec}\text{-}\Mod(\Set),\,\T\text{-}\Mod(\Set))$ then it is pseudo-complemented.
\end{Lemma}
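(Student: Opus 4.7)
The plan is to show that the geometric pseudo-complement $\bigvee_{\phi'\in\Phi}\phi'$ of $\phi$, where $\Phi=\{\phi'\text{ coherent}\,|\,\phi\land\phi'=0\text{ in }\T\}$, collapses under strong invariance to a finite subdisjunction, which will then serve as a coherent pseudo-complement of $\phi$ in $\textup{Sub}_{\mathcal{C}_\T^{coh}}(\{x.\top\})$. The argument proceeds by two applications of first-order compactness.

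First, I introduce the auxiliary set $X:=\{\chi\text{ coherent}\,|\,\T_{ec}\vdash \chi\vdash_x \phi\}$ of coherent formulas implying $\phi$ modulo the theory of existentially closed models, and establish the key claim that every $M\models\T$ together with any tuple $a$ of parameters satisfies some $\chi\in X$ at $a$ or some $\phi'\in\Phi$ at $a$. By strong invariance, the truth value of $\phi(h(a))$ in an ec-extension of $M$ is independent of the chosen extension. If this value is $1$, then the first-order theory $\T_{ec}\cup Diag^+(M)\cup\{\neg\phi(c_a)\}$ is inconsistent, and compactness produces a coherent $\chi\in X$ with $M\models\chi(a)$. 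If this value is $0$, then Proposition \ref{exist_closed} applied in any candidate ec-extension makes $\T_{ec}\cup Diag^+(M)\cup\{\neg\phi'(c_a)\,|\,\phi'\in\Phi\}$ inconsistent; compactness then yields finitely many $\phi_1',\dots,\phi_k'\in\Phi$ and a coherent $\chi''$ with $M\models\chi''(a)$ such that $\T_{ec}\vdash\chi''\vdash_x \bigvee_i\phi_i'$. A short check using preservation of coherent formulas by homomorphisms and the disjointness $\phi\land\phi_i'=0$ in $\T$ shows that $\chi''$ itself lies in $\Phi$.

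Next, I apply compactness to the first-order theory $\T\cup\{\neg\chi(c)\,|\,\chi\in X\}\cup\{\neg\phi'(c)\,|\,\phi'\in\Phi\}$, which has no model by the key claim. A finite inconsistent subtheory produces $\chi^*\in X$ and $\phi^*\in\Phi$ (as finite disjunctions of elements, which remain in $X$ respectively $\Phi$) with $\T\vdash\top\vdash_x\chi^*\lor\phi^*$. To finish I verify that $\phi^*$ is the required pseudo-complement: by construction $\phi\land\phi^*=0$ in $\T$; and for any $\phi'\in\Phi$ and any $M\models\T$ with $M\models\phi'(a)$, either $M\models\phi^*(a)$, as desired, or $M\models\chi^*(a)$. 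In the latter case, passing to an ec-extension $N$ of $M$ gives $N\models\chi^*(h(a))\land\phi'(h(a))$, hence $N\models\phi(h(a))\land\phi'(h(a))$ by $\chi^*\in X$, contradicting $\phi\land\phi'=0$. Completeness of coherent logic then promotes this to the desired syntactic implication $\T\vdash\phi'\vdash_x\phi^*$.

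The hard part is the second case of the key claim: verifying that the coherent formula $\chi''$ produced by compactness actually lies in $\Phi$, which requires a careful interplay between provability in $\T$ and in $\T_{ec}$ together with the preservation properties of coherent formulas under homomorphisms. Everything else is essentially compactness plus tracing definitions.
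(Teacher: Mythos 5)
Your overall strategy --- collapsing the geometric pseudo-complement $\bigvee_{\phi'\in\Phi}\phi'$ to a finite subdisjunction by compactness --- is sound and genuinely different from the paper's, which instead builds a single two-sorted first-order theory $\T^+$ axiomatizing a triple (model of $\T$, model of $\T_{ec}$, homomorphism between them), proves $\T^+\vdash\forall x\,(M(x)\to(\neg\phi^N(f(x))\leftrightarrow\bigvee_{\phi'\in\Phi}\phi'^M(x)))$, and applies compactness once to that entailment. Your version trades this relativization trick for direct manipulations of positive diagrams; it is more elementary but needs the extra bookkeeping you flag (checking $\chi''\in\Phi$, a second compactness pass, and the final appeal to coherent completeness), and those parts are correct as written.

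There is, however, a real gap in the second case of your key claim. You apply Proposition \ref{exist_closed} ``in any candidate ec-extension'', i.e.\ to an arbitrary model $N$ of $\T_{ec}\cup Diag^+(M)$. But $\T_{ec}$ is only the \emph{coherent} theory of the existentially closed models, and the consequence of Proposition \ref{exist_closed} you need, namely $\top\vdash_x\phi\lor\bigvee_{\phi'\in\Phi}\phi'$, is an infinitary geometric sequent, not a coherent one: it holds in every existentially closed model but need not hold in an arbitrary model of $\T_{ec}$. Nor can you repair this by mapping $N$ onward to a genuinely existentially closed $N'$, since the $\phi'\in\Phi$ that Proposition \ref{exist_closed} produces in $N'$ is not reflected back to $N$; so the inconsistency of $\T_{ec}\cup Diag^+(M)\cup\{\neg\phi'(c_a)\,|\,\phi'\in\Phi\}$ does not follow as you argue. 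The statement you want is nevertheless true, and the clean route is shorter: if the invariant truth value of $\phi$ at the image of $a$ is $0$, then $\phi$ fails at the image of $a$ in \emph{every} $\T$-model extension of $M$ (compose any such extension with a further homomorphism into an existentially closed model; $\phi$ would be preserved there, contradicting strong invariance), and then compactness applied to $\T\cup Diag^+(M)\cup\{\phi(c_a)\}$, exactly as in the proof of Lemma \ref{pc}, directly yields some $\phi'\in\Phi$ with $M\models\phi'(a)$. With that substitution your key claim holds, the detour through $\chi''$ becomes unnecessary, and the remainder of your argument goes through.
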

\begin{proof}
    We need to add to the langage two unary predicates $M$ and $N$, and a binary predicate $f$. Now consider the theory consisting of the fact that $M$ and $N$ form a partition of the universe, that each $M$ is a model of $\T$ and $N$ a model of $\T_{ec}$ (so in particular the relativisation of $\T$ to $M$ and $\T_{ec}$ to $N$), and that $f$ is a morphism between them. We call this theory $\T^+$. We note $\phi^M$ the relativisation of $\phi$ to $M$. The strong invariance and the fact that $\bigvee_{\phi'\in\Phi}\phi'$ ($\Phi$ as above) satisfies the property of the pseudo-complement of $\phi$ gives that 
    $$\T^+\vdash \forall x(M(x)\xrightarrow{}(\neg\phi^N(f(x))\leftrightarrow\bigvee_{\phi'\in\Phi}\phi'^M(x)))$$
    Now just use compactness to extract a finite subset of $\Phi$ that satisfies this, its disjunction is clearly a pseudo-complement of $\phi$.
\end{proof}

Now the Theorem~of Joyal and Reyes will be a direct consequence of the following, which corresponds to Theorem~3.3 of \cite{weispfenning1981model}. We write $\phi^*$ for the coherent pseudo-complement of $\phi$ if it exists.

\begin{theorem}\label{prejoyal}
    Under the same assumptions, $\phi^*$ and $\phi^{**}$ exist and $\phi^*\lor\phi^{**}=1$ if and only if $\phi$ is strongly invariant in $(\T_{ec}\text{-}\Mod(\Set),\,\T\text{-}\Mod(\Set))$.
\end{theorem}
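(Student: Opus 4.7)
The plan is to reduce both directions to a single key observation: whenever $\phi^*$ exists, it behaves as the classical complement of $\phi$ on existentially closed models of $\T$. Indeed, if $E$ is e.c. and $E\not\models\phi(b)$, then by the definition of existential closedness no homomorphism $k\colon E\to N$ into a $\T$-model can make $\phi$ true, so the defining property from Lemma \ref{pc} gives $E\models\phi^*(b)$; the converse $E\models\phi^*(b)\Rightarrow E\not\models\phi(b)$ follows by taking $k=\mathrm{id}$. Hence the coherent sequent $\top\vdash_x \phi\lor\phi^*$ holds in every e.c. model, so it belongs to $\T_{ec}$; combined with $\T\vdash \phi\land\phi^*=0$, this means that in any $\T_{ec}$-model, $\phi^*$ is exactly the classical negation of $\phi$.

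For the forward direction, suppose $\phi$ is strongly invariant. The previous lemma produces $\phi^*$. Using the observation above, $\phi^*$ on $\T_{ec}$-extensions coincides with $\neg\phi$, so strong invariance of $\phi$ transfers directly to strong invariance of $\phi^*$, and the same lemma then yields $\phi^{**}$. Given $M\models\T$ and $a$ in $M$: if no $\T_{ec}$-extension of $M$ satisfies $\phi(h(a))$, then $M\models\phi^*(a)$ by Lemma \ref{pc}; otherwise, by strong invariance every $\T_{ec}$-extension satisfies $\phi$, and if some $\T$-extension $M\to N$ satisfied $\phi^*(h(a))$, extending $N$ to $N'\models\T_{ec}$ would force $N'\models\phi\land\phi^*$ (the first conjunct by strong invariance, the second by coherent preservation of $\phi^*$), contradicting $\phi\land\phi^*=0$. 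Hence $M\models\phi^{**}(a)$, and $\phi^*\lor\phi^{**}=1$ in $\T$.

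For the converse, assume $\phi^*$ and $\phi^{**}$ exist with $\phi^*\lor\phi^{**}=1$. Take $M\models\T$ and $a$ in $M$. If $M\models\phi^*(a)$, then Lemma \ref{pc} says no $\T$-extension, \emph{a fortiori} no $\T_{ec}$-extension, satisfies $\phi(h(a))$. If instead $M\models\phi^{**}(a)$, then for any $\T_{ec}$-extension $h\colon M\to N$ we get $N\models\phi^{**}(h(a))$ by preservation, hence $N\not\models\phi^*(h(a))$ by Lemma \ref{pc} with $k=\mathrm{id}$, and the sequent $\top\vdash_x \phi\lor\phi^*$ holding in $\T_{ec}$ forces $N\models\phi(h(a))$. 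Either way, the truth value of $\phi(h(a))$ in a $\T_{ec}$-extension is entirely determined by $M$ and $a$, which is strong invariance.

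The main subtlety is to keep track of whether one is working relative to $\T$ or relative to $\T_{ec}$: the defining clause of Lemma \ref{pc} quantifies over $\T$-extensions, whereas strong invariance is expressed in terms of $\T_{ec}$-extensions, and the bridge between the two is precisely the observation that $\T_{ec}\vdash \phi\lor\phi^*$. Once this is in place, both implications reduce to mechanical case analyses on which disjunct of $\phi^*\lor\phi^{**}$ holds at $M$.
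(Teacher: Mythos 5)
Your proof is correct and follows essentially the same route as the paper: existence of $\phi^*$ (and then $\phi^{**}$) via the preceding lemma, transfer of strong invariance to $\phi^*$, and a case analysis hinging on the fact that $\top\vdash_x\phi\lor\phi^*$ belongs to $\T_{ec}$ (which the paper extracts from Proposition \ref{exist_closed} and you derive directly from the definition of existential closedness together with Lemma \ref{pc} --- a trivial difference). Your isolation of the observation that $\phi^*$ acts as the classical complement of $\phi$ on $\T_{ec}$-models is a clean way to package the same argument, and your implicit passage between quantification over $\T$-extensions and $\T_{ec}$-extensions is justified by the remark following Lemma \ref{pc}.
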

\begin{proof}
    Suppose $\phi$ strongly invariant in $(\T_{ec}\text{-}\Mod(\Set),\,\T\text{-}\Mod(\Set))$. By the previous Lemma~$\phi^*$ exists. We have to show that it is strongly invariant. Consider $M\models\T$, $a$ parameters in $M$, $M_1,\,M_2\models\T_{ec}$, $h_i:M\xrightarrow{}M_i$. Suppose $M_1\models\phi^*(h_1(a))$, then $M_1\not\models\phi(h_1(a))$, and by strong invariance  $N\not\models\phi(h(h_2(a)))$ for all $h:M_2\xrightarrow{}N\models\T_{ec}$, and so $M_2\models\phi^*(h_2(a))$. So indeed $\phi^*$ strongly invariant and $\phi^{**}$ exists. Now we still need to prove that $\phi^*\lor\phi^{**}=1$. So suppose $M\not\models\phi^*(a)$. This means that there is $h:M\xrightarrow{}N\models\T_{ec}$, $N\models\phi(h(a))$. By strong invariance it is true for all $h,\,N$. So for all $h,\, N$, $N\not\models\phi^*(h(a))$, so $M\models\phi^{**}(a)$.\\
    \\
    Conversely, take $M\models\T$, $a$ parameters in $M$, $M_1,\,M_2\models\T_{ec}$, $h_i:M\xrightarrow{}M_i$, and suppose $M_1\models\phi(h_1(a))$. So $M\not\models\phi^*(a)$, so $M\models\phi^{**}(a)$, so $M_2\not\models\phi^*(h_2(a))$. So there is $h:M_2\xrightarrow{}N$ in $\T_{ec}\text{-}\Mod(\Set)$ such that $N\models\phi(h(h_2(a)))$. To finish the proof we need to show that $\phi$ is invariant in $\T_{ec}\text{-}\Mod(\Set)$. But by Proposition~\ref{exist_closed}, $\T_{ec}\vdash \phi\vee\phi^*$, so this is clear.
\end{proof}

Notice that we did use the explicit definition of $\T_{ec}$ in the last proof. In fact, if all the pseudo-complements of coherent formulas exist, Proposition~\ref{exist_closed} says that the existentially closed models of $\T$ are exactly axiomatised by the `coherent Booleanization' of $\T$, i.e. by adding all the axioms $\phi\vee\phi^*$ for every coherent formula $\phi$, and this is always a model companion of $\T$, because all its models are existentially closed. We will do the link with the Booleanization of \cite{caramello2009morgan} in the next section. Now we see that we proved the desired theorem.

\begin{theorem}\label{Joyal}
    For a coherent theory $\T$, $\T$ satisfies De Morgan's law for coherent formulas if and only if $\T$ has a model completion.
\end{theorem}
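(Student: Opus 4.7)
The plan is to reduce both directions to Theorem~\ref{prejoyal}, which equates the existence of coherent pseudo-complements satisfying $\phi^* \lor \phi^{**} = 1$ with strong invariance of $\phi$ in $(\T_{ec}\text{-}\Mod(\Set), \T\text{-}\Mod(\Set))$. The bridge in both directions is the observation that, by definition, the statement ``every coherent formula is strongly invariant in $(\T_{ec}, \T)$'' is literally the statement that $\T_{ec}$ is model complete relative to $\T$.

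For the forward direction, I would assume that $\T$ satisfies De Morgan's law for coherent formulas, i.e.\ that $\phi^*$ and $\phi^{**}$ exist (as coherent formulas) and $\T \vdash \phi^* \lor \phi^{**}$ for every coherent $\phi$. Applying Theorem~\ref{prejoyal} to each such $\phi$ yields that every coherent formula is strongly invariant in $(\T_{ec}\text{-}\Mod(\Set),\T\text{-}\Mod(\Set))$, hence $\T_{ec}$ is model complete relative to $\T$. Since the text already records that $\T \subseteq \T_{ec}$, and the mutual model consistency of $\T$ and $\T_{ec}$ is immediate from the definition of $\T_{ec}$, all three clauses of Definition~\ref{standard} are satisfied and $\T_{ec}$ is a model completion of $\T$.

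For the converse, the subtle point is that Theorem~\ref{prejoyal} is stated specifically for $\T_{ec}$, but an abstract model completion $\T^*$ need not coincide on the nose with $\T_{ec}$. So I would first use that a model completion $\T^*$ is model complete relative to $\T$, giving strong invariance of each coherent $\phi$ in $(\T^*, \T)$, and then upgrade this to strong invariance in $(\T_{ec}, \T)$. Given $M \models \T$ and $M_1, M_2 \models \T_{ec}$ with $h_i : M \to M_i$, mutual model consistency of $\T^*$ with $\T$ provides maps $h_i' : M_i \to M_i' \models \T^*$; strong invariance in $(\T^*, \T)$ applied with base $M$ and composites $h_i' \circ h_i$ gives $M_1' \models \phi(h_1' h_1(a)) \iff M_2' \models \phi(h_2' h_2(a))$, and since each $M_i$ is existentially closed in $\T$ (being a model of $\T_{ec}$), the homomorphisms $h_i'$ reflect coherent formulas on parameters from $M_i$, so we may chain back to $M_1 \models \phi(h_1(a)) \iff M_2 \models \phi(h_2(a))$. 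Theorem~\ref{prejoyal} then delivers the De Morgan property. I expect this \emph{bridging step}---and in particular the use of existential closedness to reverse the direction of preservation for coherent formulas---to be the one nontrivial point of the argument.
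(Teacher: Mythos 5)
Your proposal is correct and, in the forward direction, coincides with the paper's argument: Theorem \ref{Joyal} is obtained there as a direct corollary of Theorem \ref{prejoyal} together with the preceding observations that $\T\subseteq\T_{ec}$, that $\T_{ec}$ is mutually model consistent with $\T$, and that relative model completeness of $\T_{ec}$ is by definition the strong invariance of all coherent formulas. The only divergence is in the converse, where you bridge from an abstract model completion $\T^*$ to $\T_{ec}$ by a semantic chaining argument. The paper's route is shorter: since $\T_{ec}$ is the \emph{maximal} theory mutually model consistent with $\T$, any model companion satisfies $\T^*\subseteq\T_{ec}$, so every model of $\T_{ec}$ is a model of $\T^*$ and strong invariance in $(\T^*\text{-}\Mod(\Set),\T\text{-}\Mod(\Set))$ restricts immediately to $(\T_{ec}\text{-}\Mod(\Set),\T\text{-}\Mod(\Set))$ (this is the remark that relative model completeness passes to larger theories); no composites $h_i'\circ h_i$ are needed. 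Your bridging step does go through, but the parenthetical claim that each $M_i$ is existentially closed ``being a model of $\T_{ec}$'' is not true by definition: $\T_{ec}$ is the coherent theory \emph{of} the existentially closed models, and in general it may have models that are not existentially closed. In the presence of a model companion the claim does hold (every model of $\T_{ec}$ is a model of $\T^*$ by maximality, and every model of a model companion is existentially closed by the usual round-trip argument through model consistency and model completeness), but the clean justification already invokes the inclusion $\T^*\subseteq\T_{ec}$, which renders the whole bridge redundant. So either supply that justification explicitly, or simply replace the bridging step by the maximality argument.
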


Here ``$\T$ satisfies De Morgan's law for coherent formulas" means that $\T$ proves $\phi^*\lor\phi^{**}$ for every coherent $\phi$. Equivalently, by Lemma~\ref{completion_amalg} and the fact that a model completion is a model companion, $\T$ is De Morgan for coherent formulas if and only if $\T$ has a model companion and satisfies amalgamation. Note that this is not equivalent to the theory being De Morgan, because this last property requires De Morgan's law for all geometric formulas by Theorem~\ref{dmltheory}. However, we will give the right generalization in the next section. 

\subsubsection{Topos-theoretic interpretation}\label{tti}
Since, as mentioned before, for a coherent theory, the property of satisfying De Morgan's law for coherent formulas only does not imply being De Morgan (i.e. having a De Morgan classifying topos), we do not expect to fully interpret the Theorem~of the previous section topos-theoretically. However, under reasonable assumptions, we can generalize some of its aspects to geometric theories. First, we elucidate the link between Booleanization and model companions. In this section, for a geometric formula $\phi$, we will note by $\phi^*$ the \emph{geometric} pseudo-complement of $\phi$, which, as we saw, always exists.

\begin{definition}
    Let $\T$ be a geometric theory. We define $\textup{Bool}(\T)$ to be the theory $\T\cup\{\top\vdash_x\phi\lor\phi^*\,|\,\phi\text{ geometric}\}$.
\end{definition}

Equivalently, $\textup{Bool}(\T)$ can be obtained from $\T$ by adding the sequents $\top\vdash_x\phi$ for every geometric formula $\phi$ such that for every geometric $\psi$ such that $\psi\vdash_x\bot$ is not provable in $\T$, $\psi\land\phi\vdash_x\bot$ is not provable in $\T$. Indeed, for any geometric $\phi$ and $\psi\ne\bot$, if $\phi\land\psi\leq\bot$, then $\psi\leq\phi^*$, so $\psi\land\phi^*=\psi\ne\bot$, so $\psi\land(\phi\lor\phi^*)=\psi\ne\bot$, and conversely if $\phi\land\psi\ne\bot$ for all $\psi$ then by the explicit description of $\phi^*:=\bigvee_{\phi'\land\phi=\bot}\phi'$, we have $\phi^*=\bot$ so $\phi=\phi\lor\phi^*$. Now remembre that we have a `canonical' Boolean topos associated to any topos.

\begin{proposition}[\cite{caramello2009morgan} Theorem~5.7]
    With the same assumptions, if $\T$ is classified by $\mathcal{E}$, then $\textup{Bool}(\T)$ is classified by $\mathbf{sh}_{\neg\neg}(\mathcal{E})$.
\end{proposition}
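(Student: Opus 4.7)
The plan is to combine the quotient--subtopos correspondence with the characterization, recalled earlier in the paper, of $\mathbf{sh}_{\neg\neg}(\mathcal{E})$ as the unique dense Boolean subtopos of $\mathcal{E}$. Let $\mathcal{E}_B \hookrightarrow \mathcal{E}$ denote the subtopos of $\mathcal{E}$ associated to the quotient $\textup{Bool}(\T)$. It suffices to show that $\mathcal{E}_B$ is both Boolean and dense in $\mathcal{E}$: the two properties together will force $\mathcal{E}_B = \mathbf{sh}_{\neg\neg}(\mathcal{E})$ by uniqueness.

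For Booleanness, I would apply the Boolean companion of Theorem \ref{dmltheory} (stated immediately after it, and proved in \cite{caramello2009morgan}): it suffices to check that $\textup{Sub}_{\mathcal{C}_{\textup{Bool}(\T)}}(\{x.\top\})$ is a Boolean algebra for every context $x$. Elements of this lattice are the classes of geometric formulas $\{x.\phi\}$, and for each such $\phi$ the defining axiom $\top \vdash_x \phi \lor \phi^*$ of $\textup{Bool}(\T)$, together with the tautology $\phi \land \phi^* \vdash_x \bot$ which follows from the defining property of the geometric pseudo-complement, exhibits $\{x.\phi^*\}$ as a complement of $\{x.\phi\}$. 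So the lattice is Boolean.

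For density, I would show the stronger statement $\mathbf{sh}_{\neg\neg}(\mathcal{E}) \subseteq \mathcal{E}_B$ as subtoposes of $\mathcal{E}$: then the initial object of $\mathcal{E}$, which lies in $\mathbf{sh}_{\neg\neg}(\mathcal{E})$, lies a fortiori in $\mathcal{E}_B$, so $\mathcal{E}_B$ is dense. Under the quotient--subtopos correspondence, $\mathcal{E}_B \simeq \mathbf{Sh}(\mathcal{C}_\T, J_B)$, where $J_B$ is the Grothendieck topology generated over $J^{geom}$ by the two-element covers $\{\{x.\phi\} \hookrightarrow \{x.\top\},\, \{x.\phi^*\} \hookrightarrow \{x.\top\}\}$ for $\phi$ geometric, since adding the sequent $\top \vdash_x \phi \lor \phi^*$ amounts to forcing the inclusion $\{x.\phi \lor \phi^*\} \hookrightarrow \{x.\top\}$ to be an isomorphism, i.e. declaring the two component monomorphisms to jointly cover $\{x.\top\}$. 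The union $\{x.\phi \lor \phi^*\}$ of each such generating cover has pseudo-complement $\{x.\phi^* \land \phi^{**}\} = \{x.\bot\}$ in $\textup{Sub}_{\mathcal{C}_\T}(\{x.\top\})$ (using $\neg p \land \neg\neg p = \bot$, valid in any Heyting algebra), so each generating cover is $\neg\neg$-dense. Since $J_{\neg\neg}$ contains all $\neg\neg$-dense covers and is closed under the usual topology-generation operations, $J_B \subseteq J_{\neg\neg}$, hence $\mathbf{sh}_{\neg\neg}(\mathcal{E}) = \mathbf{Sh}(\mathcal{C}_\T, J_{\neg\neg}) \subseteq \mathbf{Sh}(\mathcal{C}_\T, J_B) = \mathcal{E}_B$.

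The main obstacle, to my mind, is the identification of $J_B$ with the topology generated by the two-element covers described above: this relies on the explicit dictionary between adding the sequent $\top \vdash_x \psi$ and adding $\{x.\psi\} \hookrightarrow \{x.\top\}$ as a cover, which is standard but must be invoked carefully. Everything else reduces to the syntactic criterion for Booleanness, a one-line Heyting-algebra identity, and the formal behaviour of subtopos inclusions under inclusion of topologies.
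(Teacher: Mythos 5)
The paper does not prove this proposition at all: it is imported verbatim from \cite{caramello2009morgan} (Theorem 5.7), so there is no in-paper argument to compare yours against. Your proof is correct and complete modulo the quotient--subtopos duality theorem, which you rightly flag as the one external ingredient that must be invoked with care (the dictionary sending the added sequent $\top\vdash_x\phi\lor\phi^*$ to the sieve on $\{x.\top\}$ generated by $\{x.\phi\}$ and $\{x.\phi^*\}$, over $J^{geom}$). The structure is well chosen: establishing Booleanness of $\mathcal{E}_B$ via the syntactic criterion stated after Theorem \ref{dmltheory} is immediate, since every subobject of $\{x.\top\}$ in $\mathcal{C}_{\textup{Bool}(\T)}$ is the class of some geometric $\phi$ and acquires $[\phi^*]$ as a complement (the meet $\phi\land\phi^*\vdash_x\bot$ already holding in $\T$); and the density computation $\neg(\phi\lor\phi^*)=\phi^*\land\phi^{**}=\bot$ correctly exhibits each generating cover as $\neg\neg$-dense, giving $J_B\subseteq J_{\neg\neg}$. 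The appeal to uniqueness of the dense Boolean subtopos is what makes the argument short: it lets you get away with only the easy inclusion $J_B\subseteq J_{\neg\neg}$, whereas a direct identification $J_B=J_{\neg\neg}$ (closer in spirit to the computation in \cite{caramello2009morgan}) would also require showing that every $\neg\neg$-dense sieve is generated by the covers you add, which is the genuinely delicate half. One cosmetic remark: the complement $[\phi^*]$ a priori depends on the chosen representative $\phi$ of a class in $\textup{Sub}_{\mathcal{C}_{\textup{Bool}(\T)}}(\{x.\top\})$, but since complements in a distributive lattice are unique this causes no ambiguity; it is worth a half-sentence if this is written up.
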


We can now do the link with existentially closed models.

\begin{proposition}
    Let $\T$ be a geometric theory such that the evaluation functor $ev:\mathcal{C}_{\T}\xrightarrow[]{}Set^{\T\text{-}\Mod(\Set)}$ preserves pseudo-complements. Then the Booleanization $\textup{Bool}(\T)$ of $\T$ is a model companion of $\T$.
\end{proposition}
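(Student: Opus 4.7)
The plan is to verify the two defining properties of a model companion for $\textup{Bool}(\T)$: mutual model consistency with $\T$, and model completeness. The common thread is a geometric analogue of Lemma \ref{pc} that the hypothesis on $ev$ supplies for free: for any geometric formula $\{x.\phi\}$, any $M \models \T$, and parameters $a$ in $M$, we have $M \models \phi^*(a)$ if and only if $N \not\models \phi(h(a))$ for every homomorphism $h: M \to N$ in $\T\text{-}\Mod(\Set)$. Indeed, the pseudo-complement of a subobject in the covariant presheaf topos $\Set^{\T\text{-}\Mod(\Set)}$ is computed exactly by this sieve-theoretic formula, and by hypothesis $ev(\phi^*)$ coincides with the pseudo-complement of $ev(\phi)$ there.

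Using this, I would first identify the set-based models of $\textup{Bool}(\T)$ with the existentially closed models of $\T$. For the forward direction: if $M \models \textup{Bool}(\T)$ and $h: M \to N$ satisfies $N \models \phi(h(a))$ for some geometric $\phi$, then we cannot have $M \models \phi^*(a)$, since preservation would force $N \models \phi \land \phi^*$, contradicting $\phi \land \phi^* \vdash_x \bot$ in $\T$; hence the Booleanization axiom $\top \vdash_x \phi \lor \phi^*$ yields $M \models \phi(a)$, which is existential closedness (the coherent case suffices since every geometric formula is a disjunction of coherent ones). Conversely, if $M$ is existentially closed and $M \not\models \phi(a)$, then $N \not\models \phi(h(a))$ for every $h: M \to N$ by existential closedness, so the characterization above gives $M \models \phi^*(a)$, and hence $M \models \phi \lor \phi^*$ at $a$.

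Model consistency then splits into two parts. Since $\textup{Bool}(\T) \supseteq \T$, every $\textup{Bool}(\T)$-model is automatically a $\T$-model, handling one direction. The other direction, that every $\T$-model admits a homomorphism to an existentially closed model, I would prove by a transfinite construction: starting from $M_0 := M$, at each successor stage I search for a pair $(\phi, a)$ with $M_\alpha \not\models \phi(a)$ and $M_\alpha \not\models \phi^*(a)$ and, using the characterization above, pick an extension $M_\alpha \to M_{\alpha+1}$ with $M_{\alpha+1} \models \phi$ at the image of $a$; at limit stages I take directed colimits in $\T\text{-}\Mod(\Set)$. Model completeness is then immediate: for $M, M_1, M_2 \models \textup{Bool}(\T)$ and $h_i: M \to M_i$, existential closedness of $M$ yields $M \models \phi(a) \iff M_i \models \phi(h_i(a))$ for every coherent $\phi(a)$, so $M_1 \models \phi(h_1(a)) \iff M_2 \models \phi(h_2(a))$.

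The main obstacle is the transfinite construction of existentially closed extensions: in the coherent case this is standard via compactness, but for a general geometric theory one has to bound the size of the chain carefully and ensure that the parameters introduced at each stage are eventually exhausted. The hypothesis on $ev$ is precisely what makes each single step executable; the remaining work is to make the accessibility bookkeeping precise, which I expect to be manageable using the fact that $\T\text{-}\Mod(\Set)$ is an accessible category closed under filtered colimits.
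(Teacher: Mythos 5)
Your proposal is correct and follows essentially the same route as the paper: identify the models of $\textup{Bool}(\T)$ with the existentially closed models of $\T$ via the pseudo-complement characterization supplied by the hypothesis on $ev$, build an existentially closed extension of an arbitrary model by a transfinite chain using directed colimits in $\T\text{-}\Mod(\Set)$, and observe that model completeness is then immediate. The only point you leave open --- termination of the chain construction --- is handled in the paper by enumerating the coherent formulas with parameters from a \emph{fixed} stage, running through that enumeration, and iterating the whole process $\omega$ times before taking the colimit, which sidesteps the accessibility bookkeeping you mention.
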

\begin{proof}
    First, we can see that a model of $\textup{Bool}(\T)$ is exactly a model of $\T$ which is existentially closed. Indeed, one direction is clear, for the other suppose $M\models\T$ existentially closed such that $M\not\models\phi(a)$, we want to show that $M\models\phi^*(a)$, equivalently that for all $h:M\xrightarrow{}N$, $N\not\models\phi(h(a))$ because $ev$ preserves pseudo-complements, but this is true by existential closeness.\\
    \\
    Now we just need to show that every model of $\T$ is mapped homomorphically to an existentially closed one. This is achieved by using that $\T\text{-}\Mod(\Set)$ admits directed colimits (see for example \cite{borceux1994handbook} Corollary~4.3.2). Indeed, take $M\models\T$, and an ordering of the set of coherent formulas with parameters in $M$, noted $(\phi_\alpha(a_\alpha))_\alpha$. If there is $h:M\xrightarrow{}N$ such that $N\models\phi_0(a_0)$, define $N_1:=N$, $h_1:=h$, else take the identity on $M$. Continue inductively, taking (directed) colimits at limit stages. Let $M_1$ be the final such model, by construction for any $\phi_\alpha(a_\alpha)$, if $h:M_1\xrightarrow{}N$ with $N\models\phi_\alpha(a_\alpha)$, then $N_{\alpha+1}\models\phi_\alpha(a_\alpha)$, so $M_1$ also does. To conclude we just need to do this countably many times and take the colimit.
\end{proof}

\begin{remark}
    In fact this is a particular case of \cite{kamsma2024existentially}, Theorem~7.1. Indeed, $ev$ preserves pseudo-complements implies as we noticed that a model is existentially closed if and only if it is a model of the Booleanization of $\T$, which in turn is equivalent to being $(\mathcal{X,B})$-s.e.c in the sens of \cite{kamsma2024existentially}, for $\mathcal{X}$ the representables and $\mathcal{B}$ the representable subobjects. Now (ii) of 7.1 \cite{kamsma2024existentially} is exactly the definition of model companion: $\Set$-entwined means that the corresponding theories are mutually model-consistent, and Proposition~6.11 of \cite{kamsma2024existentially} shows that if a topos is locally zero-dimensional then all the models of the theory it classifies are existentially closed. This thus gives our result. Interestingly, this implies that the Booleanization has enough points, so it is atomic (see C3.5.2 \cite{johnstone2002sketches}). 
\end{remark}

Now, we can give a generalization of one direction of Theorem~\ref{Joyal} using this proposition.

\begin{theorem}
    Under the same assumptions as the last proposition, if $\T$ is additionally De Morgan, then $\textup{Bool}(\T)$ is a model completion of $\T$.
\end{theorem}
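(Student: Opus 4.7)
Since the previous proposition already gives that $\textup{Bool}(\T)$ is a model companion of $\T$, and $\T \subseteq \textup{Bool}(\T)$ by construction, what remains is to verify that $\textup{Bool}(\T)$ is model complete \emph{relative to} $\T$ (cf. Definition \ref{standard}). Concretely, given $M \models \T$, models $M_1, M_2 \models \textup{Bool}(\T)$, homomorphisms $h_i : M \to M_i$, a geometric formula $\phi$ and parameters $a$ in $M$, I need to show that $M_1 \models \phi(h_1(a))$ implies $M_2 \models \phi(h_2(a))$ (the converse is symmetric). Recall that $\phi^*$ denotes the geometric pseudo-complement of $\phi$, computed in $\textup{Sub}_{\mathcal{C}_\T}(\{x.\top\})$; as noted in the discussion of Definition \ref{standard}, this is sufficient, since any geometric formula is a disjunction of coherent ones and the argument will respect joins.

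The strategy is to use De Morgan's law on $M$ to split into two cases. By Theorem \ref{dmltheory} together with Proposition \ref{comp_join}, the hypothesis that $\T$ is De Morgan yields $\T \vdash \top \vdash_x \phi^* \vee \phi^{**}$ for every geometric $\phi$. Hence $M \models \phi^*(a) \vee \phi^{**}(a)$. In the first case, $M \models \phi^*(a)$ propagates to $M_1 \models \phi^*(h_1(a))$ by preservation of geometric formulas under homomorphisms; combined with $M_1 \models \phi(h_1(a))$ this contradicts $\phi \wedge \phi^* \vdash_x \bot$. So necessarily $M \models \phi^{**}(a)$.

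Propagating again, $M_2 \models \phi^{**}(h_2(a))$, and therefore $M_2 \not\models \phi^*(h_2(a))$ (using $\phi^* \wedge \phi^{**} \vdash_x \bot$). But the defining axioms of $\textup{Bool}(\T)$ force $M_2 \models \phi(h_2(a)) \vee \phi^*(h_2(a))$, so we conclude $M_2 \models \phi(h_2(a))$, as required. Combined with the previous proposition this establishes that $\textup{Bool}(\T)$ is a model completion of $\T$.

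As for obstacles, the argument is essentially a direct translation of the De Morgan identity $\phi^* \vee \phi^{**} = \top$ on the syntactic side into a dichotomy on the semantic side, so I do not expect a real difficulty. The only point to double-check is that $\phi^{**}$ is again representable by a genuine geometric formula (so that it makes sense to transport it along $h_2$ and to evaluate it in $M_2$); this is fine because $\textup{Sub}_{\mathcal{C}_\T}(\{x.\top\})$ consists of equivalence classes of geometric formulas and the geometric pseudo-complement was shown in Lemma \ref{pc} (and the remarks around it) to exist within geometric logic. The hypothesis that $ev$ preserves pseudo-complements is inherited from the previous proposition and is not used afresh here.
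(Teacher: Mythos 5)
Your proof is correct and follows essentially the same route as the paper, whose proof of this theorem simply reads ``basically the same as the corresponding direction of Theorem \ref{prejoyal}'': your dichotomy $M\models\phi^*(a)\vee\phi^{**}(a)$ from De Morgan's law, the propagation of the geometric formulas $\phi^*$ and $\phi^{**}$ along $h_1$ and $h_2$, and the final appeal to the axioms $\top\vdash_x\phi\vee\phi^*$ of $\textup{Bool}(\T)$ is precisely that argument transposed from the coherent to the geometric setting. Your closing remarks (that the geometric pseudo-complement is a genuine geometric formula, and that preservation of pseudo-complements by $ev$ is only needed for the model-companion part inherited from the previous proposition) are also accurate.
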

\begin{proof}
    Basically the same as the corresponding direction of Theorem~\ref{prejoyal}.
\end{proof}

Now we look at the other direction of Theorem~\ref{Joyal}. We still need to assume that $ev$ preserves pseudo-complements. Since we rely on the completeness theorem, we should assume that (the classifying topos of) $\T$ has enough points.

\begin{theorem}\label{joyal_geom}
    Let $\T$ as before. If $\T$ has enough points and $\T$-$\Mod(\Set)$ has the amalgamation property, then $\T$ is De Morgan.
\end{theorem}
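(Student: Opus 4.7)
The plan is to adapt the forward direction of Theorem \ref{prejoyal} to all $\T$-models (in place of $\T_{ec}$-models), using the amalgamation property in $\T\text{-}\Mod(\Set)$ together with enough points. First, by Theorem \ref{dmltheory}, it suffices to prove $\top \vdash_x \phi^* \lor \phi^{**}$ for every geometric formula $\{x.\phi\}$ (the geometric pseudo-complement $\phi^*$ always exists via the explicit description $\phi^* = \bigvee_{\phi' \wedge \phi = \bot} \phi'$). Since $\T$ has enough points, a geometric sequent is provable in $\T$ if and only if it holds in every set-based model, so the goal reduces to showing: for every $M \models \T$ and every tuple $a$ of parameters in $M$, $M \models \phi^*(a) \lor \phi^{**}(a)$.

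Fix such $M$ and $a$, and suppose $M \not\models \phi^*(a)$; the aim is to derive $M \models \phi^{**}(a)$. By the hypothesis that $ev$ preserves pseudo-complements, the geometric analogue of Lemma \ref{pc} tells us that $M \models \phi^*(a)$ is equivalent to $N \not\models \phi(h(a))$ for every homomorphism $h: M \to N$ with $N \models \T$. Hence there exists some $h: M \to N \models \T$ with $N \models \phi(h(a))$. Now for any other homomorphism $h': M \to N'$ with $N' \models \T$, amalgamation in $\T\text{-}\Mod(\Set)$ provides a commutative square
\[
\begin{tikzcd}
& P & \\
N \arrow[ur, "j"] & & N' \arrow[ul, "j'"'] \\
& M \arrow[ul, "h"] \arrow[ur, "h'"'] &
\end{tikzcd}
\]
with $P \models \T$. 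Since $\phi$ is preserved by homomorphisms, $P \models \phi(jh(a)) = \phi(j'h'(a))$, which by the same pseudo-complement characterization applied at $N'$ (via the map $j' : N' \to P$) yields $N' \not\models \phi^*(h'(a))$. As this holds for every homomorphism $h': M \to N'$, applying the characterization a second time, now to $\phi^*$ in place of $\phi$, produces $M \models \phi^{**}(a)$, as required.

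I do not anticipate a serious obstacle. The main subtlety is being careful with the direction of the ``$ev$ preserves pseudo-complements'' hypothesis: it is used twice, once for $\phi$ (to extract a witness $h : M \to N$ for $N \models \phi(h(a))$) and once for $\phi^*$ (to upgrade the universal failure of $\phi^*$ along arrows out of $M$ into $M \models \phi^{**}(a)$). The enough points assumption is invoked exactly once, to translate the syntactic conclusion of Theorem \ref{dmltheory} into a purely semantic statement over $\T\text{-}\Mod(\Set)$; in contrast with the coherent case of Theorem \ref{prejoyal}, no appeal to the classical completeness/compactness theorem for coherent logic is required, so the argument uniformly covers the geometric setting.
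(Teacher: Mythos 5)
Your argument is correct and is essentially the paper's own proof (the paper's version is just a two-line compression of it): fix $M\not\models\phi^*(a)$, extract a witness $h:M\to N$ with $N\models\phi(h(a))$ via the pseudo-complement-preservation hypothesis, amalgamate with any $h':M\to N'$ to conclude $N'\not\models\phi^*(h'(a))$, and hence $M\models\phi^{**}(a)$; enough points then converts the pointwise statement into provability of $\top\vdash_x\phi^*\lor\phi^{**}$. Your more explicit bookkeeping of where the two hypotheses enter (and the observation that no compactness is needed, unlike in Lemma \ref{pc}) matches the intent of the paper exactly.
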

\begin{proof}
    Take $M\models\T$, $M\not\models\phi^*(a)$, i.e. there is $h:M\xrightarrow{}N$ such that $N\models\phi(h(a))$. We want to show that $M\models\phi^{**}(a)$, i.e. for all $h':M\xrightarrow{}N'$, there is $f:N'\xrightarrow{}N''$, $N''\models\phi^*(f(h'(a)))$. Amalgamating $h$ and $h'$ gives the result.
\end{proof}

\subsubsection{Link with theories of presheaf type}

 In some sense, the two previous theorems are not very surprising, as we just identified what made the theorem work for coherent theories and assumed it as hypothesis. Now we sketch a more conceptual trail. We rely on~\cite{espindola2023every}.
 
 Recall that a theory of presheaf type is De Morgan if and only if its category of finitely presented models satisfies amalgamation. Clearly this is very similar to Theorem~\ref{Joyal} (in fact its reformulation just after it). So if we could find a way to `make any theory into a theory of presheaf type', maybe we would have a proof of Theorem~\ref{Joyal} or of some interesting generalization. In fact,~\cite{espindola2023every} shows that (under set theoretic assumptions) every theory is `eventually' of presheaf type. This uses \emph{$\lambda$-geometric logic}, which is the extension of geometric logic within infinitary first order logic, where we permit contexts and conjunctions of infinite ($<\lambda$) length in formulas, and where we assume the \emph{transfinite transitivity rule}: 
 \begin{center}
$\infer{\phi_0\vdash_{y_0} \bigvee_{f\in\gamma^\gamma}\exists_{\beta<\gamma} {x_f}_{|\beta+1}\bigwedge_{\beta<\gamma}{\phi_f}_{|\beta} }
    {\arraycolsep=1.4pt\begin{array}{l}
        \phi_f\vdash_{y_f} \bigvee_{g\in\gamma^{\beta+1},\,g_{|\beta}=f}\exists x_g\phi_g\quad \beta<\gamma,\,f\in\gamma^\beta \\
        \phi_f \dashv\vdash_{y_f} \bigwedge_{\alpha<\beta}{\phi_f}_{|\alpha} \quad \beta<\gamma,\,\text{limit}\,\beta,\,f\in \gamma^\beta
    \end{array}}$
   \end{center} 
   (see~\cite{espindola2020infinitary} for a complete description of this logic). A \emph{$\lambda$-geometric category} is a $\lambda$-complete geometric category such that compositions of $\lambda$-transfinite jointly covering $\lambda$-families are covering~\cite[Definition 2.1.1]{espindola2020infinitary}. A \emph{$\lambda$-topos} is a topos which is a $\lambda$-geometric category. The \emph{$\lambda$-classifying topos} of a $\lambda$-geometric theory is the $\lambda$-topos satisfying the universal property of the classifying topos restricted to $\lambda$-toposes and geometric morphisms whose inverse image preserves $\lambda$-limits~\cite[Theorem 3.15]{espindola2023every}.
 
 It is proven in~\cite[Theorem 5.5]{espindola2023every} that given a geometric theory $\T$ and an inaccessible cardinal (whose existence is not guaranteed by ZFC) $\lambda$ which is big enough, in particular bigger than the signature of $\T$, we have a commutative diagram:

 \begin{center}
     \begin{tikzcd}
         \mathcal{C}_\T \arrow[rr, "ev_\lambda"]\arrow[d, "\phi"'] & & \Set^{\T\text{-}\Mod(\Set)_{<\lambda}}\\
         (\Lex(\mathcal{C}_\T,\Set)_{<\lambda})^{op}\arrow[d,"\yo"'] & & \\
         \Sh((\Lex(\mathcal{C},\Set)_{<\lambda})^{op},J)\arrow[uurr, "\cong"']&&
     \end{tikzcd}
 \end{center}

\noindent We need to explain some notations:
\begin{itemize}
    \item ``$<\lambda$'' means ``pointwise of cardinality smaller than $\lambda$'' for the category $\Lex(\mathcal{C},\Set)_{<\lambda}$ and ``of cardinality smaller than $\lambda$'' for the category of models $\T\text{-}\Mod(\Set)$.
    \item $\Lex(\mathcal{C}_\T,\Set)$ is the category of left exact functors from $\mathcal{C}_\T$ to $\Set$. In fact, by the first point of~\cite[Theorem 4.7]{espindola2023every} we have that $(\Lex(\mathcal{C}_\T,\Set)_{<\lambda})^{op}$ is essentially the syntactic category of $\T$ seen as a $\lambda$-geometric theory. This is reasonable, since it is the free completion of $\mathcal{C}_\T$ with $<\lambda$ limits: remember that products are used to interpret context extension, and pullbacks to interpret conjunctions. We will thus note it $\mathcal{C}_\T^{\lambda\text{-}geom}$. 
    \item $\Sh((\Lex(\mathcal{C}_\T,\Set)_{<\lambda})^{op},J)$ is the $\lambda$-classifying topos of $\T$, with $J$ the expected coverage, which is in particular subcanonical.
\end{itemize}
Now $\yo$ preserves pseudo-complements when they exist~\cite[Lemma 3.1]{butz1998classifying}, and it is full and faithful, so $ev_\lambda$ preserves pseudo complements if and only if $\phi$ (which is also just the Yoneda embedding) preserves them. So we can give a purely topos-theoretic point of view on Theorem~\ref{joyal_geom}: if we have amalgamation in $(\T\text{-}\Mod(\Set))_{<\lambda}$ for some big enough inaccessible cardinal $\lambda$, the topos $\Sh((\Lex(\mathcal{C}_\T,\Set)_{<\lambda})^{op},J)$ is De Morgan, so all the $\Sub_{(\Lex(\mathcal{C}_\T,\Set)_{<\lambda})^{op}}(\{\mathbf{x}.\top\})$ (where $x$ is possibly infinite) are De Morgan algebras, by the same arguments we used for geometric logic. In particular, if $\phi$ (equivalently $ev_\lambda$) preserves pseudo-complements, since it is also conservative and preserves disjunctions, we directly get that $\T$ is De Morgan, i.e.~the $\Sub_{\mathcal{C}_\T}(\{\mathbf{x}.\top\})$ for a finite context $\mathbf{x}$ are De Morgan. Also now we have a sufficient condition for having amalgamation in $\T\text{-}\Mod(\Set)_{<\lambda}$. Summarizing, we have:
\begin{theorem}
Let $\T$ be a geometric theory in a signature $\Sigma$, $\lambda$ inaccessible cardinal, $\lambda>2^{|\Sigma|}$. Then the category of set-based models of cardinality $<\lambda$ of $\T$ verifies amalgamation if and only of the $\Sub_{\mathcal{C}_\T^{\lambda\text{-}geom}}(\{\mathbf{x}.\top\})$ (with $|x|<\lambda$) are De Morgan algebras.
\end{theorem}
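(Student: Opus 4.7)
The plan is to combine the two descriptions of the $\lambda$-classifying topos $\mathcal{E}_\lambda[\T]$ provided by the commutative diagram from \cite{espindola2023every} displayed above: on the sheaf side, $\mathcal{E}_\lambda[\T] \simeq \mathbf{Sh}((\Lex(\mathcal{C}_\T,\Set)_{<\lambda})^{op},J)$, where the site is the $\lambda$-geometric syntactic category $\mathcal{C}_\T^{\lambda\text{-}geom}$ equipped with its $\lambda$-geometric coverage; on the presheaf side, the evaluation functor yields $\mathcal{E}_\lambda[\T] \simeq \Set^{\T\text{-}\Mod(\Set)_{<\lambda}}$. The theorem then follows by expressing the De Morgan property of $\mathcal{E}_\lambda[\T]$ through each of the two descriptions and identifying the resulting conditions.

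First I would exploit the presheaf description. Since $\Set^{\T\text{-}\Mod(\Set)_{<\lambda}} = \mathbf{Psh}((\T\text{-}\Mod(\Set)_{<\lambda})^{op})$, Proposition \ref{Ore} applied to $\mathcal{C} := (\T\text{-}\Mod(\Set)_{<\lambda})^{op}$ yields that $\mathcal{E}_\lambda[\T]$ is De Morgan if and only if the category $\T\text{-}\Mod(\Set)_{<\lambda}$ has the amalgamation property. Next I would exploit the sheaf description by proving a $\lambda$-geometric analogue of Theorem \ref{dmltheory}: for every object $d$ of $\mathcal{C}_\T^{\lambda\text{-}geom}$, the fibre $\Omega(d)$ of the subobject classifier of $\mathcal{E}_\lambda[\T]$ is isomorphic to $\textup{Sub}_{\mathcal{C}_\T^{\lambda\text{-}geom}}(d)$. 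The argument mirrors the one for geometric logic: $J$-closed sieves on $d$ are generated by their monic parts via the cover-mono factorization in $\mathcal{C}_\T^{\lambda\text{-}geom}$, the monic parts form an ideal of subobjects of $d$ closed under the available $<\lambda$-disjunctions, and such an ideal is principal since its disjunction belongs to the syntactic category. A variant of Lemma \ref{Hleqx} then gives that $\mathcal{E}_\lambda[\T]$ is De Morgan if and only if every $\textup{Sub}_{\mathcal{C}_\T^{\lambda\text{-}geom}}(\{x.\top\})$ with $|x|<\lambda$ is a De Morgan algebra. Combining the two chains of equivalences yields the theorem.

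The main obstacle I anticipate is the faithful transposition of the arguments of Theorem \ref{dmltheory} to the $\lambda$-geometric setting. One must check that the $\lambda$-geometric coverage $J$ behaves well with respect to the cover-mono factorization in $\mathcal{C}_\T^{\lambda\text{-}geom}$, and that the ideals of subobjects arising from $J$-closed sieves are indeed $<\lambda$-generated so that their joins genuinely exist inside $\mathcal{C}_\T^{\lambda\text{-}geom}$. This is where the hypotheses $\lambda > |\Sigma|$ and the weak inaccessibility of $\lambda$ enter substantively: they guarantee that the relevant cardinalities remain below $\lambda$ under the syntactic constructions involved, and that the equivalence $\mathcal{E}_\lambda[\T] \simeq \Set^{\T\text{-}\Mod(\Set)_{<\lambda}}$ of \cite{espindola2023every} actually holds. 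Once these size issues are verified, both equivalences above become routine generalizations of the geometric case, and the theorem falls out of their combination.
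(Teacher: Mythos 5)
Your proposal follows essentially the same route as the paper: the theorem is obtained there precisely by combining the presheaf description $\mathcal{E}_\lambda[\T]\simeq \Set^{\T\text{-}\Mod(\Set)_{<\lambda}}$ with Proposition \ref{Ore} on one side, and the $\lambda$-geometric analogue of Theorem \ref{dmltheory} applied to the site $(\Lex(\mathcal{C}_\T,\Set)_{<\lambda})^{op}$ on the other, exactly as you describe. The only point to be careful about is that the principality of the ideals of subobjects on the sheaf side relies on the arbitrary set-indexed disjunctions present in $\lambda$-geometric logic rather than merely the $<\lambda$-ary ones you invoke.
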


\begin{corollary}
    Let $\T$ be a geometric theory in a signature $\Sigma$, $\lambda$ an inaccessible cardinal, $\lambda>2^{|\Sigma|}$, and suppose that $ev_\lambda$ preserves pseudo-complements. If the category of set-based models of cardinality $<\lambda$ of $\T$ verifies amalgamation, then $\T$ is De Morgan.
\end{corollary}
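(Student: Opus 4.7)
The plan is to reduce the corollary to a direct combination of the preceding theorem with the syntactic criterion of Theorem \ref{dmltheory}. By that criterion, showing that $\T$ is De Morgan amounts to showing that $\textup{Sub}_{\mathcal{C}_\T}(\{x.\top\})$ is a De Morgan algebra for every \emph{finite} context $x$, i.e.\ that $\psi^{*}\vee\psi^{**}=\top$ in this lattice for every geometric formula $\psi$ in finite context $x$. So the whole argument consists in transferring the De Morgan property from the $\lambda$-geometric setting back to the ordinary geometric one.

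First, I would apply the preceding theorem: the amalgamation hypothesis yields that $\textup{Sub}_{\mathcal{C}_\T^{\lambda\text{-}geom}}(\{y.\top\})$ is a De Morgan algebra for every $y$ with $|y|<\lambda$, and in particular for any finite context $x$ of interest. Next, I would invoke the canonical functor $\iota:\mathcal{C}_\T\xrightarrow{}\mathcal{C}_\T^{\lambda\text{-}geom}=(\Lex(\mathcal{C}_\T,\Set)_{<\lambda})^{op}$, which is just the Yoneda embedding into the free completion under $<\lambda$-limits. This $\iota$ is fully faithful, hence conservative on subobject lattices, and it preserves finite joins and top elements because these subobjects are computed via the geometric syntax that $\iota$ interprets faithfully in $\mathcal{C}_\T^{\lambda\text{-}geom}$. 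Moreover, the commutative triangle recalled just before the theorem above, together with the fact that the Yoneda embedding $\yo:(\Lex(\mathcal{C}_\T,\Set)_{<\lambda})^{op}\xrightarrow{}\mathbf{Sh}((\Lex(\mathcal{C}_\T,\Set)_{<\lambda})^{op},J)$ is full, faithful and itself preserves pseudo-complements (Lemma 3.1 of \cite{butz1998classifying}), shows that the assumption ``$ev_\lambda$ preserves pseudo-complements'' transfers to $\iota$, i.e.\ $\iota(\psi^{*})=\iota(\psi)^{*}$ for every geometric $\psi$.

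Putting this together, for any geometric $\psi$ in finite context $x$ we have
$$\iota(\psi^{*}\vee\psi^{**})=\iota(\psi^{*})\vee\iota(\psi^{**})=\iota(\psi)^{*}\vee\iota(\psi)^{**}=\top=\iota(\top),$$
where the third equality uses the De Morgan property of the target subobject lattice established in the first step. Conservativity of $\iota$ on subobjects then gives $\psi^{*}\vee\psi^{**}=\top$ in $\textup{Sub}_{\mathcal{C}_\T}(\{x.\top\})$, concluding the proof via Theorem \ref{dmltheory}.

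The main obstacle is purely bookkeeping: one must verify that $\iota$ really has the three properties used above (preservation of finite joins, conservativity on subobjects, and preservation of pseudo-complements under the $ev_\lambda$ hypothesis). Preservation of joins and conservativity are formal consequences of the description of $\mathcal{C}_\T^{\lambda\text{-}geom}$ as the $\lambda$-geometric syntactic category of $\T$, while preservation of pseudo-complements is precisely the translation of the hypothesis through the factorization $ev_\lambda=\yo\circ\iota$ and the cited Butz--Moerdijk lemma. Once these formalities are checked, the argument itself is essentially one line.
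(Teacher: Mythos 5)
Your proposal is correct and follows essentially the same route as the paper: apply the preceding theorem to get De Morgan's law in the subobject lattices of the $\lambda$-geometric syntactic category, then transfer it back along the canonical functor $\mathcal{C}_\T\xrightarrow{}(\Lex(\mathcal{C}_\T,\Set)_{<\lambda})^{op}$ (the paper's $\phi$), using that it is conservative, preserves joins, and — by the hypothesis on $ev_\lambda$ together with the Butz--Moerdijk lemma and full faithfulness of $\yo$ — preserves pseudo-complements, concluding via Theorem \ref{dmltheory}. Your write-up merely makes explicit the bookkeeping that the paper compresses into the sentence ``since it is also conservative and preserves disjunctions, we directly get that $\T$ is De Morgan.''
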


\begin{remark}\label{rem:gu}
    Assuming the Grothendieck Universe axiom (which is reasonable~\cite{maddy1988believing}), we see that condition~\ref{def:spc} becomes purely syntactic. Indeed, recall that $ev_\lambda$ preserves pseudo-complements if and only if the inclusion $\phi:\mathcal{C}_\T\to \mathcal{C}_\T^{\lambda\text{-}geom}$ preserves pseudo-complements, which is a syntactic property. So we just need to prove that $ev$ preserves pseudo-complements if and only if $ev_\lambda$ preserves pseudo-complements for some big enough inaccessible $\lambda$. One direction is easy. For the other one, first notice that the geometric formulas over some signature $\Sigma$ form a set by~\cite[Lemma D1.3.8]{johnstone2002sketches}, so for every context $\mathbf{x}$ and every pair $\{\mathbf{x}.\phi\}$, $\{\mathbf{x}.\psi\}$ which are not pseudo-complements, we can choose either a model $M$ with parameters $a$ such that $M\not\models\psi(a)$ and for any $h:M\xrightarrow{} N$ in $\T\text{-}\Mod(\Set)$, $N\not\models\phi(h(a))$, or a pair $(M,a)$ and a homomorphism $h:M\to N$ with $N$ another model of $\T$ such that $M\models\psi(a)$ and $N\not\models\phi(h(a))$. The collection of all the chosen models $M$ and $N$ form a set, so we can just take $\lambda$ to be an inaccessible cardinal bigger that the sup of all their sizes. 
\end{remark}

\noindent On the other hand,~\cite[Theorem 5.5]{espindola2023every} suggests that going to $\lambda$-geometric logic makes the `finitely presented' models of $\T$ exactly the models of size $<\lambda$, because we can arbitrarily take infinite ($<\lambda$) conjunctions of formulas. More formally, we can prove: 
\begin{proposition}\label{prop:lambdapres}
    Let $\Sigma$ a signature, $\lambda$ a regular cardinal bigger than $|\Sigma|$, $\T$ a geometric theory, $M$ a model of $\T$. Then $M$ is presented by a $\lambda$-geometric formula $\phi(a)$ if and only if $M$ is of size $<\lambda$.
\end{proposition}
\begin{proof}
    One direction is easy: if $M$ is of size $<\lambda$, we can just take $a$ to be a tuple enumerating all the elements of $M$, and $\phi(a)$ to be the conjunction of all the atomic formulas satisfied by $a$. Conversely, let $M=M_{\phi(a)}$. By the definition of a $\lambda$-geometric formula, $a$ is a tuple of size $<\lambda$. In addition, it is easy to show, as in the case of geometric logic, that $\phi$ is equivalent to an infinite disjunction of formulas of the form $\exists\mathbf{x}_\gamma \bigAND_{i\in I}\phi_i$, where $\gamma,\,|I|<\lambda$ and $\phi_i$ are atomic formulas. Since $M\models \phi(a)$, then $a$ makes one of these formulas true, i.e.~there is $b\in M$ a tuple of size $<\lambda$ such that $M\models (\bigAND_{i\in I}\phi_i)(a,b)$. Let $<a,b>$ be the substructure generated by $a$ and $b$. Since $\lambda$ is bigger than the cardinality of the language, $<a,b>$ is of size $<\lambda$, so by (a similar reasoning as in the proof of) downward Löwenheim-Skolem, we can find $N$ a submodel of $M$ of same size $<\lambda$ containing $<a,b>$ such that the inclusion $N\subseteq M$ reflects coherent formulas, equivalently geometric formulas, equivalently geometric sequents. Now since $M$ is presented by $\phi(a)$, there is a morphism $f:M\xrightarrow{}N$ sending $a$ to $a$. Moreover, if we compose this morphism with the inclusion $N\subseteq M$, we get a morphism $M\xrightarrow{}M$ sending $a$ to $a$. But there is a unique such morphism which is the identity morphism. So $f$ is injective, which automatically gives $|M|<\lambda$ (in fact $M\cong N$).
\end{proof}

\noindent We also have in the setting of~\cite{espindola2023every} that the $<\lambda$ models are jointly conservative by the choice of $\lambda$, so the main Theorem 5.5 therein, summarized by the diagram above, seems to be a generalization of~\cite[Corollary 9.1.3]{caramello2018theories} to $\lambda$-geometric logic, where we just replace ``finite'' by ``$<\lambda$''. We state this theorem here for the readers convenience. 
\begin{theorem}[\cite{caramello2018theories} Corollary 9.1.3]
    Let $\Sigma$ be a finite signature and $K$ a category of finite $\Sigma$-structures
and $\Sigma$-structure homomorphisms between them. Then the common geometric theory
of $K$, $Th(K)$, is of presheaf type, classified by the topos $\Psh(K^{op})$, and its
finitely presentable models are precisely the finite ones.
\end{theorem}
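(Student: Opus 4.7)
The plan is to apply a standard criterion for a geometric theory $\T$ to be of presheaf type: if $K$ is a small subcategory of $\T\text{-}\Mod(\Set)$ whose objects are jointly conservative for $\T$ and all finitely presentable, and if $K$ is equivalent to the full subcategory $f.p.\T\text{-}\Mod(\Set)$, then $\T$ is of presheaf type, classified by $\mathbf{Psh}(K^{op})$. The proof thus reduces to establishing these three conditions for $\T = Th(K)$, plus identifying the finitely presentable models with the finite ones.

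The central construction is that of the \emph{positive diagram} of a finite structure. For each $M \in K$, enumerate $M = \{a_1, \ldots, a_n\}$ and let $\phi_M(x_1, \ldots, x_n)$ be the conjunction of all atomic formulas, together with the equations $x_i = x_j$ whenever $a_i = a_j$, that $\vec{a}$ satisfies in $M$. Since $\Sigma$ is finite and $M$ is finite, $\phi_M$ is a finite conjunction, hence a genuine coherent formula. One checks that $M$ is finitely presented by $\phi_M$: indeed $M \models \phi_M(\vec{a})$, and for any $N \models Th(K)$ and tuple $\vec{b}$ with $N \models \phi_M(\vec{b})$, the assignment $a_i \mapsto b_i$ extends uniquely to a homomorphism $M \to N$ (well-defined because the atomic equations encoded in $\phi_M$ are preserved, unique because $\vec{a}$ enumerates $M$). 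Joint conservativity of $K$ for $Th(K)$ is immediate from the very definition of $Th(K)$.

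It remains to show, conversely, that every finitely presentable model of $Th(K)$ is, up to isomorphism, an object of $K$. Take an f.p.\ model $N$ presented by some geometric formula $\psi(\vec{x})$, with $N \models \psi(\vec{b})$. Joint conservativity yields some $M \in K$ and $\vec{c} \in M$ with $M \models \psi(\vec{c})$, producing a homomorphism $h : N \to M$. In the other direction, the positive-diagram presenting property of $M$, combined with preservation of coherent formulas, produces a homomorphism $M \to N$; working through the uniqueness clauses of both presentations shows that $N$ is a retract of $M$, hence finite, and (after closing $K$ under retracts if necessary) isomorphic to an object of $K$.

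The main obstacle is precisely this last step: establishing that f.p.\ implies finite. When $\Sigma$ contains function symbols, a finitely generated structure need not be finite, so the argument genuinely requires the retraction against a finite member of $K$, essentially relying on both the finiteness of $\Sigma$ and on $K$ consisting of finite structures. Once $K$ is identified with $f.p.Th(K)\text{-}\Mod(\Set)$, applying the presheaf-type criterion yields the equivalence $\mathcal{E}[Th(K)] \simeq \mathbf{Psh}(K^{op})$, and simultaneously shows that the finitely presentable models of $Th(K)$ are exactly the finite ones.
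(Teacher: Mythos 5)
The paper offers no proof of this statement---it is quoted verbatim from \cite{caramello2018theories} ``for the reader's convenience''---so there is no in-paper argument to compare yours against, and I assess the proposal on its own terms. Your positive-diagram construction is correct and is indeed the key device: finiteness of $\Sigma$ and of each $M\in K$ makes $\phi_M$ a genuine coherent formula presenting $M$, and joint conservativity is immediate. The first genuine gap is the ``standard criterion'' you invoke at the outset: having a jointly conservative family of finitely presentable models equivalent to $f.p.\T\text{-}\Mod(\Set)$ does \emph{not} imply presheaf type. For the coherent theory of fields, every field is the directed union of its finitely generated subfields, which are exactly the finitely presentable models, so these are jointly conservative; yet that theory is not of presheaf type (e.g.\ $\mathbb{Q}$ is finitely presentable but presented by no geometric formula, since any geometric sentence true in $\mathbb{Q}$ already holds in some $\mathbb{F}_p$). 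The actual characterization---Theorem 6.1.13 of the very book being cited---requires, besides joint conservativity, that every finitely presentable model be finitely \emph{presented} by a formula (which your $\phi_M$ supplies) \emph{and} a third, definability condition: every homomorphism-invariant assignment of subsets of tuples to the finitely presentable models must be definable by a geometric formula. You never address this condition. It does hold here---such a property $R$ is defined by the disjunction, over all pairs $(M,\vec{a})$ with $\vec{a}\in R_M$, of $(\exists\vec{y})\,\phi_M(\vec{x},\vec{y})$---but it is exactly the condition that fails for other theories with jointly conservative, formula-presented f.p.\ models, so it cannot be omitted.

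The second gap is the retraction step. Given $N$ finitely presentable, presented by $\psi$ with generators $\vec{b}$, and $h:N\to M$ with $M\in K$ realizing $\psi$, you claim that the presenting property of $M$ ``combined with preservation of coherent formulas, produces a homomorphism $M\to N$''. Preservation runs the wrong way: $h$ transports satisfaction from $N$ to $M$, and nothing forces $N\models(\exists\vec{z})\,\phi_M$; moreover an arbitrary realizer $M$ of $\psi$ in $K$ need not admit any homomorphism to $N$ at all, even when some other realizer does. What works is to observe that the sequent $\psi(\vec{x})\vdash_{\vec{x}}\bigvee(\exists\vec{y})\,\phi_{M}(\vec{x},\vec{y})$, the disjunction ranging over all pairs $(M,\vec{c})$ with $M\in K$ and $M\models\psi(\vec{c})$, is valid in every member of $K$, hence lies in $Th(K)$, hence holds of $\vec{b}$ in $N$; this yields some $M\in K$ and $r:M\to N$ with $r(\vec{c})=\vec{b}$, and then $r\circ h$ fixes the generators of $N$ and so equals $\id_N$ by the uniqueness clause of the presentation. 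This sequent, not ``preservation'', is where the finiteness hypotheses do their work. Finally, this only identifies the f.p.\ models with retracts of members of $K$; to conclude that the classifying topos is $\mathbf{Psh}(K^{op})$ for the \emph{original} $K$ you still need to remark that idempotent completion does not change the presheaf topos.
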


\noindent This point of view seems conceptually interesting, and could lead to new results in the same vein of~\cite{espindola2023every}. 

\subsection{Amalgamation via finitely presented structures}

As already mentioned, a theory of presheaf type is De Morgan if and only if $\textup{f.p.}\mathbb{T}\text{-}\textup{mod}(\Set)$ has the amalgamation property. In \cite{bagchi1992morgan}, Bagchi gives an analogue of this result for coherent theories, using purely model-theoretic tools. Let us start with some definitions. In this section, $\mathbb{T}$ is a relational coherent theory. We denote by $FP(\mathbb{T})$ the finitely presented structures that embed in a model of $\mathbb{T}$.
\begin{definition}
    Let $G\in FP(\mathbb{T})$, $g\in G^n$, and a diagram in $FP(\mathbb{T})$ 
    \begin{center}
            \begin{tikzcd}
        G \arrow[r,""]\arrow[d,""] & G_1 \\
        G_2
    \end{tikzcd}
    \end{center}
    An amalgamation in $Mod_\mathbb{T}(\Set)$ is a model $A$ of $\mathbb{T}$ and morphisms $G_i\xrightarrow{} A$ which make the following diagram commute.
    \begin{center}
            \begin{tikzcd}
        n\arrow[r,"g"] & G \arrow[r,""]\arrow[d,""] & G_1 \arrow[d,""] \\
        & G_2\arrow[r,""] & A
    \end{tikzcd}
    \end{center}
\end{definition}
\begin{definition}
    Let $G,G_1,G_2\in FP(\mathbb{T})$, $g\in G^n$. We say that $(G_1,G_2)$ is a base of conditions over $(G,g)$ if there is a diagram as above that cannot be amalgamated.
\end{definition}
\begin{definition}
    Let $A\in Mod_\mathbb{T}()$, $a\in A^n$. We say that $(A,a)$ allows amalgamation over conditions in $Mod_\mathbb{T}(\Set)$ if there are $(G,g)$ and $h:G\xrightarrow{}A$ such that $a=h(g)$ and there is no base of conditions over $(G,g)$. We say that $A$ allows amalgamation over conditions if $(A,a)$ allows amalgamation over conditions for all $n$, $a\in A^n$.
\end{definition}
We can now state Bagchi's result.
\begin{theorem}[\cite{bagchi1992morgan}]\label{bagchi2}
$\mathbb{T}$ is De Morgan if and only if every $A\in Mod_\mathbb{T}(\Set)$ allows amalgamation over conditions.    
\end{theorem}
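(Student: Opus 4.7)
The plan is to invoke Bagchi's syntactic criterion (Theorem~\ref{bagchi}), which reduces $\mathbb{T}$ being De Morgan to $D_n(\mathbb{T})$ being a De Morgan algebra for every $n$, and then translate the amalgamation condition over $FP(\mathbb{T})$ into a lattice-theoretic condition on $D_n(\mathbb{T})$. The starting point is the standard correspondence, for a relational coherent theory, between pairs $(G,g)$ with $G\in FP(\mathbb{T})$ and $g\in G^n$, and non-zero coherent formulas $\phi_{G,g}(x_1,\ldots,x_n)$ in $D_n(\mathbb{T})$: $\phi_{G,g}$ is the existential closure of the positive diagram of $G$ over the tuple $g$, and conversely every non-zero coherent formula arises this way. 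Under this dictionary, a morphism $(G,g) \to (G_i,g_i)$ as appearing in the definition of a base of conditions corresponds to $\phi_{G_i,g_i} \leq \phi_{G,g}$ in $D_n(\mathbb{T})$, and the amalgamability of a span $(G_1,g_1) \leftarrow (G,g) \rightarrow (G_2,g_2)$ is equivalent to $\phi_{G_1,g_1} \wedge \phi_{G_2,g_2} \neq 0$. Hence ``$(A,a)$ allows amalgamation over conditions'' is equivalent to the existence of a coherent formula $\phi$ with $A \models \phi(a)$ that is \emph{prime} in the sense that for all coherent $\phi_1, \phi_2 \leq \phi$ in $D_n(\mathbb{T})$, $\phi_1 \wedge \phi_2 = 0$ forces $\phi_1 = 0$ or $\phi_2 = 0$.

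For the direction ``amalgamation over conditions for every $(A,a)$'' $\Rightarrow$ ``$\mathbb{T}$ De Morgan'', I argue by contrapositive. If some $D_n(\mathbb{T})$ fails De Morgan, there exist $\chi \in D_n(\mathbb{T})$ and a set-based model $(A,a)$ with $A \not\models \neg\chi(a)$ and $A \not\models \neg\neg\chi(a)$. Let $\phi$ be a prime coherent formula true at $(A,a)$ provided by hypothesis. Since $A \models \phi(a)$ and $A \not\models \neg\chi(a)$, one has $\phi \wedge \chi \neq 0$ in $D_n(\mathbb{T})$ (else $\phi$ itself would witness $\neg\chi$). Since $A \not\models \neg\neg\chi(a)$, one has $\phi \not\leq \neg\neg\chi$, so decomposing $\neg\chi = \bigvee_i \chi'_i$ into its coherent joinands and using frame distributivity, one extracts a coherent $\chi'_i$ with $\chi \wedge \chi'_i = 0$ and $\phi \wedge \chi'_i \neq 0$. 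Then $\phi_1 := \phi \wedge \chi$ and $\phi_2 := \phi \wedge \chi'_i$ are non-zero coherent subformulas of $\phi$ with $\phi_1 \wedge \phi_2 = 0$, contradicting primality.

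The converse direction requires more care. Fix $(A,a)$ and write $\mathcal{F} = \{\phi \text{ coherent} : A \models \phi(a)\}$. The natural strategy is iterative refinement: starting from any coherent $\phi_0 \in \mathcal{F}$, if a splitting $\phi_1 \wedge \phi_2 = 0$ below $\phi_0$ with $\phi_1,\phi_2$ non-zero exists, applying De Morgan's law to $\phi_1$ at $(A,a)$ yields either (i) a $\psi \in \mathcal{F}$ with $\psi \wedge \phi_1 = 0$, so $\phi_0 \wedge \psi \in \mathcal{F}$ kills $\phi_1$, or (ii) a $\psi \in \mathcal{F}$ with $\psi \leq \neg\neg\phi_1$, whence $\psi \wedge \phi_2 \leq \neg\neg\phi_1 \wedge \neg\phi_1 = 0$ and $\phi_0 \wedge \psi$ kills $\phi_2$ instead. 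The technically delicate step---and the main obstacle---is termination: naive iteration yields a descending transfinite chain in $\mathcal{F}$ whose meet need not be coherent, so one must produce a single $\phi \in \mathcal{F}$ killing all splittings simultaneously. Bagchi's original argument handles this via model-theoretic compactness in coherent logic, exploiting the finitary nature of coherent formulas in the relational signature to reduce the problem to boundedly many simultaneous refinements.
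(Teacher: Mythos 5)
The paper does not actually prove this statement: it is quoted from Bagchi's article with the remark that ``the proof is purely model-theoretic'', and the subsequent reformulation in terms of weak relative amalgamation is \emph{derived from} Theorem~\ref{bagchi2} rather than used to establish it. So your attempt must stand on its own, and it contains a genuine gap in the direction ``De Morgan $\Rightarrow$ every $(A,a)$ allows amalgamation over conditions''. Your reduction via Theorem~\ref{bagchi} to finding, for each $A\models\mathbb{T}$ and each tuple $a$, a coherent formula $\phi$ true at (an extension of) $a$ that is prime in $D_m(\mathbb{T})$ is a reasonable dictionary, and your contrapositive argument for the other direction is essentially sound. But for the converse you only describe the obstruction and then assert that ``Bagchi's original argument handles this via model-theoretic compactness\dots to reduce the problem to boundedly many simultaneous refinements'' without exhibiting any such reduction. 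That reduction \emph{is} the theorem in this direction: below a given $\phi_0\in\mathcal{F}$ there are in general infinitely many disjoint pairs of nonzero coherent formulas, each refinement step kills only one of them, the descending chain has no coherent meet, and compactness by itself produces no finite subfamily of pairs that suffices. As written, the hard half of the equivalence is not proved.

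Two smaller points on the half you do prove. First, the element $\chi$ witnessing the failure of De Morgan in $D_n(\mathbb{T})$ is an infinite disjunction of coherent formulas, so $\phi\wedge\chi$ need not be coherent; you must decompose $\chi=\bigvee_j\chi_j$ into coherent joinands and use distributivity to replace $\phi\wedge\chi$ by some nonzero coherent $\phi\wedge\chi_j$, exactly as you already do for $\neg\chi$. Second, the dictionary between nonzero disjoint coherent $\phi_1,\phi_2\leq\phi_{G,g}$ and bases of conditions over $(G,g)$ is not immediate: from a coherent $\phi_i\leq\phi_{G,g}$ one must actually construct a structure $G_i\in FP(\mathbb{T})$ with a morphism $G\to G_i$ and $\phi_{G_i,g_i}\leq\phi_i$ (take a model of $\mathbb{T}$ realizing $\phi_i$, hence realizing $\phi_{G,g}$, and pass to the finite substructure generated by the image of $G$ together with the existential witnesses), and the tuple $g$ must enumerate $G$ for amalgamation of the span over $G$ to coincide with joint satisfiability of the two formulas at a common tuple. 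These are routine repairs; the missing converse is the real problem.
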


The proof is purely model-theoretic. We can state a more general version of this for $I_r$, by considering amalgamation of diagrams with $r+1$ legs. We want to interpret this topos-theoretically, and eventually generalize it to geometric theories.

\begin{definition}
    Let $\T$ be a geometric theory which is an extension of a theory $\T'$ of presheaf type in the same language. We note $FP(\T',\T)$ the set of finitely presented models of $\T'$ which admit a morphism into a model of $\T$.
\end{definition}

\begin{definition}\label{relative_amalg}
    Let $\T'\subseteq\T$ geometric theories, $\T'$ of presheaf type. We say that $\T$ has the relative amalgamation property with respect to $\T'$ if for every model $A\models\T$, for every $a:G'\xrightarrow{}A$ with $G'\in FP(\T',\T)$, there is $h:G\xrightarrow{}A$ with $G\in FP(\T',\T)$ and a morphism $g:G'\xrightarrow{}G$ over $A$ such that every diagram as bellow can be amalgamated
    \begin{center}
            \begin{tikzcd}
        G'\arrow[r,"g"] & G \arrow[r,""]\arrow[d] & G_1 \arrow[d, dotted] \\
        & G_2\arrow[r, dotted] & B
    \end{tikzcd}
    \end{center}
    where $G_1,\, G_2\in FP(\T',\T)$, $B\models\T$.
\end{definition}

We would like to characterize the theories $\T$ such that $\T$ is De Morgan if and only if $\T$ has the relative amalgamation property with respect to some theory $\T'$, for example the empty theory in the same language. Bagchi essentially proved that this is true for coherent relational countable theories (see below). In fact, the relative amalgamation property condition can be interpreted in the following way: one would like to characterize DML via amalgamation of finitely presented models, but in the general case there are not enough such models, so we should look at finitely presented models of a smaller theory instead. As a consequence, one would expect that theories of presheaf type indeed satisfy DML if and only if they have the relative amalgamation property. This is true in the case where the finitely presented models of $\T$ are finitely presented as models of $\T'$, which is a reasonable assumption in this case. In fact, this is almost tautological.

\begin{theorem}
    Let $\T'\subseteq\T$ geometric theories of presheaf type. Suppose that the finitely presentable models of $\T$ are finitely presentable as models of $\T'$. Then $\T$ is De Morgan if and only if it has the relative amalgamation property with respect to $\T'$.
\end{theorem}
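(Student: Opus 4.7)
The plan is to reduce the statement to amalgamation in the full category $\T\text{-}\Mod(\Set)$: by Proposition \ref{Ore} applied to the classifying topos $\mathbf{Psh}((f.p.\T\text{-}\Mod(\Set))^{op})$ of $\T$, together with the theorem of Section \ref{sec:presheaf_type} stating that a category has the amalgamation property if and only if its ind-completion does, being De Morgan for $\T$ is equivalent to $\T\text{-}\Mod(\Set)$ having the amalgamation property.

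For the direction $(\Rightarrow)$, I would start with $A \models \T$ and $a: G' \to A$ with $G' \in FP(\T', \T)$. Since $\T$ is of presheaf type, write $A = \colim_i A_i$ as a filtered colimit of finitely presentable $\T$-models. By hypothesis each $A_i$ is also finitely presentable as a $\T'$-model, and since filtered colimits in both $\T\text{-}\Mod(\Set)$ and $\T'\text{-}\Mod(\Set)$ are computed at the $\Sigma$-structure level they coincide; finite presentability of $G'$ as a $\T'$-model then lets $a$ factor through some $A_i$, and setting $G := A_i$ gives an element of $FP(\T', \T)$ together with $g: G' \to G$ and $h: G \to A$ satisfying $h \circ g = a$. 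To verify the amalgamation clause, given a span $G_1 \leftarrow G \to G_2$ with $G_i \in FP(\T', \T)$, pick $B_i \models \T$ with $G_i \to B_i$; the induced span $B_1 \leftarrow G \to B_2$ lives in $\T\text{-}\Mod(\Set)$, and amalgamation there produces $B \models \T$ whose composites with $G_i \to B_i$ give the desired amalgamation.

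For the direction $(\Leftarrow)$, the key trick will be to apply the relative amalgamation property to $G' := A := M_0$ with $a := \id_{M_0}$, for any finitely presentable $\T$-model $M_0$ (which lies in $FP(\T', \T)$ by hypothesis). This produces $G \in FP(\T', \T)$ together with $g: M_0 \to G$ and $h: G \to M_0$ satisfying $h \circ g = \id_{M_0}$, so that $g$ is a split monomorphism. Given now any span $M_1 \xleftarrow{f_1} M_0 \xrightarrow{f_2} M_2$ of finitely presentable $\T$-models (hence in $FP(\T', \T)$), form the induced span $M_1 \xleftarrow{f_1 \circ h} G \xrightarrow{f_2 \circ h} M_2$ and apply the amalgamation property at $G$ to obtain $B \models \T$ and $b_i: M_i \to B$ with $b_1 \circ f_1 \circ h = b_2 \circ f_2 \circ h$. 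Precomposing with $g$ and using $h \circ g = \id_{M_0}$ yields $b_1 \circ f_1 = b_2 \circ f_2$, so $B$ amalgamates the original span in $\T\text{-}\Mod(\Set)$, and the initial reduction delivers De Morganness of $\T$.

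The main obstacle is the $(\Leftarrow)$ direction, specifically spotting that one must apply relative amalgamation to the identity of $M_0$ rather than to a leg of the span. A naive attempt with $A = M_1$ produces an intermediate $G$ sitting between $M_0$ and $M_1$ but offers no natural second arrow out of $G$ towards $M_2$; the splitting $g: M_0 \to G$ arising from $A = M_0$ is precisely the ingredient that lets one ``pull back along $h$'' an arbitrary span out of $M_0$ to a span out of $G$, where the amalgamation hypothesis directly applies and can then be transported back along $g$.
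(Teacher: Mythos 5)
Your proposal is correct and follows essentially the same route as the paper: the forward direction factors $a$ through a finitely presentable $\T$-model using the filtered-colimit presentation of $A$ and then amalgamates the resulting span of $\T$-models, and the backward direction hinges on exactly the paper's key trick of applying relative amalgamation to the identity of the apex to obtain a splitting $h\circ g=\id$ through which an arbitrary span can be transported. The only cosmetic difference is that you phrase the reduction in terms of amalgamation in all of $\T\text{-}\Mod(\Set)$ via the $\Ind$-completion theorem, whereas the paper works directly with amalgamation in $f.p.\T\text{-}\Mod(\Set)$ via Proposition \ref{Ore}; these are interchangeable given the results of section \ref{sec:presheaf_type}.
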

\begin{proof}
    Suppose that $f.p.\T$-$\Mod(\Set)$ has the amalgamation property. Consider $A\models\T$, $a:G'\xrightarrow{}A$ with $G'\in FP(\T',\T)$. Recall that $A$ can be written as a filtered colimit of finitely presented models of $\T$. Now $A$ is a model of $\T'$ and $G'$ is a finitely presented model of $\T'$, so $a$ factors through some $h:G\xrightarrow{}A$ with $G$ finitely presented model of $\T$, with $a=h\circ g$ for some $g:G'\xrightarrow{}G$. Now consider a diagram  
    \begin{center}
            \begin{tikzcd}
        G'\arrow[r,"g"] & G \arrow[r,""]\arrow[d] & G_1 \\
        & G_2 & 
    \end{tikzcd}
    \end{center}
    with $G_1,\, G_2\in FP(\T',\T)$, i.e. there is $g_1:G_1\xrightarrow{}A_1$ and $g_2:G_2\xrightarrow{}A_2$ with $A_1,\,A_2\models\T$. $g_i$ factors through some finitely presented model $G'_i$ of $\T$, and we get a diagram 
     \begin{center}
            \begin{tikzcd}
        G'\arrow[r,"g"] & G \arrow[r,""]\arrow[d] & G'_1 \\
        & G'_2 & 
    \end{tikzcd}
    \end{center}
    by composition, and this diagram admits amalgamation by hypothesis because $G,\,G'_1,\,G'_2$ are all finitely presented models of $\T$.\\
    \\
    Conversely, consider a diagram 
     \begin{center}
            \begin{tikzcd}
        G' \arrow[r,"g_1"]\arrow[d, "g_2"'] & G_1 \\
        G_2 & 
    \end{tikzcd}
    \end{center}
    in $f.p.\T$-$\Mod(\Set)$. By hypothesis, we can write the identity morphism $G'\xrightarrow{}G'$ as $$G'\xrightarrow{g}G\xrightarrow{h}G'$$
    and we can amalgamate
    \begin{center}
            \begin{tikzcd}
        G'\arrow[r,"g"] & G \arrow[r,"h"] & G' \arrow[r,""]\arrow[d] & G_1 \arrow[d, dotted] \\
       & & G_2\arrow[r, dotted] & B
    \end{tikzcd}
    \end{center}
    This easily implies that we can amalgamate the diagram by a finitely presented model.
\end{proof}
Notice that we do not really need $G'$ in definition \ref{relative_amalg} to be finitely presentable as model of $\T'$, but just `relatively to models of $\T$'. With this assumption, we then do not need the finitely presentable models of $\T$ to be finitely presentable as models of $\T'$.\\
\\
However, this is not completely satisfying because Bagchi's Theorem~\ref{bagchi2} does not exactly say that a coherent relational countable theory is De Morgan if and only if it has the relative amalgamation property with respect to the empty theory. So we give another version, which is equivalent for theories of presheaf type.

\begin{definition}
    Let $\T'\subseteq\T$ geometric theories, $\T'$ of presheaf type. We say that $\T$ has the weak relative amalgamation property with respect to $\T'$ if for every model $A\models\T$, $A$ is the colimit of a filtered diagram $(G_i)_{i\in I}$ of models of $\T'$, such that for every $G_i\xrightarrow{}A$, there is $h:G\xrightarrow{}A$ with $G$ finitely presented model  of $\T'$ and a morphism $g:G_i\xrightarrow{}G$ over $A$ such that every diagram as bellow can be amalgamated
    \begin{center}
            \begin{tikzcd}
        G_i\arrow[r,"g"] & G \arrow[r,""]\arrow[d] & H_1 \arrow[d, dotted] \\
        & H_2\arrow[r, dotted] & B
    \end{tikzcd}
    \end{center}
    where $H_1,\, H_2\in FP(\T',\T)$, $B\models\T$.
\end{definition}

\begin{theorem}
    Let $\T'\subseteq\T$ geometric theories of presheaf type. Suppose that the finitely presentable models of $\T$ are finitely presentable as models of $\T'$. Then $\T$ is De Morgan if and only if it has the weak relative amalgamation property with respect to $\T'$.
\end{theorem}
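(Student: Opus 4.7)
The plan is to reduce to the preceding theorem by showing that, under the standing hypothesis, the weak relative amalgamation property is equivalent to the (ordinary) relative amalgamation property of Definition \ref{relative_amalg}.

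First I would check the implication relative $\Rightarrow$ weak. Given a model $A\models\T$, since $\T'$ is of presheaf type, $A$ (viewed as a model of $\T'$) is the filtered colimit of its canonical diagram of finitely presented models $(G_i)_{i\in I}$ of $\T'$ equipped with morphisms $G_i\to A$. Each such $G_i$ lies in $FP(\T',\T)$ since $A$ is in particular a model of $\T$. Applying the relative amalgamation property of $\T$ with respect to $\T'$ to each morphism $G_i\to A$ then directly produces the data required by the weak version, so relative amalgamation implies weak relative amalgamation, and in particular De Morganness implies the weak property by the previous theorem.

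The converse direction, weak $\Rightarrow$ relative, is the substantive one. Given $A\models\T$ and a morphism $a:G'\to A$ with $G'\in FP(\T',\T)$, one writes $A$ as the filtered colimit of a family $(G_i)_{i\in I}$ of models of $\T'$ furnished by the weak property. Since $G'$ is finitely presented as a model of $\T'$, the morphism $a$ factors through some leg of the colimit, say $a=k_i\circ g'$ with $k_i:G_i\to A$ and $g':G'\to G_i$. The weak property applied to $k_i$ then yields a finitely presented model $G$ of $\T'$, a morphism $h:G\to A$ and $g:G_i\to G$ over $A$ such that every diagram $G\to H_1,\,G\to H_2$ with $H_1,\,H_2\in FP(\T',\T)$ admits an amalgamation by a model of $\T$. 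The composite $g\circ g':G'\to G$ is then the factorization exhibiting $(A,a)$ as satisfying relative amalgamation: a diagram with apex $G$ and legs in $FP(\T',\T)$ is already amalgamated by the weak property. Since $h:G\to A$ exhibits $G$ as an element of $FP(\T',\T)$, this verifies Definition \ref{relative_amalg} for $(A,a)$, and by the previous theorem $\T$ is De Morgan.

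The main technical point is ensuring that $a:G'\to A$ factors through some $G_i$, which is precisely where the standing hypothesis that finitely presented models of $\T$ are finitely presented as models of $\T'$, hence that objects of $FP(\T',\T)$ are finitely presented in $\T'\text{-}\Mod(\Set)$, enters: it guarantees that $\Hom_{\T'\text{-}\Mod(\Set)}(G',-)$ commutes with the filtered colimit defining $A$. Once this factorization step is in place, the proof assembles as above and there is no further obstacle.
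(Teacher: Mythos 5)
Your proof is correct, but it takes a different route from the paper's: you derive the theorem from the preceding one (De Morgan $\iff$ relative amalgamation) by first proving that, under the standing hypothesis, the weak relative amalgamation property is equivalent to the relative amalgamation property of Definition \ref{relative_amalg}; the paper instead repeats the direct argument of the preceding theorem with the necessary modifications (decomposing $A$ into finitely presented $\T$-models, taking $g=\id$ on the legs of that decomposition, and in the converse factoring $\id_H$ through some $G_i$ of the given filtered diagram), and only records the equivalence of the two amalgamation properties as a byproduct afterwards. Your factorization is arguably the cleaner organization, since the equivalence weak $\iff$ relative is the real content and the rest is a citation. One correction to your closing remark, though: the fact that $\Hom_{\T'\text{-}\Mod(\Set)}(G',-)$ preserves the filtered colimit defining $A$ does not come from the standing hypothesis --- objects of $FP(\T',\T)$ are finitely presented models of $\T'$ by definition, so this is automatic. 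In your argument the standing hypothesis is used only inside the preceding theorem that you invoke as a black box (where it is needed to ensure that the factorizing object $G$ is finitely presented over $\T'$, and, in the converse, that a span of finitely presented $\T$-models has its apex in $FP(\T',\T)$); the equivalence weak $\iff$ relative that you establish does not itself require it. This misattribution does not affect the validity of the argument.
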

\begin{proof}
    Suppose that $f.p.\T$-$\Mod(\Set)$ has the amalgamation property. Consider $A\models\T$. Recall that we can write $A=\colim_{i\in I}G_i$ with $G_i$ finitely presented model of $\T$. By assumption, the $G_i$ are also finitely presented models of $\T'$. Now consider a diagram  
    \begin{center}
            \begin{tikzcd}
        G_i\arrow[r,"\id"] & G_i \arrow[r,""]\arrow[d] & H_1 \\
        & H_2 & 
    \end{tikzcd}
    \end{center}
    with $H_1,\, H_2\in FP(\T',\T)$, i.e. there is $h_1:H_1\xrightarrow{}A_1$ and $h_2:H_2\xrightarrow{}A_2$ with $A_1,\,A_2\models\T$. $h_i$ factors through some finitely presented model $H'_i$ of $\T$, and we get a diagram 
     \begin{center}
            \begin{tikzcd}
        G_i\arrow[r,"\id"] & G_i \arrow[r,""]\arrow[d] & H'_1 \\
        & H'_2 & 
    \end{tikzcd}
    \end{center}
    by composition, and this diagram admits amalgamation by hypothesis because $G,\,H'_1,\,H'_2$ are all finitely presented models of $\T$.\\
    \\
    Conversely, consider a diagram 
     \begin{center}
            \begin{tikzcd}
        H \arrow[r,"h_1"]\arrow[d, "h_2"'] & H_1 \\
        H_2 & 
    \end{tikzcd}
    \end{center}
    in $f.p.\T$-$\Mod(\Set)$. Write $H=\colim_{i\in I}G_i$ as given by hypothesis. The identity morphism $H\xrightarrow{}H$ factors through some $G_i$:
    $$\id=H\xrightarrow{f}G_i\xrightarrow{h}H$$
    and there is $g:G_i\xrightarrow{}G$ through which $h$ factors and we can amalgamate
    \begin{center}
            \begin{tikzcd}
        G_i\arrow[r,"g"] & G \arrow[r,""] & H \arrow[r,""]\arrow[d] & H_1 \arrow[d, dotted] \\
       & & H_2\arrow[r, dotted] & B
    \end{tikzcd}
    \end{center}
    We can then precompose with $f$ to get $\id_H$ before the square. This easily implies that we can amalgamate the diagram by a finitely presented model.
\end{proof}

This proves in particular the weak relative amalgamation and relative amalgamation are equivalent in this case. Moreover, we have:
\begin{theorem}[Bagchi]
    Let $\T$ be a coherent countable relational theory. Then $\T$ is De Morgan if and only if it has the weak relative amalgamation property with respect to the empty theory in the same language.
\end{theorem}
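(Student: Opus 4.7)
The plan is to reduce the statement to Bagchi's Theorem \ref{bagchi2}, which asserts that a coherent relational theory $\T$ is De Morgan if and only if every $A \in \T\text{-}\Mod(\Set)$ allows amalgamation over conditions. The key step is to identify this condition, for the countable relational $\T$ at hand, with the weak relative amalgamation property with respect to the empty theory $\T_\emptyset$ in the same countable relational signature $\Sigma$. As a preliminary observation, $\T_\emptyset$ is of presheaf type with finitely presented models given by the finite $\Sigma$-structures: such a structure on $\{a_1,\dots,a_n\}$ is presented by the geometric formula $\bigwedge_{M \models R(a_{i_1},\dots,a_{i_k})} R(x_{i_1},\dots,x_{i_k})$ encoding its positive atomic diagram, where the countability of $\Sigma$ ensures the conjunction is admissible. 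Consequently, $FP(\T_\emptyset,\T)$ coincides with Bagchi's $FP(\T)$, and ``amalgamation by a model of $\T$'' matches amalgamation in $\T\text{-}\Mod(\Set)$.

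For the forward direction, fix $A \models \T$ and present $A$ as the filtered colimit $A = \colim_{i \in I} G_i$ of its finitely generated (hence finite) substructures. A morphism $G_i \to A$ is determined by the tuple $a$ consisting of the images of the generators; Theorem \ref{bagchi2} then supplies $(G,g)$ with $G \in FP(\T)$ and $h : G \to A$ with $h(g) = a$ such that no base of conditions exists over $(G,g)$. Since $G_i$ is finitely presented by its positive atomic diagram and $G$ validates those atomic facts on $g$, the universal property of $G_i$ produces a factoring $G_i \to G$ over $A$, and the no-base-of-conditions clause is exactly the amalgamability of every span $H_1 \leftarrow G \to H_2$ in $FP(\T_\emptyset,\T)$ by a model of $\T$. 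The converse is analogous: given the weak relative amalgamation property and a tuple $a \in A^n$, let $G'$ be the finite substructure of $A$ generated by $a$; the inclusion $G' \hookrightarrow A$ factors through some $G_i$ in the filtered colimit provided by the hypothesis, and the resulting extension $G_i \to G$ with $h : G \to A$ plays the role of Bagchi's $(G,g)$, verifying his criterion.

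The main obstacle lies in rigorously establishing the presheaf-type status of $\T_\emptyset$ in a countable relational signature, since the cited Corollary 9.1.3 of \cite{caramello2018theories} is stated only for finite signatures. The key point is that homomorphisms between $\Sigma$-structures need only preserve, not reflect, atomic formulas, so the presentation of a finite structure requires only its positive atomic diagram—a countable conjunction that remains within countable geometric logic. Once this is settled, the identification between Bagchi's tuple-based data $(G,g)$ and the structure-based morphism $G_i \to G$ is transparent, since a morphism from a finitely presented structure is determined by the images of its generators.
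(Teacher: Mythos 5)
Your overall strategy --- reducing the statement to Theorem \ref{bagchi2} by writing $A$ as the directed colimit of its finite substructures and matching Bagchi's ``allows amalgamation over conditions'' with the weak relative amalgamation property --- is exactly the route the paper takes (its own proof is a two-line sketch of this reduction). However, your forward direction contains a genuine gap. You assert that ``$G$ validates those atomic facts on $g$'' in order to invoke the universal property of $G_i$ and obtain the morphism $G_i\xrightarrow{}G$ over $A$. This does not follow from the data supplied by Theorem \ref{bagchi2}: a homomorphism $h:G\xrightarrow{}A$ with $h(g)=a$ only guarantees that relations holding in $G$ on $g$ are \emph{preserved} into $A$, not that relations holding in $A$ on $a$ (equivalently, in the induced substructure $G_i$) are \emph{reflected} back to $G$. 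For instance, $G$ could carry no relations at all on the tuple $g$ while $a$ satisfies many in $A$; then there is no homomorphism $G_i\xrightarrow{}G$ whatsoever. The step can be repaired: replace $G$ by the structure $G'$ obtained by adjoining to $G$ the relation instances $R(\bar g)$ for which $A\models R(\bar a)$. Then $\id:G\xrightarrow{}G'$ and $h:G'\xrightarrow{}A$ remain homomorphisms, $a\mapsto g$ becomes a homomorphism $G_i\xrightarrow{}G'$ over $A$, and ``no base of conditions over $(G',g)$'' is inherited from $(G,g)$, since every span out of $G'$ restricts along $G\xrightarrow{}G'$ to a span out of $G$ whose amalgamation over the tuple $g$ also amalgamates the original span. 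Some such repair is needed and is absent from your argument.

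A second, related problem is your treatment of the signature. You present a finite $\Sigma$-structure by the conjunction of its positive atomic diagram and argue that countability of $\Sigma$ makes this ``admissible'' as a countable conjunction in ``countable geometric logic.'' Geometric formulas in the sense used throughout the paper admit only \emph{finite} conjunctions, so in an infinite relational signature a finite structure realizing infinitely many relation instances is not finitely presented, and the finitely presented models of the empty theory are the finite structures with only finitely many relation instances. Relationality is invoked precisely to turn finite subsets of $A$ into substructures serving as the $G_i$ of the filtered diagram, which the definition does \emph{not} require to be finitely presented; the objects $G$, $H_1$, $H_2$ that must be finitely presented carry only finitely many relation instances, and the repair above must accordingly adjoin only the finitely many instances relevant at a given stage (or one must work with finite partial substructures). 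Finally, Bagchi's $FP(\T)$ consists of finitely presented structures that \emph{embed} in a model of $\T$, whereas $FP(\T_\emptyset,\T)$ only requires a homomorphism into one; your claim that the two ``coincide'' and that the no-base-of-conditions clause is ``exactly'' the relative amalgamation clause silently passes over this discrepancy, which deserves at least a remark.
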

\begin{proof}
    We need to prove that the weak relative amalgamation is equivalent to the condition in Theorem~\ref{bagchi2}. This just comes from the fact that a model $A\models\T$ can be written as the directed colimit of its finite substructures. Indeed, since the language is relational, any finite subset of $A$ can be turned into a finite substructure. 
\end{proof}

This kind of relative characterization of De Morgan's law may be interesting in connection with relative topos theory \cite{caramello2021relative}.

\section{Enforcing De Morgan's law}\label{sec:enforce}
\subsection{Gleason's cover}
We mentioned at the end of section \ref{sec:dml} that for any topos $\mathcal{E}$, there exists $\mathcal{E}_m\hookrightarrow{}\mathcal{E}$ such that $\mathcal{E}_m$ is De Morgan. However, there is another way of getting a De Morgan topos starting from $\mathcal{E}$, and we get a surjection instead of an inclusion. This is Gleason's cover, first invented for topological spaces by Gleason, and then generalized to toposes by Johnstone in \cite{johnstone1980gleason}. Today, the development of the theory of relative toposes \cite{caramello2021relative} and of existential sites \cite{caramello2022fibred} gives us better tools to study this construction in a more concrete and explicit way. Essentially, the goal of this section is to get back all the results of Jonstone on Gleason's cover using these techniques (which is not staightforward). We hope that this will eventually lead to new results on the subject.

\subsubsection{Generalities on internal locales}\label{sec:int_loc}
We start by recalling some notions of \cite{caramello2021relative} and \cite{caramello2022fibred}. 
\begin{definition}
    For a category $\mathcal{C}$, a $\mathcal{C}$-indexed category is a pseudo-functor $\mathcal{C}^{op}\xrightarrow{}\Cat$.
\end{definition}
Now recall the \emph{Grothendieck construction} on a $\mathcal{C}$-indexed category $\dcat$, noted $\mathcal{G}(\dcat)$, which is the category whose objects are pairs $(c,x)$ with $c$ an object of $\mathcal{C}$ and $x$ an object of $\dcat(c)$, and whose morphisms $(d,y)\xrightarrow{}(c,x)$ are pairs $(f,\alpha)$ with $f:d\xrightarrow{}c$ in $\mathcal{C}$ and $\alpha:y\xrightarrow{}\dcat(f)(x)$ in $\dcat(d)$. We note $\pi_\dcat$ the obvious projection functor $\mathcal{G}(\dcat)\xrightarrow{}C$. 
\begin{definition}[\cite{caramello2021relative} Definition 8.2.1]
    Let $(\mathcal{C},J)$ be a site. A relative site over $(\mathcal{C},J)$ is a comorphism of sites $\pi_\dcat:(\mathcal{G}(\dcat),K)\xrightarrow{}(\mathcal{C},J)$ for some $\mathcal{C}$-indexed category $\dcat$ and topology $K$. 
\end{definition}
Given a $\mathcal{C}$-indexed category $\dcat$ and a topology $J$, there is a topology, known as the \emph{Giraud topology} (relative to $J$), which is the smallest topology $K$ on $\mathcal{G}(\dcat)$ such that $\pi_\dcat$ is a comorphism of sites. Recall that such a functor is a comorphism of sites if and only if it has the cover lifting property. So the Giraud topology is nothing else than the topology generated by (the pullback-stable family of) sieves $\{f:\dom(f)\xrightarrow{}(c,x)\,|\,\pi_\dcat(f)\in S\}$ for every covering sieve $S$ on $c$. However, we have a factorization result on $\mathcal{G}(\dcat)$ which makes it even easier. Indeed, any morphism $(f,\alpha):(d,y)\xrightarrow{}(c,x)$ can clearly be written as $$(d,y)\xrightarrow{(\id,\alpha)}(d,\dcat(f)(x))\xrightarrow{(f,\id)}(c,x)$$
Thus, the sieves that we mentioned above are the sieves generated by the families $\{(f,\id):(\dom(f),\dcat(f)(x))\xrightarrow{}(c,x)\,|\,f\in S\}$ for covering sieves $S$ on $c$. So equivalently, a relative site is a functor $\mathcal{G}(\dcat)\xrightarrow{}C$ and a topology $K$ on $\mathcal{G}(\dcat)$ which contains the Giraud topology over $J$. We will use the Giraud topology later. Now we can define internal locales. 
\begin{definition}
    Let $\mathbf{Sh}(\mathcal{C},J)$ be any topos, $(\mathcal{C},J)$ a cartesian site. An internal locale $L$ is an object of $\mathbf{Sh}(\mathcal{C},J)$ which is pointwise a frame, such that the transition maps $L(f)$ are frame homomorphisms with a left adjoint $\exists_f\dashv L(f)$, and satisfying the two following conditions:\\
    \begin{enumerate}[(i)]
        \item (Beck-Chevalley) For any pullback square in $\mathcal{C}$
        \begin{center}
    \begin{tikzcd}
    a\arrow[r,"f"]\arrow[d,"g"'] & \arrow[d,"e"] b\\
    c\arrow[r,"h"'] & d
    \end{tikzcd}
    \end{center}
the following square 
\begin{center}
    \begin{tikzcd}
    L(a)\arrow[d,"\exists_g"'] & \arrow[l,"L(f)"']\arrow[d,"\exists_e"] L(b)\\
    L(c) & L(d) \arrow[l,"L(h)"]
    \end{tikzcd}
    \end{center}
 commutes;\\
 \item (Frobenius reciprocity) for any $f:d\xrightarrow{}c$, $l\in L(d)$ and $l'\in L(c)$, we have 
$$\exists_f(L(f)(l') \land l) = \exists_f(l) \land l'$$
    \end{enumerate}
\end{definition}

This definition exists already in \cite{johnstone2002sketches} section C1.6, but the reader should know that this is just a particular case of a more general kind of objects, known as \emph{existential fibred sites}, introduced in \cite{caramello2022fibred}. In fact, by Proposition~5.9 therein, an object of $\mathbf{Sh}(\mathcal{C},J)$ which is pointwise a frame and whose transition maps are frame homomorphisms with left adjoints is an internal locale if and only it is existential when considered as a fibred site. In what follows, we will not specify to which frame $L(c)$ refers some frame operation when it can be deduced from the `types'. For example $1$ will indifferently designate the top object of any $L(c)$, when we can clearly see to what $c$ we are referring.

\begin{definition}
    Let $L$ be an internal locale in $\mathbf{Sh}(\mathcal{C},J)$. The existential topology on $\mathcal{G}(L)$ is defined as follows: a sieve $S$ on $(c,x)$ is covering if and only if it contains a family $\{f_i:(c_i,x_i)\xrightarrow{}(c,x)\,|\,i\in I\}$ such that $\bigvee_{i\in I}\exists_{f_i}(x_i)=x$. We note it $J_L^{ext}$ or $J^{ext}$.
\end{definition}

In fact it is not obvious that this indeed defines (and not generates) a topology. But by Theorem~5.1 of \cite{caramello2022fibred} (applied to internal locales), an object $L$ of $\mathbf{Sh}(\mathcal{C},J)$ which is pointwise a frame and whose transition maps are frame homomorphisms with left adjoints is an internal locale if and only the existential topology on $\mathcal{G}(L)$ is indeed a Gothendieck topology (specifically, is stable under pullbacks). Notice that in the case of internal locales, we just write $f:(d,y)\xrightarrow{}(c,x)$ to mean $(f,*):(d,y)\xrightarrow{}(c,x)$ when $y\leq L(f)(x)$. We also have the following proposition, for which we give a direct proof:

\begin{proposition}[\cite{caramello2022fibred}]
    For an internal locale $L$ in $\mathbf{Sh}(\mathcal{C},J)$, the existential topology on $\mathcal{G}(L)$ contains the Giraud topology over $J$.
\end{proposition}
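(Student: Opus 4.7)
The paper explicitly identifies the Giraud topology over $J$ on $\mathcal{G}(L)$ as the one generated by the sieves
$$S^\sharp := \big\{(f,\id):(\dom f, L(f)(x))\xrightarrow{}(c,x) \,\big|\, f\in S\big\}$$
for $S \in J(c)$ and $x \in L(c)$. Since $J^{ext}$ is itself a Grothendieck topology, it suffices to show that each such generating family is already $J^{ext}$-covering, i.e.\ to verify the identity
$$\bigvee_{f\in S}\exists_f\big(L(f)(x)\big) = x \qquad \text{in } L(c).$$

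The first step is to free the statement from its dependence on $x$. Applying Frobenius reciprocity with $l' = x$ and $l = 1 \in L(\dom f)$ gives $\exists_f(L(f)(x)) = \exists_f(1) \land x$, and distributivity in the frame $L(c)$ rewrites the identity above as $\big(\bigvee_{f \in S}\exists_f(1)\big) \land x = x$. Hence it is enough to establish
$$\bigvee_{f\in S}\exists_f(1) = 1 \qquad \text{in } L(c).$$
Write $a$ for this join. For every $g \in S$, the unit $\id \leq L(g)\exists_g$ of the adjunction $\exists_g \dashv L(g)$ gives $L(g)(a) \geq L(g)(\exists_g(1)) \geq 1$, so $L(g)(a) = 1 = L(g)(1)$. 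Because $L$ is a $J$-sheaf and $S$ is a $J$-cover of $c$, the separation part of the sheaf condition forces $a = 1$, concluding the proof.

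Conceptually the crucial ingredient is the sheaf property of $L$, which promotes the family of local equalities $L(g)(a) = L(g)(1)$ ($g \in S$) to the desired global equality in $L(c)$; Frobenius reciprocity plays only the auxiliary role of reducing an identity about an arbitrary $x$ to the single identity about the top element. I do not expect a serious obstacle: Beck--Chevalley is not needed here (it is used rather in the pullback-stability of $J^{ext}$ invoked before the proposition), and every remaining step is a direct application of the defining axioms of an internal locale.
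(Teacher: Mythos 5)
Your argument is correct and coincides with the paper's own proof: both reduce via Frobenius reciprocity and distributivity to the identity $\bigvee_{f\in S}\exists_f(1)=1$, verify that its image under each $L(g)$ for $g\in S$ equals $1=L(g)(1)$ using the unit of the adjunction $\exists_g\dashv L(g)$, and conclude by the uniqueness part of the sheaf condition for $L$. No further comment is needed.
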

\begin{proof}
    We have to show that for all $x\in L(c)$, for every covering family $f_i:c_i\xrightarrow{}c$, $i\in I$, we have $x=\bigvee_i \exists_{f_i}(L(f_i)(x))$. Now we have 
    $$\bigvee_i \exists_{f_i}(L(f_i)(x))=\bigvee_i \exists_{f_i}(L(f_i)(x)\land 1)=\bigvee_i (\exists_{f_i}(1)\land x)=x\land \bigvee_i \exists_{f_i}(1)$$
    by Frobenius, so we just have to prove that $\bigvee_i \exists_{f_i}(1)=1$. Now starting from $\exists_{f_j}(1)\leq \bigvee_{i\in I}\exists_{f_i}(1)$, we get that $1\leq L(f_j)(\bigvee_{i\in I}\exists_{f_i}(1))$ for all $j$, i.e. $L(f_j)(\bigvee_{i\in I}\exists_{f_i}(1))=1=L(f_j)(1)$ for all $j$. Now clearly the $L(f_j)(\bigvee_{i\in I}\exists_{f_i}(1))$ form a matching family, so since $L$ is a sheaf and $f_i$ is covering, the uniqueness of the amalgamation gives that $\bigvee_{i\in I}\exists_{f_i}(1)=1$.
\end{proof}

Now last Proposition~shows that for an internal locale $L$ in $\mathbf{Sh}(\mathcal{C},J)$, $\pi_L:(\mathcal{G}(L),J^{ext})\xrightarrow{}(\mathcal{C},J)$ is a comorphism of sites, so it induces a geometric morphism $C_{\pi_L}:\mathbf{Sh}(\mathcal{G}(L),J^{ext})\xrightarrow{}\mathbf{Sh}(\mathcal{C},J)$. The topos $\mathbf{Sh}(\mathcal{G}(L),J^{ext})$ is called the \emph{topos of internal sheaves} on $L$, sometimes noted $\mathbf{Sh}_{\mathbf{Sh}(\mathcal{C},J)}(L)$. In fact, since $\pi_L$ is faithful, by Proposition~7.11 \cite{caramello2019denseness}, we have that $C_{\pi_L}$ is $\emph{localic}$.\\
\\
Now we can state the result that will be the most important for us:

\begin{proposition}[\cite{caramello2022fibred} Proposition~4.2]\label{int_loc_eq}
    For a topos $\mathcal{E}=\mathbf{Sh}(\mathcal{C},J)$, there is an equivalence
$$Loc(\mathcal{E}) \cong \textsc{Loc}/\mathcal{E}$$
where $Loc(\mathcal{E})$ is the category of internal locales in $\mathcal{E}$ and \textsc{Loc} is the category of toposes and localic morphisms
between them. The equivalence sends an internal locale $L$ to $C_{\pi_L}$ and a localic geometric morphism $f: \mathcal{F} \xrightarrow{} \mathcal{E}$ to the functor $L_f: \mathcal{C}^{op} \xrightarrow{} \Set$ sending an object $c$ of $\mathcal{C}$ to the frame
$\textup{Sub}_\mathcal{F}(f^*(l(c)))$, which yields an internal locale in $\mathcal{E}$.
\end{proposition}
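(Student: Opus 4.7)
The plan is to show that the two constructions described in the statement are mutually pseudo-inverse functors between $Loc(\mathcal{E})$ and $\textsc{Loc}/\mathcal{E}$. One direction has essentially been constructed in the preceding discussion: $L \mapsto C_{\pi_L}$ is localic by the remark invoking Proposition 7.11 of \cite{caramello2019denseness}, and it becomes 2-functorial because a morphism $\alpha: L \to L'$ of internal locales induces a fibred functor $\mathcal{G}(L) \to \mathcal{G}(L')$ over $\mathcal{C}$ which sends the existential topology to the existential topology (as $\alpha$ is a frame homomorphism commuting with the left adjoints $\exists_f$), hence induces a geometric morphism of internal sheaf toposes over $\mathcal{E}$. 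In the other direction, given localic $f: \mathcal{F}\to\mathcal{E}$, I define $L_f$ as in the statement; functoriality is immediate from naturality of $\textup{Sub}$.

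Next, I would verify that $L_f$ is genuinely an internal locale. It is a sheaf on $(\mathcal{C},J)$ because $l = a \circ \yon$ sends $J$-covers to effective epimorphic families in $\mathcal{E}$, $f^*$ preserves these families, and $\textup{Sub}_\mathcal{F}(-)$ converts them into equalizers. Each $L_f(c)$ is a frame as the subobject lattice of an object in the topos $\mathcal{F}$. Transition maps are pullbacks along $f^*(l(h))$, hence frame homomorphisms with left adjoints given by image factorization. The Beck-Chevalley condition holds because $l$ preserves finite limits (being the composite of Yoneda and the lex sheafification), $f^*$ preserves finite limits, and Beck-Chevalley for image/pullback along a pullback square is valid in any topos. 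Frobenius reciprocity for $L_f$ is inherited from Frobenius in $\mathcal{F}$.

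For the natural isomorphism $L_{C_{\pi_L}} \cong L$, one unwinds the definition: by construction of the comorphism $\pi_L$, the object $C_{\pi_L}^*(l(c))$ is the $J^{ext}$-sheafification of the representable at $(c,1_{L(c)})$. Subobjects of such a representable sheaf correspond to $J^{ext}$-closed sieves on $(c,1)$, which by the very definition of the existential topology are in bijection with elements $x \in L(c)$ via the sieve generated by $(c,x) \to (c,1)$. One then checks that this bijection is natural in $c$ and respects the frame operations and left adjoints, yielding an iso of internal locales.

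The main obstacle is the remaining isomorphism $C_{\pi_{L_f}} \cong f$ in $\textsc{Loc}/\mathcal{E}$. The intuition is that a localic morphism over $\mathcal{E}$ is determined by its internal locale of ``relative subobjects'', and this is where the localic hypothesis on $f$ is essential. The plan is to exhibit a comparison morphism of sites from $(\mathcal{G}(L_f), J^{ext})$ into $\mathcal{F}$ (sending $(c,A)$ to the subobject $A \hookrightarrow f^*(l(c))$), verify it is a morphism of sites using the explicit description of the existential topology and the fact that joins of subobjects correspond to effective epimorphic families, and then argue that the induced geometric morphism is an equivalence over $\mathcal{E}$. This last step uses that $\mathcal{F}$ is generated as an $\mathcal{E}$-topos by subobjects of the objects $f^*(l(c))$ for $c \in \mathcal{C}$, which is precisely what it means for $f$ to be localic given that the $l(c)$ form a generating family of $\mathcal{E}$; the comparison is then essentially surjective and fully faithful on subobjects, hence an equivalence. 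Checking the details of this final equivalence — in particular the match between the $J^{ext}$-covers and the effective epimorphic families in $\mathcal{F}$ — is where the bulk of the technical work lies.
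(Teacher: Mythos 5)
The paper does not actually prove this proposition: it is imported verbatim, with citation, from \cite{caramello2022fibred} (Proposition 4.2), so there is no in-paper argument to compare yours against. Judged on its own terms, your outline follows the standard Joyal--Tierney-style proof of the correspondence between internal locales and localic morphisms, and the first three stages are sound: the verification that $L_f$ is an internal locale (sheaf condition via effective epimorphic families, Beck--Chevalley from preservation of pullbacks by $l$ and $f^*$, Frobenius from the image/pullback adjunction in $\mathcal{F}$) is correct, and the identification $L_{C_{\pi_L}}\cong L$ via principal $J^{ext}$-closed sieves on $(c,1)$ is exactly the computation the paper itself carries out later in the proof of Proposition \ref{reldml}.

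The one place where your plan, as literally written, would not go through is the final step. The comparison functor $(c,A)\mapsto A$ from $\mathcal{G}(L_f)$ to $\mathcal{F}$ is \emph{not} full: a morphism $(c,A)\to(c',A')$ in $\mathcal{G}(L_f)$ must lie over a morphism $c\to c'$ of $\mathcal{C}$, whereas $\mathcal{F}$ contains many morphisms between these subobjects that are not of this form (nor even locally of this form in an obvious way). So the phrase ``essentially surjective and fully faithful on subobjects, hence an equivalence'' cannot be an appeal to the classical comparison lemma. What is actually needed is either (a) the weaker denseness criteria for morphisms of sites (covering-flatness, cover preservation and reflection, local surjectivity on objects and on morphisms, local injectivity), in the style of \cite{caramello2019denseness}, each of which has to be checked against the explicit description of $J^{ext}$; or (b) the route via the hyperconnected--localic factorization, identifying $\mathcal{F}$ with the topos of internal sheaves on $f_*(\Omega_{\mathcal{F}})$ and then matching that internal locale with $L_f$. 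Either repair is routine for an expert but is genuinely the bulk of the proof, and your sketch currently papers over it with an incorrect invocation of full faithfulness.
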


This means in particular that every internal locale is of the form $L_f$ for some localic geometric morphism $f$. Note that we did not really define what is an internal locale morphism. We have the usual (diagrammatic) internal definition, but an external definition can be found in \cite{wrigley2023some}. \\
\\
The previous Proposition~enables us to generalize the well-known operation of `ideal completion'. Indeed, given a distributive lattice $P$, we can consider the lattice of ideals of $P$, which will in fact be a frame. More generally, given a preorder $P$ seen as a category and a coverage $J$ on $P$, we can consider the frame of $J$-ideals of $P$, where a $J$-ideal is a subset $I$ of $P$ which is a lowerset and satisfies the following property: for every $x\in P$ and covering family $S$ on $x$, if for every $x_i\in S$ we have $x_i\in I$ then $x\in I$. For example, if $P$ is a distributive lattice, taking $J(x)$ to be the coverage generated by finite families $(x_i)_i$ with $\bigvee_i x_i=x$, we get back the usual ideal completion, i.e. a $J$-ideal is just an ideal in the usual sense. \\
\\
Now, as developped in section 6 of \cite{caramello2022fibred}, the last Proposition~enables us to generalize this to the case of internal preoders. For a site $(\mathcal{C},J),$ define a \emph{fibred preoder} $\pcat$ to be a $\mathcal{C}$-indexed category which is pointwise a preorder. For any topology $K$ on $\mathcal{G}(\pcat)$ containing the Giraud topology over $J$, the morphism $C_{\pi_\pcat}$ is localic like before, so it corresponds by the Proposition~to an internal locale $L_\pcat^K$ of $\mathbf{Sh}(\mathcal{C},J)$. This is what we call the \emph{fibred ideal completion} of $\pcat$ relative to $K$. Notice that in this case, $\pi_\pcat$ has left and right adjoints $c\mapsto (c,0)$ and $c\mapsto (c,1)$, and by Proposition~3.14 of \cite{caramello2019denseness}, the latter is a morphism of sites $i_\pcat:(\mathcal{C},J)\xrightarrow{}(\mathcal{G}(\pcat),K)$ such that $\mathbf{Sh}(i_\pcat)=C_{\pi_\pcat}$. Now we have an explicit definition
\begin{align*}
    L_\pcat^K(c) & =\textup{Sub}_{\mathbf{Sh}(\mathcal{G}(\pcat),K)}(C_{\pi_\pcat}^*(l_J(c)))\\
    & =\textup{Sub}_{\mathbf{Sh}(\mathcal{G}(\pcat),K)}(l_K((c,1)))\quad \text{(by definition of induced morphism)}\\
    & =\textup{ClSub}^K_{\mathbf{Psh}(\mathcal{G}(\pcat))}(\Hom_{\mathcal{G}(\pcat)}(-,(c,1)))\quad\text{(by Proposition~2.3 of \cite{caramello2019denseness})}\\
    & =\textup{ClSub}^K_{\mathbf{Psh}(\mathcal{G}(\pcat))}(\Hom_C(\pi_\pcat(-),c)) \quad\text{(by adjointness)}
\end{align*}
where $l_J$ is the Yoneda embedding followed by sheafification, and $$\textup{ClSub}^K_{\mathbf{Psh}(\mathcal{G}(\pcat))}(\Hom_{\mathcal{G}(\pcat)}(-,(c,1)))$$ is the set a $K$-closed subobjects of the presheaf $\Hom_{\mathcal{G}(\pcat)}(-,(c,1))$, i.e. the set of $K$-closed sieve on $(c,1)$.\\
\\
Finally, note that we have a canonical morphism $\pcat\xrightarrow{}L_\pcat^K$. Indeed, for an object $c$ of $\mathcal{C}$, for an element $x\in\pcat(c)$, we can consider the $K$-closure of the sieve generated by $\id:(c,x)\xrightarrow{}(c,1)$. The details are easily checked.

\subsubsection{Gleason's cover, explicitly}
Using the tools that we just exposed, we give an explicit description of Gleason's cover for toposes. The problem is the following: given a topos $\mathcal{E}$, we want to build a topos $\gamma\mathcal{E}$ satisfying De Morgan's law and a surjection $\gamma\mathcal{E}\xrightarrow{}\mathcal{E}$ which is minimal in an appropriate sense. The essential results of this section are not new and can be found in \cite{johnstone1980gleason} and \cite{harun1996applications}, but we give new proofs using explicit calculations and not internal arguments.

\begin{definition}
    Let $A$ be an internal distributive lattice in a topos $\mathcal{E}=\mathbf{Sh}(\mathcal{C},J)$. The coherent coverage $K^{(A)}$ on $A$ is the coverage defined on the site $\mathcal{G}(A)$ by the following two (pullback stable) types of families of arrows:
    \begin{enumerate}[(i)]
        \item The families of the form $\{(c,x_i)\xrightarrow{id}(c,x)\,|\,i\in I\}$ for $I$ finite and $x=\bigvee_i x_i$, which we call $K_1^{(A)}$;
    \item The families of the form $\{(c_i,A(f_i)(x))\xrightarrow{f_i}(c,x)\,|\,i\in I\}$ for $\{f_i\,|\,i\in I\}\in J(c)$, called $K_2^{(A)}$.
    \end{enumerate}
\end{definition}

We will write $K$ when the lattice we are talking about is clear. Notice that $K_2$ exactly generates the Giraud topology, so we have a comorphism of sites $\pi_A:(\mathcal{G}(A),K)\xrightarrow{}(\mathcal{C},J)$, having right adjoint $i_A:c\mapsto (c,1)$, so we have a generated geometric morphism $\mathbf{Sh}(i_A)=C_{\pi_A}$. Now recall the canonical internal Boolean algebra $\Omega_{\neg\neg}$.

\begin{definition}
    Gleason's cover for a topos $\mathcal{E}$ is defined by the generated morphism $C_{\pi_{\Omega_{\neg\neg}}}:\mathbf{Sh}(\mathcal{G}(\Omega_{\neg\neg}),K)\xrightarrow{}\mathcal{E}$ (or equivalently as $\mathbf{Sh}(i_{\Omega_{\neg\neg}})$).
\end{definition}

Since $A$ above is a fibred preoder, we can considered its fibred ideal completion with respect to $K$, which is an internal locale. We just call this locale the ideal completion of $A$, and we note it $\textup{Idl}(A)$. Gleason's cover will then be equivalent to the morphism generated by $\pi_{\textup{Idl}(\Omega_{\neg\neg})}:(\mathcal{G}(\textup{Idl}(\Omega_{\neg\neg})),J^{ext})\xrightarrow{}(\mathcal{C},J)$.\\
\\
Note that this definition is equivalent to the one in \cite{johnstone1980gleason}, as proved there. However, we will prove all the important properties of Gleason's cover without any reference to Johnstone's definition. We want to avoid doing things by internalizing known theorems on locales, to get more enlightening proofs. Our first goal is thus to show that Gleason's covers satisfies De Morgan's law.

\begin{proposition}\label{reldml}
    Let $L$ be an internal locale in $\mathcal{E}$. Then $\mathbf{Sh}(\mathcal{G}(L),J^{ext})$ satisfies De Morgan's law if and only if $L$ satisfies De Morgan's law (i.e. is a Stone algebra).
\end{proposition}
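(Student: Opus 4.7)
The plan is to reduce the De Morgan property of $\mathcal{F} := \mathbf{Sh}(\mathcal{G}(L), J^{ext})$ to a pointwise Stone algebra condition on the internal locale $L$, by examining subobject lattices of representables.

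First, I would invoke the standard fact that a Grothendieck topos $\mathbf{Sh}(\mathcal{D}, K)$ is De Morgan iff $\textup{Sub}(l_K(d))$ is a Stone algebra for every $d \in \mathcal{D}$. This follows from the equivalent characterization that $\Omega$ be an internal Stone algebra: an equation $\neg x \lor \neg\neg x = 1$ between terms in the theory of Heyting algebras holds internally on $\Omega$ iff, for every $d$ and every generalized element $\sigma \in \Omega(d) = \textup{Sub}(l_K(d))$, the identity $\neg \sigma \lor \neg\neg \sigma = 1_d$ holds there, since a map $E \to \Omega$ is determined by its values on elements of $E(d)$ and the internal join restricts to the join in each $\Omega(d)$. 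Applied to $(\mathcal{D}, K) = (\mathcal{G}(L), J^{ext})$, the topos $\mathcal{F}$ is De Morgan iff $\textup{Sub}_\mathcal{F}(l(c,x))$ is a Stone algebra for every $(c,x) \in \mathcal{G}(L)$.

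Next, I would use Proposition \ref{int_loc_eq} to identify $\textup{Sub}_\mathcal{F}(l(c,x)) \cong L(c)_{\leq x}$. The equivalence $Loc(\mathcal{E}) \cong \textsc{Loc}/\mathcal{E}$ yields $L(c) = \textup{Sub}_\mathcal{F}(C_{\pi_L}^*(l_J(c))) = \textup{Sub}_\mathcal{F}(l(c,1))$, and under this bijection the element $x \in L(c)$ corresponds to the subobject $l(c,x) \hookrightarrow l(c,1)$ induced by the mono $(c,x) \to (c,1)$ in $\mathcal{G}(L)$, which matches the canonical comparison described at the end of section \ref{sec:int_loc}. Hence subobjects of $l(c,x)$ correspond exactly to elements of $L(c)$ below $x$, so $\mathcal{F}$ is De Morgan iff $L(c)_{\leq x}$ is a Stone algebra for every $(c,x) \in \mathcal{G}(L)$. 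By Lemma \ref{Hleqx} applied fibrewise, this condition is equivalent to $L(c)$ being a Stone algebra for every $c \in \mathcal{C}$. Since $L$ is pointwise a frame with pointwise frame operations, this is exactly the internal validity of $\neg x \lor \neg\neg x = 1$ on $L$, i.e.\ $L$ being an internal Stone algebra in $\mathcal{E}$, which concludes the proof.

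The main delicate point is the identification $\textup{Sub}_\mathcal{F}(l(c,x)) \cong L(c)_{\leq x}$: while the equality $L(c) = \textup{Sub}_\mathcal{F}(l(c,1))$ is built into Proposition \ref{int_loc_eq}, one must verify that under the isomorphism $L \cong L_{C_{\pi_L}}$ the element $x$ is sent exactly to the subobject $l(c,x)$. For fibred ideal completions this is essentially by construction of the canonical map discussed in section \ref{sec:int_loc}; the general case reduces to this, since an arbitrary internal locale is, up to canonical isomorphism, the reconstruction of itself obtained from its associated localic geometric morphism.
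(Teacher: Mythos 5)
Your proof is correct and follows the same overall skeleton as the paper's: reduce De Morgan for $\mathbf{Sh}(\mathcal{G}(L),J^{ext})$ to the condition that each $\textup{Sub}(l(c,x))$ is a Stone algebra, identify these lattices with the $L(c)_{\leq x}$, and conclude via Lemma \ref{Hleqx}. The genuine difference lies in how you obtain $\textup{Sub}(l(c,1))\cong L(c)$: you deduce it abstractly from the equivalence of Proposition \ref{int_loc_eq} (via $L\cong L_{C_{\pi_L}}$ and $C_{\pi_L}^*(l_J(c))\cong l(c,1)$), whereas the paper computes $\Omega((c,1))$ directly by showing that every $J^{ext}$-closed sieve on $(c,1)$ is principal, generated by $(c,t)\to(c,1)$ with $t=\bigvee_{h\in S}\exists_h(x)$. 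The delicate point you flag --- that under the abstract isomorphism the element $x$ really corresponds to the subobject $l(c,x)$ --- is exactly what the paper's explicit computation delivers for free, and it is the content of the statement that the canonical map $L\to L^{J^{ext}}_L$ of section \ref{sec:int_loc} is an isomorphism; so your route does not avoid that verification so much as relocate it. Note, however, that for the proof itself the precise matching $x\mapsto l(c,x)$ is not strictly needed: since $(c,x)\to(c,1)$ is monic in $\mathcal{G}(L)$ and $l=a\circ\yon$ is left exact, $l(c,x)\rightarrowtail l(c,1)$ is a subobject, so $\textup{Sub}(l(c,x))$ is a principal downset of $\textup{Sub}(l(c,1))\cong L(c)$ in any case, and Lemma \ref{Hleqx} applies regardless of which element of $L(c)$ it sits under. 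With that observation your argument closes cleanly; the paper's version buys a concrete description of $\Omega$ on $\mathcal{G}(L)$ (used again later, e.g.\ for minimality of $\textup{Idl}(\Omega_{\neg\neg})$), while yours is shorter if one is willing to lean on the machinery of Proposition \ref{int_loc_eq}.
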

\begin{proof}
    Let us right $f:\mathbf{Sh}(\mathcal{G}(L),J^{ext})\xrightarrow{}\mathcal{E}$ for the canonical morphism. We first prove that $L=f_*(\Omega_{\mathbf{Sh}(\mathcal{G}(L),J^{ext})})$, i.e. we want to prove that for every object $c$ of $\mathcal{C}$, we have $L(c)=\Omega((c,1))$ (natural in $c$). Thus, it suffices to prove that every $J^{ext}$-closed sieve on $(c,1)$ is principal. Consider such sieve $S$, and write $t:=\bigvee_{\substack{h:(d,x)\xrightarrow{}(c,1)\\h\in S}}\exists_h(x)$. Now it is clear that $h\in S$ if and only if $h$ factors through $(c,t)$, so we just need to notice that $(c,t)\xrightarrow{id}(c,1)$ is in $S$ because $S$ is $J^{ext}$-closed and $\{\Tilde{h}:(d,x)\xrightarrow{}(c,t)\,|\,h\in S\}$ is $J^{ext}$-covering by definition. Now to conclude we still have to see that for any $x\in L(c)$, $(c,x)\xrightarrow{id}(c,1)$ is a monomorphism, so $\exists_{id}:\Omega((c,x))\xrightarrow{}\Omega((c,1))$ is just given by composition and is injective, and so $\Omega((c,x))=\Omega((c,1))_{\leq a\yo((c,x))}=L(c)_{\leq x}$. Now $\mathbf{Sh}(\mathcal{G}(L),J^{ext})$ satisfies De Morgan's law if and only if for every $(c,x)$, $\Omega((c,x))$ satisfies De Morgan's law if and only if for every $(c,x)$, $L(c)_{\leq x}$ satisfies De Morgan's law if and only if for every $c$, $L(c)$ satisfies De Morgan's law.
\end{proof}

 Considering our goal to prove that Gleason's cover satisfies De Morgan's law, we still need to prove that $\textup{Idl}(\Omega_{\neg\neg})$ is a Stone algebra. We will use the fact that $\Omega_{\neg\neg}$ is an internal complete Boolean algebra, i.e. an internal locale which is also an internal Boolean algebra. But first let us look closer at the topology generated by $K$. Consider an object $(c,x)$ of $\mathcal{G}(B)$, with $B$ an internal locale. Since the existential topology on an internal locale contains the Giraud topology, we see that $K\subseteq J^{ext}$. We then easily conclude that $\varnothing\in K((c,x))$ if and only if $x=0$. We also need a technical lemma.

 \begin{Lemma}
     If $B$ is an internal locale, then the transition functions preserve pseudo-complements when they exist.
 \end{Lemma}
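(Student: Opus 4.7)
The plan is to show the equality $B(f)(\neg x) = \neg B(f)(x)$ for any $f:d\to c$ in $\mathcal{C}$ and any $x\in B(c)$ whose pseudo-complement exists, by proving both inequalities and using the adjunction $\exists_f \dashv B(f)$ together with Frobenius reciprocity.

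For the ``easy'' inequality $B(f)(\neg x) \leq \neg B(f)(x)$, the idea is to use that $B(f)$ is a frame homomorphism. Starting from $\neg x \land x = 0$ in $B(c)$ and applying $B(f)$, which preserves finite meets and the bottom element, I get $B(f)(\neg x) \land B(f)(x) = B(f)(0) = 0$ in $B(d)$. The universal property of the pseudo-complement in $B(d)$ then yields the inequality.

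For the reverse inequality $\neg B(f)(x) \leq B(f)(\neg x)$, I would transpose across the adjunction: this is equivalent to $\exists_f(\neg B(f)(x)) \leq \neg x$ in $B(c)$, which in turn is equivalent to $\exists_f(\neg B(f)(x)) \land x = 0$. Here is where Frobenius reciprocity enters: applied with $l := \neg B(f)(x)$ and $l' := x$, it gives
\[
\exists_f(\neg B(f)(x)) \land x \;=\; \exists_f\bigl(B(f)(x) \land \neg B(f)(x)\bigr) \;=\; \exists_f(0) \;=\; 0,
\]
where the last equality uses that $\exists_f$, being a left adjoint, preserves the bottom element as an empty colimit. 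This closes the argument.

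There is essentially no obstacle here: the proof is a routine manipulation once one writes down the adjunction and Frobenius reciprocity, and it mirrors exactly the standard proof in locale theory that any frame homomorphism admitting a left adjoint satisfying Frobenius reciprocity preserves pseudo-complements. The only conceptual point worth stressing is that the Frobenius identity is precisely what is needed to turn the outer meet with $x$ into an inner meet with $B(f)(x)$, at which point the definition of $\neg B(f)(x)$ makes the whole expression collapse to $0$.
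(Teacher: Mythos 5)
Your proof is correct and uses exactly the same ingredients as the paper's: the adjunction $\exists_f\dashv B(f)$, Frobenius reciprocity to convert $\exists_f(-)\land x$ into $\exists_f(-\land B(f)(x))$, and the defining property of the pseudo-complement. The paper packages both inequalities into a single chain of equivalences $y\leq B(f)(\neg x)\iff y\leq\neg B(f)(x)$ (concluding by Yoneda), whereas you split them and dispose of the easy direction using only that $B(f)$ is a frame homomorphism, but the substance is identical.
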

 \begin{proof}
     \begin{align*}
         y\leq B(f)(\neg x) & \iff \exists_f(y)\leq \neg x\\
         & \iff \exists_f(y)\land x \leq 0\\
         & \iff \exists_f(y\land B(f)(x))\leq 0 \quad \text{(by Frobenius)}\\
         & \iff y\land B(f)(x)\leq 0\\
         & \iff y\leq \neg B(f)(x)
    \end{align*}
 \end{proof}

\begin{proposition}[\cite{johnstone1980gleason} Lemma~1.1]
    Let $B$ be an internal locale in $\mathcal{E}$ which is also a Boolean algebra. Then $\textup{Idl}(B)$ is a Stone algebra. 
\end{proposition}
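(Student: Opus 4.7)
The plan is to reason internally in the topos $\mathcal{E}$, treating $\textup{Idl}(B)$ as the internal frame of ideals of $B$. The first step is to justify this interpretation: an element of $\textup{Idl}(B)(c)$ is a $K$-closed sieve on $(c, 1)$ in $\mathcal{G}(B)$, which, by unfolding the generating families $K_1$ and $K_2$, amounts to the data of an ideal $S_f \subseteq B(d)$ for each $f : d \to c$, compatible with the transition maps of $B$ (from $K_1$ and the sieve property) and satisfying the $J$-sheaf condition (from $K_2$). This is precisely the external presentation of a subsheaf of the restriction of $B$ to $\mathcal{C}/c$ which is internally an ideal of $B$.

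With this identification in hand, I would run the classical internal argument. Since $B$ is simultaneously an internal locale and an internal Boolean algebra, it is internally a complete Boolean algebra: arbitrary joins exist and distribute over binary meets, and $\neg\neg x = x$ for every $x$. Let $I$ be an internal ideal of $B$ and set $s := \bigvee I \in B$. Then $x \wedge y = 0$ for every $y \in I$ is equivalent, by infinite distributivity, to $x \wedge s = 0$, and hence by Boolean-ness to $x \leq \neg s$; this shows that the pseudo-complement of $I$ in the ideal frame is the principal ideal $\downarrow \neg s$. Applying the same reasoning to $\neg I$, whose supremum is $\neg s$, gives $\neg\neg I = \downarrow \neg\neg s = \downarrow s$. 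Finally, $\neg I \vee \neg\neg I$ is computed as the ideal generated by $\downarrow \neg s \cup \downarrow s$, which equals $\downarrow(\neg s \vee s) = \downarrow 1 = B$, the top of the ideal frame.

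The main obstacle is the bookkeeping for the first step: one must verify that the internal supremum $s$ and the pseudo-complement operation on internal ideals correspond correctly to the externally-defined operations on $K$-closed sieves, and that the ideal-join in $\textup{Idl}(B)(c)$ is indeed computed as the ideal generated by the union. Given the explicit form of $K$ and the preceding lemma that transition maps of $B$ preserve pseudo-complements, these verifications are essentially routine. Once they are in place, the Boolean computation above yields $\neg I \vee \neg\neg I = 1$ in $\textup{Idl}(B)(c)$ for every $c$, which is exactly the assertion that $\textup{Idl}(B)$ is an internal Stone algebra in $\mathcal{E}$.
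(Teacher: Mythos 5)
Your proof is correct in substance, but it takes the route the paper deliberately sidesteps: you identify $\textup{Idl}(B)(c)$ with the object of internal ideals of $B$ and then run the classical one-line computation ($\neg I=\mathord{\downarrow}\neg s$, $\neg\neg I=\mathord{\downarrow}s$ with $s=\bigvee I$, so $\neg I\vee\neg\neg I=\mathord{\downarrow}1$) in the internal language, which is essentially Johnstone's original argument. The paper instead works entirely externally with $K$-closed sieves on $(c,1)$: it derives the explicit formula $\neg R=\{f:(d,x)\to(c,1)\mid \exists_f(x)\wedge\exists_g(x')=0 \text{ for all } g:(d',x')\to(c,1) \text{ in } R\}$ via Beck--Chevalley and Frobenius, deduces that $\neg R$ is the principal sieve on $(c,z)$ with $z=\bigvee_{h\in\neg R}\exists_h(x)$, and exhibits its complement as the $K$-closure of the principal sieve on $(c,\neg z)$. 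The two proofs compute the same thing in different languages (your $s$ is the external $z$ up to complementation), so the mathematical content agrees; what differs is where the work is placed. Your "essentially routine" first step --- checking that internal suprema, pseudo-complements and ideal-joins of subsheaves correspond to the sieve-level operations --- is not negligible: it is exactly where the paper's Beck--Chevalley and Frobenius manipulations live, and deferring it to the soundness of Kripke--Joyal semantics hides rather than removes it. What your approach buys is brevity and conceptual transparency once that identification is granted; what the paper's approach buys is an explicit description of $\neg R$ that is reused immediately afterwards (see Remark \ref{dml_ideals}, where the hypothesis on $B$ is weakened from Boolean to Stone precisely by inspecting that formula), and a proof that stays within the relative-site formalism the paper is advertising. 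One small point to make explicit if you keep the internal argument: you should say why internal validity of $\neg I\vee\neg\neg I=\top$ gives the pointwise statement that each $\textup{Idl}(B)(c)$ is a Stone algebra, i.e.\ that the internal Heyting operations of an internal locale presented as a sheaf of frames with adjoint transition maps are computed fibrewise (this again uses Frobenius).
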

\begin{proof}
    Take $\mathcal{E}=\mathbf{Sh}(\mathcal{C},J)$, $\mathcal{C}$ the geometric site of definition. As noted before, $\textup{Idl}(B)(c,1)$ is the algebra of $K$-closed sieves on $(c,1)$. We write $0_{d,x}$ for the smallest $K$-closed sieve on $(d,x)$, namely the sieve $\{(d',x')\xrightarrow{}(d,x)\,|\,\varnothing\in K((d',x'))\}=\{(d',x')\xrightarrow{}(d,x)\,|\,x'=0\}$ by the previous discussion. Now consider $R$ a $K$-closed sieves on $(c,1)$. Clearly the pseudo-complement in this algebra is given by $\neg R=\{f:(d,x)\xrightarrow{}(c,1)\,|\,f^*(R)=0_{d,x}\}$. We claim that $$\neg R = \{f:(d,x)\xrightarrow{}(c,1)\,|\,\forall g:(d',x')\xrightarrow{}(c,1)\in R,\,\exists_f(x)\land\exists_g(x')=0\}$$
    Indeed, take $f:(d,x)\xrightarrow{}(c,1)$ in $\neg R$, $g:(d',x')\xrightarrow{}(c,1)$ in R. We have the following pullback square:
\begin{center}
            \begin{tikzcd}
        (d\times_c d',B(\pi)(x)\land B(\pi')(x')) \arrow[r,"\pi"]\arrow[d,"\pi'"] & (d,x) \arrow[d,"f"] \\
        (d',x')\arrow[r,"g"] & (c,1)
    \end{tikzcd}
    \end{center}
By definition of $\neg R$, we have $B(\pi)(x)\land B(\pi')(x')=0$. Now, the following algebraic manipulation shows that this is equivalent to $\exists_f(x)\land\exists_g(x')=0$:
\begin{align*}
    B(\pi)(x)\land B(\pi')(x')=0 & \iff B(\pi)(x)\leq B(\pi')(\neg x') \\
    & \iff \exists_\pi(B(\pi')(x'))\leq\neg x\\
    & \iff B(f)(\exists_g(x'))\leq\neg x \text{    (by Beck-Chevalley)}\\
    & \iff x\land B(f)(\exists_g(x')) = 0\\
    & \iff x\leq B(f)(\neg \exists_g(x'))\\
    & \iff \exists_f(x)\leq\neg\exists_g(x')\\
    & \iff \exists_f(x)\land\exists_g(x')=0
\end{align*}

Conversely, take $f$ such that for all $g:(d',x')\xrightarrow{}(c,1)$ in $R$, $\exists_f(x)\land\exists_g(x')=0$. Now take $g:(d',x')\xrightarrow{}(d,x)$ such that $f\circ g$ is in $R$. We have to show that $x'=0$. By definition of morphisms in $\mathcal{G}(B)$, we have that $\exists_g(x')\leq x$, and so $\exists_{f\circ g}(x')\leq\exists_f(x)$. So $\exists_{f\circ g}(x')=\exists_{f\circ g}(x')\land \exists_f(x)$, but since $f\circ g\in R$, by hypothesis we have that this is $0$. We conclude that $x'=0$. This proves the claim.\\
\\
As a consequence of the claim, if $f:(d,x)\xrightarrow{}(c,1)$ is in $\neg R$, then $\id:(c,\exists_f(x))\xrightarrow{}(c,1)$ is also in $\neg R$. So $\neg R$ is the sieve generated by the $\id:(c,\exists_h(x))\xrightarrow{}(c,1)$, $h\in\neg R$. In fact, it is the principal sieve generated by $(c,\bigvee_{\substack{h:(d,x)\xrightarrow{}(c,1)\\h\in\neg R}}\exists_h(x))$, by the distributivity law. Now consider the principal sieve generated by $(c,\neg\bigvee_{\substack{h:(d,x)\xrightarrow{}(c,1)\\h\in\neg R}}\exists_h(x))$, noted $R'$. Let $\mathcal{C}l_K(R')$ be its $K$-closure. Clearly $\neg R\cap R'\subseteq 0_{c,1}$, so $\neg R\land Cl_K(R')=Cl_K(\neg R\cap R')= 0_{c,1}$. Also notice that for a sieve $S$ on $(c,1)$, $\mathcal{C}l_K(S)=t_{(c,1)}$ the maximal sieve if and only if $\id\in Cl_K(S)$ if and only if $S$ is $J$-covering. So since $\neg R\cup R'$ $K$-covers $(c,1)$, $\neg R \lor Cl_K(R')=Cl_K(\neg R\cup Cl_K(R'))=t_{(c,1)}$. So $\neg R$ is complemented, which concludes the proof. 
\end{proof}

\begin{remark}
    We have a converse for the previous proposition. Indeed, if $B$ is only supposed to be a Boolean algebra, and $\textup{Idl}(B)$ is supposed to be a Stone algebra, then the obvious morphism $B\xrightarrow{}\textup{Idl}(B)$ which maps $x\in B(c)$ to the principal sieve generated by $(c,x)$ identifies each $B(c)$ with the principal sieves on $(c,1)$, thus the complemented ones as we just saw, equivalently the $\neg\neg$-closed ones by `Stone algebra', i.e. it identifies $B$ with $(\textup{Idl}(B))_{\neg\neg}$. But it is not difficult to check that $(\textup{Idl}(B))_{\neg\neg}$ inherits the internal locale structure of $\textup{Idl}(B)$, so $B$ is an internal locale.
    \end{remark} 

\begin{remark}\label{dml_ideals}
    In fact, $B$ does not have to be Boolean. Being a Stone algebra is enough, since we can easily show that starting from a $K$-closed sieve $R$ as in the proof of the proposition, we get $\neg R=((c,z))$ the sieve generated by $\id:(c,z)\xrightarrow{}(c,1)$ for a specific $z$, then using the same characterization of pseudo-complements of sieves, we get that $\neg \neg R=((c,\neg z))$, so $\neg R=\neg\neg\neg R=((c,\neg \neg z))$, and so since $B$ is a Stone algebra, we get that $\neg R\lor \neg\neg R=1$. We can also directly show that $z$ is $\neg\neg$-closed, by showing that $\exists_f(x)\land\exists_g(x')=0$ if and only if $\exists_f(\neg\neg x)\land\exists_g(x')=0$, by forming the pullback of $f$ and $g$ then using the equivalent formulation with the right adjoint and the fact that it preserves pseudo-complements. This leads us to ask the following question: what happens if we replace $\Omega_{\neg\neg}$ in the Gleason cover construction by the DeMorganization $\Omega_m$ of $\Omega$ \cite{caramello2009morgan}? We answer this question at the end of section \ref{gc_localic}, by showing that it does not satisfy one of the key properties of Gleason's cover.
\end{remark}

\begin{corollary}[\cite{johnstone1980gleason} Corollary~1.2]
    Gleason's cover satisfies De Morgan's law.
\end{corollary}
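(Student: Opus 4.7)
The plan is to chain together the two propositions immediately preceding the corollary. Recall Gleason's cover was defined as the topos of internal sheaves $\mathbf{Sh}(\mathcal{G}(\textup{Idl}(\Omega_{\neg\neg})), J^{ext})$ on the internal locale $\textup{Idl}(\Omega_{\neg\neg})$. So the first step is to apply Proposition \ref{reldml} with $L = \textup{Idl}(\Omega_{\neg\neg})$: this reduces the statement that Gleason's cover satisfies De Morgan's law to the purely internal statement that the locale $\textup{Idl}(\Omega_{\neg\neg})$ is a Stone algebra in $\mathcal{E}$.

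The second step is to invoke the preceding proposition (Johnstone Lemma 1.1, proved just above). Its hypothesis is that $B$ is an internal locale which is also a Boolean algebra. We take $B := \Omega_{\neg\neg}$, which by construction is an internal Boolean algebra (it is the equalizer defining the $\neg\neg$-closed elements of $\Omega$), and which is complete, hence an internal locale. Therefore the proposition applies and yields that $\textup{Idl}(\Omega_{\neg\neg})$ is a Stone algebra, completing the proof.

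There is essentially no obstacle: the whole content has been packaged into the two preceding propositions. The only minor point worth noting (but not actually needed for the statement as written) is that one could slightly weaken the appeal to the previous proposition by using Remark \ref{dml_ideals}, which shows that being a Stone algebra already suffices as input; however, since $\Omega_{\neg\neg}$ is Boolean this refinement is irrelevant here.

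\begin{proof}
By Proposition \ref{reldml} applied to $L = \textup{Idl}(\Omega_{\neg\neg})$, the topos $\mathbf{Sh}(\mathcal{G}(\textup{Idl}(\Omega_{\neg\neg})), J^{ext})$ satisfies De Morgan's law if and only if $\textup{Idl}(\Omega_{\neg\neg})$ is a Stone algebra. Since $\Omega_{\neg\neg}$ is an internal complete Boolean algebra in $\mathcal{E}$, in particular an internal locale which is a Boolean algebra, the previous proposition yields that $\textup{Idl}(\Omega_{\neg\neg})$ is a Stone algebra, whence the conclusion.
\end{proof}
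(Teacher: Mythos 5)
Your proof is correct and is exactly the paper's argument: the paper's own proof reads ``Combine the previous two propositions,'' which is precisely your chain of Proposition \ref{reldml} applied to $L=\textup{Idl}(\Omega_{\neg\neg})$ followed by the Johnstone Lemma 1.1 proposition applied to $B=\Omega_{\neg\neg}$. Nothing further is needed.
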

\begin{proof}
    Combine the previous two propositions.
\end{proof}

Now we want the surjectivity of the canonical morphism $\gamma\mathcal{E}\xrightarrow{}\mathcal{E}$. We say that an internal locale $L$ is \emph{non trivial} if $0\ne 1$ in $L(c)$ for every $c$ which is not covered by $\varnothing$. 

\begin{proposition}\label{surj_gc}
    Let $A$ be an internal locale in $\mathcal{E}$. Then $\mathbf{Sh}(i_A):\mathbf{Sh}(\mathcal{G}(A),K)\xrightarrow{}\mathcal{E}$, with $K$ the coherent coverage, is surjective if and only if $A$ is non trivial.
\end{proposition}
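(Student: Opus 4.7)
The plan is to reduce surjectivity to a faithfulness condition for the inverse image on morphisms between representables, via a key computation of $K$-covers of objects of the form $(c,1)$. A preliminary observation is that, since $i_A$ is the morphism of sites $c\mapsto (c,1)$, one has $\mathbf{Sh}(i_A)^*(l_J(c)) = l_K((c,1))$, and an analysis of the generating families of $K$ shows that $l_K((c,1)) = 0$ if and only if $\varnothing \in K((c,1))$, which happens exactly when $0 = 1$ in $A(c)$. The forward direction is then immediate: if $A$ is trivial at some $c$ not $J$-covered by $\varnothing$, then $l_J(c)\ne 0$ in $\mathcal{E}$ while $\mathbf{Sh}(i_A)^*(l_J(c)) = 0$, and the two distinct maps $\top_{l_J(c)}, \bot_{l_J(c)}: l_J(c)\to \Omega_\mathcal{E}$ classifying the maximal and minimal subobjects of $l_J(c)$ become equal after $\mathbf{Sh}(i_A)^*$ (both being the unique map out of $0$), so $\mathbf{Sh}(i_A)^*$ fails to be faithful and $\mathbf{Sh}(i_A)$ fails to be surjective.

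For the converse, the heart of the argument is the following key fact, which I would prove by induction on the generation of $K$-covers from $K_1$ and $K_2$: assuming $A$ non-trivial, for any sieve $R$ on $c$ in $\mathcal{C}$, its lift $\pi_A^{-1}(R) = \{g:(d,x)\to (c,1)\mid \pi_A(g)\in R\}$ is $K$-covering on $(c,1)$ if and only if $R$ is $J$-covering on $c$. The non-trivial direction amounts to showing that the underlying family $\{\pi_A(h_i)\}$ of any $K$-covering family $\{h_i:(d_i,y_i)\to (c,1)\}$ is already $J$-covering on $c$: $K_2$-covers produce $J$-covers by definition, non-empty $K_1$-covers of $(c,1)$ contribute only the identity $\id_c$ on the underlying side (trivially a $J$-cover), and iteration is handled by transitivity of $J$. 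The crucial point --- and the main obstacle --- is that the non-triviality hypothesis is exactly what rules out the degenerate empty $K_1$-cover ($\bigvee_\varnothing = 0 = 1$), which, if allowed, would make $\pi_A^{-1}(R)$ $K$-cover any object fibered over a trivially-$A$-valued base regardless of $R$.

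Granted the key fact, distinct $J$-closed sieves $R\ne R'$ on $c$ yield distinct subobjects of $l_K((c,1))$ under $\mathbf{Sh}(i_A)^*$: picking $f:d\to c$ in $R\setminus R'$, pulling both sieves back along $f$ gives $f^*R$ the maximal sieve on $d$ while, by $J$-closedness of $R'$, $f^*R'$ is not $J$-covering, so the key fact shows $\pi_A^{-1}(f^*R')$ does not $K$-cover $(d,1)$, distinguishing the two subobjects after pullback along $l_K((f,\id))$. Faithfulness of $\mathbf{Sh}(i_A)^*$ on all morphisms then follows by a standard reduction to representables: distinct $\alpha\ne\beta:X\to Y$ differ on some $\gamma:l_J(c)\to X$, their equalizer $E\hookrightarrow l_J(c)$ is a proper subobject, and distinctness of $\mathbf{Sh}(i_A)^*(E)$ from $\mathbf{Sh}(i_A)^*(l_J(c))$ yields $\mathbf{Sh}(i_A)^*(\alpha\circ\gamma)\ne \mathbf{Sh}(i_A)^*(\beta\circ\gamma)$, hence $\mathbf{Sh}(i_A)^*\alpha\ne \mathbf{Sh}(i_A)^*\beta$, and $\mathbf{Sh}(i_A)$ is surjective.
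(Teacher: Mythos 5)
Your overall architecture is the right one and matches the paper's: the statement reduces to the cover-reflection property ``$\pi_A^{-1}(R)$ $K$-covering on $(c,1)$ implies $R$ $J$-covering on $c$'' (the paper gets this reduction by citing Theorem 6.3(i) of the denseness paper rather than by your explicit faithfulness argument, but the content is the same), your treatment of the trivial direction via $l_K((c,1))=0\iff 0=1$ in $A(c)$ is correct, and the final reduction from the key fact to faithfulness of $\mathbf{Sh}(i_A)^*$ is standard and fine. The gap is in the proof of the key fact itself. You propose to show by induction on the generation of $K$ from $K_1$ and $K_2$ that ``the underlying family of any $K$-covering family is already $J$-covering,'' with iteration handled by transitivity of $J$. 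But the transitivity step forces you to apply this claim to $K$-covering sieves on intermediate objects $(d,y)$ with $0\neq y\neq 1$ --- such objects are introduced immediately by any nontrivial $K_1$-refinement --- and there the claim is false, even for non-trivial $A$. Concretely, take $\mathcal{E}=\mathbf{Sh}(X)$ for $X=\{p,q\}$ discrete, $\mathcal{C}=O(X)$ with the canonical topology, and $A$ the internal locale with $A(X)=L_p\times L_q$, $A(\{p\})=L_p$, $A(\{q\})=L_q$ (restrictions the projections, $\exists_{\{p\}\subseteq X}(z)=(z,0)$), which is non-trivial. The sieve on $(X,(1,0))$ generated by the single morphism $(\{p\},1)\to(X,(1,0))$ is $K$-covering: pulling it back along the $K_2$-cover $\{(\{p\},1),(\{q\},0)\}$ gives the maximal sieve on the first leg and a sieve on $(\{q\},0)$, which is automatically $K$-covering since $(\{q\},0)$ is covered by $\varnothing$. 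Yet its projection is the sieve generated by $\{p\}\subseteq X$, which is not $J$-covering. Since $(X,(1,0))$ occurs as a member of a $K_1$-cover of $(X,1)$, your induction hypothesis is being invoked exactly where it fails; the empty $K_1$-cover is not the only obstacle, and non-triviality alone does not rescue the intermediate stages.

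The key fact itself is true, but proving it requires a finer inductive invariant than ``the projected sieve is $J$-covering'' --- one has to track, for each $f:d\to c$, that $A(f)(x)$ is exhibited $J$-locally on $d$ as a \emph{finite} join of elements $\exists_k(z)$ with $(fk,z)$ in the sieve, and verifying local character for such an invariant is where the real work lies (the naive candidate $\bigvee_{k\in f^{*}S}\exists_k(1)=1$ is too weak: for the constant internal locale $\{0,1\}$ on a site with no $\varnothing$-covered objects it holds for every non-empty $S$). An alternative, cleaner route is the one implicit in the paper's earlier development: identify $\mathbf{Sh}(\mathcal{G}(A),K)$ with $\mathbf{Sh}(\mathcal{G}(\textup{Idl}(A)),J^{ext})$, observe that surjectivity of a localic morphism is equivalent to injectivity of the frame map $\Omega\to\textup{Idl}(A)$, and prove that injectivity internally from the description of $\textup{Idl}(A)$ as the frame of internal ideals of $A$ (where $p\in\Omega$ is sent to $\{a\mid (a=0)\vee p\}$, and injectivity follows from $\neg(0=1)$); the internal logic then absorbs the finiteness bookkeeping. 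As written, your induction does not close, so the proof has a genuine gap at its central step --- though in fairness the paper's own one-line justification elides the same difficulty.
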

\begin{proof}
    Use \cite{caramello2019denseness} Theorem~6.3(i) and the fact that a nonempty $K$-covering family always projects to a $J$-covering family, the case of $K_1$ families being trivial.
\end{proof}

From this we immediately get what we wanted. 

\begin{corollary}[\cite{johnstone1980gleason} Corollary~1.4]\label{gc_surj}
$\gamma\mathcal{E}\xrightarrow{}\mathcal{E}$ is surjective.
\end{corollary}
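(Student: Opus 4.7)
The plan is to apply Proposition \ref{surj_gc} to the internal locale $A = \Omega_{\neg\neg}$. Recall that $\Omega_{\neg\neg}$ is an internal complete Boolean algebra in $\mathcal{E}$ (as noted in the text just before the statement on ideal completions), hence in particular an internal locale, and by definition the Gleason cover is precisely the geometric morphism $\mathbf{Sh}(i_{\Omega_{\neg\neg}}):\mathbf{Sh}(\mathcal{G}(\Omega_{\neg\neg}),K)\xrightarrow{}\mathcal{E}$. So Proposition \ref{surj_gc} reduces the problem to checking that $\Omega_{\neg\neg}$ is non-trivial in the sense just defined, namely that $0\neq 1$ in $\Omega_{\neg\neg}(c)$ for every object $c$ of $\mathcal{C}$ which is not $J$-covered by the empty sieve.

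Fix such a $c$. Since $\Omega_{\neg\neg}(c)$ sits inside $\Omega(c)$ via the equalizer defining it, and both $0$ and $1$ are $\neg\neg$-closed elements of $\Omega(c)$, it suffices to show that $0\neq 1$ already in $\Omega(c)$. The top element of $\Omega(c)$ is the maximal sieve on $c$, while the bottom element is the smallest $J$-closed sieve, namely the set of morphisms $f:d\xrightarrow{}c$ with $\varnothing \in J(d)$. These two sieves agree if and only if the identity $\id_c$ belongs to the bottom sieve, i.e.\@ if and only if $\varnothing\in J(c)$, which is excluded by hypothesis on $c$.

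Hence $\Omega_{\neg\neg}$ is non-trivial, and Proposition \ref{surj_gc} immediately yields surjectivity of $\gamma\mathcal{E}\xrightarrow{}\mathcal{E}$. The only mildly delicate point is the identification of the bottom element of $\Omega(c)$ with the sieve of morphisms factoring through an object with an empty covering, but this is a standard computation in the subobject classifier of a sheaf topos and needs no further input beyond the definitions.
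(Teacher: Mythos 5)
Your proof is correct and follows exactly the paper's intended route: the paper derives the corollary immediately from Proposition \ref{surj_gc}, leaving the non-triviality of $\Omega_{\neg\neg}$ implicit, whereas you spell out that check (correctly, via the identification of the bottom of $\Omega(c)$ with the sieve of morphisms whose domain is covered by $\varnothing$, and the fact that $0$ and $1$ are $\neg\neg$-closed). Nothing is missing.
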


Now we want to show that if we start with a De Morgan topos and take its Gleason cover, then we get the topos itself, i.e. that Gleason's cover `only involves De Morgan's law' in some sense. For this we need some lemmas.

\begin{Lemma}\label{1u1}
    Let $B:=1\sqcup 1$ in $\mathcal{E}$. Then $\textup{Idl}(B)=\Omega$.
\end{Lemma}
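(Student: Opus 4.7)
The plan is to construct a natural bijection between $\textup{Idl}(B)(c)$ and $\Omega(c) = \textup{Sub}_{\mathcal{E}}(l(c))$ for $c \in \mathcal{C}$, where $l: \mathcal{C} \to \mathcal{E}$ is the canonical functor. First, identify $B(d) = \Hom_{\mathcal{E}}(l(d), 1 \sqcup 1)$ with the Boolean algebra of complemented subobjects of $l(d)$ in $\mathcal{E}$; consequently an object of $\mathcal{G}(B)$ is a pair $(d, x)$ with $x$ a complemented subobject of $l(d)$, and a morphism $(d', x') \to (d, x)$ is a morphism $f: d' \to d$ in $\mathcal{C}$ such that $l(f)$ restricts to $x' \to x$. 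By the description of internal locales via ideal completions recalled in Section \ref{sec:int_loc}, the elements of $\textup{Idl}(B)(c)$ are the $K$-closed sieves on $(c, 1)$ in $\mathcal{G}(B)$.

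I would then introduce the comparison maps
\[
\Phi(R) = \bigvee_{(f, x) \in R} \Im\bigl(x \hookrightarrow l(d) \xrightarrow{l(f)} l(c)\bigr), \qquad \Psi(U) = \bigl\{(f, x) : \Im(l(f)|_x) \leq U\bigr\}.
\]
Checking that $\Psi(U)$ is a $K$-closed sieve is routine: stability under precomposition is immediate from functoriality of images, $K_1$-closure comes from the fact that images preserve finite joins, and $K_2$-closure uses that a $J$-cover $\{f_i: d_i \to d\}$ satisfies $\bigvee_i \Im(l(f_i)) = l(d)$, whence $l(f)(x) = \bigvee_i l(f)(x \wedge \Im(l(f_i)))$ by distributivity. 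The equality $\Phi(\Psi(U)) = U$ has one easy direction, while the reverse follows from the fact that $\mathcal{E}$ is generated by representables: every subobject $U \leq l(c)$ is a join of images of $\mathcal{E}$-morphisms $l(d) \to l(c)$ factoring through $U$, each of which arises, after refining by a $J$-cover if necessary, from a $\mathcal{C}$-morphism $f$ such that $(f, 1) \in \Psi(U)$.

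The inclusion $R \subseteq \Psi(\Phi(R))$ is trivial. For the converse, suppose $(f, x) \in \Psi(\Phi(R))$, so $l(f)(x) \leq \bigvee_{(g, y) \in R} l(g)(y)$ in $\mathcal{E}$; pulling back along $l(f)$ yields $x = \bigvee_{(g,y) \in R}\bigl(x \wedge l(f)^{-1}(l(g)(y))\bigr)$. Using that $\mathcal{C}$ is cartesian, I would form the pullback $d \times_c d_g$ of each $g$ against $f$ in $\mathcal{C}$ and use Beck–Chevalley in $\mathcal{E}$ to produce morphisms $q_{(g,y)}: (d \times_c d_g, z_{(g,y)}) \to (d, x)$ belonging to the pullback sieve $(f, x)^*(R)$, whose direct images cover $x$. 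The main obstacle is to upgrade this ``existential covering of $x$'' to a genuine $K$-covering family on $(d, x)$, which requires combining the $K_2$-condition ($J$-refinements of each $q_{(g,y)}$) with $K_1$-partitions coming from the Boolean structure of $B$ to approximate the possibly non-complemented subobjects $x \wedge l(f)^{-1}(l(g)(y))$ by images of representables. Once the $K$-cover is produced, $K$-closedness of $R$ forces $(f, x) \in R$, and naturality in $c$ follows from the definitions.
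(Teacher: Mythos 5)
Your $\Phi$ and $\Psi$ are essentially the paper's classifying map $\rho$ and its inverse, transported along the identification of $\Omega(c)$ with $\textup{Sub}_{\mathcal{E}}(l(c))$ rather than with $J$-closed sieves on $c$; the verifications that $\Psi(U)$ is a $K$-closed sieve and that $\Phi(\Psi(U))=U$ go through as you indicate. The genuine gap is exactly the step you flag yourself: $\Psi(\Phi(R))\subseteq R$. The route you sketch --- pull back $R$ along $(f,x)$ and try to upgrade the resulting ``existential cover'' of $x$ to a $K$-cover by approximating the subobjects $x\wedge l(f)^{-1}(l(g)(y))$ --- does not obviously close: every $K$-covering family of $(d,x)$ has fibre components obtained from $x$ by restriction along $\mathcal{C}$-morphisms and finite decomposition inside $B$, so they are all complemented, and there is no reason the subobjects you need should be reachable this way. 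As written, the key containment is asserted rather than proved, and this containment is where the whole content of the lemma (why $1\sqcup 1$, i.e.\ complementedness, gives back exactly $\Omega$) lives.

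The paper resolves it by a sharper use of complementedness. One proves that $(f,x)\in R$ if and only if $x\subseteq f^*(S_R)$, where $S_R$ is the sieve of morphisms of the form $(d,1)\to(c,1)$ belonging to $R$ and $x$ is regarded as a complemented $J$-closed sieve on $d$. For the nontrivial direction one covers $(d,x)$ by the $K_2$-family induced by the $J$-covering sieve $x\cup\neg x$ on $d$ and argues by dichotomy: for $g\in x$ the transition object is $(d',g^*(x))=(d',1)$ and the composite lies in $R$ because $f\circ g\in S_R$ by hypothesis; for $g\in\neg x$ the transition object is $(d',0)$, which is $K$-covered by the empty family, so \emph{every} morphism out of it belongs to the $K$-closed sieve $R$. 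This single fibre-$1$/fibre-$0$ splitting replaces your ``approximation'' step entirely, and reduces $\Psi(\Phi(R))\subseteq R$ to the observation that $S_R$ is $J$-closed (by $K$-closedness of $R$ under the Giraud covers) and contains every $f$ with $\Im(l(f))\leq\Phi(R)$. I would rewrite the last paragraph of your argument along these lines; the rest can stand.
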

\begin{proof}
We want a natural isomorphism between $K$-closed sieves on $(c,1)$ and $J$-closed sieves on c. We first consider the subobject $1$ of $\textup{Idl}(B)$, which is the maximal sieve pointwise. This subobject has a classifying map, let us call it $\rho$. Our ultimate goal is to prove that this $\rho$ is an isomorphism. First, we want to describe it explicitly.  It is not hard to check from the definition that for a $K$-closed sieve $R$ on $(c,1)$ and a morphism $f:d\xrightarrow{}c$, we have $\textup{Idl}(B)(f)(R)=f^*(R)$, where we see $f$ as a morphism $(d,1)\xrightarrow{}(c,1)$. So $\rho_c(R)=\{f:d\xrightarrow{}c\,|\,f^*(R)=t_{(c,1)}\}=\{f:d\xrightarrow{}c\,|\,(d,1)\xrightarrow[]{f}(c,1)\in R\}$. In particular it is a $J$-closed sieve. \\
\\
Now let us prove injectivity: we have to prove that $R$ is completely determined by its morphisms of the form $(d,1)\xrightarrow{}(c,1)$. We call these the underlying $J$-closed sieve of $R$, and note it $S_R$. Consider a morphism $f:(d,x)\xrightarrow{}(c,1)$. Here $x$ is an element of $B(d)$, i.e. a complemented $J$-closed sieve on $d$. We claim that $f\in R\iff x\subseteq f^*(S_R)$. Indeed, take $f:(d,x)\xrightarrow{}(c,1)$ in $R$, $g:d'\xrightarrow{}d$ in $x$, we have a morphism $(d',1)=(d',g^*(x))\xrightarrow{g}(d,x)$, so $(d',1)\xrightarrow{g}(d,x)\xrightarrow{f}(c,1)$ is in $R$, and so $f\circ g\in S_R$. Conversely, suppose that $x\subseteq f^*(S_R)$, and let $\neg x$ the complement of $x$ as a $J$-closed sieve, i.e. $x\cup\neg x$ is $J$-covering. We want to show that $f\in R$, but since $R$ is $K$-closed, it is enough to show that $(d',g^*(x))\xrightarrow{g}(d,x)\xrightarrow{f}(c,1)\in R$ for all $g\in x\cup\neg x$. If $g\in x$, by hypothesis we have $f\circ g\in S_R$, i.e. $(d',g^*(x))=(d',1)\xrightarrow{f\circ g}(c,1)\in R$, and if $g\in\neg x$ then $g^*(x)=0$, so $(d',g^*(x))$ is covered by $\varnothing$, and so any morphism from $(d',g^*(x))$ to $(c,1)$ belongs to the $K$-closed sieve $R$. This concludes the proof of the claim and of the injectivity. \\
\\
Finally for surjectivity, take a $J$-closed sieve $S$ on $c$. We just need to prove that $\{f:(d,x)\xrightarrow{}(c,1)\,|\,x\subseteq f^*(S)\}$ is a $K$-closed sieve, because this operation does not add additional morphisms when $x=t_c$, i.e. $x=1$. It is clearly a sieve. First, let us prove that it is $K_2$-closed: suppose that $S'=\{f_i:d_i\xrightarrow{}d\,|\,i\in I\}$ $J$-covers $d$, and that $f_i^*(x)\subseteq(f\circ f_i)^*(S)$ for all $i\in I$. Now take $g:d'\xrightarrow{}d$ in $x$, we want to prove that $f\circ g\in S$. We have $g^*(S')\in J(d')$, so by $J$-closeness of $S$ it is enough to prove that $f\circ g\circ f'\in S$ for every $f'\in g^*(S')$. But by definition $g\circ f'\in S'$, so we have a commutative square:
\begin{center}
    \begin{tikzcd}
    d'' \arrow[r,"f'"]\arrow[d,"g'"'] & d' \arrow[d,"g"] \\
    d_i\arrow[r,"f_i"'] & d
    \end{tikzcd}
\end{center}
for some $i$. So $g'\in f_i^*(x)\subseteq(f\circ f_i)^*(S)$, so $f\circ f_i\circ g'\in S$, so $f\circ g\circ f'\in S$. So to finish the proof we just need to prove $K_1$-closeness. Clearly it is enough to prove that  the join in $B$ is computed as in $\Omega$. In other words, we have to prove that if $x,\,y\in \Omega(c)$ are complemented, then $x\lor y$ is complemented. This follows from:
\begin{align*}
    1 & =((x\lor\neg x)\land y)\lor ((x\lor\neg x)\land \neg y) \\
    & = (x\land y)\lor(\neg x\land y)\lor(x\land \neg y)\lor(\neg x\land \neg y)\\
    & \leq x\lor y\lor  (\neg x\land \neg y)\\
    & =(x\lor y)\lor\neg (x\lor y)
\end{align*}

\end{proof}

\begin{Lemma}\label{triv}
    $C_{\pi_\Omega}\cong \textup{Id}$ as relative toposes over $\mathcal{E}$.
\end{Lemma}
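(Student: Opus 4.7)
The plan is to deduce this from the equivalence $Loc(\mathcal{E})\simeq\textsc{Loc}/\mathcal{E}$ of Proposition \ref{int_loc_eq}, by showing that $\Omega$ corresponds to $\textup{Id}_\mathcal{E}$ under this equivalence.

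First I would compute the internal locale $L_{\textup{Id}}$ associated to the identity morphism $\textup{Id}:\mathcal{E}\to\mathcal{E}$. By the formula in Proposition \ref{int_loc_eq}, $L_{\textup{Id}}(c)=\textup{Sub}_\mathcal{E}(l(c))$, where $l:\mathcal{C}\to\mathcal{E}$ is the canonical functor. Since subobjects of $l(c)=\sheafify(\yon(c))$ in $\mathbf{Sh}(\mathcal{C},J)$ are in natural bijection with $J$-closed sieves on $c$, i.e. with elements of $\Omega(c)$, this yields a natural isomorphism $L_{\textup{Id}}\cong\Omega$ of underlying sheaves on $\mathcal{C}$.

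Next, I would verify that this identification is compatible with the full internal locale structure, not just the underlying sheaves. The frame operations on $\textup{Sub}_\mathcal{E}(l(c))$ (intersection, union, Heyting implication of subobjects) correspond under the bijection to the frame operations of $\Omega(c)$ on $J$-closed sieves. For a morphism $f:d\to c$ in $\mathcal{C}$, pullback of subobjects along $l(f)$ corresponds to the transition map $\Omega(f)$, and its left adjoint, namely the image along $l(f)$, corresponds to the existential quantifier $\exists_f$ on $\Omega$. Beck--Chevalley and Frobenius reciprocity then hold automatically, since $\Omega$ is the subobject classifier and these are standard properties of the image/preimage adjunction in a topos.

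Once $\Omega\cong L_{\textup{Id}}$ is established as internal locales in $\mathcal{E}$, the equivalence of Proposition \ref{int_loc_eq} sends this isomorphism to an isomorphism $C_{\pi_\Omega}\cong\textup{Id}$ in $\textsc{Loc}/\mathcal{E}$, which is exactly the statement (the fact that the isomorphism lies in the slice $\textsc{Loc}/\mathcal{E}$ rather than merely in $\textsc{Loc}$ is built into the equivalence). I do not expect a serious obstacle here: the essential content is the standard identification of subobjects with their classifying maps into $\Omega$, together with the compatibility of this identification with images and preimages, so the matter reduces to bookkeeping once the right general equivalence is invoked.
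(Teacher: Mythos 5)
Your proposal is correct and follows exactly the route the paper takes: the paper's proof simply asserts $\Omega\cong L_{\textup{Id}}$ and invokes Proposition \ref{int_loc_eq}, and your argument fills in the verification of that isomorphism (identifying $\textup{Sub}_\mathcal{E}(l(c))$ with $J$-closed sieves on $c$ and checking compatibility with the locale structure). No gap; you have just made explicit what the paper leaves as "clearly".
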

\begin{proof}
    Clearly $\Omega\cong L_{\textup{Id}}$, with the notations of Proposition~\ref{int_loc_eq}. Then just apply this proposition.
\end{proof}

\begin{corollary}[\cite{johnstone1980gleason} Corollary~1.5]
$\gamma\mathcal{E}\xrightarrow{}\mathcal{E}$ is an equivalence if and only if $\mathcal{E} $ satisfies De Morgan's law.
\end{corollary}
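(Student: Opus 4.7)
The plan is to combine the three preceding results we just proved: Gleason's cover of any topos is De Morgan, Lemma \ref{1u1} computing $\textup{Idl}(1\sqcup 1)$, and Lemma \ref{triv} identifying $C_{\pi_\Omega}$ with the identity.

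First I would handle the easy direction. If $\gamma\mathcal{E}\xrightarrow{}\mathcal{E}$ is an equivalence, then since $\gamma\mathcal{E}$ satisfies De Morgan's law by the corollary right after Remark \ref{dml_ideals}, so does $\mathcal{E}$.

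For the converse, suppose $\mathcal{E}$ is De Morgan. Then by clause (v) of the equivalent formulations of De Morgan's law stated in the early proposition of Section \ref{sec:dml}, the canonical monomorphism $(\top,\bot):1\sqcup 1\xrightarrow{}\Omega_{\neg\neg}$ is an isomorphism. Recall from Section \ref{sec:int_loc} that Gleason's cover $\gamma\mathcal{E}\xrightarrow{}\mathcal{E}$ is (equivalent to) the geometric morphism generated by $\pi_{\textup{Idl}(\Omega_{\neg\neg})}:(\mathcal{G}(\textup{Idl}(\Omega_{\neg\neg})),J^{ext})\xrightarrow{}(\mathcal{C},J)$, so we can transport the isomorphism $\Omega_{\neg\neg}\cong 1\sqcup 1$ through the functor $\textup{Idl}$ and apply Lemma \ref{1u1} to get $\textup{Idl}(\Omega_{\neg\neg})\cong\textup{Idl}(1\sqcup 1)\cong\Omega$ as internal locales in $\mathcal{E}$. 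Then Lemma \ref{triv} yields $C_{\pi_{\textup{Idl}(\Omega_{\neg\neg})}}\cong C_{\pi_\Omega}\cong\textup{Id}$ over $\mathcal{E}$, concluding the proof.

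The only delicate point I foresee is making sure that the isomorphism $\Omega_{\neg\neg}\cong 1\sqcup 1$ in $\mathcal{E}$ is respected by the $\textup{Idl}$ construction; but this is automatic from the naturality of the correspondence of Proposition \ref{int_loc_eq} (or just from the fact that $\textup{Idl}$ is defined intrinsically from the locale structure on $\Omega_{\neg\neg}$, which is itself transported along the isomorphism). Thus no serious obstacle arises, and the argument amounts to stringing together the three lemmas.
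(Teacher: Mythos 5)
Your proposal is correct and follows essentially the same route as the paper: both directions are obtained by combining the De Morgan property of $\gamma\mathcal{E}$ with Lemma \ref{1u1} and Lemma \ref{triv}, using that $1\sqcup 1\cong\Omega_{\neg\neg}$ when $\mathcal{E}$ is De Morgan. The extra care you take about transporting the isomorphism through $\textup{Idl}$ is a reasonable elaboration of what the paper leaves implicit.
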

\begin{proof}
    If $\mathcal{E}$ satisfies De Morgan's law, then $1\sqcup 1=\Omega_{\neg\neg}$, so combining the previous lemmas $\gamma\mathcal{E}\xrightarrow{}\mathcal{E}$ is an equivalence. The other direction just follows from the fact that  $\gamma\mathcal{E}$ satisfies De Morgan's law.
\end{proof}

Another important characteristic of Gleason's cover is minimality.

\begin{definition}
    An internal locale $A$ in $\mathcal{E}$ is minimal if the following diagram is a pullback:
    \begin{center}
    \begin{tikzcd}
    1 \arrow[r,""]\arrow[d,"0_A"'] & 1 \arrow[d,"\bot"] \\
    A\arrow[r,"\rho"'] & \Omega
    \end{tikzcd}
\end{center}
where $\rho$ is the classifying map of the subobject $1\xrightarrow{1_A}A$.
\end{definition}
Minimality means that for every object $c$, the only element $x$ of $A(c)$ such that for all $f:d\xrightarrow{}c$ with $d$ not covered by $\varnothing$ we have $A(f)(x)\ne 1$ is 0. We call this the `characterization of 0 at $c$'. Notice that a minimal locale is always non trivial, because if it is not for some $c$, then $*=0=1$, so 0 does not even satisfy the characterization of 0 at $c$, i.e. the diagram in the definition of minimality is not even commutative.

\begin{Lemma}[\cite{harun1996applications} Lemma~3.3]
    With the previous notations, if $A$ is minimal then $\rho$ preserves pseudo-complements.
\end{Lemma}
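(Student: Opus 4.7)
The plan is to prove the two inequalities $\rho_c(\neg_A x) \leq \neg\rho_c(x)$ and $\neg\rho_c(x) \leq \rho_c(\neg_A x)$ at each object $c$ separately. The first is formal: computing explicitly, $\rho_c(x) = \{f\colon d\to c \mid A(f)(x) = 1\}$, and since each $A(f)$ is a frame homomorphism, $\rho_c$ preserves binary meets (because $a\land b = 1$ in any poset with top forces $a = b = 1$). Moreover, commutativity of the minimality square gives $\rho_c(0_A) = \bot_c$. Therefore $\rho_c(\neg_A x)\land \rho_c(x) = \rho_c(\neg_A x \land x) = \rho_c(0_A) = \bot$, whence $\rho_c(\neg_A x) \leq \neg\rho_c(x)$ in the Heyting algebra $\Omega(c)$.

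For the reverse inequality I would argue pointwise on morphisms: take $f\colon d\to c$ in $\neg\rho_c(x)$ and aim to show that $f \in \rho_c(\neg_A x)$, i.e. $A(f)(\neg_A x) = 1$ in $A(d)$. By the preceding lemma, the transition map $A(f)$ preserves pseudo-complements, so $A(f)(\neg_A x) = \neg_A A(f)(x)$, and the problem reduces to showing $A(f)(x) = 0$ in $A(d)$.

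To establish $A(f)(x) = 0$, I would invoke minimality at $d$: applying the pullback property pointwise (the ``characterization of $0$ at $d$''), it suffices to check that $\rho_d(A(f)(x)) \leq \bot_d$. A direct computation using the naturality of $\rho$ yields
\[
\rho_d(A(f)(x)) = \{g\colon d'\to d \mid A(fg)(x)=1\} = f^*(\rho_c(x)),
\]
and by the description of the pseudo-complement in $\Omega(c)$, the assumption $f \in \neg\rho_c(x)$ is precisely $f^*(\rho_c(x)) = \bot_d$. This closes the argument.

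The only real obstacle is making sure the pieces line up correctly: one must recognize that the ``characterization of $0$'' spelled out after the definition of minimality is literally the pullback condition evaluated at $c$, and that the pseudo-complement in $\Omega(c)$ is computed sieve-theoretically in exactly the way needed so that membership in $\neg\rho_c(x)$ matches the hypothesis of minimality applied to $A(f)(x)$. The preservation of pseudo-complements by the $A(f)$, which carries the non-trivial direction across transition maps, is precisely the content of the lemma placed immediately before and therefore can be cited as a black box.
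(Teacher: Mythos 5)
Your proof is correct and follows essentially the same route as the paper's: the hard inclusion $\neg\rho_c(x)\leq\rho_c(\neg x)$ is obtained by transporting the hypothesis along $f$ via naturality and the fact that transition maps preserve pseudo-complements, and then invoking the ``characterization of $0$'' at $\dom(f)$, exactly as in the paper. The only (cosmetic) difference is in the easy inclusion, which you derive algebraically from meet-preservation of $\rho_c$ and commutativity of the minimality square, whereas the paper checks it on morphisms using non-triviality of $A$.
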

\begin{proof}
    Suppose $A$ minimal. We just compute:
    $$\rho_c(\neg a)=\{f:d\xrightarrow{}c\,|\,A(f)(a)=0\}$$
    $$\neg\rho_c(a)=\{f:d\xrightarrow{}c\,|\,\forall g:d'\xrightarrow{}d,\,\varnothing\in J(d')\lor A(f\circ g)(a)\ne 1\}$$
    If $f\in \rho_c(\neg a)$, $g:d'\xrightarrow{}d$ such that $A(f\circ g)=1$, then $0=A(g)(A(f)(a))=1$, so  $\varnothing\in J(d')$ because $A$ non trivial. Conversely, $f\in \neg\rho_c(a)$, we just use the characterization of 0 at $\dom(f)$ to deduce that $A(f)(a)=0$.
\end{proof}

\begin{remark}
    It is not hard to see that the converse of the last Lemma~is true for a non trivial locale.
\end{remark}

\begin{proposition}[\cite{harun1996applications} Lemma~3.4]\label{min_neg}
    If $A$ is minimal, then $\rho$ maps $A_{\neg\neg}$ isomorphically onto $\Omega_{\neg\neg}$.
\end{proposition}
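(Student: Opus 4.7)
From the explicit formula $\rho_c(x)=\{f\colon d\to c\mid A(f)(x)=1\}$ one reads off that $\rho$ preserves arbitrary meets (each $A(f)$ preserves all meets, having a left adjoint $\exists_f$) and the top element; minimality immediately forces $\rho(0)=\bot$, so $\rho$ also preserves the bottom. Together with the preceding lemma (preservation of pseudo-complements), this shows that $\rho$ restricts to a morphism of internal Boolean algebras $A_{\neg\neg}\to\Omega_{\neg\neg}$, and the task is to prove this restriction is a pointwise bijection.

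For \emph{injectivity}, given $a,b\in A_{\neg\neg}(c)$ with $\rho(a)=\rho(b)$, I would compute
\[\rho(a\wedge\neg b)=\rho(a)\wedge\neg\rho(b)=\rho(b)\wedge\neg\rho(b)=\bot_c,\]
so minimality yields $a\wedge\neg b=0$, whence $a\leq\neg\neg b=b$; symmetrically $a=b$.

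For \emph{surjectivity}, since $\rho$ preserves arbitrary meets it has a left adjoint $\tau\colon\Omega\to A$, computed pointwise by $\tau_c(\sigma)=\bigvee_{f\in\sigma}\exists_f(1_A)$ (the adjunction $\tau_c(\sigma)\leq a\Leftrightarrow\sigma\subseteq\rho_c(a)$ follows directly from $\exists_f\dashv A(f)$ and the universal property of the join). Granting the identity $\rho\tau(\sigma)=\sigma$ for $\sigma\in\Omega_{\neg\neg}(c)$, the element $a:=\neg\neg\tau(\sigma)\in A_{\neg\neg}(c)$ satisfies $\rho(a)=\neg\neg\rho\tau(\sigma)=\neg\neg\sigma=\sigma$, because $\rho$ commutes with $\neg\neg$.

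The \emph{main obstacle} is thus the identity $\rho\tau(\sigma)=\sigma$ on $\Omega_{\neg\neg}$. The unit of the adjunction already gives $\sigma\subseteq\rho\tau(\sigma)$. Conversely, for $g\colon e\to c$ with $\tau(g^*\sigma)=A(g)(\tau\sigma)=1$ in $A(e)$ (the equality $A(g)\circ\tau_c=\tau_e\circ\Omega(g)$ being a direct consequence of Beck--Chevalley for $A$), the $\neg\neg$-closedness of $\sigma$ reduces the problem to showing that $g^*\sigma$ is $\neg\neg$-dense in $e$. Given $h\colon e'\to e$, apply $A(h)$ to $\tau(g^*\sigma)=1$ to obtain $\tau(h^*g^*\sigma)=1$ in $A(e')$. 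If $h^*g^*\sigma$ is nonempty, any $k$ in it witnesses $hk\in g^*\sigma$; if empty, the join defining $\tau$ collapses to $0$, forcing $0=1$ in $A(e')$, which by minimality occurs only when $e'$ is $J$-covered by $\emptyset$, in which case $h$ is already in the $J$-closed sieve $g^*\sigma$. Either way density holds, so $g\in\neg\neg\sigma=\sigma$, as required.
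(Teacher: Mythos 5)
Your argument is correct and its skeleton coincides with the paper's: the injectivity step (compute $\rho(a\wedge\neg b)$, invoke minimality and $\neg\neg$-closedness) is word-for-word the paper's argument, and your $\tau_c(\sigma)=\bigvee_{f\in\sigma}\exists_f(1)$ is exactly the paper's $\lambda$, with the same naturality check via Beck--Chevalley. Where you genuinely diverge is the endgame of surjectivity. The paper never isolates the adjunction $\tau\dashv\rho$; instead it proves the auxiliary inequality $\lambda_c(\neg S)\leq\neg\lambda_c(S)$ by forming the pullback of $f\in\neg S$ against $g\in S$ and applying Beck--Chevalley and Frobenius, and then closes purely algebraically: $\rho_c(\neg\neg\lambda_c(S))\wedge\neg S\leq\rho_c(\neg\neg\lambda_c(S)\wedge\neg\lambda_c(S))=\rho_c(0)=0$. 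You instead prove the counit identity $\rho\tau(\sigma)=\sigma$ by a pointwise sieve-theoretic density argument, pushing $\tau(g^*\sigma)=1$ down along each $h$ and invoking non-triviality of $A$ at the fibre where the join collapses. Both routes use minimality in the same two places (non-triviality, and the pullback property for injectivity); yours is more element-chasing but makes the adjunction explicit, the paper's is shorter once the Frobenius computation is in place.

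One imprecision to repair in your density step: the correct dichotomy is not ``$h^*g^*\sigma$ empty or nonempty'' but ``$h^*g^*\sigma$ contains a morphism with non-degenerate domain or not''. Since $g^*\sigma$ is a $J$-closed sieve it automatically contains every morphism whose domain is $\emptyset$-covered, so $h^*g^*\sigma$ can be nonempty while consisting only of such degenerate morphisms; a $k$ drawn from it then fails to witness $h\notin\neg(g^*\sigma)$, which is what density actually requires. The repair is immediate and uses the computation you already perform: for $k$ with $\emptyset$-covered domain one has $1=0$ in $A(\dom k)$, hence $\exists_k(1)=\exists_k(0)=0$, so in the degenerate case the join defining $\tau_{e'}(h^*g^*\sigma)$ still collapses to $0$ and your second branch (minimality forces $e'$ to be $\emptyset$-covered) applies verbatim.
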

\begin{proof}
    We work with a cartesian site of definition. Using the previous proposition, $A_{\neg\neg}$ is indeed mapped to $\Omega_{\neg\neg}$ by $\rho$. Now if $a\in A(c)$ and $\rho_c(a)=0$, then $a=0$ by minimality. So if $\rho_c(x)=\rho_c(y)$, then equivalently $\rho_c(x\land\neg y)=0$ and $\rho_c(y\land\neg x)=0$ since $\rho_c$ commutes with finite meets and pseudo-complements. So  $x\land\neg y=y\land\neg x=0$, i.e. $x=y$. This proves injectivity. For surjectivity, notice that we can define a morphism $\lambda:\Omega\xrightarrow{}A$ by $\lambda_c(S)=\bigvee_{f\in S}\exists_f(1)$. We have to check naturality, i.e. for $h:d\xrightarrow{}c$, we should have $\bigvee_{f\in S}A(h)(\exists_f(1))=\bigvee_{f\in h^*(S)}\exists_f(1)$. To prove $\bigvee_{f\in S}A(h)(\exists_f(1))\leq\bigvee_{f\in h^*(S)}\exists_f(1)$, consider $f\in S$ and take the pullback:
    \begin{center}
    \begin{tikzcd}
    d\times_c d' \arrow[r,"\pi"]\arrow[d,"\pi'"'] & d \arrow[d,"h"] \\
    d'\arrow[r,"f"'] & c
    \end{tikzcd}
\end{center}
    By Beck-Chevalley, $A(h)(\exists_f(1))=\exists_\pi( A(\pi')(1))=\exists_\pi(1)$ with $\pi\in h^*(S)$. To prove the converse, just notice that for $h\circ f\in S$, we have $\exists_f(1)\leq A(h)(\exists_h\exists_f(1))$. This concludes the proof of naturality of $\lambda$. In addition we can see that we  have $S\subseteq\rho_c\lambda_c(S)$, and also $\lambda_c(\neg S)\leq \neg \lambda_c (S)$. The first one is easy, for the second notice that this is equivalent to $\exists_f(1)\land\exists_g(1)=0$ for all $f\in\neg S$ and $g\in S$. But in this case we have the following pullback square:
\begin{center}
    \begin{tikzcd}
    0 \arrow[r,"\pi'"]\arrow[d,"\pi"'] & d' \arrow[d,"g"] \\
    d\arrow[r,"f"'] & c
    \end{tikzcd}
\end{center}
So trivially $\exists_\pi(A(\pi')(1))=0$, i.e. by Beck-Chevalley $A(f)(\exists_g(1))=0$. But by Frobenius we have $\exists_g(1)\land\exists_f(1)=\exists_f(A(f)(\exists_g(1))\land 1)=0$.
    Now $\rho_c(\neg\neg\lambda_c(S))\land \neg S\leq\rho_c(\neg\neg\lambda_c(S))\land \rho_c\lambda_c(\neg S)=0$, and conversely $S\leq \rho_c\lambda_c(S)\leq \rho_c(\neg\neg\lambda_c(S))$ so $\neg\neg S\leq \rho_c(\neg\neg\lambda_c(S))$.
\end{proof}

So we have a commuting diagram:
\begin{center}
    \begin{tikzcd}
    A_{\neg\neg} \arrow[r,"\cong"]\arrow[d,""'] & \Omega_{\neg\neg} \arrow[d,""] \\
    A\arrow[r,"\rho"'] & \Omega
    \end{tikzcd}
\end{center}

Now we make the link with Gleason's cover.
\begin{proposition}
    $\textup{Idl}(\Omega_{\neg\neg})$ is minimal.
\end{proposition}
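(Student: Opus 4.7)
The plan is to verify the ``characterization of $0$'' at each object $c$ directly, using the explicit description of $\textup{Idl}(\Omega_{\neg\neg})$ as the algebra of $K$-closed sieves on $(c,1)$ in $\mathcal{G}(\Omega_{\neg\neg})$ developed in Section \ref{sec:int_loc}. Fixing a site of definition $(\mathcal{C},J)$ for $\mathcal{E}$, I will use the two facts recorded in the proof of Lemma \ref{1u1}: the transition map $\textup{Idl}(\Omega_{\neg\neg})(f)$ for $f: d \to c$ is pullback along $(f,\id): (d,1) \to (c,1)$, and $0_{(c,1)}$ is the sieve consisting precisely of morphisms $(d',x') \to (c,1)$ with $x' = 0$.

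I will argue the contrapositive of the minimality condition. Suppose $R$ is a $K$-closed sieve on $(c,1)$ with $R \neq 0_{(c,1)}$; the goal is to produce some $h: d' \to c$ with $d'$ not covered by $\varnothing$ such that $h^*(R) = 1$. Since $0_{(c,1)}$ is the smallest $K$-closed sieve, $R \supsetneq 0_{(c,1)}$, so there exists $f: (d,x) \to (c,1)$ in $R$ with $x \neq 0$ in $\Omega_{\neg\neg}(d)$. Now, since $\neg\neg 0 = 0$, the bottom of $\Omega_{\neg\neg}(d)$ coincides with the bottom of $\Omega(d)$, which is the $J$-closure of the empty sieve and thus consists exactly of those morphisms whose domain is $\varnothing$-covered. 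Hence the condition $x \neq 0$ provides some $g: d' \to d$ belonging to $x$ with $d'$ not covered by $\varnothing$. Since $g$ is in the sieve $x$, one has $g^*(x) = t_{d'} = 1$ in $\Omega_{\neg\neg}(d')$, so $g$ lifts to a morphism $(d',1) \to (d,x)$ in $\mathcal{G}(\Omega_{\neg\neg})$.

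Setting $h := f \circ g : (d',1) \to (c,1)$, the composite lies in $R$ because $R$ is a sieve, so the identity of $(d',1)$ lies in $h^*(R)$ and $h^*(R)$ is therefore the maximal sieve on $(d',1)$; that is, $\textup{Idl}(\Omega_{\neg\neg})(h)(R) = 1$ with $d'$ not $\varnothing$-covered, contradicting the minimality hypothesis and forcing $R = 0$. The only real subtlety I foresee is matching the hypothesis ``$d'$ not covered by $\varnothing$'' in the definition of minimality to the correct choice of $g$, which amounts to the identification of the bottom element of $\Omega_{\neg\neg}(d)$ with the set of morphisms into $d$ whose domain is $\varnothing$-covered; once this is in place, the rest of the argument is routine sieve manipulation.
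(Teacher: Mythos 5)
Your argument is correct and coincides with the paper's own proof: both take a nonzero $K$-closed sieve $R$ on $(c,1)$, pick $f:(d,x)\to(c,1)$ in $R$ with $x\neq 0$, choose $g:d'\to d$ in the sieve $x$ with $d'$ not $\varnothing$-covered, and observe that the composite $(d',1)\to(c,1)$ lies in $R$, witnessing $\textup{Idl}(\Omega_{\neg\neg})(f\circ g)(R)=1$. The only cosmetic difference is that the paper phrases the conclusion as $\rho_c(R)\neq 0$ for the classifying map $\rho$, which is the same condition.
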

\begin{proof}
    Recall the explicit description of $\rho$ in this case from the proof of \ref{1u1}: $\rho_c(R)$ is the sieve of morphisms of $R$ of the form $(d,1)\xrightarrow{}(c,1)$. Now just notice that if $R\ne 0$, then there is $f:(d,x)\xrightarrow{}(c,1)$ in $R$ with $x\in\Omega_{\neg\neg}(d)$, $x\ne 0$, so take any $g:d'\xrightarrow{}d$ in $x$ with $\varnothing\not\in J(d')$, so $(d',1)=(d',g^*(x))\xrightarrow{g}(d,x)\xrightarrow{f}(c,1)$ in $R$, so $f\circ g\in\rho_c(R)$, so $\rho_c(R)\ne 0$.
\end{proof}
As a corollary, by Proposition~\ref{min_neg} we have the commuting diagram:
\begin{center}
    \begin{tikzcd}
    (\textup{Idl}(\Omega_{\neg\neg}))_{\neg\neg} \arrow[r,"\cong"]\arrow[d,""'] & \Omega_{\neg\neg} \arrow[d,""] \\
    (\textup{Idl}(\Omega_{\neg\neg}))\arrow[r,"\rho"'] & \Omega
    \end{tikzcd}
\end{center}
This diagram will have an important consequence, but we need a Lemma~before.

\begin{Lemma}
    $\mathbf{Sh}(\mathcal{G}(A_{\neg\neg}),J^{ext})=(\mathbf{Sh}(\mathcal{G}(A),J^{ext}))_{\neg\neg}$.
\end{Lemma}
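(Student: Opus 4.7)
The plan is to apply Proposition \ref{int_loc_eq}, which reduces the desired equality of toposes to an isomorphism of internal locales in $\mathcal{E}$. The right-hand side $\mathbf{Sh}(\mathcal{G}(A_{\neg\neg}),J^{ext})$ corresponds by construction to the internal locale $A_{\neg\neg}$, so it suffices to show that the left-hand side also corresponds to $A_{\neg\neg}$ as a localic topos over $\mathcal{E}$.

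First I would observe that the composite
$$ g\colon (\mathbf{Sh}(\mathcal{G}(A), J^{ext}))_{\neg\neg} \hookrightarrow \mathbf{Sh}(\mathcal{G}(A), J^{ext}) \xrightarrow{C_{\pi_A}} \mathcal{E} $$
is localic, since localic morphisms are closed under composition, the second factor is localic by the discussion after Proposition \ref{int_loc_eq}, and inclusions of subtoposes are always localic. Hence Proposition \ref{int_loc_eq} attaches to $g$ an internal locale $L_g$ in $\mathcal{E}$, with $L_g(c) = \textup{Sub}_{(\mathbf{Sh}(\mathcal{G}(A), J^{ext}))_{\neg\neg}}(g^\ast(l(c)))$.

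Next I would compute this frame explicitly. Since $g^\ast$ factors as $C_{\pi_A}^\ast$ followed by the associated $\neg\neg$-sheaf functor, and since subobjects of a $\neg\neg$-sheafification in $\mathcal{F}_{\neg\neg}$ correspond bijectively to $\neg\neg$-closed subobjects of the original object in $\mathcal{F}$, we get
$$ L_g(c) \;\cong\; \{\,S \in \textup{Sub}_{\mathbf{Sh}(\mathcal{G}(A), J^{ext})}(l((c,1))) \,:\, \neg\neg S = S\,\}. $$
From the proof of Proposition \ref{reldml} we have a natural isomorphism of frames $A(c) \cong \textup{Sub}_{\mathbf{Sh}(\mathcal{G}(A), J^{ext})}(l((c,1)))$; as frame isomorphisms automatically preserve the Heyting structure (and in particular $\neg\neg$), this restricts to a natural isomorphism $A_{\neg\neg}(c) \cong L_g(c)$. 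Thus $L_g \cong A_{\neg\neg}$ as internal locales in $\mathcal{E}$, and Proposition \ref{int_loc_eq} yields the desired equivalence.

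The main obstacle is the careful bookkeeping in the third step: one must verify that the pointwise operation $\neg\neg$ on $A$ really does correspond, under the iso of Proposition \ref{reldml}, to the $\neg\neg$ operation on the frame of $J^{ext}$-closed sieves on $(c,1)$, and that the identification of subobjects in the $\neg\neg$-subtopos with $\neg\neg$-closed subobjects of the ambient topos is natural in $c$ (so as to produce an isomorphism of internal locales, not merely of their pointwise values). Both points are essentially formal once one unpacks that the Heyting/frame structure on $\textup{Sub}_{\mathbf{Sh}(\mathcal{G}(A),J^{ext})}(l((c,1)))$ is the one inherited from $\Omega_{\mathbf{Sh}(\mathcal{G}(A),J^{ext})}((c,1))$, which the proof of Proposition \ref{reldml} identifies with $A(c)$ as a frame.
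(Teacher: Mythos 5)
Your proposal is correct and follows essentially the same route as the paper: both identify $A_{\neg\neg}$ with the internal locale $L_{f\circ i}$ attached to the composite of the dense inclusion $i$ with the canonical localic morphism $f=C_{\pi_A}$, using the correspondence between subobjects in the $\neg\neg$-subtopos and $\neg\neg$-closed subobjects, and then conclude by Proposition \ref{int_loc_eq}. The only cosmetic difference is that the paper reads off $A(E)=\textup{Sub}_{\mathbf{Sh}(\mathcal{G}(A),J^{ext})}(f^*(E))$ directly from the statement of Proposition \ref{int_loc_eq} rather than re-deriving it from the proof of Proposition \ref{reldml}.
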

\begin{proof}
    By Proposition~\ref{int_loc_eq}, $A=L_f$ for $f:\mathbf{Sh}(\mathcal{G}(A),J^{ext})\xrightarrow{}\mathcal{E}$ the canonical morphism. But $$(L_f)_{\neg\neg}(E)=ClSub^{\neg\neg}_{\mathbf{Sh}(\mathcal{G}(A),J^{ext})}(f^*(E))=\textup{Sub}_{(\mathbf{Sh}(\mathcal{G}(A),J^{ext}))_{\neg\neg}}(a_{\neg\neg}(f^*(E)))$$
    with $i:(\mathbf{Sh}(\mathcal{G}(A),J^{ext}))_{\neg\neg}\hookrightarrow{}\mathbf{Sh}(\mathcal{G}(A),J^{ext})$. So $A_{\neg\neg}=L_{f\circ i}$, which concludes by \ref{int_loc_eq} again.  
\end{proof}

As a consequence, translating the previous diagram into a diagram of toposes, we get:
\begin{center}
    \begin{tikzcd}
    (\gamma\mathcal{E})_{\neg\neg}\arrow[r,"\cong"]\arrow[d,""'] & \arrow[d,""] \mathcal{E}_{\neg\neg}\\
    \gamma\mathcal{E}\arrow[r,""'] & \mathcal{E}
    \end{tikzcd}
\end{center}

This is what is needed in the applications of Gleason's cover in \cite{harun1996applications}. We can give a nice corollary.

\begin{corollary}[\cite{harun1996applications} Corollary~3.5]
    $\gamma\mathcal{E}$ is Boolean if and only if $\mathcal{E}$ is.
\end{corollary}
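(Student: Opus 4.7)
The plan is to exploit the commutative square just derived, namely
\[
\begin{array}{ccc}
(\gamma\mathcal{E})_{\neg\neg} & \xrightarrow{\;\cong\;} & \mathcal{E}_{\neg\neg}\\
\downarrow & & \downarrow\\
\gamma\mathcal{E} & \longrightarrow & \mathcal{E}
\end{array}
\]
in which the verticals are the canonical double-negation inclusions and the bottom arrow is the Gleason surjection. The key observation is that a topos $\Ftopos$ is Boolean if and only if the inclusion $\Ftopos_{\neg\neg}\hookrightarrow\Ftopos$ is an equivalence.

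For the easy direction, if $\mathcal{E}$ is Boolean then $\mathcal{E}$ is De Morgan, so by the preceding corollary $\gamma\mathcal{E}\to\mathcal{E}$ is itself an equivalence; hence $\gamma\mathcal{E}$ is Boolean.

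For the converse, suppose $\gamma\mathcal{E}$ is Boolean. Then the left vertical of the square is an equivalence, so composing with the bottom row yields a surjection $(\gamma\mathcal{E})_{\neg\neg}\twoheadrightarrow\mathcal{E}$. Commutativity of the square identifies this composite with the canonical inclusion $\mathcal{E}_{\neg\neg}\hookrightarrow\mathcal{E}$ (up to the isomorphism along the top). Thus $\mathcal{E}_{\neg\neg}\hookrightarrow\mathcal{E}$ is simultaneously an inclusion and a surjection, hence an equivalence by the standard surjection/inclusion orthogonal factorization of geometric morphisms (\cite{maclane2012sheaves} or \cite{johnstone2002sketches} A4.2), so $\mathcal{E}$ is Boolean.

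The only mildly delicate step is checking that the surjection $\gamma\mathcal{E}\twoheadrightarrow\mathcal{E}$ really does restrict to the inclusion $\mathcal{E}_{\neg\neg}\hookrightarrow\mathcal{E}$ under the top isomorphism; but this is precisely the commutativity of the square that was established right before the statement of the corollary, so no further work is required. I do not anticipate any real obstacle: the whole argument is a two-line diagram chase once one has the square and the surjection/inclusion factorization in hand.
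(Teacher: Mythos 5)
Your proof is correct and follows essentially the same route as the paper: the easy direction via Boolean $\Rightarrow$ De Morgan $\Rightarrow$ $\gamma\mathcal{E}\simeq\mathcal{E}$, and the converse by reading off from the commutative square that $\mathcal{E}_{\neg\neg}\hookrightarrow\mathcal{E}$ is surjective, hence an equivalence since it is also an inclusion. No gaps; your write-up merely spells out the diagram chase that the paper leaves implicit.
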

\begin{proof}
    If $\mathcal{E}$ is Boolean then it satisfies De Morgan's law so $\gamma\mathcal{E}\cong\mathcal{E}$. Conversely, if $ \gamma\mathcal{E}$ is Boolean, by the previous diagram $\mathcal{E}_{\neg\neg}\xrightarrow{}\mathcal{E}$ is surjective, and so being an inclusion it is an isomorphism.
\end{proof}

\subsubsection{Gleason's cover of a localic topos}\label{gc_localic}
Now we want to study the Gleason's cover construction for a topos $\mathbf{Sh}(L)$ for a locale $L$. As an application, we will get back Gleason's cover construction for compact Hausdorff spaces. The content of this section up to Corollary~\ref{gc_top}, including the proofs, is essentially the same as section 3 of \cite{johnstone1980gleason}, but we reformulate it in our framework for completeness.\\
\\
In fact, we will restrict our attention to regular locales: we say that a locale $L$ is \emph{regular} if for every $l\in L$, there are $l_i,\,t_i$ for $i\in I$ such that $l=\bigvee l_i$, $t_i\land l_i=0$ and $t_i\vee l= 1$ (notice that we may assume the $l_i$ to be $\neg\neg$-closed, because $t_i\land l_i=0$ if and only if $t_i\leq\neg l_i=\neg\neg\neg l_i$ if and only if $t_i\land\neg\neg l_i=0$). Regularity is a kind of separation property. A regular space $X$ is a space such that its locale of open sets $O(X)$ is regular. Note for example that a space is compact Hausdorff if and only if it is $T_0$ compact regular. Since a locale is always $T_0$, a compact Hausdorff locale is the same as a compact regular locale. It seems that this assumption is necessary to get a nice description of Gleason's cover in this case, but it is not completely satisfying because it is purely topological, and we would like a more general characterization. \\
\\
We say that a topos is \emph{localic} if it is equivalent to a topos $\mathbf{Sh}(L)$ for a locale $L$. Recall that this is the category of sheaves on the site $(L,J)$ where $(x_i)_i\in J(x)$ if and only if $\bigvee_i x_i=x$. Let us have a closer look at the subobject classifier in this case. Clearly, a $J$-closed sieve on $x\in L$ is an ideal of $L_{\leq x}$ which is closed under arbitrary join, so it is principal. In particular $\Omega_{\mathbf{Sh}(L)}(1)\cong L$ ($\cong \textup{Sub}_{\mathbf{Sh}(L)}(1)$ as mentioned before, because $l(1)=1$ where the first $1$ is the top element of $L$, i.e. its terminal object, and the second $1$ is the terminal object of $\mathbf{Sh}(L)$, and $l$ preserves finite limits). It is a general fact that that if $\mathcal{E}:=\mathbf{Sh}(L)$ is localic and $\gamma\mathcal{E}\xrightarrow{}\mathcal{E}$ is a localic geometric morphism, then $\gamma\mathcal{E}$ is localic, and in fact the corresponding locale is given by $\textup{Idl}(\Omega_{\neg\neg})(1)$. Now we need to give an explicit description of $\Omega_{\neg\neg}$. We know that $\Omega(l)=L_{\leq l}$, so $\Omega_{\neg\neg}(l)=(L_{\leq l})_{\neg\neg}$. But negation in $L_{\leq l}$ is given by $\neg^{L_{\leq l}}x=l\land\neg x$, so double negation is given by $l\land \neg(l\land\neg x)$. Now for any $y\leq l$, we have 
\begin{align*}
    y\leq l\land \neg(l\land\neg x) & \iff y\leq \neg(l\land\neg x)\\
    & \iff y\land l \land \neg x=0\\
    & \iff y\leq\neg\neg x\\
    & \iff y\leq\neg\neg x\land l
\end{align*}
so by Yoneda, the double negation of $x\in L_{\leq l}$ is given by $\neg\neg x\land l$, i.e. $$\Omega_{\neg\neg}(l)=\{x\leq l\,|\,\neg\neg x\land l=x\}$$
Also for $l\leq l'$, we have the pair of adjoint functors 
\begin{align*}
    \Omega_{\neg\neg}(l\leq l'): & \quad\Omega_{\neg\neg}(l')  \xrightarrow{} \Omega_{\neg\neg}(l)\\
    & \quad x  \mapsto  x\land l
\end{align*}
right adjoint and 
\begin{align*}
    \exists_{l\leq l'}:&\quad\Omega_{\neg\neg}(l)\xrightarrow{}\Omega_{\neg\neg}(l')\\
    &\quad x\mapsto \neg\neg x\land l'
\end{align*}
left adjoint. Now notice that $\exists_{l\leq l'}$ is injective, because for $x,\,x'\in \Omega_{\neg\neg}(l)$, if $\neg\neg x\land l'=\neg\neg x'\land l'$, then $\neg\neg x\land l=\neg\neg x'\land l$ and so $x=x'$. So $\Omega_{\neg\neg}(l\leq l')$ is surjective, and in fact we have $\Omega_{\neg\neg}(l)=\{x\land l\,|\,x\in L,\,\neg\neg x=x\}$.

\begin{proposition}\label{prop_localic}
    Let $L$ be a regular locale, and consider the topos $\mathbf{Sh}(L)$. Then $\textup{Idl}(\Omega_{\neg\neg})(1)=\textup{Idl}^+(\Omega_{\neg\neg}(1))$, where $\textup{Idl}^+$ is the set of ideals $I$ satisfying the following closure property:\\
    for every family $l_i,\,i\in F$ of $\Omega(1)$ such that $\bigvee l_i=1$, for every $x\in \Omega_{\neg\neg}(1)$, if $\neg\neg(x\land l_i)\in I$ for all $i\in F$ then $x\in I$.
\end{proposition}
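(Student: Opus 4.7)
The plan is to exhibit an order-preserving bijection $\Phi : \textup{Idl}(\Omega_{\neg\neg})(1) \xrightarrow{\sim} \textup{Idl}^+(\Omega_{\neg\neg}(1))$ by sending a $K$-closed sieve $R$ on $(1,1)$ in $\mathcal{G}(\Omega_{\neg\neg})$ to its ``top fibre''
$$\Phi(R) := \{y \in \Omega_{\neg\neg}(1) : (1,y) \to (1,1) \in R\}.$$
Unwinding the explicit description of $\Omega_{\neg\neg}$ recalled just before the statement, morphisms $(l', y') \to (l, y)$ in $\mathcal{G}(\Omega_{\neg\neg})$ amount to $l' \leq l$ and $y' \leq y \land l'$, and $K$ is generated by $K_1$ (finite pointwise joins $y = \bigvee y_i$ in $\Omega_{\neg\neg}(l)$) and $K_2$ (families $\{(l_i, y \land l_i) \to (l, y)\}$ indexed by $J$-covers $(l_i)_i$ of $l$ in $L$).

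First I would verify $\Phi(R) \in \textup{Idl}^+$. The lowerset axiom follows from the sieve property; closure under finite joins follows from $K_1$-closedness applied to the two-element cover $\{(1, y_1), (1, y_2)\} \to (1, y_1 \lor y_2)$, where the join is in $L_{\neg\neg}$. The distinctive closure condition of $\textup{Idl}^+$ is exactly the restriction of $K_2$-closedness to the top fibre: for $(l_i)_i$ with $\bigvee l_i = 1$ in $L$ and $x \in \Omega_{\neg\neg}(1)$ with each $\neg\neg(x \land l_i) \in \Phi(R)$, every morphism $(l_i, x \land l_i) \to (1, 1)$ factors as $(l_i, x \land l_i) \to (1, \neg\neg(x \land l_i)) \to (1, 1) \in R$ (using $x \land l_i \leq \neg\neg(x \land l_i) \land l_i$), so the $K_2$-cover $\{(l_i, x \land l_i) \to (1, x)\}_i$ of $(1,x)$ has every composite to $(1,1)$ in $R$, forcing $(1, x) \to (1, 1) \in R$, i.e.\ $x \in \Phi(R)$.

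Next I would produce the inverse $\Psi(I) := \{f : (l, y) \to (1, 1) : \neg\neg y \in I\}$, which is plainly a sieve by the lowerset property of $I$. The identity $\Phi \circ \Psi = \id$ is immediate since $y \in \Omega_{\neg\neg}(1)$ gives $\neg\neg y = y$. The inclusion $\Psi(\Phi(R)) \subseteq R$ follows from the factorisation $(l, y) \to (1, \neg\neg y) \to (1, 1)$ (permitted by $y \leq \neg\neg y \land l = y$). The two remaining non-trivial points — that $\Psi(I)$ is $K_2$-closed with respect to covers of $l \neq 1$, and that conversely every $f : (l, y) \to (1, 1) \in R$ forces $(1, \neg\neg y) \to (1, 1) \in R$ — are both bridged by regularity of $L$. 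To check $K_2$-closedness for a cover $\bigvee l_i = l \neq 1$, regularity of $l$ produces $l = \bigvee l^{(r)}_k$ and witnesses $t_k$ with $t_k \land l^{(r)}_k = 0$ and $t_k \lor l = 1$, so that $(l_i)_i \cup \{t_k\}$ becomes a cover of $1$ in $L$ to which the $\textup{Idl}^+$ closure applies. Dually, to derive $(1, \neg\neg y) \to (1, 1) \in R$ from $f \in R$, regularity of $y$ yields $u_m$ with $u_m \land y^{(r)}_m = 0$ and $u_m \lor y = 1$, giving a $K_2$-cover of $(1, \neg\neg y)$ by $\{(y, y), (u_m, \neg\neg y \land u_m)\}$ whose $(y, y)$-composite lies in $R$ through $f$.

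I expect the main obstacle to be controlling the ``complementary'' contributions $\neg\neg(\neg\neg y \land t_k)$ and $\neg\neg y \land u_m$ that appear in the regularity-lifted covers: a priori they need not vanish nor lie in $I$. The resolution will exploit the interpolation property of the rather-below relation in a regular locale, refining each $l^{(r)}_k$ (resp.\ $y^{(r)}_m$) further, and iterating — possibly transfinitely — until the residual fragments are small enough to fall under the $\textup{Idl}^+$ hypothesis or to be dispatched by the sieve and lowerset properties of $I$.
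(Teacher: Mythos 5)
Your first half --- sending a $K$-closed sieve $R$ on $(1,1)$ to its top fibre $\Phi(R)$ and checking, via $K_1$- and $K_2$-closedness, that $\Phi(R)$ lands in $\textup{Idl}^+(\Omega_{\neg\neg}(1))$ --- is exactly the paper's argument and is fine. The gap is the inverse. The claim you need for $R\subseteq\Psi(\Phi(R))$, namely that $(l,y)\to(1,1)\in R$ forces $(1,\neg\neg y)\to(1,1)\in R$, is \emph{false}, so the ``main obstacle'' you defer to an iterated interpolation argument cannot be overcome. Concretely, take $L=O(\mathbb{R})$ and $I_0:=\{z\in\Omega_{\neg\neg}(1)\mid \overline{z}\subseteq(0,1)\}$. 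This is an ideal of regular open sets satisfying the $\textup{Idl}^+$ closure property: if $(u_j)_j$ is an open cover of $\mathbb{R}$ with $\overline{x\cap u_j}\subseteq(0,1)$ for all $j$, then any $p\in\overline{x}$ lies in some open $u_j$, hence $p\in\overline{x}\cap u_j\subseteq\overline{x\cap u_j}\subseteq(0,1)$. The sieve $P_{I_0}:=\{(l,x)\to(1,1)\mid \exists\,(l_i)_i,\ l=\bigvee_i l_i,\ \neg\neg(x\land l_i)\in I_0\ \forall i\}$ is $K$-closed (this is the easy half of the surjectivity check and is independent of the rest); it contains $((0,1),(0,1))\to(1,1)$, witnessed by the cover $l_i=(\tfrac{1}{i+2},1-\tfrac{1}{i+2})$ of $(0,1)$ since $\neg\neg((0,1)\cap l_i)=l_i\in I_0$; but it does not contain $(1,(0,1))\to(1,1)$, since $\neg\neg(0,1)=(0,1)\notin I_0$. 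So any $K$-closed $R$ with $((0,1),(0,1))\in R\subseteq P_{I_0}$ violates your claim, and for the same reason your $\Psi(I_0)$ is not $K$-closed: the $K_2$-family $\{(l_i,l_i)\to((0,1),(0,1))\}_i$ lies in $\Psi(I_0)$ while $((0,1),(0,1))$ does not.

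The fix --- and the paper's actual route --- is to characterise membership in $R$ locally rather than through $\neg\neg y$ globally: for $R$ $K$-closed with top fibre $I$, one proves $(l,x)\to(1,1)\in R$ if and only if there is a family $(l_i)_i$ with $l=\bigvee_i l_i$ and $\neg\neg(x\land l_i)\in I$ for every $i$ (and surjectivity is obtained by taking this formula as the \emph{definition} of $P_I$, the $\textup{Idl}^+$ closure property guaranteeing that its top fibre is exactly $I$). The ``if'' direction is the computation you already did. Regularity enters only in the ``only if'' direction, and only once, with no iteration: choose the $l_i$ $\neg\neg$-closed and equipped with $t_i$ such that $t_i\land l_i=0$ and $t_i\lor l=1$. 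Then the $K_2$-cover of $(1,\neg\neg(x\land l_i))$ indexed by $\{l,t_i\}$ has second component $(t_i,\neg\neg(x\land l_i)\land t_i)=(t_i,0)$, because $t_i\le\neg l_i=\neg\neg\neg l_i$ kills $\neg\neg(x\land l_i)\le\neg\neg l_i$; since $(t_i,0)$ is covered by the empty family, $(1,\neg\neg(x\land l_i))$ is $K$-covered by $(l,x\land l_i)$ alone, and membership follows from $(l,x)\in R$. In other words, the residual fragments are arranged to be \emph{exactly zero} for each index $i$ separately, by working with $\neg\neg(x\land l_i)$ for $l_i$ well inside $l$ instead of with $\neg\neg x$ itself.
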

\begin{proof}
    Consider $P\in \textup{Idl}(\Omega_{\neg\neg})(1)$. $P$ is a $K$-closed sieve on the object $(1,1)$ of $\mathcal{G}(\Omega_{\neg\neg})$. We define $I_P:=\{x\in\Omega_{\neg\neg}\,|\,P(1,x)\ne\varnothing\}$. Clearly it is an ideal: if $x\leq y\in I_P$ then we have a morphism $(1,x)\xrightarrow{}(1,y)$ and hence a map $P(1,y)\xrightarrow{}P(1,x)$ with $P(1,y)$ nonempty, so $P(1,x)$ nonempty; and if $x,\,y\in I_P$ then just use $K_1$-closure of $P$ to get that $x\lor y\in I_P$. It also satisfies the closure property, because if we take $(l_i)_i$ and $x$ satisfying the required property, we want to show that $P(1,x)\ne\varnothing$, but by $K_2$-closure it is enough to prove that $P(l_i,x\land l_i)\ne\varnothing$, and this just follows from $(l_i,x\land l_i)\leq (1,\neg\neg (x\land l_i))$. Thus we defined an order preserving map $P\mapsto I_P$, we still need to check that it is injective and surjective.\\
    \\
    For injectivity, we prove that given such a $P$, we have $P(l,x)\ne\varnothing$ if and only if there is a family $(l_i)_i$ in $\Omega(1)$ such that $l=\bigvee l_i$ and $\neg\neg (x\land l_i)\in I_P$ for all $i$. One direction is easy: if $P(1,\neg\neg(x\land l_i))\ne\varnothing$, then $P(l_i,x\land l_i)\ne\varnothing$ for all $i$, and by $K_2$-closure $P(l,x)\ne\varnothing$. For the other one, we need to use that $L$ is regular. So suppose $P(l,x)\ne\varnothing$. We can chose families $l_i,\,t_i$ such that $l=\bigvee l_i$, $t_i\land l_i=0$ and $t_i\vee l= 1$, and we may assume the $l_i$ to be $\neg\neg$-closed. We want to show that $\neg\neg (x\land l_i)\in I_P$ for all $i$. Now by definition $(1,\neg\neg (x\land l_i))$ is $K$-covered by $(l,\neg\neg (x\land l_i)\land l)=(l,x\land l_i)$ and $(t_i,0)$, but $(t_i,0)$ is $K$-covered by $\varnothing$, so $(1,\neg\neg (x\land l_i))$ is $K$-covered by $(l,x\land l_i)$, and by $K$-closure of $P$ we are reduced to show that $P(l,x\land l_i)\ne\varnothing$, but this just follows from $P(l,x)\ne\varnothing$.\\
    \\
    For surjectivity, for any $I\in \textup{Idl}^+(\Omega_{\neg\neg}(1))$, we can define $P$ by $P(l,x)\ne\varnothing$ if and only if there is a family $(l_i)_i$ in $\Omega(1)$ such that $l=\bigvee l_i$ and $\neg\neg (x\land l_i)\in I$ for all $i$. It is easy to check that this gives an element of $\textup{Idl}(\Omega_{\neg\neg})(1)$. However we need to check that this does not add elements $x\not\in I$ such that $P(1,x)\ne\varnothing$. But this is just a consequence of the closure property. 
\end{proof}

\begin{corollary}
    For a regular locale $L$, $\gamma \mathbf{Sh}(L)=\mathbf{Sh}(\textup{Idl}^+(L_{\neg\neg}))$.
\end{corollary}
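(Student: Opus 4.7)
The plan is to combine the definition of Gleason's cover with Proposition~\ref{prop_localic} via the standard dictionary between internal locales in $\mathbf{Sh}(L)$ and localic geometric morphisms over $\mathbf{Sh}(L)$. There is essentially no new content to prove; the work consists in identifying the three ingredients and reading them off correctly.

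First, I would recall that by definition Gleason's cover $\gamma\mathbf{Sh}(L)\to\mathbf{Sh}(L)$ is the geometric morphism generated by the comorphism $\pi_{\textup{Idl}(\Omega_{\neg\neg})}\colon (\mathcal{G}(\textup{Idl}(\Omega_{\neg\neg})),J^{ext})\to(L,J)$, i.e.\ it is the topos of internal sheaves on the internal locale $\textup{Idl}(\Omega_{\neg\neg})$ in $\mathbf{Sh}(L)$. By Proposition~\ref{int_loc_eq} and the faithfulness of $\pi_{\textup{Idl}(\Omega_{\neg\neg})}$, this is a localic geometric morphism.

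Next, I would use the observation recalled at the opening of Section~\ref{gc_localic}: when $\mathcal{E}=\mathbf{Sh}(L)$ is localic and $\gamma\mathcal{E}\to\mathcal{E}$ is localic, the composite $\gamma\mathcal{E}\to\mathcal{E}\to\Set$ is localic, so $\gamma\mathcal{E}$ is localic, and the corresponding external locale is obtained by taking global sections of the internal locale, namely $\textup{Idl}(\Omega_{\neg\neg})(1)$. Thus $\gamma\mathbf{Sh}(L)\simeq \mathbf{Sh}(\textup{Idl}(\Omega_{\neg\neg})(1))$.

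Finally, Proposition~\ref{prop_localic} gives $\textup{Idl}(\Omega_{\neg\neg})(1)=\textup{Idl}^+(\Omega_{\neg\neg}(1))$, and the explicit description of $\Omega_{\neg\neg}$ above the proposition specializes at $l=1$ to $\Omega_{\neg\neg}(1)=\{x\in L\mid \neg\neg x=x\}=L_{\neg\neg}$, so that $\textup{Idl}(\Omega_{\neg\neg})(1)=\textup{Idl}^+(L_{\neg\neg})$. Assembling the two identifications yields $\gamma\mathbf{Sh}(L)\simeq\mathbf{Sh}(\textup{Idl}^+(L_{\neg\neg}))$, which is the claim. The only point that requires slight care is that the isomorphism $\textup{Idl}(\Omega_{\neg\neg})(1)\cong\textup{Idl}^+(L_{\neg\neg})$ is one of frames (hence of locales), but this is immediate from the construction in the proof of Proposition~\ref{prop_localic}, since the map $P\mapsto I_P$ and its inverse both respect arbitrary joins and finite meets of ideals.
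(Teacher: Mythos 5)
Your proposal is correct and follows essentially the same route as the paper: the paper's proof is precisely ``follows from the discussion before the proposition'', which consists of the identification of $\gamma\mathbf{Sh}(L)$ with the localic topos on $\textup{Idl}(\Omega_{\neg\neg})(1)$ together with the computation $\Omega_{\neg\neg}(1)=L_{\neg\neg}$, combined with Proposition \ref{prop_localic}. You have simply made these steps explicit, including the (correct) remark that the bijection $P\mapsto I_P$ is an isomorphism of frames.
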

\begin{proof}
    Follows from the discussion before the proposition.
\end{proof}

\begin{corollary}\label{gc_top}
    For a compact Hausdorff space $X$, we have $\gamma \mathbf{Sh}(X)=\mathbf{Sh}(\gamma X)$.
\end{corollary}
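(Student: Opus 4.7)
The plan is to invoke the preceding corollary---applied to $L = O(X)$, which is a regular locale since any compact Hausdorff space is regular---reducing the statement to showing that $\textup{Idl}^+(O(X)_{\neg\neg})$ is isomorphic, as a locale, to the frame of open subsets of the classical Gleason cover $\gamma X$. Recall that, classically, $\gamma X$ is constructed as the Stone space of the complete Boolean algebra $RO(X) := O(X)_{\neg\neg}$ of regular open subsets of $X$; by ordinary Stone duality this immediately gives $O(\gamma X) \cong \textup{Idl}(RO(X))$, where $\textup{Idl}$ denotes the ordinary ideal completion of a Boolean algebra (the open subsets of the Stone space of $B$ correspond bijectively to the ideals of $B$).

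The heart of the argument is therefore to show that, when $X$ is compact, the $\textup{Idl}^+$ condition collapses to the ordinary ideal condition, i.e. $\textup{Idl}^+(RO(X)) = \textup{Idl}(RO(X))$. Given an ideal $I$ of $RO(X)$, a family $(l_i)_{i \in F}$ in $O(X)$ with $\bigvee_i l_i = 1$, and $x \in RO(X)$ such that $\neg\neg(x \land l_i) \in I$ for every $i$, I would use compactness of $X$ to extract a finite subcover $l_{i_1},\dots,l_{i_n}$. A short computation in $O(X)$, exploiting $\neg\neg x = x$, yields $x = \bigvee_{j=1}^n \neg\neg(x \land l_{i_j})$ in $RO(X)$; since $I$ is closed under finite joins, this forces $x \in I$, giving the nontrivial inclusion $\textup{Idl}(RO(X)) \subseteq \textup{Idl}^+(RO(X))$ (the reverse inclusion being immediate from the definition).

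Stringing these identifications together gives
\[
\gamma\mathbf{Sh}(X) \;=\; \mathbf{Sh}(\textup{Idl}^+(O(X)_{\neg\neg})) \;=\; \mathbf{Sh}(\textup{Idl}(RO(X))) \;\cong\; \mathbf{Sh}(O(\gamma X)) \;=\; \mathbf{Sh}(\gamma X),
\]
as desired. The main obstacle is really the reduction $\textup{Idl}^+ = \textup{Idl}$: it is here that compactness is genuinely used, to pass from arbitrary open covers of $X$ to finite ones, and it is precisely this step which fails for a general regular locale. All remaining identifications are then routine, either from the preceding corollary or from classical Stone duality.
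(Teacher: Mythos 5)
Your proposal is correct and follows essentially the same route as the paper: reduce via the preceding corollary to comparing $\textup{Idl}^+(O(X)_{\neg\neg})$ with $\textup{Idl}(RO(X))$, observe that compactness makes every ideal satisfy the closure property, and conclude by Stone duality for the Boolean algebra of regular opens. The only difference is that you spell out the finite-subcover computation $x=\bigvee_j\neg\neg(x\land l_{i_j})$ which the paper leaves implicit in the sentence ``if the locale is compact, then any ideal satisfies the closure property.''
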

\begin{proof}
    If the locale is compact, then any ideal satisfies the closure property. We conclude by recalling that the original definition of Gleason's cover for a space $X$ is the Stone space of the Boolean algebra $\Omega_{\neg\neg}(X)$ of regular opens, that is the space whose open sets are ideals of $\Omega_{\neg\neg}(X)$, and that $\mathbf{Sh}(X)$ for $X$ a topological space is by definition $\mathbf{Sh}(O(X))$ where $O(X)$ is the locale of open sets of $X$.
\end{proof}

We can also use the same kind of calculations to prove that in general we cannot have that the canonical morphism $\mathbf{Sh}(\textup{Idl}(\Omega_m),J^{ext})\xrightarrow{}\mathbf{Sh}(\mathcal{C},J)$ is an equivalence if and only $\mathbf{Sh}(\mathcal{C},J)$ is De Morgan. Indeed, if $\mathbf{Sh}(\mathcal{C},J)$ is De Morgan, $\Omega_m=\Omega$. By Proposition~\ref{int_loc_eq} (and Lemma~\ref{triv}), the morphism $\mathbf{Sh}(\mathcal{G}(\textup{Idl}(\Omega)),J^{ext})\xrightarrow{}\mathbf{Sh}(\mathcal{C},J)\cong \mathbf{Sh}(\mathcal{G}(\Omega),J^{ext})$ comes from a unique internal frame morphism $\Omega\xrightarrow{}\textup{Idl}(\Omega)$, which is just the canonical one. Now in the (regular) localic case we can do the same kind of simplification as in Proposition~\ref{prop_localic}. We say that a topological space is \emph{extremally disconnected} if $O(X)$ is a Stone algebra, and \emph{almost discrete} if $O(X)$ is a Boolean algebra.
\begin{proposition}\label{prop_localic2}
    Let $L$ be a regular locale, and consider the topos $\mathbf{Sh}(L)$. Then $\textup{Idl}(\Omega)(1)=\textup{Idl}^{++}(\Omega(1))$, where $\textup{Idl}^{++}$ is the set of ideals $I$ satisfying the following closure property:\\
    for every family $l_i,\,i\in F$ of $\Omega(1)$ such that $\bigvee l_i=1$, for every $x\in \Omega(1)$, if $x\land l_i\in I$ for all $i\in F$ then $x\in I$.
\end{proposition}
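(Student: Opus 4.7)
The plan is to mirror closely the proof of Proposition \ref{prop_localic}, adapting it from $\Omega_{\neg\neg}$ to $\Omega$. The simplification is that we no longer need any double-negation closure operation: since $\Omega(l)=L_{\leq l}$ and the transition maps $\Omega(l\leq l')$ are simply $x\mapsto x\land l$ (which is already surjective with section the inclusion $L_{\leq l}\hookrightarrow L_{\leq l'}$), the ``$\neg\neg(x\land l_i)$" appearing in \ref{prop_localic} collapses to just ``$x\land l_i$".

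Concretely, I would set up the bijection as follows. Given $P\in\textup{Idl}(\Omega)(1)$, \ie a $K$-closed sieve on $(1,1)\in\mathcal{G}(\Omega)$, I define $I_P:=\{x\in\Omega(1)\,|\,P(1,x)\neq\varnothing\}$. That this is a lowerset follows from the existence of morphisms $(1,x)\to(1,y)$ whenever $x\leq y$; closure under finite joins follows from $K_1$-closure of $P$; and the closure property defining $\textup{Idl}^{++}$ follows from $K_2$-closure applied to the covering family $\{(l_i,x\land l_i)\to (1,x)\}_i$ coming from $\bigvee l_i=1$, after observing $(l_i,x\land l_i)\leq(1,x\land l_i)$ so that $x\land l_i\in I_P$ implies $P(l_i,x\land l_i)\ne\varnothing$.

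For injectivity of $P\mapsto I_P$, the key claim to establish is: $P(l,x)\ne\varnothing$ if and only if there exists a family $(l_i)_i$ with $l=\bigvee l_i$ and $x\land l_i\in I_P$ for all $i$. The ``if" direction is immediate from $K_2$-closure. For ``only if", this is exactly where regularity of $L$ enters: using the regularity presentation $l=\bigvee l_i$ with $t_i\land l_i=0$ and $t_i\lor l=1$, I cover $(1,x\land l_i)$ in $\mathcal{G}(\Omega)$ by $(l,x\land l_i)$ and $(t_i,0)$, notice that $(t_i,0)$ is $K$-covered by $\varnothing$, and conclude via $K$-closure of $P$ that $P(1,x\land l_i)\ne\varnothing$, which means $x\land l_i\in I_P$. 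For surjectivity, given $I\in\textup{Idl}^{++}(\Omega(1))$, I define $P$ by declaring $P(l,x)\ne\varnothing$ iff there exists $(l_i)_i$ with $l=\bigvee l_i$ and $x\land l_i\in I$ for all $i$; routine verification shows $P$ is a $K$-closed sieve, and the closure property of $I$ ensures that this reconstruction does not add elements to $I$ when read off at $(1,x)$.

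The only non-routine step is the ``only if" direction of the characterization in the injectivity part, which is the one genuinely using regularity of $L$; everything else is bookkeeping with the definitions of $K_1$- and $K_2$-closure together with the simplified expression for $\Omega$ on a locale. I do not anticipate any obstacle beyond verifying that the argument of \ref{prop_localic} goes through verbatim once the $\neg\neg$'s are stripped away, since the structural role played by $\Omega_{\neg\neg}$ there (being a sub-presheaf closed under the locale operations) is played here by $\Omega$ itself.
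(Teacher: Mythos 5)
Your proposal is correct and takes exactly the route the paper intends: its proof of this proposition is literally ``essentially the same proof as Proposition \ref{prop_localic}'', and your write-up carries out that adaptation faithfully, correctly observing that the left adjoint $\exists_{l\leq 1}\colon L_{\leq l}\to L$ is now just the inclusion so all the $\neg\neg$'s disappear while regularity is used in the same place. No gaps.
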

\begin{proof}
    Essentially the same proof as Proposition~\ref{prop_localic}.
\end{proof}
Now if $L$ is also compact, clearly $\textup{Idl}^{++}(\Omega(1))=\textup{Idl}(\Omega(1))=\textup{Idl}(L)$. So take any extremally disconnected compact Hausdorff space $X$, the corresponding locale of open sets is in particular compact regular. As discussed before, $\mathbf{Sh}(\textup{Idl}(\Omega_m),J^{ext})\xrightarrow{}\mathbf{Sh}(X)$ is an equivalence if and only if the canonical morphism $\Omega\xrightarrow{}\textup{Idl}(\Omega)$ is an isomorphism, if and only if $\textup{Idl}(O(X))=O(X)$ (i.e. every ideal of $O(X)$ is principal). But this is equivalent to the fact that every open set is compact. Indeed, if this is true then clearly any ideal is closed by arbitrary union, so is principal, and conversely if every ideal of $O(X)$ is principal then considering a situation $x=\bigvee_i x_i$, we can look at the ideal generated by the $x_i$ which is principal, so necessarily it contains $x$, which means that $x$ is smaller than a finite union of the $x_i$, which concludes. Now in the case of compact Hausdorff spaces, since a closed subset of a compact space is compact, this is equivalent to  the property that every open set is closed, i.e. $X$ is almost discrete, which is obviously not shared by all extremally diconnected compact Hausdorff spaces, so we have a counter-example to the fact that $\mathbf{Sh}(\textup{Idl}(\Omega_m),J^{ext})\xrightarrow{}\mathcal{E}$ is an equivalence if and only if $\mathcal{E}$ is De Morgan. It would be interesting to see if the fact that $\textup{Idl}(\Omega)=\Omega$ implies that the topos is in fact Boolean is true in a more general setting, or even more generally if $\textup{Idl}(\Omega_j)=\Omega$ for some (dense) Lawvere-Tierney topology $j$ implies that $\Omega_j=1\sqcup 1$, which is true for $j=\neg\neg$ (it is just a reformulation of the fact that the topos is De Morgan if and only if Gleason's cover is an equivalence), and it is true for $j=\id$ in the above case, because a topos in Boolean if and only if $\Omega=1\sqcup 1$. In fact we can give a more general statement:

\begin{proposition}
    Let $L$ be a regular compact locale. Then in $\mathbf{Sh}(L)$, $\textup{Idl}(\Omega)=\Omega$ if and only if $\Omega=1\sqcup 1$. 
\end{proposition}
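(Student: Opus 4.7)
The plan is to translate both sides of the equivalence into frame-theoretic statements about $L$, and then prove their equivalence using the compact regular structure. Since $\mathbf{Sh}(L)$ is localic, $\Omega = 1\sqcup 1$ is equivalent to $\mathbf{Sh}(L)$ being Boolean, which for a localic topos amounts to $L$ being a Boolean frame (and hence to $L_{\leq l}$ being Boolean for every $l$). On the other hand, by Proposition~\ref{prop_localic2} together with the remark, made just after it, that compactness of $L$ collapses the $^{++}$ closure condition to ordinary ideal-hood, the canonical morphism $\Omega\to\textup{Idl}(\Omega)$ is, at the terminal stage, the principal-ideal embedding $L\hookrightarrow\textup{Idl}(L)$; so $\textup{Idl}(\Omega)=\Omega$ is equivalent to every ideal of $L$ being principal, i.e.\ to every $a\in L$ being compact (way-below itself in the frame-theoretic sense). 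Thus the theorem reduces to the assertion: in a compact regular frame, every element is compact if and only if $L$ is Boolean. Propagation from the terminal stage to equality as internal frames poses no problem, since $L_{\leq l}$ inherits all three structural properties from $L$.

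For the ``if'' direction, suppose $L$ is Boolean. Given any join decomposition $a=\bigvee_i a_i$, I would write $1=a\lor\neg a=\bigvee_i(a_i\lor\neg a)$, use compactness of $1$ to extract a finite $F$ with $\bigvee_{i\in F}(a_i\lor\neg a)=1$, and then meet with $a$ to obtain $a=\bigvee_{i\in F}a_i$; hence $a$ is compact. This shows every element of $L$ is compact, so $\textup{Idl}(L)=L$, and the equality propagates to every stage.

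For the converse, assume every element of $L$ is compact and fix $a\in L$. Regularity gives the representation $a=\bigvee\{b\mid b\prec a\}$ as a directed join, since the well-inside relation $\prec$ is closed under finite joins on the left. Compactness of $a$ then forces some $b\prec a$ already to satisfy $b\geq a$, whence $a\prec a$; unpacking the definition of well-inside yields $c\in L$ with $a\land c=0$ and $a\lor c=1$, so $a$ is complemented. Hence $L$ is Boolean, and $\Omega=1\sqcup 1$. The main subtlety I anticipate is in this final step: one must recognise that in a regular frame the relation $\prec$ explicitly witnesses complementation, so that compactness of $a$ combined with directedness of $\{b\mid b\prec a\}$ yields the complement of $a$ immediately, without any further combinatorial work.
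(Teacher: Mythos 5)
Your proof is correct and follows essentially the same route as the paper: reduce via Proposition \ref{prop_localic2} to the claim that in a compact regular frame every element is compact if and only if every element is complemented, then prove both directions using compactness of the top element and the regularity witnesses. Your packaging of the regularity argument through the well-inside relation $\prec$ and its closure under finite joins is just a reformulation of the paper's direct construction of the complement as the finite meet $\bigwedge_i t_i$ of the witnesses.
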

\begin{proof}
    Because of Proposition~\ref{prop_localic2}, we just need to show that $\textup{Idl}(\Omega(1))=\Omega(1)$ if and only if the topos is Boolean, i.e. $\textup{Idl}(L)=L$ if and only if $L$ is Boolean. By the same argument as for topological spaces, $\textup{Idl}(L)=L$ if and only if every element of $L$ is compact. We claim that in a compact regular locale, compact elements are exactly complemented elements (``clopen sets are exactly compact open sets"), which finishes the proof. Let $x\in L$ be a compact element. By regularity, we can write $x$ as the join of a family $x_i$, $i\in I$, such that for each $i$ there is $t_i$ with $t_i\land x_i=0$ and $t_i\lor x=1$. Now by compactness $I$ can be taken to be finite. Now consider $t:=\bigwedge_i t_i$. We have $t\land x=\bigvee_i(t\land x_i)\leq\bigvee_i(t_i\land x_i)=0$. Also by the distributivity law, $1=\bigwedge_i(x\lor t_i)=\bigvee_i(x\land t_i)\lor x\lor t= x\lor t$. So $t$ is indeed the complement of $x$. Conversely, if $x$ is complemented, for a situation $x=\bigvee_{i\in I} x_i$, we have $\bigvee_i x_i\lor \neg x=1$, so we can extract a finite $J\subseteq I$ such that $\bigvee_{i\in J} x_i\lor \neg x=1$, so $\bigvee_{i\in J} x_i$ is the complement of $\neg x$, and by uniqueness of the complement $x=\bigvee_{i\in J} x_i$. 
\end{proof}

We expect a more general statement to hold, which would take the form of a fibred Stone duality. This is the subject of a future work.

\subsubsection{Gleason's cover of a presheaf topos}\label{gc_psh}
Now we want to answer the following question: when is Gleason's cover of a presheaf topos again a presheaf topos? This question is interesting, because by Proposition~\ref{Ore}, $\mathbf{Psh}(\mathcal{C}^{op})$ is De Morgan if and only if $\mathcal{C}$ has amalgamation, so this could be a way of enforcing amalgamation in some category: starting from a category $\mathcal{C}$, we can consider $\gamma \mathbf{Psh}(\mathcal{C}^{op})$, and if this is equivalent to $\mathbf{Psh}(\mathcal{C}')$ for some $\mathcal{C}'$, $(\mathcal{C}'^{op})$ could be some kind of `completion for amalgamation' of $\mathcal{C}$.\\
\\
We have the following classical proposition: 
\begin{proposition}[\cite{johnstone2002sketches} Lemma~C2.2.20]
    A topos $\mathcal{E}$ is a presheaf topos if and only if every object of $\mathcal{E}$ is $J^{can}$-covered by a set of irreducible objects, i.e. objects $E$ such that the only $J^{can}$-covering sieve is the maximal sieve, or in other words every epimorphic family on $E$ contains a split epi.
\end{proposition}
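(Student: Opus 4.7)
The plan is to prove the two directions separately, with the `only if' being a direct verification and the `if' direction relying on the Comparison Lemma applied to the canonical topology.

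For the `only if' direction, suppose $\mathcal{E} \simeq \mathbf{Psh}(\mathcal{C})$. The representables $\yon(c)$ form a set which $J^{can}$-covers every presheaf (via the standard canonical colimit diagram indexed by the category of elements). It remains to show that each $\yon(c)$ is irreducible. Given any jointly epimorphic family $(f_i:E_i\xrightarrow{}\yon(c))_i$, epimorphicity at the object $c$ means that the element $\id_c\in\yon(c)(c)$ is in the image of some $(f_{i_0})_c$; lifting it gives $x\in E_{i_0}(c)$, which by Yoneda corresponds to a morphism $s:\yon(c)\xrightarrow{}E_{i_0}$ satisfying $f_{i_0}\circ s=\id_{\yon(c)}$, so $f_{i_0}$ is a split epi.

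For the `if' direction, use the hypothesis to extract a small full subcategory $\mathcal{C}\subseteq\mathcal{E}$ of irreducible objects such that every object of $\mathcal{E}$ admits a $J^{can}$-covering family from $\mathcal{C}$ (this extraction is possible because $\mathcal{E}$ has a small generating set and we only need to cover the objects in such a set). Since $\mathcal{E}\simeq \mathbf{Sh}(\mathcal{E},J^{can})$ and the inclusion $\mathcal{C}\hookrightarrow(\mathcal{E},J^{can})$ is dense by construction, the Comparison Lemma gives an equivalence
$$\mathcal{E}\simeq \mathbf{Sh}(\mathcal{E},J^{can})\simeq \mathbf{Sh}(\mathcal{C},J^{can}|_\mathcal{C}),$$
where $J^{can}|_\mathcal{C}$ denotes the induced topology on $\mathcal{C}$. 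To finish, I would show that $J^{can}|_\mathcal{C}$ is the trivial topology, so that $\mathbf{Sh}(\mathcal{C},J^{can}|_\mathcal{C})=\mathbf{Psh}(\mathcal{C})$. By definition, a sieve $S$ on $c\in\mathcal{C}$ is induced-covering iff the sieve $\overline{S}$ it generates in $\mathcal{E}$ is $J^{can}$-covering on $c$; by irreducibility of $c$, $\overline{S}$ must be the maximal sieve, so $\id_c$ factors as $f\circ g$ for some $f:c'\xrightarrow{}c$ in $S$ and some $g:c\xrightarrow{}c'$ in $\mathcal{E}$. Fullness of $\mathcal{C}$ places $g$ in $\mathcal{C}$, and since $S$ is a sieve we get $\id_c=f\circ g\in S$, i.e.\ $S$ is maximal in $\mathcal{C}$.

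The main obstacle is the correct identification of the induced topology together with a clean justification that the extracted subcategory $\mathcal{C}$ can be taken small and that it is $J^{can}$-dense in the sense required by the Comparison Lemma; everything else reduces either to the Yoneda lemma or to the split-epi characterisation of irreducibility. One has to be slightly careful about which version of the Comparison Lemma is being invoked (the one for dense sub-sites of a site, rather than for morphisms of sites), but once this is in place the argument is essentially formal.
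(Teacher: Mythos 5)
Your argument is correct and follows exactly the route the paper indicates (it gives no detailed proof, only the remark that the statement ``is just a consequence of the comparison lemma''): representables are irreducible by the Yoneda/splitting argument, and conversely a small full dense subcategory of irreducibles carries the trivial induced topology, so the Comparison Lemma yields a presheaf presentation. Both directions check out, including the extraction of a small $\mathcal{C}$ via a generating set and transitivity of $J^{can}$-covers.
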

This is just a consequence of the \emph{Comparison Lemma}, see for example Corollary~4.3 of the appendix of \cite{maclane2012sheaves}; however, earlier formulations of this Theorem~exist, for instance Theorem~13.1 of \cite{bunge1966categories}, where irreducible objects were called ``atoms", and exercise 7.6(d) of \cite{grothendieck2006sga}, where it is attributed to Roos. Note that the notion of irreducible object as a topos-theoretic invariant was introduced by Caramello in \cite{caramello2012syntactic}, Definition 2.3(c).\\
\\
More generally, for a site $(\mathcal{C},J)$, we say that the site is \emph{rigid} if every object is $J$-covered by $J$-irreducible objects, where an object is $J$-irreducible if the only $J$-covering sieve on it is the maximal one. Note that if a topos has a rigid site of definition, then it is equivalent to a presheaf topos (because if an object $c$ of $\mathcal{C}$ is $J$-irreducible, then $l_J(c)$ is irreducible, essentially by Corollary~III.7.7 of \cite{maclane2012sheaves} and Lemma~1.3.8 of \cite{makkai2006first}), but all the sites of definition of a presheaf topos do not have to be rigid. However, it is the case for every subcanonical site $(\mathcal{C},J)$ with $\mathcal{C}$ Cauchy-complete, by \cite{caramello2018theories} Theorem~6.1.17 and Remark~6.1.18. We start by studying when $K_1$ is rigid. Recall that an \emph{atom} of a Boolean algebra is an element $x$ such that $x\ne 0$ and if $x'\leq x$ then $x'=0$ or $x'=x$.
\begin{Lemma}\label{rigid}
    Let $\mathcal{E}:=\mathbf{Sh}(\mathcal{C},J)$. An object $(c,x)$ of $\mathcal{G}(\Omega_{\neg\neg})$ is $K_1$-irreducible if and only if $x$ is an atom. Thus $K_1$ is rigid if and only if $\Omega_{\neg\neg}(c)$ is finite for all $c$
\end{Lemma}
\begin{proof}
    In a Boolean algebra, an atom is equivalently an element $x\ne 0$ such that $x=\bigOR_i x_i$ implies $x=x_i$ for some $i$, so the first claim is straightforward. It follows that $K_1$ is rigid if and only if for all $c$ we have $\Omega_{\neg\neg}(c)$ atomic and every element is the union of a finite number of atoms, if and only if $\Omega_{\neg\neg}(c)$ is finite.
\end{proof}
This gives a sufficient condition for the Gleason cover of a presheaf topos to be a presheaf topos, and it was already observed in \cite{johnstone1980gleason}, section 4. This is not enough to give the converse, since $K_1$ is not subcanonical. Indeed, every object $(c,0)$ is covered by the empty sieve, so all the $l((c,0))$ are isomorphic to the initial object of the topos. However, we can get it with a little more work, but we need to study the irreducible objects of $\gamma\mathcal{E}$. 

\begin{proposition}
    Let $\mathcal{E}:=\mathbf{Sh}(\mathcal{C},J)$ be any topos, with $(\mathcal{C},J)$ a subcanonical site and $\mathcal{C}$ Cauchy-complete. The irreducible objects of $\mathbf{Sh}(\mathcal{G}(\Omega_{\neg\neg}),K)$ are all of the form $l((c,x))$ with $x\ne 0$.
\end{proposition}
\begin{proof}
    Let $E$ an irreducible. $E$ is covered by objects of the form $l((c_i,x_i))$ (for which we can suppose $x_i\ne 0$), and since $E$ is irreducible, one of these morphisms is a split epi. So 
    we have a retraction 
    $$E\xrightarrow{i}l((c_i,x_i))\xrightarrow{e}E$$
    for some $i$, i.e. we have $e\circ i=\id$. Since $x_i\ne 0$ and $(\mathcal{C},J)$ is subcanonical, it is easy to see that $\yo((c_i,x_i))$ is already a $K$-sheaf, so $l((c_i,x_i))=\yo((c_i,x_i))$. But noticing that $\mathcal{C}$ Cauchy-complete implies $\mathcal{G}(\Omega_{\neg\neg})$ Cauchy-complete, this is enough to make Remark~6.1.8 of \cite{caramello2018theories} work, so $E$ is isomorphic to an object coming from the site.
\end{proof}

\begin{proposition}
    Let $\mathcal{E}:=\mathbf{Psh}(\mathcal{C})$ be a presheaf topos, with $\mathcal{C}$ Cauchy-complete. The irreducible objects of $\mathbf{Sh}(\mathcal{G}(\Omega_{\neg\neg}),K)$ are exactly the objects $l((c,x))$ with $x$ an atom.
\end{proposition}
\begin{proof}
    By the previous proposition, we just have to show that $l((c,x))$ is irreducible if and only if $x$ is an atom. One direction is clear. Now suppose $l((c,x))$ is irreducible, and $x=\bigOR_i x_i$. Note that $x\ne 0$ because the initial object is not irreducible. There is some $i$ such that $l(\id):l((c,x_i))\xrightarrow{}l((c,x))$ is a split epimorphism, so since it is also monic, it is an isomorphism. As above, $x$ and $x_i$ are not zero so $l((c,x))=\yo((c,x))$ and similarly for $x_i$, and we conclude by the Yoneda Lemma~that $x=x_i$.
\end{proof}

We are ready to prove the full characterization.

\begin{theorem}\label{thm:psh_gc}
    The Gleason cover of a presheaf topos $\mathbf{Psh}(\mathcal{C})$ with $\mathcal{C}$ Cauchy-complete is a presheaf topos if and only if $\Omega_{\neg\neg}(c)$ is finite for every object $c$ of $\mathcal{C}$.
\end{theorem}
\begin{proof}
    For a presheaf topos, the topology $K$ reduces to $K_1$. So by virtue of Lemma~\ref{rigid}, we just need to show that $\gamma\mathbf{Psh}(\mathcal{C})$ is a presheaf topos if and only if $K_1$ is rigid. Only one direction needs a proof. So suppose $\gamma\mathbf{Psh}(\mathcal{C})$ is a presheaf topos. Every object of $\gamma\mathbf{Psh}(\mathcal{C})$ is covered by a family of irreducible objects which, by the last proposition, are of the form $l((d,y))$ with $y$ an atom. So if we take an object $l((c,x))$ with $x\ne 0$, it is covered by such a family $(l((d_i,y_i)))_{i\in I}$ and we can suppose that the morphisms all come from the site since $y_i\ne 0$ for all $i$ and so $l((d_i,y_i))=\yo((d_i,y_i))$ and we can apply the Yoneda lemma. By Corollary~III.7.7 of \cite{maclane2012sheaves}, $(c,x)$ is $K_1$-covered by $((d_i,y_i))_{i\in I}$, and we conclude by the definition of $K_1$.
\end{proof}

\noindent In particular, this is true if $\mathcal{C}$ is finite. Now we introduce the following construction:
\begin{definition}
    Let $\mathcal{C}$ be a category. Let $c$ be an object of $\mathcal{C}$. A sieve $S$ on $c$ is \emph{$\neg\neg$-closed} if for every $f:d\to c$ 
\[(\forall g:e\to d,\,\exists h:e'\to e\text{ such that } f\circ g\circ h\in S) \text{ implies } f\in S\]
    We define $\At_{\neg\neg}(\mathcal{C})$ to be the category whose objects are pairs $(c,S)$ with $c$ an object of $\mathcal{C}$ and $S$ a minimal (for inclusion) nonempty $\neg\neg$-closed sieve on $c$, and whose morphisms are of the form $f:(d,S')\to (c,S)$ with $f:d\to c$ a morphism of $\mathcal{C}$ and $S=\{f\circ g\,|\,g\in S'\}=:\exists_f(S')$. We define $\pi_\mathcal{C}:\At_{\neg\neg}(\mathcal{C})\to\mathcal{C}$ to be the projection.
\end{definition}

Note that in the above definition, a $\neg\neg$-closed sieve on $c$ is $\neg\neg$-closed as an element of $\Omega(c)$ with $\Omega$ the subobject classifier of $\Psh(\mathcal{C})$, so a minimal $\neg\neg$-closed sieve on $c$ is exactly an atom of $\Omega_{\neg\neg}(c)$. Moreover, since $S$ is minimal, $S=\exists_f(S')$ is equivalent to $\exists_f(S')\subseteq S$ which in turn is equivalent to $S'\subseteq f^*(S)$, (which recall is the sieve $\{g:\dom(g)\to d\,|\,f\circ g\in S\}$), so $\At_{\neg\neg}(\mathcal{C})$ is just the full subcategory of $\mathcal{G}(\Omega_{\neg\neg})$ consisting of objects $(c,x)$ with $x$ an atom of $\Omega_{\neg\neg}(c)$. By an immediate generalization of~\cite[Theorem 4.2]{johnstone1980gleason}, under the condition of Theorem~\ref{thm:psh_gc}, the geometric morphism induced by $\pi_\mathcal{C}$ coincides with Gleason's cover. So $\At_{\neg\neg}(\mathcal{C})$ is the `completion of $\mathcal{C}$ under amalgamation'. Note that, in some cases, notably when $\mathcal{C}$ is finite, we have a 1-1 correspondence between atoms of $\Omega_{\neg\neg}(c)$ and minimal elements of $\Omega(c)$, which makes it even easier to compute (see for example by~\cite[Lemma 4.1]{johnstone1980gleason}). However, this construction is still interesting if we assume weaker conditions. 

\begin{theorem}
    Let $\Psh(\mathcal{C})$ be a presheaf topos such that $\Omega_{\neg\neg}(c)$ contains an atom for every $c$ in $\mathcal{C}$. Then $\At_{\neg\neg}(\mathcal{C})$ satisfies the right Ore condition, and the geometric morphism $C_{\pi_\mathcal{C}}:\Psh(\At_{\neg\neg}(\mathcal{C}))\to\Psh(\mathcal{C})$ induced by $\pi_\mathcal{C}$ is surjective. Moreover, if $\mathcal{C}$ already satisfies the right Ore condition, then $C_{\pi_\mathcal{C}}$ is an equivalence.
\end{theorem}
\begin{proof}
    $\At_{\neg\neg}(\mathcal{C})$ satisfies the right Ore condition because the proof given in~\cite[Remark 4.3]{johnstone1980gleason} still works. We give the idea, to concretely illustrate why this is the case and where the hypothesis on $\mathcal{C}$ is needed. 
    Consider a diagram 
    \[(c,x)\xrightarrow{f}(e,z)\xleftarrow{g}(d,y)\]
    in $\At_{\neg\neg}(\mathcal{C})$. By definition, $\exists_f(x)=\exists_g(y)(=z)$, so for any $p\in x$, there is $p'\in y$ such that $f\circ p=g\circ p'$. Now since $p\in x$ and $p'\in y$, for any $t\in\Omega_{\neg\neg}(\dom(p))$, which exists by hypothesis, we have $\exists_p(t)\subseteq x$ and $\exists_{p'}(t)\subseteq y$, i.e.~we have a commutative diagram:
    \[\begin{tikzcd}
        (\dom(p),t)\arrow[r,"p"]\arrow[d,"p'"'] & (c,x)\arrow[d,"g"]\\
        (d,y)\arrow[r,"f"'] & (e,z)
    \end{tikzcd}
    \]
    
    $C_{\pi_\mathcal{C}}$ is surjective by~\cite[Proposition 7.1]{caramello2019denseness} (this also uses the hypothesis). If $\mathcal{C}$ satisfies the right Ore condition, $\Psh(\mathcal{C})$ is De Morgan, so $\Omega_{\neg\neg}\cong 1\sqcup1$ which, since we are in a presheaf topos, is just the pointwise two elements Boolean algebra, which pointwise admits exactly one atom (the maximal element). So $\pi_\mathcal{C}:\At_{\neg\neg}(\mathcal{C})\to \mathcal{C} $ is an isomorphism.
\end{proof}

\noindent Thus, this construction satisfies most of the nice properties of Gleason's cover (and coincide with it if the condition of Theorem~\ref{thm:psh_gc} is satisfied), but has the advantage of always giving a presheaf topos, hence always enforcing amalgamation. 
We end this section by illustrating the above construction on the basic example of a category \[\mathcal{C}:=c\xrightarrow{f} e \xleftarrow{g} d\]
This category does not satisfy the right Ore condition, i.e.~its opposite does not satisfy amalgamation. Following the above explicit description, $\At_{\neg\neg}(\mathcal{C})$ is the category:
\begin{center}
    \begin{tikzcd}
        (e,\{f\}) & (e,\{g\})\\
        (c,\{\id\})\arrow[u, "f"] & (d,\{\id\})\arrow[u, "g"']
    \end{tikzcd}
\end{center}
which indeed satisfies the right Ore condition. 
It is interesting to point out that this construction does not freely add amalgams, but rather dislocates the diagrams when an amalgamation is missing, which is dual in some sense.

\subsection{Enforcing De Morgan's law pointwise}\label{dml_pt}
The internal locale associated to Gleason's cover can be though of as the `global' ideal completion of $\Omega_{\neg\neg}$. However, one can wonder if the `local' ideal completion of some internal locale, in particular of $\Omega_m$ where $m$ is the De Morgan topology of \cite{caramello2009morgan}, can be of some interest. We first study the question in full generality. 

\begin{proposition}
    Let $L$ be an internal locale in $\mathbf{Psh}(\mathcal{C})$, $\mathcal{C}$ cartesian. Then $I_L: c\mapsto \textup{Idl}(L(c))$ is an internal locale of $\mathbf{Psh}(\mathcal{C})$.
\end{proposition}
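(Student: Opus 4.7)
The strategy is to verify the four conditions of the definition of an internal locale directly, exploiting the fact that $\mathbf{Psh}(\mathcal{C})$ has the trivial topology so the sheaf condition is vacuous and $I_L$ is automatically a presheaf provided functoriality on morphisms is checked.

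First I would check that each $I_L(c) = \textup{Idl}(L(c))$ is a frame. This is a standard fact about ideal completions of frames: meets are given by intersection, arbitrary joins by the ideal generated by the union, and the frame distributive law follows from distributivity in $L(c)$ together with the fact that every element of $\bigvee_i I_i$ is bounded by a finite join of elements from the $I_i$. Next I would define the transition maps: for $f:d\to c$ in $\mathcal{C}$, set
$$I_L(f)(I) = \{\, y\in L(d) \;:\; y\leq L(f)(x) \text{ for some } x\in I\,\}.$$
I would check that this is an ideal, that $I_L(g\circ f)=I_L(f)\circ I_L(g)$ (using that $L$ is a presheaf), and that $I_L(f)$ is a frame homomorphism. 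Preservation of arbitrary joins and of the top element is immediate from the definition; preservation of binary meets uses that $L(f)$ preserves meets, via the computation
$$I_L(f)(I)\cap I_L(f)(J) = \{y : y\leq L(f)(x_1)\land L(f)(x_2) = L(f)(x_1\land x_2),\; x_i\in I, J\},$$
combined with $x_1\land x_2\in I\cap J$.

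Next I would exhibit the left adjoint $\exists^I_f: \textup{Idl}(L(d))\to \textup{Idl}(L(c))$, defined as the ideal generated by $\{\exists_f(y) : y\in J\}$. The adjunction $\exists^I_f\dashv I_L(f)$ reduces pointwise to the adjunction $\exists_f\dashv L(f)$ in $L$: indeed, $\exists^I_f(J)\subseteq I$ iff $\exists_f(y)\in I$ for every $y\in J$, which by $\exists_f\dashv L(f)$ and downward closure of $I$ is equivalent to asking that every $y\in J$ satisfy $y\leq L(f)(x)$ for some $x\in I$, i.e.\ $J\subseteq I_L(f)(I)$.

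Finally, I would verify the Beck--Chevalley condition and Frobenius reciprocity for $I_L$ by reducing them pointwise to the corresponding identities in $L$. For Beck--Chevalley along a pullback square, both $\exists^I_g(I_L(f)(I))$ and $I_L(h)(\exists^I_e(I))$ unfold to the ideal generated by $\{\exists_g(L(f)(z)) = L(h)(\exists_e(z)) : z\in I\}$, where the equality is BC in $L$. For Frobenius, given $I\in I_L(d)$ and $J\in I_L(c)$, I would show $\exists^I_f(I_L(f)(J)\cap I) = \exists^I_f(I)\cap J$: the inclusion $\subseteq$ uses Frobenius in $L$ to rewrite $\exists_f(y) = \exists_f(y)\land z$ when $y\in I$ and $y\leq L(f)(z)$ with $z\in J$; the converse uses Frobenius again in the form $w\land \exists_f(y) = \exists_f(L(f)(w)\land y)$ to express an arbitrary element of $\exists^I_f(I)\cap J$ as a join of generators of $\exists^I_f(I_L(f)(J)\cap I)$. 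The main obstacle is really just keeping the bookkeeping of ``ideal generated by'' straight in the Frobenius calculation; once one phrases everything in terms of representatives, all of the work factors through the fact that $L$ is already an internal locale.
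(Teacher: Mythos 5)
Your proposal is correct and follows essentially the same route as the paper: the transition maps and their left adjoints are defined as downward closures of direct images ($I_L(f)(u)=\downarrow L(f)[u]$ and $\exists^{I_L}_f(u)=\downarrow\exists_f[u]$, which coincides with your ``ideal generated by'' description since $\exists_f$ preserves binary joins), and the frame-homomorphism, adjunction, Beck--Chevalley and Frobenius conditions are all verified by the same pointwise reductions to the corresponding identities in $L$. The element chases you sketch for the adjunction and for both inclusions of Frobenius match the paper's computations.
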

\begin{proof}
    It is well-known that the ideals of a frame form a frame. The transition function for $f:d\xrightarrow{}c$ is given by $I_L(f): u \mapsto \downarrow (L(f)[u])$, i.e. the set of elements of $L(d)$ that are smaller than an element in the image of $u$. Similarly, the left adjoint is given by $\exists_f^{I_L}: u \mapsto \downarrow (\exists_f^{L}[u])$. By the fact that $L(f)$ and $\exists_f^L$ preserve binary joins, it is easily checked that these operations indeed produce ideals of the corresponding frames. We  need to check that $I_L(f)$ preserves arbitrary joins and meets. Recall that the join $\bigvee_i u_i$ of a family $u_i$ of ideals is the ideal whose underlying set is the set of elements which are smaller than a finite join of elements in $\bigcup_i u_i$, so $I_L(f)(\bigvee_i I_i)=\bigvee_i I_L(f)(u_i)$ just by the fact that $L(f)$ preserves joins. The meet is just given by set-theoretic intersection, so the fact that $I_L(f)$ preserve these is again given by the corresponding property for $L(f)$ (if $x\leq L(f)(y_i)$ with $y_i\in I_i$, then $x\leq \bigwedge_i L(f)(y_i)=L(f)(\bigwedge_i y_i)$ with $\bigwedge_i y_i\in\bigwedge_i u_i$). We also check that the left adjoint is indeed given by $I \mapsto \downarrow (\exists_f^{L}[I])$, by checking that this is a left adjoint to $I_L(f)$, i.e. we have to check that $u\subseteq \downarrow (L(f)[v])$ if and only if $\downarrow (\exists_f[u])\subseteq v$. Right to left direction: if $x\in u$, then $\exists_f (x)\in v$, and $x\leq L(f)(\exists_f)(x)$. Left to right direction: if $y\leq \exists_f(x)$ for some $x\in u$, knowing that $x\leq L(f)(z)$ for $z\in v$, we get $y\leq \exists_f (L(f)(z))\leq z$ so $y\in v$.\\
    \\
    We still need to check Beck-Chevalley and Frobenius. In fact Beck-Chevalley immediately follows from the corresponding property for $L$. For Frobenius, we have to check that $\downarrow (\exists_f[(\downarrow (L(f)[u]))\land v])= u \land (\downarrow (\exists_f[v]))$. Right to left inclusion: if $x\in u$ and $x\leq\exists_f(y)$ with $y\in v$, then $x=x\land\exists_f(y)=\exists_f(L(f)(x)\land y)$ by Frobenius. Left to right: if $x\leq \exists_f(y)$ with $y\in v$ and $y\leq L(f)(z)$ with $z\in u$, then $x\leq \exists_f(L(f)(z))\leq z$. 
\end{proof}

We will use the same notation $\exists_f$ for the right adjoint in $L$ and $I_L$ when there is no ambiguity. Notice that we only get an internal locale of $\mathbf{Psh}(\mathcal{C})$, so we still need to apply an additional operation to get an internal locale of $\mathbf{Sh}(\mathcal{C},J)$ (see the end of this appendix). However, in some cases there is no need to do so. For a site $(\mathcal{C},J)$, we say that $J$ is \emph{finitary} if every $J$-covering sieve is generated by a finite family of morphisms. For example, the coherent topology on a coherent category is finitary. Also the trivial topology is (trivially) finitary, since any covering sieve is generated by the identity morphism.

\begin{proposition}
    With the same notations, if $L$ is a $J$-sheaf and $J$ is finitary then $I_L$ is also an internal locale of $\mathbf{Sh}(\mathcal{C},J)$.
\end{proposition}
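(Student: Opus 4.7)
The plan is to verify that $I_L$ is a $J$-sheaf; once this is established, it upgrades the internal locale structure of $I_L$ on $\mathbf{Psh}(\mathcal{C})$ (from the previous proposition) to one on $\mathbf{Sh}(\mathcal{C},J)$, since the frame operations, Beck–Chevalley and Frobenius are all pointwise statements. Using finitariness, any $J$-covering sieve on $c$ is generated by a finite family $(f_i : d_i \to c)_{i \in F}$, and the finiteness of $F$ is essential throughout. Given a matching family $(u_i)_{i \in F}$ with $u_i \in I_L(d_i)$, I claim the unique amalgamation is
\[u := \{y \in L(c) \mid L(f_i)(y) \in u_i \text{ for all } i \in F\},\]
which is visibly an ideal of $L(c)$ since each $L(f_i)$ preserves order and finite joins.

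To establish $I_L(f_i)(u) = u_i$, the inclusion $\subseteq$ is immediate from the definition. For $\supseteq$, given $x \in u_i$ I claim $\exists_{f_i}(x) \in u$. For any $j \in F$, let $p_i : d_i \times_c d_j \to d_i$ and $p_j : d_i \times_c d_j \to d_j$ be the projections; Beck–Chevalley gives
\[L(f_j)(\exists_{f_i}(x)) = \exists_{p_j}(L(p_i)(x)).\]
The matching condition supplies $w \in u_j$ with $L(p_i)(x) \leq L(p_j)(w)$, and the counit $\exists_{p_j} L(p_j) \leq \id$ yields $L(f_j)(\exists_{f_i}(x)) \leq w$, so $L(f_j)(\exists_{f_i}(x)) \in u_j$ by downward closure. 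Thus $\exists_{f_i}(x) \in u$, and since $L(f_i)(\exists_{f_i}(x)) \geq x$ we deduce $x \in I_L(f_i)(u)$.

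For uniqueness, let $u'$ be any amalgamation. The inclusion $u' \subseteq u$ is immediate, since $y \in u'$ implies $L(f_i)(y) \in I_L(f_i)(u') = u_i$ for all $i$. For $u \subseteq u'$, take $y \in u$: for each $i$, $L(f_i)(y) \in u_i = I_L(f_i)(u')$, so pick $y'_i \in u'$ with $L(f_i)(y) \leq L(f_i)(y'_i)$. Then
\[\tilde{y} := \bigvee_{i \in F}(y \wedge y'_i)\]
lies in $u'$ as a finite join of elements of the ideal $u'$, and a direct computation gives $L(f_j)(\tilde{y}) = L(f_j)(y)$ for every $j$ (using $L(f_j)(y) \leq L(f_j)(y'_j)$ for the diagonal index). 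The sheaf property of $L$ then forces $\tilde{y} = y$, placing $y$ in $u'$.

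The main obstacle is reconciling the ideal structure (closure only under finite joins) with the sheaf amalgamation on $L$: the uniqueness argument expresses $y$ as a finite join of elements of $u'$, which is only legitimate because $F$ is finite. Without the finitariness hypothesis the set $u$ is still an ideal, but the join in the uniqueness step could be infinite and escape $u'$, so the sheaf condition on $I_L$ would generally fail.
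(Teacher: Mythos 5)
Your proof is correct, but it takes a genuinely different route from the paper's. You build the amalgamation ``from above'', as the largest candidate $u=\{y\mid L(f_i)(y)\in u_i\ \forall i\}$, and then prove uniqueness by hand: given another amalgamation $u'$, you exhibit each $y\in u$ as the finite join $\bigvee_{i\in F}(y\wedge y_i')$ of elements of $u'$ and invoke separatedness of $L$ to identify this join with $y$. The paper instead builds the amalgamation ``from below'', as $u=\bigvee_i\exists_{f_i}(u_i)$, and derives uniqueness from the identity $u=\bigvee_i\exists_{f_i}(I_L(f_i)(u))$, which it reduces via Frobenius to $\bigvee_i\exists^{I_L}_{f_i}(1)=1$ --- the point where finiteness and the sheaf property of $L$ enter. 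Both arguments use Beck--Chevalley together with the matching condition on pullbacks in the existence step, and both confine the use of finitariness to uniqueness (as you correctly observe in your closing remark, existence of \emph{an} amalgamation survives for infinite families; it is uniqueness that fails). Your version is slightly more elementary, needing only separatedness of $L$ and no Frobenius computation; the paper's version has the advantage of producing the amalgamation by the same formula $\bigvee_i\exists_{f_i}(u_i)$ that recurs in its later arguments (e.g.\ the surjectivity criteria), and of making visible that the obstruction is precisely whether $\bigvee_i\exists_{f_i}(1)=1$ can be witnessed by a finite join inside the ideal completion.
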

\begin{proof}
    We just have to check that $I_L$ is a $J$-sheaf. So take a (finite) covering family $f_i:c_i\xrightarrow{}c$ and a matching family $u_i\in \textup{Idl}(L(c_i))$. We claim that $u:=\bigvee_i\exists_{f_i}(u_i)$ is an amalgamation of this family. Indeed, $I_L(f_i)(u)\geq I_L(f_i)(\exists_{f_i}(u_i))\geq u_i$, and we are left to show that $I_L(f_i)(u)\leq u_i$, equivalently that $I_L(f_i)(\exists_{f_j}(u_j))\leq u_i$ for all $i,\,j$. Now consider the following pullback square:
    \begin{center}
    \begin{tikzcd}
    c_i\times_c c_j\arrow[r,"p_i"]\arrow[d,"p_j"'] & \arrow[d,"f_i"] c_i\\
    c_j\arrow[r,"f_j"'] & c
    \end{tikzcd}
    \end{center}
    Using Beck-Chevalley, we get 
    $$I_L(f_i)(\exists_{f_j}(u_j))=\exists_{p_i}(I_L(p_j)(u_j))=\exists_{p_i}(I_L(p_i)(u_i))\leq u_i$$
    the last equality coming from the fact that $(u_i)_i$ is a matching family. This proves the claim.\\
    \\
    We still need to check uniqueness. We claim that for all $u\in \textup{Idl}(L(c))$, for every (finite) covering family $f_i:c_i\xrightarrow{}c$, $i\in I$, we have $u=\bigvee_i \exists_{f_i}(I_L(f_i)(u))$. Clearly this gives uniqueness, so proving the claim finishes the proof. Now we have 
    $$\bigvee_i \exists_{f_i}(I_L(f_i)(u))=\bigvee_i \exists_{f_i}(I_L(f_i)(u)\land 1)=\bigvee_i (\exists_{f_i}(1)\land u)=u\land \bigvee_i \exists_{f_i}(1)$$
    by Frobenius, so we just have to prove that $\bigvee_i \exists_{f_i}^{I_L}(1)=1$. Unwrapping the definitions, we have to show that there are $x_i\in L(c_i)$ such that $1\leq \bigvee_{i\in F}\exists_{f_i}(x_i)$ in $L(c)$, for a finite $F$. Equivalently, we have to show that $1\leq \bigvee_{i\in I}\exists_{f_i}(1)$, since $I$ is finite. Now starting from $\bigvee_{i\in I}\exists_{f_i}(1)\leq \bigvee_{i\in I}\exists_{f_i}(1)$, we get that $1\leq L(f_i)(\bigvee_{i\in I}\exists_{f_i}(1))$ for all $i$, so since $L$ is a sheaf and $f_i$ is covering, we get what we wanted.
\end{proof}

Now one can wonder if this local completion is in fact a fibred ideal completion with respect to some Grothendieck topology on $\mathcal{G}(L)$, in the sense of section \ref{sec:int_loc}. If we still assume that $J$ is finitary, then the answer is positive. 

\begin{definition}
    Let $L$ be an internal locale of some topos $\mathbf{Sh}(\mathcal{C},J)$, with $\mathcal{C}$ cartesian. The finitary existential topology $J^{finext}$ on $\mathcal{G}(L)$ is the topology generated by the basis defined by finite families $(c_i,x_i)\xrightarrow{f_i}(c,x)$ such that $\bigvee_i \exists_{f_i}(x_i)=x$.
\end{definition}

In fact, it is not obvious that this defines a topology on $\mathcal{G}(L)$, specifically the stability under pullbacks. But since the existential topology (which is just the infinitary version) is stable under pullbacks because $L$ is an internal locale, and since the pullback (in the strong sense) of a finite family is finite, then $J^{finext}$ is indeed a Grothendieck topology on $\mathcal{G}(L)$. Now the question is whether the projection $\mathcal{G}(L)\xrightarrow{}C$ generates a localic morphism $\mathbf{Sh}(\mathcal{G}(L),J^{finext})\xrightarrow{}\mathbf{Sh}(\mathcal{C},J)$, i.e. if $J^{finext}$ contains the Giraud topology. This is where we need $J$ to be finitary.

\begin{proposition}
    If $J$ is finitary then the topology $J^{finext}$ on $\mathcal{G}(L)$ for $L$ internal locale contains the Giraud topology.
\end{proposition}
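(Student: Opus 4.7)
The plan is to adapt the proof of the analogous statement for $J^{ext}$ given earlier in the paper, using finiteness in a crucial way. Since $J$ is finitary, every $J$-covering sieve on an object $c$ is generated by a \emph{finite} family of morphisms $(f_i : c_i \to c)_{i \in I}$. It therefore suffices to show that for any such finite covering family and any $x \in L(c)$, the finite family
$$\{(c_i, L(f_i)(x)) \xrightarrow{f_i} (c,x) \mid i \in I\}$$
generating the Giraud-covering sieve is in $J^{finext}$, i.e. that
$$\bigvee_{i \in I} \exists_{f_i}(L(f_i)(x)) = x \quad \text{in } L(c).$$

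First I would apply Frobenius reciprocity to each summand, exactly as in the earlier proof: $\exists_{f_i}(L(f_i)(x)) = \exists_{f_i}(L(f_i)(x) \wedge 1) = \exists_{f_i}(1) \wedge x$. Distributivity in $L(c)$ then gives $\bigvee_i \exists_{f_i}(L(f_i)(x)) = x \wedge \bigvee_i \exists_{f_i}(1)$, so the problem reduces to proving $\bigvee_i \exists_{f_i}(1) = 1$.

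For this reduction, I would argue via the sheaf condition: from $\exists_{f_j}(1) \leq \bigvee_i \exists_{f_i}(1)$ and the adjunction $\exists_{f_j} \dashv L(f_j)$, we get $1 \leq L(f_j)(\bigvee_i \exists_{f_i}(1))$, hence $L(f_j)(\bigvee_i \exists_{f_i}(1)) = 1 = L(f_j)(1)$ for every $j$. Thus $1$ and $\bigvee_i \exists_{f_i}(1)$ restrict to the same (trivially matching) family along the $J$-covering family $(f_i)_i$, and uniqueness of amalgamation in the sheaf $L$ forces $\bigvee_i \exists_{f_i}(1) = 1$.

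The only subtle point — and the reason finitariness of $J$ is needed — is that the join in the definition of a $J^{finext}$-covering family must be over a \emph{finite} index set. Without finitariness of $J$, the same argument would only exhibit the Giraud sieve as covering for the full existential topology $J^{ext}$, not for its finitary variant $J^{finext}$. Everything else (Frobenius, Beck–Chevalley, sheaf amalgamation) is formal and carries over verbatim from the earlier proposition.
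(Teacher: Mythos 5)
Your proof is correct, but it takes a different route from the paper's. The paper does not argue directly: it identifies $J^{finext}$ as the existential topology on the existential fibred site obtained by equipping each fibre of $L$ with the coherent topology, and then invokes Proposition 5.4 of the fibred-sites paper of Caramello--Zanfa, which reduces the containment of the Giraud topology to checking that the fibred site is ``$J$-reflecting''; finitariness of $J$ enters there to produce a finite subfamily $F=\bigcup_{f\in S}I_f$ witnessing $\bigvee_{i\in F}x_i=x$. You instead give a direct, self-contained computation that transplants the paper's earlier argument for $J^{ext}\supseteq G_J$: you observe that for finitary $J$ the Giraud-covering sieve on $(c,x)$ induced by a covering sieve $S$ is generated by the \emph{finite} family $\{(c_i,L(f_i)(x))\xrightarrow{f_i}(c,x)\}$, and then Frobenius plus the sheaf condition give $\bigvee_i\exists_{f_i}(L(f_i)(x))=x$, so this finite family is a basic $J^{finext}$-cover. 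Your version buys elementarity and independence from the external reference; the paper's version buys a conceptual identification of $J^{finext}$ within the general framework of existential fibred sites, which is of independent interest. Your closing remark correctly isolates the single place where finitariness is indispensable, namely that the witnessing family must be finite to qualify for $J^{finext}$ rather than merely for $J^{ext}$.
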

\begin{proof}
    In fact we can see $J^{finext}$ as the existential topology on the existential fibred site $L$ where we put the coherent topology on each fibre (see Theorem~5.1 of \cite{caramello2022fibred}). The relative Beck-Chevalley and Frobenius conditions will still be valid in the cartesian case, as it is clear from the definition. Then we can apply  Proposition~5.4 of \cite{caramello2022fibred}. So we just need to show that this fibred site is $J$-reflecting (definition 5.1(d) of \cite{caramello2022fibred}). Consider a (finite) $J$-covering family $S$ on $c$ and a family $x_i\leq x$ in $L(c)$, $i\in I$. Suppose also that for every $f\in S$, there is a finite $I_f$ with $\bigvee_{i\in I_f}L(f)(x_i)=L(f)(x)$. We need to show that there is a finite $F\subseteq I$ such that $\bigvee_{i\in F}x_i=x$. Define $F:=\bigcup_{f\in S}I_f$. For every $f\in S$, we have $L(f)(\bigvee_{i\in F}x_i)\geq L(f)(\bigvee_{i\in I_f}x_i)=L(f)(x)$, and this is an equality because $x_i\leq x$ for all $i$. So since $L$ is a sheaf and $S$ is covering, we have indeed $x=\bigvee_{i\in F}x_i$.
\end{proof}

In fact we can give yet another description of $J^{finext}$. Indeed, it is not hard to see that this topology is the one generated by all the families containing a $(d,x)\xrightarrow{f}(c,\exists_f(x))$ for $f:d\xrightarrow{}c$ in $\mathcal{C}$ and the finite families $(c,x_i)\xrightarrow{id}(c,x)$ with $x=\bigvee_i x_i$. Let us call them covers of type 1 and 2 respectively. Now as we mentioned before the proposition, the morphism $\mathbf{Sh}(\mathcal{G}(L),J^{finext})\xrightarrow{}\mathbf{Sh}(\mathcal{C},J)$ generated by the projection is localic, so it corresponds, by Proposition~\ref{int_loc_eq}, to an internal locale $\Tilde{L}$, and for any other locale $L'$ we have (using Theorem~3.3 and Proposition~4.2 of \cite{caramello2022fibred}):
$$Loc(L',\Tilde{L})=Fib^{cart,cov}((\mathcal{G}(L),J^{finext}),(\mathcal{G}(L'),J^{ext}))$$
Following \cite{wrigley2023some}, we see that an \emph{internal locale morphism} $\alpha:L'\xrightarrow{}\Tilde{L}$ is a natural transformation $\alpha^{-1}:\Tilde{L}\xrightarrow{}L'$ which is pointwise a frame morphism, and which is also \emph{conatural}, in the sense that for any morphism $f:d\xrightarrow{}c$ in $\mathcal{C}$, the following square commutes:
\begin{center}
    \begin{tikzcd}
    \Tilde{L}(d)\arrow[r,"\exists_f"]\arrow[d,"\alpha_d"'] & \arrow[d,"\alpha_c"] \Tilde{L}(c)\\
    L'(d)\arrow[r,"\exists_f"'] & L'(c)
    \end{tikzcd}
    \end{center}

Now like in the proof of Corollary~3.5 of \cite{caramello2022fibred}, we see that a cartesian cover-preserving fibration morphism $F:(\mathcal{G}(L),J^{finext})\xrightarrow{}(\mathcal{G}(L'),J^{ext})$ corresponds to a natural transformation $\alpha:L\xrightarrow{}L'$ which preserves finite meets (cartesian) and joins (preservation of covers of type 2) pointwise (fibration morphism) and is conatural (preservation of covers of type 1). Note that naturality comes from the preservation of cartesian arrows, which is included in the definition of a fibration morphism. Indeed, we can prove that cartesian arrows in $\mathcal{G}(L)$ are exactly the arrows $(d,L(f)(x))\xrightarrow{f}(c,x)$. Let $(d,x')\xrightarrow{f}(c,x)$ be a cartesian arrow. We have $x'\leq L(f)(x)$, we want to show $x'\geq L(f)(x)$, but this is just a consequence of the fact that we can lift the identity arrow $d\xrightarrow{}d$ to make the following triangle commute:
\begin{center}
    \begin{tikzcd}
    (d,L(f)(x))\arrow[dr,"f"]\arrow[d,"\id"'] &  \\
    (d,x')\arrow[r,"f"'] & (c,x)
    \end{tikzcd}
    \end{center}
The converse is trivial. Since we have the same for cartesian arrows in $\mathcal{G}(L')$, this shows that $F$ preserves cartesian arrows if and only if $\alpha$ is natural. \\
\\
Now we recall from, say, \cite{caramello2011topos} Theorem~6.2, that for frames $L$ and $L'$ (in $Sets$), the morphism sending an element $l$ of $L$ to the principal ideal generated by $l$ induces by precomposition a natural isomorphism between morphisms $L\xrightarrow{}L'$ that preserve finite meets and joins and frame morphisms $\textup{Idl}(L)\xrightarrow{}L'$ (in fact $L$ does not have to be a frame). It is straightforward to extend this result to internal locales and the pointwise ideal completion, and we get that internal frame morphisms between $I_L$ and $L'$ correspond to natural and conatural transformations from $L$ to $L'$ which preserve finite meets and joins pointwise. Putting everything together, by the Yoneda lemma, we have proven that $I_L\cong \Tilde{L}$, i.e. the locale corresponding to the local ideal completion of $L$ is exactly the completion of $L$ with respect to $J^{finext}$. In other words:

\begin{theorem}\label{loc_ideal}
    For $L$ internal locale in $\mathbf{Sh}(\mathcal{C},J)$, $\mathcal{C}$ cartesian, $J$ finitary, we have $$\mathbf{Sh}(\mathcal{G}(L),J^{finext})\cong \mathbf{Sh}(\mathcal{G}(I_L),J^{ext})$$ as relative toposes over $\mathbf{Sh}(\mathcal{C},J)$.
\end{theorem}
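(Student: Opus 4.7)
The plan is to follow the roadmap already laid out in the paragraphs preceding the theorem: identify both toposes as the localic topos (over $\mathbf{Sh}(\mathcal{C},J)$) associated to the same internal locale via Proposition \ref{int_loc_eq}, and then conclude using a Yoneda argument.

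First, I would observe that the projection $\pi_L:(\mathcal{G}(L),J^{finext})\xrightarrow{}(\mathcal{C},J)$ is a comorphism of sites, because the previous proposition established that $J^{finext}$ contains the Giraud topology. The induced geometric morphism $f:\mathbf{Sh}(\mathcal{G}(L),J^{finext})\xrightarrow{}\mathbf{Sh}(\mathcal{C},J)$ is localic, since $\pi_L$ is faithful (by Proposition 7.11 of \cite{caramello2019denseness}). So, by Proposition \ref{int_loc_eq}, there is an internal locale $\Tilde{L}$ of $\mathbf{Sh}(\mathcal{C},J)$ such that $\mathbf{Sh}(\mathcal{G}(L),J^{finext})\cong \mathbf{Sh}(\mathcal{G}(\Tilde{L}),J^{ext})$ as relative toposes. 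The whole task reduces to proving $\Tilde{L}\cong I_L$ as internal locales in $\mathbf{Sh}(\mathcal{C},J)$.

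To identify $\Tilde{L}$, I would compute its functor of points. For any internal locale $L'$ in $\mathbf{Sh}(\mathcal{C},J)$, combining Proposition \ref{int_loc_eq} with Theorem 3.3 and Proposition 4.2 of \cite{caramello2022fibred}, we get
\begin{align*}
Loc(L',\Tilde{L}) & \cong Fib^{cart,cov}((\mathcal{G}(L),J^{finext}),(\mathcal{G}(L'),J^{ext})).
\end{align*}
Then, unravelling definitions as the excerpt sketches: a cartesian fibration morphism $\mathcal{G}(L)\xrightarrow{}\mathcal{G}(L')$ over $\mathcal{C}$ corresponds to a natural transformation $\alpha:L\xrightarrow{}L'$ (using the explicit description of cartesian arrows as $(d,L(f)(x))\xrightarrow{f}(c,x)$); preservation of type 2 covers gives preservation of finite joins pointwise; the cartesianness forces preservation of finite meets pointwise; and preservation of type 1 covers $(d,x)\xrightarrow{f}(c,\exists_f(x))$ is exactly conaturality of $\alpha$ (i.e.\ $\alpha_c\circ\exists_f=\exists_f\circ\alpha_d$). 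So $Loc(L',\Tilde{L})$ is naturally isomorphic to the set of natural, conatural transformations $L\xrightarrow{}L'$ preserving finite meets and joins pointwise.

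It remains to match this with $Loc(L',I_L)$. By the classical adjunction between distributive lattices and frames (Theorem 6.2 of \cite{caramello2011topos}), a frame homomorphism $\textup{Idl}(L(c))\xrightarrow{}L'(c)$ is the same as a finite-meet- and finite-join-preserving map $L(c)\xrightarrow{}L'(c)$, via precomposition with the principal-ideal embedding. Applying this pointwise and checking that naturality on $L$ corresponds to naturality of the associated frame maps on $I_L$, and that conaturality of $\alpha$ corresponds to commutation with $\exists_f$ on the ideal completions (using the explicit description $\exists_f^{I_L}(u)=\,\downarrow\!(\exists_f^L[u])$ from the first proposition of this subsection), we obtain a natural isomorphism $Loc(L',I_L)\cong Loc(L',\Tilde{L})$. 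Yoneda then gives $I_L\cong\Tilde{L}$, hence the desired equivalence of relative toposes.

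The main obstacle will be the third paragraph: carefully verifying that the bijection between fibration morphisms and conatural transformations, and then between conatural meet-join preserving transformations and frame morphisms out of $I_L$, is natural in $L'$ (so that Yoneda applies cleanly) and respects the conaturality condition in both directions. The rest is bookkeeping of definitions.
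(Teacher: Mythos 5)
Your proposal is correct and follows essentially the same route as the paper: the paper's proof of this theorem is precisely the discussion in the paragraphs preceding it (identifying $\Tilde{L}$ via Proposition \ref{int_loc_eq}, computing $Loc(L',\Tilde{L})$ as cartesian cover-preserving fibration morphisms, translating these into natural and conatural finite-meet-and-join-preserving transformations $L\to L'$, matching with frame morphisms out of $I_L$ via the ideal-completion adjunction, and concluding by Yoneda). The verification steps you flag as the "main obstacle" (the explicit description of cartesian arrows and the conaturality bookkeeping) are exactly the ones the paper carries out in that passage.
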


We can also study when the morphism $\mathbf{Sh}(\mathcal{G}(I_L),J^{ext})\xrightarrow{}\mathbf{Sh}(\mathcal{C},J)$ is surjective. First, we need a lemma. Recall that the Yoneda embedding $\mathcal{E}\xrightarrow{}\mathbf{Sh}(\mathcal{E},J^{can})$ is an equivalence, so every internal locale $L:\mathcal{C}^{op}\xrightarrow{}\Set$ in $\mathcal{E}=\mathbf{Sh}(\mathcal{C},J)$ can be seen as an internal locale $L:\mathcal{E}^{op}\xrightarrow{}\Set$.

\begin{Lemma}
    For an internal locale $L:\mathcal{E}^{op}\xrightarrow{}\Set$, the following are equivalent:
    \begin{enumerate}[(i)]
        \item The canonical morphism $f:\mathbf{Sh}(\mathcal{G}(L),J^{ext})\xrightarrow{}\mathcal{E}$ is surjective;
        \item $f^*$ reflects epimorphisms;
        \item for any $\alpha:E'\xrightarrow{}E$ in $\mathcal{E}$, if $\exists_\alpha(1)=1$ then $\alpha$ is an epimorhism.
    \end{enumerate}
\end{Lemma}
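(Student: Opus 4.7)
The plan is to exploit the explicit description of $L$ furnished by Proposition \ref{int_loc_eq}, which identifies $L$ with $L_f : E \mapsto \textup{Sub}_{\mathcal{F}}(f^*(E))$ where $\mathcal{F} := \mathbf{Sh}(\mathcal{G}(L), J^{ext})$, and which identifies, for any $\alpha : E' \to E$ in $\mathcal{E}$, the transition map $L(\alpha) : L(E) \to L(E')$ with pullback of subobjects along $f^*(\alpha)$. Its left adjoint $\exists_\alpha : L(E') \to L(E)$ is therefore (tautologically, by uniqueness of adjoints) the image-along operation $x \mapsto \textup{Im}(f^*(\alpha) \circ x)$, which is the standard left adjoint to pullback of subobjects in the topos $\mathcal{F}$.

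I would then treat (i) $\Leftrightarrow$ (ii) by citing the standard characterization of surjective geometric morphisms: $f$ is surjective if and only if $f^*$ is conservative (see e.g.\ \cite{maclane2012sheaves} or \cite{johnstone2002sketches}). Since $f^*$ preserves finite limits and colimits, conservativity is equivalent to reflecting epimorphisms in a topos: on the one hand, a morphism $\alpha$ is an epimorphism iff its codiagonal $E \sqcup_{E'} E \to E$ is an isomorphism, and this construction is preserved by $f^*$; on the other hand, the analogous fact for the diagonal shows $f^*$ also reflects monomorphisms, and in a topos a morphism is an isomorphism iff it is both mono and epi.

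Next I would establish (ii) $\Leftrightarrow$ (iii) by unwinding the formula for $\exists_\alpha$. Applied to the maximal subobject $1_{L(E')} = f^*(E')$, we obtain
\[ \exists_\alpha(1) = \textup{Im}\bigl(f^*(\alpha) : f^*(E') \to f^*(E)\bigr) \in \textup{Sub}_{\mathcal{F}}(f^*(E)), \]
which equals $1_{L(E)} = f^*(E)$ if and only if $f^*(\alpha)$ is an epimorphism in $\mathcal{F}$. Thus the implication in (iii) is literally the statement \emph{$f^*(\alpha)$ epi implies $\alpha$ epi}, i.e.\ that $f^*$ reflects epimorphisms, which is (ii).

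The only subtlety is the identification of the left adjoint $\exists_\alpha$ supplied by the internal locale structure on $L$ with the image operation in $\mathcal{F}$; but this is forced by the uniqueness of left adjoints once one transports the adjunction across the equivalence of Proposition \ref{int_loc_eq}. Everything else is then a direct reading of the definitions.
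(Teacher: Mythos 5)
Your proof is correct, and for the equivalence (ii) $\Leftrightarrow$ (iii) it coincides with the paper's: both reduce to the identification $\exists_\alpha(1)=\Im(f^*(\alpha))$ coming from Proposition \ref{int_loc_eq}. Where you diverge is in how condition (i) is tied in. The paper splits this into two separate implications: for (i) $\Rightarrow$ (iii) it works at the level of sites, observing that $f$ is generated by the morphism of sites $E\mapsto (E,1)$ and that $\exists_\alpha(1)=1$ makes the single arrow $(E',1)\to(E,1)$ a $J^{ext}$-cover, so that the cover-reflection criterion for surjections (Theorem 6.3(i) of \cite{caramello2019denseness}) yields that $\alpha$ is $J^{can}$-covering, i.e.\ epi; for (ii) $\Rightarrow$ (i) it shows that reflecting epimorphisms forces $f^*$ to be faithful, via the equalizer of a pair of morphisms identified by $f^*$. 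You instead invoke the standard characterization of surjections as geometric morphisms with conservative (equivalently faithful) inverse image and then prove, purely diagrammatically, that for a functor preserving finite limits and colimits between toposes, conservativity is equivalent to reflecting epimorphisms (cokernel pairs in one direction, diagonals plus balancedness in the other). Your route avoids the site-theoretic machinery entirely and is arguably more self-contained modulo the cited characterization of surjections, while the paper's argument stays closer to the explicit presentation $\mathbf{Sh}(\mathcal{G}(L),J^{ext})$ that it uses throughout the section; your reduction of mono-reflection to epi-reflection via the diagonal is a slightly slicker version of the paper's equalizer argument. Both are sound.
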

\begin{proof}
    2 and 3 are clearly equivalent since $\exists_\alpha(1)=Im(f^*(\alpha))$. Now suppose 1. $f$ is generated by the morphism of sites $F:E\mapsto (E,1)$. Take $\alpha:E'\xrightarrow{}E$ such that $\exists_\alpha(1)=1$. So $(E',1)\xrightarrow{\alpha}(E,\exists_\alpha(1))=F(E'\xrightarrow{\alpha}E)$ is $J^{ext}$-covering, and by \cite{caramello2019denseness} Theorem~6.3(i), $\alpha$ is $J^{can}$-covering, i.e. an epimorphism. Conversely, if $f^*$ reflects epimorphisms, we show that it is faithful. Indeed, take $\alpha,\,\alpha':E'\xrightarrow{}E$ such that $f^*(\alpha)=f^*(\alpha')$. Let $E''\xrightarrow{e}E'$ be the equaliser of $\alpha$ and $\alpha'$. $f^*$ preserves finite limits, so $f^*(e)$ is the equaliser of equal morhisms, i.e. an isomorphism. In particular it is an epimorphism, so $e$ is an epimorphism and a monomorphism so it is an isomorphism, and $\alpha=\alpha'$.
\end{proof}

\begin{proposition}\label{surj_ideal}
    $L$ internal locale in a topos $\mathcal{E}=\mathbf{Sh}(\mathcal{C},J)$, $\mathcal{C}$ cartesian, $J$ finitary. Then $\mathbf{Sh}(\mathcal{G}(I_L),J^{ext})\xrightarrow{}\mathcal{E}$ is surjective if and only if $\mathbf{Sh}(\mathcal{G}(L),J^{ext})\xrightarrow{}\mathcal{E}$ is.
\end{proposition}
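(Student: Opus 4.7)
The plan is to reduce both surjectivity conditions to the same non-triviality condition on the underlying locale. Specifically, I will argue that for any internal locale $A$ in $\mathcal{E}$, the canonical localic morphism $C_{\pi_A}:\mathbf{Sh}(\mathcal{G}(A),J^{ext}) \to \mathcal{E}$ is surjective iff $A$ is non-trivial, i.e.\ $0 \neq 1$ in $A(c)$ whenever $c$ is not $J$-covered by $\varnothing$. Applied to $A = L$ and $A = I_L$, together with the obvious equivalence ``$L$ non-trivial $\iff$ $I_L$ non-trivial'' (since $I_L(c) = \textup{Idl}(L(c))$ is the one-element frame exactly when $L(c)$ is), this yields the result.

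Both directions of the criterion mirror the proof of Proposition~\ref{surj_gc}. For ``$\Rightarrow$'': if $A$ is trivial at some $c$ not $\varnothing$-covered, then $(c,1)=(c,0) \in \mathcal{G}(A)$ is $J^{ext}$-covered by the empty family (since $\bigvee_{i\in\varnothing}\exists_{f_i}(x_i)=0$), so $f^*(l(c)) \cong l_{J^{ext}}((c,1)) \cong 0$, whereas $l(c) \neq 0$ in $\mathcal{E}$; criterion (iii) of the preceding Lemma, applied to the unique mono $0 \to l(c)$, then shows $f$ is not surjective. For ``$\Leftarrow$'': if $A$ is non-trivial, then the lift $c \mapsto (c,1)$ gives, for every $c$ not $\varnothing$-$J$-covered, a preimage $(c,1)$ that is not $\varnothing$-$J^{ext}$-covered; moreover every $J$-covering family $\{f_i: c_i \to c\}$ lifts to $\{(c_i,1) \xrightarrow{f_i} (c,1)\}$, which is a Giraud cover and hence belongs to $J^{ext}$. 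This provides the data needed to conclude surjectivity via \cite{caramello2019denseness} Theorem~6.3(i).

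The main obstacle is extending Proposition~\ref{surj_gc} from the coherent coverage $K$ to the possibly infinitary existential topology $J^{ext}$. The key point is that although an arbitrary non-empty $J^{ext}$-covering family need not project to a $J$-cover (unlike in the finitary coherent case), the surjectivity criterion of \cite{caramello2019denseness} Theorem~6.3(i) only requires exhibiting a $J^{ext}$-cover in $\mathcal{G}(A)$ lying over each $J$-cover in $\mathcal{C}$ together with a non-$\varnothing$-covered preimage of each non-$\varnothing$-covered object of $\mathcal{C}$, both of which are supplied by the Giraud lift as above whenever $A$ is non-trivial.
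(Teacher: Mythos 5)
There is a genuine gap. Your reduction rests on the claim that for \emph{any} internal locale $A$, the morphism $\mathbf{Sh}(\mathcal{G}(A),J^{ext})\xrightarrow{}\mathcal{E}$ is surjective if and only if $A$ is non-trivial, and this claim is false. The model you invoke, Proposition \ref{surj_gc}, concerns the \emph{coherent} coverage $K$, whose non-empty covers are finite joins in the fibres composed with Giraud covers and hence always project to $J$-covers; the existential topology also contains the infinitary covers $\{(c_i,x_i)\xrightarrow{f_i}(c,1)\}$ with $\bigvee_i\exists_{f_i}(x_i)=1$, whose projections need not $J$-cover $c$. Concretely, take $\mathcal{E}=\mathbf{Sh}(X)$ for a space $X$ and let $A$ be the internal locale $V\mapsto O(U\cap V)$ associated to a dense proper open $U\subseteq X$ (e.g.\ $X=\mathbb{R}$, $U=\mathbb{R}\setminus\{0\}$): then $A$ is non-trivial in your sense, but $\mathbf{Sh}(\mathcal{G}(A),J^{ext})\simeq\mathbf{Sh}(U)\xrightarrow{}\mathbf{Sh}(X)$ is a non-trivial inclusion, hence not a surjection. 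The error in your ``$\Leftarrow$'' step is a direction confusion: exhibiting, over each $J$-cover of $c$, a $J^{ext}$-cover of $(c,1)$ is cover \emph{preservation}, which is needed for the geometric morphism to exist at all; surjectivity via Theorem 6.3(i) of \cite{caramello2019denseness} requires cover \emph{reflection}, i.e.\ that whenever the sieve generated by $i_A(S)$ on $(c,1)$ is $J^{ext}$-covering (equivalently $\bigvee_{f\in S}\exists_f(1)=1$), the sieve $S$ is $J$-covering on $c$ --- and in the example above this fails for the singleton family $\{U\hookrightarrow X\}$.

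The paper avoids this entirely: its preceding Lemma characterizes surjectivity of $\mathbf{Sh}(\mathcal{G}(A),J^{ext})\xrightarrow{}\mathcal{E}$ by the condition that $\exists_\alpha^{A}(1)=1$ implies $\alpha$ is an epimorphism, for all $\alpha$ in $\mathcal{E}$ (this is exactly the cover-reflection condition, which is strictly stronger than non-triviality), and then simply observes that $\exists_\alpha^{I_L}(1)=1$ if and only if $\exists_\alpha^{L}(1)=1$, since $\exists_\alpha^{I_L}(1)=\mathop{\downarrow}(\exists_\alpha^{L}[L(E')])$ is the whole frame exactly when it contains the top, i.e.\ when $\exists_\alpha^{L}(1)=1$. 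Your argument can be repaired by replacing ``non-trivial'' throughout with this criterion; as it stands, the ``$\Leftarrow$'' direction of your key lemma does not hold.
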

\begin{proof}
    Just apply the previous Proposition~to $I_L$, noticing that $\exists_\alpha^{L}(1)=1$ if and only if $\exists_\alpha^{I_L}(1)=1$.
\end{proof}

The Proposition~shows that $\mathbf{Sh}(\mathcal{G}(I_{\Omega_m}),J^{ext})\xrightarrow{}\mathbf{Sh}(\mathcal{C},J)$ it is not surjective unless the topos is De Morgan, because $\mathbf{Sh}(\mathcal{G}(\Omega_m),J^{ext})$ is just the DeMorganization of $\mathcal{E}$ which is an inclusion. We have the same problem if we consider $I_{\Omega_{\neg\neg}}$.\\
\\
At this point, one can ask several questions. First, what can we do if $J$ is not finitary? In the general case, we can still get an internal locale by the following operation: for an internal locale $L$ of $\mathbf{Psh}(\mathcal{C})$, the pullback of $\mathbf{Sh}_{\mathbf{Psh}(\mathcal{C})}(I_L)$ along $\mathbf{Sh}(\mathcal{C},J)\hookrightarrow{}\mathbf{Psh}(\mathcal{C})$ is localic, so it is of the form $\mathbf{Sh}_{\mathbf{Sh}(\mathcal{C},J)}(\Tilde{I}_L)$ for some internal locale $\Tilde{I}_L$ of $\mathbf{Sh}(\mathcal{C},J)$. On the other hand, we can also add the Giraud topology to $J^{finext}$. Do we have a result similar to Theorem~\ref{loc_ideal} in this case? The answer is positive, and in fact, we can state a more general fact: take a site $(\mathcal{C},J)$, a fibred preorder $\pcat$, and any topology $K$ on $\mathcal{G}(\pcat)$. Let $G_J$ be the Giraud topology over $J$. The fibred ideal completion of $\pcat$, $L_\pcat^K$, is an internal locale of $\mathbf{Psh}(\mathcal{C})$. By the proof of Proposition~3.4 of \cite{giraud1972classifying}, we have that $\mathbf{Sh}(\Tilde{L}_\pcat^K)\cong \mathbf{Sh}(\mathcal{G}(\pcat), K\lor G_J)$. In particular, this is true in the case where $\pcat$ is an internal distributive lattice and $K$ is the finite covering topology on each fiber, as noticed in Corollary~C2.4.3 of \cite{johnstone2002sketches}, which is already interesting because this is exactly the setting of Gleason's cover. In particular, if $J$ is dense, then $\Omega_{\neg\neg}^{\mathbf{Psh}(\mathcal{C})}=\Omega_{\neg\neg}^{\mathbf{Sh}(\mathcal{C},J)}$ (as elements of $\mathbf{Psh}(\mathcal{C})$), so we have the following pullback diagram of toposes:
\begin{center}
    \begin{tikzcd}
        \gamma \mathbf{Sh}(\mathcal{C},J) \arrow[r]\arrow[d] & \gamma \mathbf{Psh}(\mathcal{C})\arrow[d]\\
        \mathbf{Sh}(\mathcal{C},J)\arrow[r, hook] & \mathbf{Psh}(\mathcal{C})
    \end{tikzcd}
\end{center}

This could be relevant for the study of Gleason's cover, which will be done in a future work. \\
\\
Also we could persue several generalizations, for example when $\mathcal{C}$ is not cartesian, and when $L$ is not an internal locale but just an internal distributive lattice for example. We can also try to generalize this to general $J$-ideals like in \cite{caramello2011topos}, instead of only looking at the coherent topology on a distributive lattice. These generalizations seem interesting and will be done in the future work about fibred Stone duality.

\section{Conclusion and future work}
This paper, despite having some new results, is mainly to be thought of as a methodological clarification on De Morgan's law in toposes. Its goal is to show how to reformulate known results and constructions about De Morgan's law in the framework of relative topos theory, and to hint at possible applications of this point of view. The proofs of section \ref{sec:enforce} clarify the proofs of \cite{johnstone1980gleason} which can seem rather mysterious and heavily rely on known results about distributive lattices. In some cases, the proofs are drastically simplified, for instance the proof of Corollary~\ref{gc_surj}, which is a direct consequence of general results of relative topos theory, and can be compared to the proof of Corollary~1.4 (and thus Lemma~1.3) of \cite{johnstone1980gleason}. This point of view also allows more modularity in the process of enforcing De Morgan's law, as showed by the new construction of section \ref{dml_pt}. We intend to study in more generality the different ways of enforcing De Morgan's law via (generalized) ideal completion, which passes through a study of Stone duality in the relative setting, as already mentioned. \\
\\
The other important protagonist of this paper is the amalgamation property. The link between De Morgan's law and the amalgamation property for the category of models of some theory was already well-known for theories of presheaf type, and some independent results for coherent theories were also available, outside the realm of topos theory. One of the achievements of this paper has been to establish links between all theses scattered results via topos theory. In the same vein, one of our future objectives is the application of our methods for enforcing De Morgan's law to the construction of techniques for enforcing the amalgamation property on a category, as hinted at in section \ref{gc_psh}.

\bibliographystyle{alpha}
\bibliography{b}

\vspace{1cm}
\textbf{Acknowledgements:} Olivia Caramello has benefited for this work of the support of the Université Paris-Saclay in the framework on the Jean D’Alembert 2024 Programme.

\vspace{1cm}
\textsc{Olivia Caramello\\
\\
Dipartimento di Scienza e Alta Tecnologia, Università degli Studi dell’Insubria, via Valleggio 11, 22100 Como, Italy.}\\
\textit{E-mail address:} {\fontfamily{cmtt}\selectfont
olivia.caramello@uninsubria.it
}\\
\\
\textsc{Istituto Grothendieck ETS, Corso Statuto 24, 12084 Mondovì, Italy.}
\textit{E-mail address:} {\fontfamily{cmtt}\selectfont olivia.caramello@igrothendieck.org}\\
\\
\textsc{Université Paris-Saclay, CentraleSupélec, Mathématiques et Informatique pour la Complexité et les Systèmes, 91190, Gif-sur-Yvette, France.}\\ 
\textit{E-mail address:} {\fontfamily{cmtt}\selectfont olivia.caramello@centralesupelec.fr}\\
\\
\\
\textsc{Yorgo Chamoun \\
\\
École Polytechnique, Laboratoire d’informatique (LIX), Bâtiment Alan Turing, 1 rue Honoré d’Estienne d’Orves, 91120, Palaiseau, France.}\\
\textit{Email address:} {\fontfamily{cmtt}\selectfont yorgo.chamoun@polytechnique.edu}

\end{document}